 \numberwithin{equation}{section}
 \def\cprime{$'$}
\newenvironment{proofb}[1][\indent\emph{P\,r\,o\,o\,f}]{\par
  \pushQED{\qed}%
  \normalfont \topsep0\p@\@plus6\p@\relax
  \trivlist
  \item[\hskip\labelsep
        \itshape
    #1\@addpunct{.}]\ignorespaces
}{%
  \popQED\endtrivlist\@endpefalse
}
  \def\@oddfoot{%
    \if@finallayout%
      \normalfont\fontsize{8}{9\p@}\normalfont%
      \bfseries\journal@url \hfil
      \normalfont\fontsize{6}{7\p@}\normalfont%
      \copyright\ \copyrightyear\ \copyrightholder
    \else
      \normalfont\fontsize{6}{7\p@}\normalfont%
      \hfil \phantom{Copyright line will be provided by the publisher} 
    \fi}%
  \def\@evenfoot{%
    \if@finallayout%
      \normalfont\fontsize{6}{7\p@}\normalfont%
      \copyright\ \copyrightyear\ \copyrightholder\hfil
      \normalfont\fontsize{8}{9\p@}\normalfont%
      \bfseries\journal@url
    \else
      \normalfont\fontsize{6}{7\p@}\normalfont%
      \hfil \phantom{Copyright line will be provided by the publisher} 
    \fi}%
 \newcommand*{\beq}{$$\refstepcounter{equation}}
 \newcommand*{\eeq}{\eqno(\theequation)$$}
 \newcommand*{\bal}{\begin{aligned}}
 \newcommand*{\eal}{\end{aligned}}
 \newcommand*{\qa}{,\qquad}
 \newcommand*{\qb}{,\quad}
 \newcommand*{\mf}[1]{\boldsymbol{#1}} 
 \newcommand*{\ci}{\mathaccent"7017 } 
 \newcommand*{\cqqncel}[2]{\ooalign{$\hfil#1\mkern1mu/\hfil$\crcr$#1#2$}}
 \newcommand*{\notsim}{\mathrel{\mathpalette\cqqncel\sim}}
 \newcommand*{\hb}[1]{\hbox{$#1$}} 
 \newcommand*{\sdot}{\!\cdot\!}
 \newcommand*{\sco}{\kern2pt\colon\kern2pt}
 \newcommand*{\sn}{\kern1pt|\kern1pt}
 \newcommand*{\ssm}{\!\setminus\!}
 \newcommand*{\vsdot}{\hbox{$|\sdot|$}}
 \newcommand*{\Vsdot}{\hbox{$\|\sdot\|$}}
 \newcommand{\vd}{\hbox{$\vert\kern-.2ex\vert\kern-.2ex\vert$}}
 \newcommand*{\Vvsdot}{\hbox{$\vd\sdot\vd$}}
 \newcommand*{\hh}[1]{{\textbf{#1}}}
 \newcommand*{\pe}{\hbox{$[{}\sdot{},{}\sdot{}]$}}
 \newcommand*{\pr}{\hbox{$(\cdot,\cdot)$}}
 \newcommand*{\prsn}{\hbox{$(\cdot\sn\cdot)$}}
 \newcommand*{\pw}{\hbox{$\dl{}\sdot{},{}\sdot{}\dr$}}
 \newcommand*{\mfpw}{\hbox{$\mf{\dl}\!\!\!\mf{\dl}{}\sdot{},{}\sdot{}\mf{\dr}\!\!\!\mf{\dr}$}}
 \newcommand*{\mfdl}{\hbox{$\mf{\dl}\!\!\!\mf{\dl}$}}
 \newcommand*{\mfdr}{\hbox{$\mf{\dr}\!\!\!\mf{\dr}$}}
 \newcommand*{\bid}[2]{\Bigl({{#1}\atop{#2}}\Bigr)}
 \newcommand*{\ol}{\overline}
 \newcommand*{\wh}{\widehat}
 \newcommand*{\wt}{\widetilde}
 \newcommand*{\thdot}[3]{\vbox{\ialign{##\crcr    
                    {\kern#2ex\raise#3ex\hbox{$\scriptscriptstyle\bt$}}\crcr\noalign{\kern-1pt\nointerlineskip}
                    $\hfil\displaystyle{#1}\hfil$\crcr}}}
 \newcommand*{\thB}{\rlap{\thdot{B}{.7}{.4}}{\ph{B}}}               
 \newcommand*{\thmfB}{\rlap{\thdot{\mf{B}}{.8}{.4}}{\ph{\mf{B}}}}   
 \newcommand*{\thmfgF}{\rlap{\thdot{\mf{\gF}}{.8}{.4}}{\ph{\mf{\gF}}}}   
 \newcommand*{\thgK}{\rlap{\thdot{\gK}{.5}{.4}}\ph{\gK}}            
 \newcommand*{\thS}{\thdot{S}{.5}{.4}}
 \newcommand*{\thV}{\thdot{V}{.5}{.4}}                              
 \newcommand*{\thchi}{\thdot{\chi}{.5}{.4}}                        
 \newcommand*{\thia}{\rlap{\thdot{\ia}{.5}{.4}}\ph{\ia}}            
 \newcommand*{\thka}{\rlap{\thdot{\ka}{.5}{.4}}{\ph{\ka}}}                           
 \newcommand*{\thwtka}{\rlap{\thdot{\wt{\ka}}{.4}{.4}}{\ph{\ka}}}
 \newcommand*{\ithka}{{\thdot{\scriptstyle\ka}{.5}{.4}}}            
 \newcommand*{\thpi}{\thdot{\pi}{.5}{.4}}                           
 \newcommand*{\thvp}{\rlap{\thdot{\vp}{.5}{.4}}\ph{\vp}}            
 \newcommand*{\thpsi}{\rlap{\thdot{\psi}{.5}{.5}}\ph{\psi}}         
 \newcommand*{\thrho}{\rlap{\thdot{\rho}{.5}{.4}}\ph{\rho}}         
 \newcommand*{\ph}{\phantom}
 \newcommand*{\vph}{\vphantom}
 \newcommand*{\hr}{\hookrightarrow}
 \newcommand*{\ra}{\rightarrow}
 \newcommand*{\dl}{\langle}
 \newcommand*{\dr}{\rangle}
 \newcommand*{\bwt}{{\textstyle\bigwedge}}
 \newcommand*{\sdh}{\stackrel{d}{\hookrightarrow}}
 \newcommand*{\bdry}{\mathop{\mathrm{bdry}}\nolimits}
 \newcommand*{\card}{\mathop{\mathrm{card}}\nolimits}
 \newcommand*{\tdiv}{\mathop{\rm div}\nolimits}
 \newcommand*{\divgrad}{\mathop{\rm div\,grad}\nolimits}
 \newcommand*{\dom}{\mathop{\rm dom}\nolimits}
 \newcommand*{\grad}{\mathop{\rm grad}\nolimits}
 \newcommand*{\Ho}{{*}}
 \newcommand*{\Hodge}{\rm Hodge}
 \newcommand*{\id}{{\rm id}}
 \newcommand*{\im}{\mathop{\rm im}\nolimits}
 \newcommand*{\pro}{{\rm pr}}
 \newcommand*{\supp}{\mathop{\rm supp}\nolimits}
 \newcommand*{\trace}{\mathop{\rm trace}\nolimits}
 \newcommand*{\Lis}{{\mathcal L}{\rm is}}
 \newcommand*{\loc}{{\rm loc}}
 \newcommand*{\is}{\subset}
 \newcommand*{\Chr}{\Ga_{ij}^k}
 \newcommand*{\bt}{\bullet}
 \newcommand*{\es}{\emptyset}
 \newcommand*{\iy}{\infty}
 \newcommand*{\mt}{\mapsto}
 \newcommand*{\pl}{\partial}
 \newcommand*{\pa}{\partial^\alpha}
 \newcommand*{\sh}{\sharp}
 \newcommand*{\cona}{\kern-1pt}
 \newcommand*{\coT}{\kern-1pt}
 \newcommand*{\coU}{\kern-1pt}
 \newcommand*{\coV}{\kern-1pt}
 \newcommand*{\coW}{\kern-1pt}
 \newcommand*{\cimfW}{\rlap{${\ci{\mf{W}}}$}{\ph{\mf{W}}}}
 \newcommand*{\cimfgF}{\rlap{${\ci{\mf{\gF}}}$}{\ph{\mf{\gF}}}}
 \newcommand*{\al}{\alpha}
 \newcommand*{\ba}{\beta}
 \newcommand*{\Da}{\Delta}
 \newcommand*{\da}{\delta}
 \newcommand*{\Ga}{\Gamma}
 \newcommand*{\ga}{\gamma}
 \newcommand*{\mfga}{\rlap{$\mf{\ga}$}{\kern.3pt\mf{\ga}}}
 \newcommand*{\ia}{\iota}
 \newcommand*{\ka}{\kappa}
 \newcommand*{\lda}{\lambda}
 \newcommand*{\na}{\nabla}
 \newcommand*{\Om}{\Omega}
 \newcommand*{\om}{\omega}
 \newcommand*{\sa}{\sigma}
 \newcommand*{\ta}{\theta}
 \newcommand*{\vp}{\varphi}
 \newcommand*{\Bl}{(B,\ell)}
 \newcommand*{\BHmE}{(\BH^m,E)}
 \newcommand*{\ciBHmE}{(\ci\BH^m,E)}
 \newcommand*{\ME}{(M,E)}
 \newcommand*{\ciME}{(\ci M,E)}
 \newcommand*{\Mg}{(M,g)}
 \newcommand*{\MBK}{(M,\BK)}
 \newcommand*{\MV}{(M,V)}
 \newcommand*{\ciMV}{(\ci M,V)}
 \newcommand*{\ciMVs}{(\ci M,V')}
 \newcommand*{\Rm}{(\BR^m)}
 \newcommand*{\RmE}{(\BR^m,E)}
 \newcommand*{\SV}{(S,V)}
 \newcommand*{\XE}{(X,E)}
 \newcommand*{\BXE}{(\BX,E)}
 \newcommand*{\BXkE}{(\BX_\ka,E)}
 \newcommand*{\ciBXE}{(\ci\BX,E)}
 \newcommand*{\ciBXkE}{(\ci\BX_\ka,E)}
 \newcommand*{\XY}{(X,Y)}
 \newcommand*{\cXcX}{(\cX,\cX)}
 \newcommand*{\cXcY}{(\cX,\cY)}
 \newcommand*{\YX}{(Y,X)}
 \renewcommand*{\ldots}{\mathinner{\ldotp\ldotp\ldotp}}
 \newcommand*{\BC}{{\mathbb C}}
 \newcommand*{\BH}{{\mathbb H}}
 \newcommand*{\BJ}{{\mathbb J}}
 \newcommand*{\BK}{{\mathbb K}}
 \newcommand*{\BN}{{\mathbb N}}
 \newcommand*{\BR}{{\mathbb R}}
 \newcommand*{\BS}{{\mathbb S}}
 \newcommand*{\BX}{{\mathbb X}}
 \newcommand*{\BZ}{{\mathbb Z}}
 \newcommand*{\cD}{{\mathcal D}}
 \newcommand*{\cL}{{\mathcal L}}
 \newcommand*{\cS}{{\mathcal S}}
 \newcommand*{\cT}{{\mathcal T}}
 \newcommand*{\cX}{{\mathcal X}}
 \newcommand*{\cY}{{\mathcal Y}}
 \newcommand*{\gF}{{\mathfrak F}}
 \newcommand*{\gK}{{\mathfrak K}}
 \newcommand*{\gL}{{\mathfrak L}}
 \newcommand*{\gM}{{\mathfrak M}}
 \newcommand*{\gN}{{\mathfrak N}}
 \newcommand*{\gS}{{\mathfrak S}}
 \newcommand*{\gT}{{\mathfrak T}}
 \newcommand*{\gU}{{\mathfrak U}}
 \newcommand*{\sA}{{\mathsf A}}
 \newcommand*{\sC}{{\mathsf C}}
 \newif\ifinany@
 \def\column@plus{%
    \global\advance\column@\@ne
 }
 \def\add@amps#1{%
    \begingroup
        \count@#1
        \DN@{}%
        \loop
            \ifnum\count@>\column@
                \edef\next@{&\next@}%
                \advance\count@\m@ne
        \repeat
    \@xp\endgroup
    \next@
 }
 \def\Let@{\let\\\math@cr}
 \def\restore@math@cr{\def\math@cr@@@{\cr}}
 \def\default@tag{\let\tag\dft@tag}
 \newbox\strutbox@
 \def\strut@{\copy\strutbox@}
 \addto@hook\every@math@size{%
  \global\setbox\strutbox@\hbox{\lower.5\normallineskiplimit
         \vbox{\kern-\normallineskiplimit\copy\strutbox}}}
 \renewcommand{\start@aligned}[2]{%
    \RIfM@\else
        \nonmatherr@{\begin{\@currenvir}}%
    \fi
    \null\,%
    \if #1t\vtop \else \if#1b \vbox \else \vcenter \fi \fi \bgroup
        \maxfields@#2\relax
        \ifnum\maxfields@>\m@ne
            \multiply\maxfields@\tw@
            \let\math@cr@@@\math@cr@@@alignedat
        \else
            \restore@math@cr
        \fi
        \Let@
        \default@tag
        \ifinany@\else\openup\jot\fi
        \column@\z@
        \ialign\bgroup
           &\column@plus
            \hfil
            \strut@
            $\m@th\displaystyle{##}$%
           &\column@plus
            $\m@th\displaystyle{{}##}$%
            \hfil
            \crcr
 }
 \renewenvironment{aligned}[1][c]{%
    \start@aligned{#1}\m@ne
 }{%
    \crcr\egroup\egroup
 }
\begin{document}
\DOIsuffix{mana.DOIsuffix}
\Volume{248}
\Month{01}
\Year{2005}
\pagespan{1}{}
    \Receiveddate{15 November 2005}
    \Reviseddate{30 November 2005}
    \Accepteddate{2 December 2005}
    \Dateposted{3 December 2005}
    \vspace*{3\baselineskip}    
\keywords{Weighted Sobolev spaces, Bessel potential spaces, Besov spaces,
singularities, non-complete Riemannian manifolds with boundary}
\subjclass[msc2000]{46E35, 54C35, 58A99, 58D99}
\title[Function Spaces on Singular Manifolds]{Function Spaces on Singular
Manifolds}
\author[H. Amann]{H. Amann\footnote{e-mail: {\sf herbert.amann@math.uzh.ch}}}
\address{Math.\ Institut, Universit\"at Z\"urich,
Winterthurerstr.~190, CH--8057 Z\"urich, Switz\-er\-land}
\begin{abstract}
It is shown that most of the well-known basic results for Sobolev-Slobodeckii
and Bessel potential spaces, known to hold on bounded smooth domains
in~$\BR^n$, continue to be valid on a wide class of Riemannian
manifolds with singularities and boundary, provided suitable weights, which
reflect the nature of the singularities, are introduced. These results are
of importance for the study of partial differential equations on piece-wise
smooth domains.
\end{abstract}
\maketitle                 
\section{Introduction}\label{sec-J}
It is our principal concern in this paper to develop a satisfactory theory of
spaces of functions and tensor fields on Riemannian manifolds which may
have a boundary and may be non-compact and non-complete.
Such a theory has to extend the basic results known for function spaces on
subdomains of~$\BR^n$ with smooth boundary to this more general setting,
that is to say, embedding and interpolation properties,
point-wise multiplier and trace theorems, duality characterizations and,
last but not least, intrinsic local descriptions.

\par
Our research is motivated by~---~and provides the basis for~---~ the study
of elliptic and parabolic boundary value problems on piece-wise smooth
manifolds, on
domains in~$\BR^n$ with piece-wise smooth boundary, in particular. Such
domains occur in a wide variety of problems modeling physical, chemical,
biological, and engineering processes by means of differential 
and pseudodifferential equations. 
In this connection Sobolev spaces play a predominant role, as is well-known
from the theory of partial differential equations on smooth domains. In
the presence of singularities, say edges on the boundary, solutions of
differential equations lose their smoothness near these singularities.
Since the seminal work of V.A.~Kondrat{\cprime}ev~\cite{Kon67a} on elliptic
boundary value problems in domains with conical points it is known that an
appropriate setting for the study of such problems is provided by Sobolev
spaces with weights reflecting the nature of the singularity. This has
since been exploited by numerous authors and there is a large number
of papers and monographs devoted to elliptic problems on non-smooth
domains. 
Besides of the early papers by V.G.~Maz{\cprime}ya and 
B.A.~Plamenevski{\u\i} \hbox{\cite{MaP72a}\nocite{MaP73a}--\cite{MaP77a}}, 
the first successful approaches to this kind of problems, we cite only the 
following few books and refer the reader to the
references therein for further information:
P.~Grisvard~\cite{Gri85a}, M.~Dauge~\cite{Dau88a},
S.A.~Nazarov and B.A.~Plamenevski{\u\i}~\cite{NaP94a},
V.A.~Kozlov, V.G.~Maz{\cprime}ya, and J.~Rossmann~\cite{KMR97a}, 
V.G.~Maz{\cprime}ya, and J.~Rossmann~\cite{MaR10a} 
(and many more papers and books by V.G.~Maz{\cprime}ya and coauthors), and
the numerous contributions of B.-W.~Schulze and co-workers on the
\hbox{$L_2$-theory} of elliptic pseudo-differential boundary problems on
singular manifolds for which \cite{Schu91a} may stand representatively.

\par
Weighted Sobolev spaces of a different type occur as solution spaces
for degenerate elliptic equations. This fact has triggered a large
amount of research on weighted Sobolev and related function spaces, e.g.,
A.~Kufner~\cite{Kuf85a}, H.~Triebel~\cite{Tri78a},
H.-J. Schmeisser and H.~Triebel~\cite{SchT87a}, and the references
therein. Since that work is not directly related to the subject of
our paper we do not give more details or cite more recent references.

\par
In Section~\ref{sec-S} we give a precise definition of our concept of a
singular manifold~$M$. It will be seen that, to a large extent, $M$~is
determined by a `singularity function'
\hb{\rho\in C^\iy\bigl(M,(0,\iy)\bigr)}. The behavior of~$\rho$ at the
`singular ends' of~$M$, that is, near that parts of~$M$ at which $\rho$~gets
either arbitrarily small or arbitrarily large, 
reflects the singular structure of~$M$. It turns out
that the basic building blocks for a useful theory of function spaces on
singular manifolds are weighted Sobolev spaces based on the singularity
function~$\rho$. More precisely, we denote by~$\BK$ either $\BR$ or~$\BC$.
Then, given
\hb{k\in\BN},
\ \hb{\lda\in\BR}, and
\hb{p\in(1,\iy)}, the weighted Sobolev space
\hb{W_{\coW p}^{k,\lda}(M)=W_{\coW p}^{k,\lda}\MBK} is the completion
of~$\cD(M)$, the space of smooth functions with compact support in~$M$,
in~$L_{1,\loc}(M)$ with respect to the norm
\beq\label{J.n}
u\mt
\Bigl(\sum_{i=0}^k\big\|\rho^{\lda+i}\,|\na^iu|_g\big\|_p^p\Bigr)^{1/p}.
\eeq
Here $\na$~denotes the Levi-Civita covariant derivative and
$|\na^iu|_g$~is the `length' of the covariant tensor field~$\na^iu$
naturally derived from the Riemannian metric $g$ of~$M$. Of course,
integration is carried out with respect to the volume measure of~$M$.
It turns out that $W_{\coW p}^{k,\lda}(M)$ is well-defined,
independently~---~in the sense of equivalent norms~---~of the
representation of the singularity structure of~$M$ by means of the
particular singularity function.

\par
A~very special and simple example of a singular manifold is provided by a
bounded smooth domain whose boundary contains a conical point. More
precisely, suppose $\Om$~is a bounded domain in~$\BR^m$ whose topological
boundary, $\bdry(\Om)$, contains the origin, and
\hb{\Ga:=\bdry(\Om)\ssm\{0\}} is a smooth
\hb{(m-1)}-dimensional submanifold of~$\BR^m$ lying locally on one side
of~$\Om$. Also suppose that
\hb{\Om\cup\Ga} is near~$0$ diffeomorphic to a cone
\hb{\{\,ry\ ;\ 0<r<1,\ y\in B\,\}}, where $B$~is a smooth compact
submanifold of the unit sphere in~$\BR^m$. Then, endowing
\hb{M:=\Om\cup\Ga} with the Euclidean metric, we get a singular
manifold with a single conical singularity, as considered in
\cite{NaP94a} and~\cite{KMR97a}, for example. In this case the weighted
norm~\eqref{J.n} is equivalent to
$$
u\mt\Bigl(\sum_{|\al|\leq k}
\|r^{\lda+|\al|}\pa u\|_{L_p(\Om)}^p\Bigr)^{1/p},
$$
where $r(x)$~is the Euclidean distance from
\hb{x\in M} to the origin. Moreover,
$W_{\coW p}^{k,\lda}(M)$ coincides with the
space $V_{\coV p,\lda+k}^k(\Om)$ employed by
S.A. Nazarov and B.A. Plamenevski{\u\i} (cf.~p.~319 of \cite{NaP94a}) and,
in the case
\hb{p=2}, by
V.A. Kozlov, V.G. Maz{\cprime}ya, and J.~Rossmann (see Section~6.2
of~\cite{KMR97a}, for example).

\par
As mentioned above, the theory of function spaces on singular
manifolds is built on the weighted Sobolev spaces $W_{\coW p}^{k,\lda}(M)$.
We define weighted Sobolev spaces of negative order by duality, and Bessel
potential spaces, $H_p^{s,\lda}(M)$, and Besov spaces,~$B_{p,p}^{s,\lda}(M)$,
by complex and real interpolation, respectively. A~basic result, which
renders the theory useful, is the fact that these spaces can be
characterized locally by their `classical' non-weighted counterparts
on~$\BR^m$ and on half-spaces. This implies, in particular,
\hb{H_p^{k,\lda}(M)=W_{\coW p}^{k,\lda}(M)} for
\hb{k\in\BN}.

\par
A~linear differential operator on a Riemannian manifold is of the form
\hb{\sum_{i=0}^ka_i\cdot\na^iu}, where $a_i$~is a contravariant tensor field
of order~$i$ and
\hb{{}\cdot{}}~denotes complete contraction. In order to study continuity
properties of such operators in the weighted function spaces under
consideration we have to have  at our disposal point-wise multiplier
theorems for tensor fields. Thus it is mandatory to study spaces of tensor
fields on singular manifolds.

\par
In the particular case where we can choose the constant map~$1$
as singularity function, our spaces reduce to non-weighted
Sobolev spaces~$W_{\coW p}^k(M)$, Bessel potential spaces~$H_p^s(M)$, and
Besov spaces~$B_{p,p}^s(M)$, respectively. This is, for example, the
case if $M$~is a complete Riemannian manifold without boundary and with 
bounded geometry (that is, $M$~has a positive injectivity radius and all 
covariant derivatives of the curvature tensor are bounded). 
To the best of our
knowledge, this is the only class of Riemannian manifolds for which a
general theory of function spaces has been developed so far.
More precisely: 

\par
Integer order Sobolev spaces, with particular emphasis on the
validity of Sobolev's embedding theorem, have been treated by
Th.~Aubin \hbox{\cite{Aub76a}\nocite{Aub82a}--\cite{Aub98a}} in the
case of compact manifolds with boundary, and for complete Riemannian
manifolds without boundary, making essential use of curvature estimates
and the positivity of the injectivity radius. Also
see E.~Hebey \cite{Heb99a} and~\cite{HeR08a} for the case where $M$~has no 
boundary. 

\par
Bessel potential spaces~$H_p^s(M)$,
\ \hb{1<p<\iy},
\ \hb{s\in\BR}, on complete Riemannian manifolds without boundary have been
introduced and investigated by R.S.~Strichartz~\cite{Stri83a} as domains of
the fractional powers of
\hb{1-\Da_M}, where $\Da_M$~is the Laplace-Beltrami operator. H.~Triebel
\cite{Tri86a}, \cite{Tri87a} (see also~\cite{Tri92a}) established a general
theory of Triebel-Lizorkin and Besov spaces on complete Riemannian
manifolds without boundary and with bounded geometry. 
His work makes use of a distinguished coordinate system
based on the exponential map and of mapping properties of the
Laplace-Beltrami operator.

\par
None of the above techniques is available in our situation, where $M$~may
be not complete or may not have bounded geometry. 
In particular, relevant properties of the
Laplace-Beltrami operator are not at our disposal, even in the case where
$M$~has no boundary.  Anyhow, they would not be helpful in the presence of a
boundary.

\par 
B.~Ammann, R.~Lauter,  and V.~Nistor~\cite{ALN04a} introduce a class of 
complete non-compact Riemannian manifolds without boundary and with bounded 
geometry, called Lie manifolds. This class encompasses, in particular, 
manifolds with cylindrical ends and manifolds being Euclidean at infinity. 
In B.~Ammann, A.D. Ionescu,  and V.~Nistor~\cite{AIN06a} Bessel potential 
spaces on suitable open subsets of Lie manifolds~---~called Sobolev spaces 
therein and denoted by~$W^{s,p}$~---~ are being investigated to some extent. 
Lie manifolds are useful for the study of regularity properties of elliptic 
differential operators on polyhedral domains in which case the authors are 
led to introduce weighted Bessel potential spaces, the weight being 
equivalent to the distance to the non-smooth boundary points (also see 
\cite{ALN07a}, \cite{AN07a}, and the references therein for related 
research). The results of the present paper apply to Lie manifolds and 
polyhedral domains as well and greatly extend and sharpen the 
investigations of these authors; in particular, as far as the trace theorem 
is concerned. 

\par
There seem to be only very few general results on spaces of tensor fields.
J.~Eichhorn~\cite{Eich88a} studies integer order Sobolev spaces of
differential forms on complete Riemannian manifolds without boundary 
and with bounded geometry; 
also see~\cite{Eich08a}. Some results on
Sobolev spaces of differential forms on compact manifolds with boundary
can be found in G.~Schwarz~\cite{Schwa95a}. Of course, there are many
`ad~hoc' results in the literature, predominantly on \hbox{$L_2$-Sobolev}
spaces, for Riemannian manifolds (without boundary) possessing specific
geometries.

\par
Section~\ref{sec-U} is of technical nature. There we review some concepts
from differential geometry, mainly to fix notation. Then we prove basic
estimates related to the singularity structure of the manifold. They are
fundamental for the construction of universal retractions by which we can
transplant the well-established theory of function spaces on~$\BR^m$ to the
singular manifold. For this we first
have to establish a localization procedure for tensor-field-valued
distributions on~$M$. This is done in Sections \ref{sec-D} and~\ref{sec-L}.
In Section~\ref{sec-W} we show that this localization procedure induces a
corresponding retraction-coretraction system on Sobolev spaces. Then, by
interpolation, we extend the retraction-coretraction theorem to Bessel
potential and Besov spaces of positive order.

\par
After having introduced weighted H\"older spaces in Section~\ref{sec-H}, we
prove in Section~\ref{sec-P} point-wise multiplier theorems.
Section~\ref{sec-T} is devoted to the trace theorem, and in the following
section we characterize spaces with vanishing traces. This puts us in
position to define, in Section~\ref{sec-N}, spaces of negative order by
duality. All spaces
under consideration possess the retraction-coretraction property induced
from the localization procedure for tensor-field-valued sections
constructed in Section~\ref{sec-L}. By means of this property we can then,
in Sections \ref{sec-I} and~\ref{sec-E}, respectively,
easily prove interpolation and embedding theorems for weighted spaces of
tensor fields on singular manifolds.

\par
Section~\ref{sec-F} is concerned with spaces of differential forms. In
particular, we establish mapping properties of the exterior differential
and codifferential operators, and, as an application, of the
gradient and divergence operators. These results are of importance in the
study of differential operators on singular manifolds. Such
investigations, which will be carried out elsewhere, rely fundamentally on
the retraction-coretraction theorems established in this paper.

\par
For simplicity, and being oriented towards differential equations, we
restrict our considerations essentially to weighted Sobolev-Slobodeckii
spaces. However, we include some brief remarks concerning possible
extensions to spaces of Triebel-Lizorkin type.
\section{Singular Manifolds}\label{sec-S}
By~a \emph{manifold} we always mean a smooth, that is, $C^\iy$~manifold
with (possibly empty) boundary such that its underlying topological space
is separable and metrizable. Thus, in the context of manifolds,  we work in
the smooth category. A~manifold need not be connected, but all connected
components are of the same dimension.

\par
We denote by~$\BH^m$  the closed right half-space
\hb{\BR^+\times\BR^{m-1}} in~$\BR^m$, where
\hb{\BR^0=\{0\}}. We set
\hb{Q:=(-1,1)\is\BR}. If $\ka$~is a local chart for an
\hbox{$m$-dimensional} manifold~$M$, then we write~$U_{\coU\ka}$ for the
corresponding coordinate patch~$\dom(\ka)$.
A~local chart~$\ka$ is \emph{normalized} if
\hb{\ka(U_{\coU\ka})=Q^m} whenever
\hb{U_{\coU\ka}\is\ci{M}}, the interior of~$M$, whereas
\hb{\ka(U_{\coU\ka})=Q^m\cap\BH^m} if
\hb{U_{\coU\ka}\cap\pl M\neq\es}. We put
\hb{Q_\ka^m:=\ka(U_{\coU\ka})} if $\ka$~is normalized.

\par
An atlas~$\gK$ for~$M$ has \emph{finite multiplicity} if there exists
\hb{k\in\BN} such that any intersection of more than $k$ coordinate
patches is empty. It is \emph{uniformly shrinkable} if it consists of
normalized charts and there exists
\hb{r\in(0,1)} such that
\hb{\big\{\,\ka^{-1}(rQ_\ka^m)\ ;\ \ka\in\gK\,\big\}} is a cover of~$M$.

\par
Given an open subset~$X$ of $\BR^m$ or~$\BH^m$ and a Banach space~$E$
over~$\BK$, we write
\hb{\Vsdot_{k,\iy}} for the usual norm of $BC^k\XE$, the Banach space of
all
\hb{u\in C^k\XE} such that $|\pa u|_E$~is uniformly bounded for
\hb{\al\in\BN^m} with
\hb{|\al|\leq k}.

\par
By~$c$ we denote constants
\hb{\geq1} whose numerical value may vary from occurrence to occurrence;
but $c$~is always independent of the free variables in a given formula,
unless an explicit dependence is indicated.

\par
Let $S$ be a nonempty set. On~$\BR^S$, the space of all real-valued
functions on~$S$, we introduce an equivalence relation~%
\hb{{}\sim{}} by setting
\hb{f\sim g} iff there exists
\hb{c\geq1} such that
\hb{f/c\leq g\leq cf}. By~$\mf{1}$ we denote the constant function
\hb{s\mt1}, whose domain will always be clear from the context.

\par
The Euclidean metric on~$\BR^m$,
\ \hb{(dx^1)^2+\cdots+(dx^m)^2}, is denoted by~$g_m$. The same symbol is used
for its restriction to an open subset~$U$ of $\BR^m$ or~$\BH^m$, that is,
for~$\ia^*g_m$, where
\hb{\ia\sco U\hr\BR^m} is the natural embedding. Here and below, we employ
the standard notation for pull-back and push-forward operations.

\par
Let
\hb{M=\Mg} be an \hbox{$m$-dimensional} Riemannian manifold. Suppose
\hb{\rho\in C^\iy\bigl(M,(0,\iy)\bigr)} and $\gK$~is an atlas for~$M$. Then
$(\rho,\gK)$ is~a \emph{singularity datum for}~$M$ if
\beq\label{S.sd}
\bal
\rm{(i)}  \qquad    &\gK\text{ is uniformly shrinkable, has finite
                     multiplicity, and is orientation preserving if $M$ is
                     oriented.}\\
\rm{(ii)} \qquad    &\|\wt{\ka}\circ\ka^{-1}\|_{k,\iy}\leq c(k),
                     \ \ \ka,\wt{\ka}\in\gK,
                     \ \ k\in\BN.\\
\rm{(iii)}\qquad    &\ka_*(\rho^{-2}g)\sim g_m,
                     \ \ \ka\in\gK.\\
\rm{(iv)} \qquad    &\|\ka_*(\rho^{-2}g)\|_{k,\iy}\leq c(k),
                     \ \ \ka\in\gK,
                     \ \ k\in\BN.\\
\rm{(v)}  \qquad    &\|\ka_*\rho\|_{k,\iy}\leq c(k)\rho_\ka, 
                     \ \ \ka\in\gK,
                     \ \ k\in\BN,
                     \text{\ where }
                     \rho_\ka:=\ka_*\rho(0)=\rho\bigl(\ka^{-1}(0)\bigr).\\
\rm{(vi)} \qquad    &1/c\leq\rho(p)/\rho_\ka\leq c,
                     \ \ p\in U_{\coU\ka},
                     \ \ \ka\in\gK.
\eal
\eeq
In (ii) and in similar situations it is understood that only
\hb{\ka,\wt{\ka}\in\gK} with
\hb{U_{\coU\ka}\cap U_{\coU\wt{\ka}}\neq\es} are being considered.
Condition~(iii) reads more explicitly:
$$
\ka_*\rho^2(x)\,|\xi|^2/c
\leq\ka_*g(x)(\xi,\xi)\leq c\ka_*\rho^2(x)\,|\xi|^2
\qa x\in Q_\ka^m
\qb \xi\in\BR^m
\qb \ka\in\gK.
$$
Note that the finite multiplicity of~$\gK$ and the separability of~$M$
imply that $\gK$~is countable.

\par
 Let $(\rho,\gK)$ and $(\wt{\rho},\wt{\gK})$ be singularity data for~$M$. Set
 $$
 \gN(\ka):=\{\,\wt{\ka}\in\wt{\gK}
 \ ;\ U_{\coU\wt{\ka}}\cap U_{\coU\ka}\neq\es\,\}
 \qa \ka\in\gK.
 $$
 Then $(\rho,\gK)$ and $(\wt{\rho},\wt{\gK})$ are \emph{equivalent} if
\beq\label{S.eq}
\bal
\rm{(i)}  \qquad    &\rho\sim\wt{\rho};\\
\rm{(ii)} \qquad    &\card\bigl(\gN(\ka)\bigr)\leq c,
                     \ \ \ka\in\gK;\\
\rm{(iii)}\qquad    &\|\wt{\ka}\circ\ka^{-1}\|_{k,\iy}\leq c(k),
                     \ \ \ka\in\gK,
                     \ \ \wt{\ka}\in\wt{\gK},
                     \ \ k\in\BN.
\eal
\eeq
A~\emph{singularity structure},~$\gS(M)$, for~$M$ is a maximal family of
equivalent singularity data. A~\emph{singularity function} for~$M$ is a
function
\hb{\rho\in C^\iy\bigl(M,(0,\iy)\bigr)} such that there exists an
atlas~$\gK$ with
\hb{(\rho,\gK)\in\gS(M)}. The set of all singularity functions is the
\emph{singularity type},~$\gT(M)$, of~$M$. By~a \hh{singular manifold} we
mean a Riemannian manifold~$M$ endowed with a singularity
structure~$\gS(M)$. Then $M$~is said to be \emph{singular of
type}~$\gT(M)$. If
\hb{\rho\in\gT(M)}, then it is convenient to set
\hb{[\![\rho]\!]:=\gT(M)}. A~singular manifold of type~$[\![\mf{1}]\!]$ is
called \emph{uniformly regular}.

\par
Let $\Mg$ be singular of type~$[\![\rho]\!]$. It follows from
\eqref{S.sd}\hbox{(i)--(iv)} that then $(M,\rho^{-2}g)$ is uniformly
regular. Suppose
\hb{\rho\notsim\mf{1}}. Then either 
\hb{\inf\rho=0} or 
\hb{\sup\rho=\iy}, or both. Hence $M$~is not compact but has singular ends. 
It follows from~\eqref{S.sd}(iii) that the diameter of the coordinate 
patches converges either to zero or to infinity near the singular ends in 
a manner controlled by the singularity type~$\gT(M)$.
\begin{examples}\label{exa-S.ex}
\hh{(a)}\quad
Every compact Riemannian manifold is uniformly regular.

\par
\hh{(b)}\quad
Let $M$ be an \hbox{$m$-dimensional} Riemannian submanifold of~$\BR^m$
possessing a compact boundary. Then $M$~is uniformly regular.

\par
\hh{(c)}\quad
\hb{\BR^m=(\BR^m,g_m)} and 
\hb{\BH^m=(\BH^m,g_m)} are uniformly regular. 
\begin{proofb}
For
\hb{\BX\in\{\BR^m,\BH^m\}} and
\hb{z\in\BZ^m\cap\BX} we set
\hb{Q_z^m:=Q^m} if either
\hb{\BX=\BR^m} or
\hb{z\in\ci\BH^m}; otherwise we let
\hb{Q_z^m:=Q^m\cap\BH^m}. We put
\hb{U_{\coU z}:=z+Q_z^m} and
\hb{\ka_z(x):=x-z} for
\hb{z\in\BZ^m\cap\BX} and
\hb{x\in U_{\coU z}}. Then $(\mf{1},\gK)$, where
\hb{\gK:=\{\,\ka_z\ ;\ \BZ^m\cap\BX\,\}}, is a singularity datum for~$\BX$.
\end{proofb}
\hh{(d)}\quad
Let $\Mg$ be singular of type~$[\![\rho]\!]$ and
\hb{\vp\sco M\ra N} a diffeomorphism. Then $(N,\vp_*g)$ is singular of
type~$[\![\vp_*\rho]\!]$. Assume $(\rho,\gK)$ is a singularity datum
for~$M$ and set
\hb{\vp_*\gK:=\{\,\vp_*\ka\ ;\ \ka\in\gK\,\}}. Then
$(\vp_*\rho,\vp_*\gK)$ is a singularity datum for~$N$.

\par
\hh{(e)}\quad
Let $M$ be singular of type~$[\![\rho]\!]$. Suppose
\hb{\pl M\neq\es}. Denote by
\hb{\thia\sco\pl M\hr M} the natural injection and endow~$\pl M$ with
the induced Riemannian metric
\hb{g_{\pl M}:=\thia\,^*g}. Suppose
\hb{\ka\sco U_{\coU\ka}\ra\BR^m} is a local chart for~$M$ with
\hb{U_{\coU\ithka}:=\pl U_{\coU\ka}=U_{\coU\ka}\cap\pl M\neq\es}. Put
$$
\thka:=\ia_0\circ(\thia\,^*\ka)\sco U_{\coU\ithka}\ra\BR^{m-1},
$$
where
\hb{\ia_0\sco\{0\}\times\BR^{m-1}\ra\BR^{m-1}},
\ \hb{(0,x')\mt x'}. Let $\gK$ be a normalized atlas for~$M$. Then
a normalized atlas for~$\pl M$ is given by
\hb{\thgK:=\{\,\thka\ ;\ \ka\in\gK,\ \pl U_{\coU\ka}\neq\es\,\}}, the one
\emph{induced by}~$\gK$. Assume $(\rho,\gK)$ is a singularity datum
for~$M$. Set
\hb{\thrho:=\thia\,^*\rho=\rho\sn\pl M}. Then $(\thrho,\thgK)$ is a
singularity datum for~$\pl M$. Thus $\pl M$~is singular of
type~$[\![\thrho]\!]$.

\par
\hh{(f)}\quad 
If $M$~is a complete Riemannian manifold without boundary 
and with bounded geometry, then $M$~is uniformly regular.
\begin{proofb}
This follows from Lemma~2.2.6 in~\cite{Aub82a}, for example.
\end{proofb}
\end{examples}
In order to describe nontrivial classes of singular manifolds we need some
preparation. Let $N$ be a complete Riemannian
manifold without boundary and of dimension~$n$. Suppose $M$~is an
\hbox{$m$-dimensional}
submanifold of~$N$. Denote by~$\ol{M}$ the closure of~$M$ in~$N$. Then
\hb{\cS(M):=\ol{M}\ssm M} is the \emph{singularity set } of~$M$ (in~$N$).
Thus
\hb{\ol{M}=\ci M\cup\pl M\cup\cS(M)} and $\cS(M)$~is closed in~$N$. In
particular, $M$~is not complete if
\hb{\cS(M)\neq\es}.

\par
We assume now that $M$~can be described, locally in the
neighborhood of~$\cS(M)$, by model cusps and wedges over such cusps. More
precisely: suppose
\hb{d\in\BN^\times:=\BN\ssm\{0\}} and $B$~is a submanifold
of~$\BS^{d-1}$, the unit sphere in~$\BR^d$. Then
$$
K_1^d(B):=\{\,ry\in\BR^d\ ;\ 0<r<1,\ y\in B\,\},
$$
where
\hb{y\in B} is identified with its image in~$\BR^d$ under the natural
embedding
\hb{\BS^{d-1}\hr\BR^d}, is called \emph{model cone over}~$B$ in~$\BR^d$.

\par
Next, let
\hb{1<\al<\iy} and assume now that $B$~is a submanifold of~$Q^{d-1}$, where
\hb{d\geq2}. Then
$$
K_\al^d(B):=\bigl\{\,(r,r^\al y)\in\BR^d\ ;\ 0<r<1,\ y\in B\,\bigr\}
$$
is the \emph{model \hbox{$\al$-cusp}} in~$\BR^d$. To allow for a unified
treatment we call~$K_1^d$, in abuse of language, model \hbox{$1$-cusp}. Then,
given
\hb{\al\in[1,\iy)} and
\hb{\ell\in\BN},
$$
K_\al^d\Bl:=K_\al^d(B)\times Q^\ell
$$
is the \emph{model $(\al,\ell)$-wedge over}~$B$ in~$\BR^{d+\ell}$. Here and
below, all references to~$Q^\ell$ have to be neglected if
\hb{\ell=0}. Thus
\hb{K_\al^d(B,0)=K_\al^d(B)}, and a model cusp is a specific instance of a
model wedge.

\par
If
\hb{b:=\dim(B)}, then $K_\al^d\Bl$ is a submanifold of~$\BR^{d+\ell}$ of
dimension
\hb{b+1+\ell} and boundary $K_\al^d(\pl B,\ell)$. Thus
\hb{\pl K_\al^d\Bl=\es} if
\hb{\pl B=\es}, which is the case, in particular, if
\hb{\al=1} and
\hb{B=\BS^{d-1}}, or if
\hb{b=0}.

\par
Now we suppose
\hb{0\leq\ell\leq m-1} and $S$~is an \hbox{$\ell$-dimensional} submanifold
of~$N$ without boundary, contained in~$\cS(M)$. We also suppose
\hb{\al\in[1,\iy)} and $B$~is an
\hb{(m-\ell-1)}-dimensional submanifold of~$\BS^{m-\ell-1}$ if
\hb{\al=1},
or of~$Q^{m-\ell-1}$ if
\hb{\al>1}. Then $S$~is called
$(\al,\ell)$\emph{-wedge of\/~$M$ over~$B$} if for each
\hb{p\in S} there exists a normalized local chart~$\vp$ for~$N$ at~$p$
such that
\hb{\cS(M)\cap U_\vp=S\cap U_\vp},
$$
\vp(M\cap U_\vp)=\bigl(K_\al^{m-\ell}\Bl\times\{0\}\bigr)\cap Q^n,
$$
and
$$
\vp(S\cap U_\vp)=\bigl(\{0\}\times Q^\ell\bigr)\times\{0\}.
$$
Thus an $(\al,\ell)$-wedge of~$M$ over~$B$ looks locally like the model
wedge $K_\al^{m-\ell}\Bl$ in~$\BR^m$.

\par
Finally, $M$~is called \emph{relatively compact} 
(\emph{sub-}) \emph{manifold} (\emph{of}~$N$)
\emph{with smooth cuspidal singularities} if $\ol{M}$~is compact,
\hb{\cS(M)\neq\es}, and for each connected component~$\Ga$  of~$\cS(M)$
there exist
\hb{\al\in[1,\iy)},
\ \hb{\ell\in\{0,\ldots,m-1\}}, and a compact manifold~$B$ such that
$\Ga$~is an $(\al,\ell)$-wedge of~$M$ over~$B$.

\par
\hangindent=-180pt\hangafter=0
In the adjacent figure we have depicted a three-dimensional 
relatively compact submanifold~$M$
of~$\BR^3$ with smooth cuspidal singularities. More precisely,
$\cS(M)$~consists of $5$ connected components, namely of one
\hbox{$2.5$-cusp}, one $(2,1)$-wedge (the upper rim), and three
$(1,1)$-wedges (one at the bottom of the figure and two on the inner
plateau).

\par
\hangindent=-180pt\hangafter=0
Let $M$ be a relatively compact 
submanifold of~$N$ with smooth cuspidal singularities.
Denote by~$\mf{\Ga}$ the set of connected components of~$\cS(M)$. Since
$\cS(M)$~is closed in~$\ol{M}$, it is compact. Hence $\mf{\Ga}$~is a
finite set and each
\hb{\Ga\in\mf{\Ga}} is a compact submanifold of~$N$ without boundary.
\noindent
\hfill
\rlap{\hskip0em\smash{\vbox{
\begin{picture}(128,110)(-7,0)
\put(11,-10){\makebox(0,0)[bl]{\includegraphics[height=110pt]{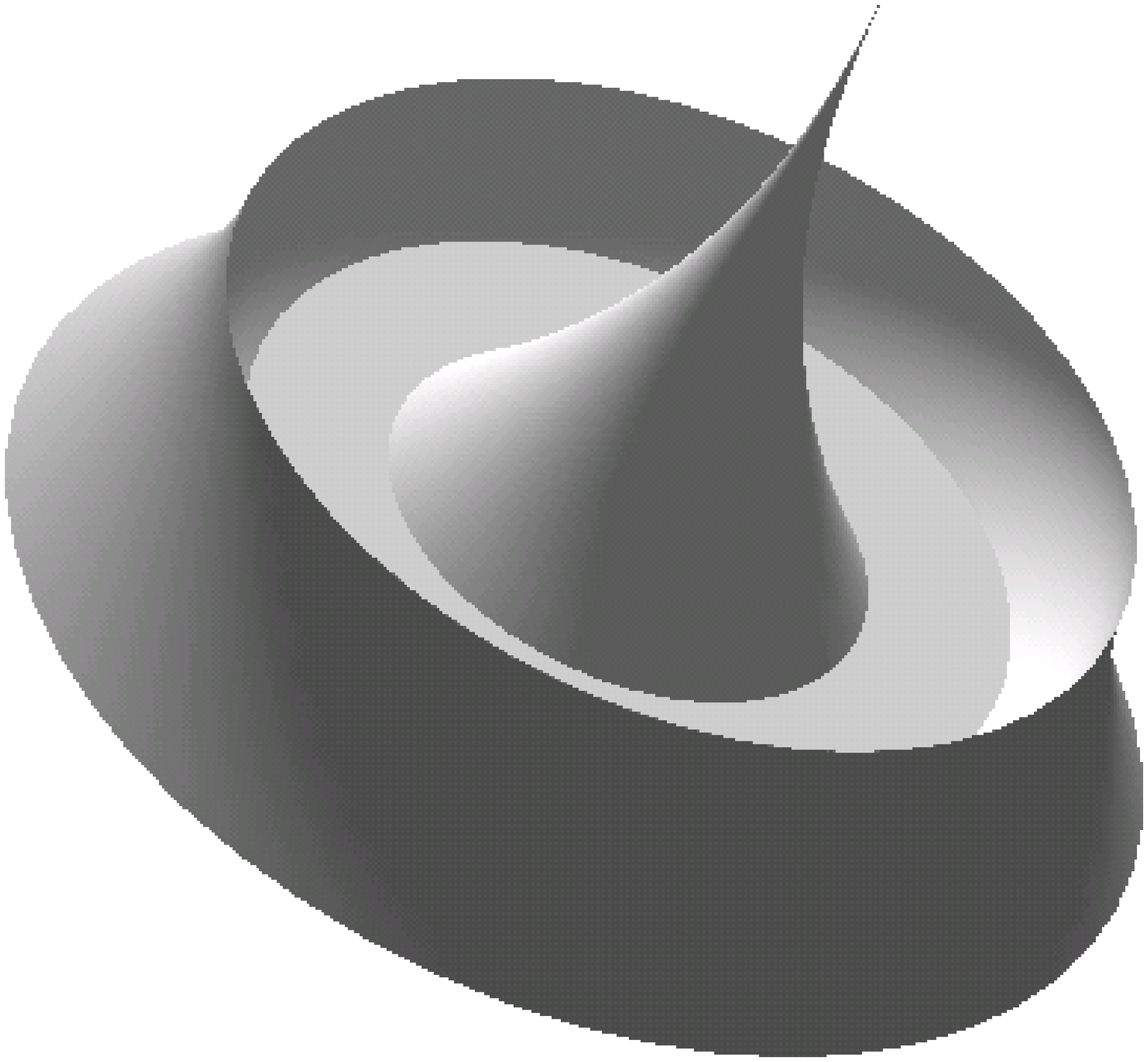}}}
\end{picture}
\vskip 1.5\baselineskip}}}

\par 
Given a nonempty subset~$S$ of~$\cS(M)$, we denote by~$d_N(p,S)$ the
Riemannian distance in~$N$ from
\hb{p\in N} to~$S$. For each
\hb{\Ga\in\mf{\Ga}} we can find a relatively compact open
neighborhood~$U_\Ga$ in~$N$ such that
\hb{d_N\bigl(p,\cS(M)\bigr)=d_N(p,\Ga)} for
\hb{p\in U_\Ga} and
$d_N(\cdot,\Ga)$  is smooth on~$U_\Ga$. Moreover, there exists a unique
\hb{\al_\Ga\in[1,\iy)} such that $\Ga$~is an
$\bigl(\al_\Ga,\dim(\Ga)\bigr)$-wedge
of~$M$ over some compact manifold~$B_\Ga$ of dimension
\hb{m-\dim(\Ga)-1}.
\begin{theorem}\label{thm-S.cusp}
Let $M$ be a relatively compact  
manifold with smooth cuspidal singularities. 

\par 
Choose 
\hb{\rho\in C^\iy\bigl(M,(0,1]\bigr)} satisfying
\hb{\rho(p)\sim\bigl(d_N(p,\Ga)\bigr)^{\al_\Ga}} for~$p$ near
\hb{\Ga\in\mf{\Ga}}. Then
$M$~is a singular manifold of type\/~$[\![\rho]\!]$.
\end{theorem}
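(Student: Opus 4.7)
The plan is to construct an explicit atlas $\gK$ so that $(\rho,\gK)$ becomes a singularity datum for~$M$ in the sense of \eqref{S.sd}, by decomposing $M$ into a compact core plus one cuspidal-end region for each $\Ga\in\mf{\Ga}$. Choose relatively compact neighborhoods $U'_\Ga\Subset U_\Ga$ still covering $\cS(M)$. Since $\ol M$~is compact in~$N$, the complement $M_0:=M\ssm\bigcup_\Ga U'_\Ga$ has compact closure in~$M$, and $\rho$~is bounded above and below by positive constants on~$M_0$. Hence finitely many normalized charts drawn from any smooth atlas cover~$M_0$ and trivially satisfy all of \eqref{S.sd}(i)--(vi) on that part. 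The work is concentrated in the wedge ends.

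Fix $\Ga\in\mf{\Ga}$ and write $\al:=\al_\Ga$, $\ell:=\dim(\Ga)$, $B:=B_\Ga$, $d:=m-\ell$. Cover $\Ga$~by finitely many wedge charts~$\vp$ of~$N$, obtaining parametrizations $(r,\eta,z)\in(0,r_*)\times B\times Q^\ell$ of pieces of $M\cap U_\Ga$ via the embedding $(r,\eta,z)\mt(r,r^\al y(\eta),z)$, where $y\sco B\hr\BR^{d-1}$ is the spherical (for $\al=1$) or cubical (for $\al>1$) realization of~$B$. A~direct computation of the metric induced from~$g_N$ yields
\[
g\sim dr^2+r^{2\al}g_B+dz^2,
\]
the cross terms being absorbed into this comparability by using $|y|=1$ and $y\perp\pl_\eta y$ for $\al=1$, and $r^{2\al-2}\leq1$ for $\al>1$. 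Since $d_N(\cdot,\Ga)\sim r$ in these coordinates, the hypothesis gives $\rho\sim r^\al$. Now fix a finite uniformly shrinkable normalized atlas $\gK_B=\{\ba\}$ of~$B$, set $r_j:=2^{-j}$, and discretize~$Q^\ell$ into cubical cells of edge $\sim r_j^\al$ centered at~$\{z_k\}$. For each triple $(j,\ba,k)$ define the chart
\[
\ka_{j,\ba,k}(r,\eta,z):=\bigl(\Phi_j(r),\,\ba(\eta),\,r_j^{-\al}(z-z_k)\bigr)
\]
on the shell $\{r\in[r_j/2,2r_j]\}\times\ba^{-1}(Q^b)\times(z_k+r_j^\al Q^\ell)\cap Q^\ell$, where $\Phi_j(r)=c\log(r/r_j)$ if $\al=1$ and $\Phi_j(r)=c(r^{1-\al}-r_j^{1-\al})$ if $\al>1$, the constant chosen so $\Phi_j$~maps the shell onto~$Q$. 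If $\pl B\neq\es$, a coordinate permutation places the boundary-defining variable first, yielding charts to $Q^m\cap\BH^m$.

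The central computation verifies $\ka_*(\rho^{-2}g)\sim g_m$: since $\rho\sim r^\al\sim r_j^\al$ on the shell, the rescalings $\pl_\sigma=r^\al\pl_r$, $\pl_\xi\sim\pl_\eta$, $\pl_\zeta=r_j^\al\pl_z$ cancel exactly the $r^{2\al}$-weights in the metric components, producing a metric with all entries pinched between fixed constants on~$Q^m$; this gives \eqref{S.sd}(iii). The higher-order bounds \eqref{S.sd}(iv), (v) follow by induction on derivative order, since each additional application of $\pl_\sigma$, $\pl_\xi$, or $\pl_\zeta$ multiplies by a factor of $r^\al$, $1$, or $r_j^\al$ that exactly matches the $r$-power arising when differentiating the polynomial components of~$g$ and~$\rho$. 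Condition \eqref{S.sd}(vi) is immediate from $\rho/\rho_\ka\sim(r/r_j)^\al\in[c^{-1},c]$. Finite multiplicity and uniform shrinkability \eqref{S.sd}(i) are manifest from the product-dyadic construction: only neighboring scales $j\pm1$, finitely many $\ba\in\gK_B$, and $O(1)$ $z$-cells overlap at any point. Condition \eqref{S.sd}(ii) for transition maps reduces to the uniformly $C^k$-bounded compositions $\Phi_j\circ\Phi_{j'}^{-1}$ over dyadic $r$-intervals together with the finite atlas transitions of~$\gK_B$. The main technical obstacle is the bookkeeping for \eqref{S.sd}(iv): one must systematically verify, at every derivative order and uniformly in~$(j,\ba,k)$, that the interplay of the rescaled vector fields with the explicit $r$-polynomial structure of the metric components remains bounded; a secondary subtlety is the smooth patching of the outermost shells $j=0$ with the compact-core charts while preserving finite multiplicity, handled by thickening $U'_\Ga$ slightly and standard partition-of-unity adjustments.
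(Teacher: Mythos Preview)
The paper does not actually prove this theorem; it defers to the reference~\cite{Ama11c} (in preparation), so there is no argument here to compare against. Your overall architecture---a compact core handled by finitely many charts, plus explicit rescaled product charts on each wedge end---is the natural strategy and almost certainly what that reference carries out.

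There is, however, a genuine gap in your construction for~$\al>1$. You take dyadic shells $r_j=2^{-j}$ and simultaneously claim that $\Phi_j(r)=c(r^{1-\al}-r_j^{1-\al})$ maps the shell $(r_j/2,2r_j)$ onto~$Q$ and that $\pl_\sigma=r^\al\pl_r$. These two requirements are incompatible: the second forces the constant~$c$ to be independent of~$j$, while the first forces $|c|\sim r_j^{\al-1}\to0$. Equivalently, the radial length of a dyadic shell in the metric~$\rho^{-2}g$ is
\[
\int_{r_j/2}^{2r_j}r^{-\al}\,dr\ \sim\ r_j^{1-\al}\to\iy,
\]
so no rescaling can make such a shell look like a unit interval. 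If you instead normalize~$c$ so that the image is~$Q$, the pushed-forward radial component becomes $\ka_*(\rho^{-2}g)_{\sigma\sigma}\sim c^{-2}\sim r_j^{2(1-\al)}\to\iy$, destroying~\eqref{S.sd}(iii).

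The repair is to drop the dyadic decomposition when~$\al>1$ and instead choose the~$r_j$ so that $r_j^{1-\al}$ is arithmetic, e.g.\ $r_j\sim j^{-1/(\al-1)}$; this is precisely the condition that each shell have unit radial extent in~$\rho^{-2}g$. Then $\Phi_j$ with a $j$-independent constant does land in~$Q$, the relation $\pl_\sigma\sim r^\al\pl_r$ holds, transition maps between adjacent shells become translations by~$O(1)$ in~$\sigma$, the $z$-cell sizes $r_j^\al$ still vary slowly between adjacent levels so finite multiplicity survives, and the remainder of your argument goes through as written.
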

\begin{proof}
H.~Amann~\cite{Ama11c}.
\end{proof}
In the case of the
manifold~$M$ depicted above, $\rho$~behaves near~$\cS(M)$ like the
power~$\al$ of the Euclidean distance in~$\BR^3$ to~$\cS(M)$, where
\hb{\al=2.5} near the vertex  of the cusp,
\hb{\al=2} near the upper rim, and
\hb{\al=1} near the remaining three wedges.

\par
For manifolds with
non-smooth cuspidal singularities we refer to~\cite{Ama11c}. There it
is no longer assumed that $B_\Ga$~is a
compact manifold, but $B_\Ga$~itself can have (non-) smooth
cuspidal singularities. This covers the case of corners and intersecting
wedges.
In addition, in~\cite{Ama11c} we consider singular manifolds which are 
not relatively compact; for example: subdomains of~$\BR^m$ with 
`outlets to infinity'.
\section{Tensor Fields and Uniform Estimates}\label{sec-U}
It is the purpose of this section to provide technical estimates on which
much of what follows is based. First we prepare some results on tensor
bundles and covariant derivatives. For general background information we
refer to J.~Dieudonn\'e~\cite{Die69b}, for instance.

\par
Let
\hb{M=\Mg} be an \hbox{$m$-dimensional} Riemannian manifold. We denote
by $TM$ and~$T^*M$ the (complexified, if
\hb{\BK=\BC}) tangent and cotangent bundle, respectively. Then, given
\hb{\sa,\tau\in\BN},
$$
T_\tau^\sa M:=TM^{\otimes\sa}\otimes T^*M^{\otimes\tau}
$$
is the $(\sa,\tau)$-tensor bundle of~$M$, that is, the vector bundle of all
tensors on~$M$ being contravariant of order~$\sa$ and covariant of
order~$\tau$. We use obvious conventions if
\hb{\sa=0} or
\hb{\tau=0}. In particular,
\hb{T_0^0M=M\times\BK}, a~trivial vector bundle. We write~$\cT_\tau^\sa M$
for the $C^\iy(M)$-module of all smooth sections of~$T_\tau^\sa M$, the
smooth $(\sa,\tau)$-tensor fields on~$M$. For abbreviation,
\hb{\cT M:=\cT_0^1M} and
\hb{\cT^*M:=\cT_1^0M}.

\par
For
\hb{\nu\in\BN^\times} we set
\hb{\BJ_\nu:=\{1,\ldots,m\}^\nu}. Then, given local coordinates
\hb{\ka=(x^1,\ldots,x^m)} and setting
$$
\frac\pl{\pl x^{(i)}}
:=\frac\pl{\pl x^{i_1}}\otimes\cdots\otimes\frac\pl{\pl x^{i_\sa}}
\qb dx^{(j)}:=dx^{j_1}\otimes\cdots\otimes dx^{j_\tau}
$$
for
\hb{(i)=(i_1,\ldots,i_\sa)\in\BJ_\sa},
\ \hb{(j)\in\BJ_\tau}, the local representation of
\hb{a\in\cT_\tau^\sa M} with respect to these coordinates is given by
\beq\label{U.aloc}
a=a_{(j)}^{(i)}\frac\pl{\pl x^{(i)}}\otimes dx^{(j)}
\eeq
with
\hb{a_{(j)}^{(i)}\in C^\iy(U_{\coU\ka})}. Here and below, we use the
summation conventions whereby expressions are summed over all possible
values of repeated indices.

\par
We write
\hb{g_\flat\sco\cT M\ra\cT^*M} for the conjugate linear (fiber-wise
defined) Riesz isomorphism. Thus
\beq\label{U.gb}
\dl g_\flat X,Y\dr=g\YX
\qa X,Y\in\cT M,
\eeq
where
\beq\label{U.dua}
\pw\sco\cT^*M\times\cT M\ra C^\iy(M)
\eeq
is the (fiber-wise defined) duality pairing. The inverse of~$g_\flat$,
denoted by~$g^\sh$, satisfies
$$
\dl\al,Y\dr=g(Y,g^\sh\al)
\qa \al\in\cT^*M
\qb X\in\cT M.
$$
Denoting by~$g^*$ the adjoint Riemannian metric on~$T^*M$ it follows from
\eqref{U.gb} that
\beq\label{U.ginv}
\dl\al,g^\sh\ba\dr=\dl g_\flat g^\sh\al,g^\sh\ba\dr
=g(g^\sh\ba,g^\sh\al)=g^*(\al,\ba)
\qa \al,\ba\in\cT^*M.
\eeq
From this we obtain, in local coordinates,
\beq\label{U.gdl}
g_\flat X=g_{ij}\ol{X}\vph{X}^j\,dx^i
\qb g^\sh\al=g^{ij}\ol{\al}_j\frac\pl{\pl x^i}
\qquad\text{for}\quad
X=X^i\frac\pl{\pl x^i}
\qb \al=\al_j\,dx^j,
\eeq
where
\hb{g=g_{ij}\,dx^i\otimes dx^j} and $[g^{ij}]$~is the inverse of the
matrix~$[g_{ij}]$ .

\par
We let
\beq\label{U.dup}
\pw\sco\cT_\tau^\sa M\times\cT_\sa^\tau M\ra C^\iy(M)
\eeq
be the natural extension of \eqref{U.dua}. Thus, given
\hb{p\in M}, we write $(T_\sa^\tau M)_p$ for the fiber
of $T_\sa^\tau M$ over~$p$. Then, for decomposable tensors
\hb{u\otimes\al\in(T_\tau^\sa M)_p} and
\hb{v\otimes\ba\in(T_\sa^\tau M)_p},
$$
\dl u\otimes\al,v\otimes\ba\dr_p
:=\prod_{i=1}^\sa\dl\ba_i,u_i\dr_p\prod_{j=1}^\tau\dl\al_j,v_j\dr_p,
$$
where
\hb{u=u_1\otimes\cdots\otimes u_\sa\in(T_0^\sa M)_p}  and
\hb{\al=\al_1\otimes\cdots\otimes\al_\tau\in(T_\tau^0M)_p}, etc. Hence
$$
(T_\tau^\sa M)'=T_\sa^\tau M
$$
with respect to the `tensor product duality pairing'~\eqref{U.dup}. This is
consistent with
\hb{(TM)'=T^*M}.

\par
Suppose
\hb{\sa+\tau\geq1}. We put
\beq\label{U.G}
(G_\sa^\tau a)(\al_1,\ldots,\al_\tau,X_1,\ldots,X_\sa)
:=a(g_\flat X_1,\ldots,g_\flat X_\sa,g^\sh\al_1,\ldots,g^\sh\al_\tau)
\eeq
for
\hb{a\in\cT_\tau^\sa M},
\ \hb{\al_1,\ldots,\al_\tau\in\cT^*M}, and
\hb{X_1,\ldots,X_\sa\in\cT M}. This induces a conjugate linear bijection
$$
G_\sa^\tau\sco T_\tau^\sa M\ra T_\sa^\tau M
\qb (G_\sa^\tau)^{-1}=G_\tau^\sa.
$$
Consequently,
\beq\label{U.gG}
\prsn_g\sco T_\tau^\sa M\times\cT_\tau^\sa M\ra C^\iy(M)
\qb (a,b)\mt\dl a,G_\sa^\tau b\dr
\eeq
is an inner product (a~vector bundle metric) on~$T_\tau^\sa M$, the
\emph{inner product induced by}~$g$. It follows from \eqref{U.gdl} that,
in local coordinates,
\beq\label{U.gabl}
(a\sn b)_g=g_{(i)(j)}g^{(k)(\ell)}a_{(k)}^{(i)}\ol{b}\vph{b}_{(\ell)}^{(j)}
\qa a,b\in\cT_\tau^\sa M,
\eeq
where
\beq\label{U.gg}
g_{(i)(j)}:=g_{i_1j_1}\cdots g_{i_\sa j_\sa}
\qb g^{(k)(\ell)}:=g^{k_1\ell_1}\cdots g^{k_\tau\ell_\tau}
\eeq
for
\hb{(i),(j)\in\BJ_\sa} and
\hb{(k),(\ell)\in\BJ_\tau}. Of course,
\hb{(a\sn b)_g=a\ol{b}} for
\hb{a,b\in\cT_0^0M=C^\iy(M)}. Clearly,
$$
\vsdot_g\sco\cT_\tau^\sa M\ra C(M)
\qb a\mt\sqrt{(a\sn a)_g}
$$
is called (vector bundle) \emph{norm} induced by~$g$. (We do not
notationally indicate  the dependence on~$(\sa,\tau)$. This will be clear
from the context.) Note that
\hb{|a|_g^2=g^*(a,a)} for
\hb{a\in T_1^0M}. For this reason we also write~$|a|_{g^*}$ for~$|a|_g$ if
\hb{a\in T_\tau^0M}.

\par
Let
\hb{\vp\sco M\ra N} be a diffeomorphism onto some manifold~$N$. Then one
verifies
$$
\vp_*\bigl((a\sn b)_g\bigr)=(\vp_*a\sn\vp_*b)_{\vp_*g}.
$$

\par
We denote by
\hb{\na=\na_{\cona g}} the (complexified, if
\hb{\BK=\BC}) Levi-Civita connection on~$TM$. It has a unique extension
over~$\cT_\tau^\sa$ satisfying, for
\hb{X\in\cT M},
\beq\label{U.LT}
\bal
\rm{(i)}  \qquad    &\na_{\cona X}f=\dl df,X\dr,
                     \ \ f\in C^\iy(M);\\
\rm{(ii)} \qquad    &\na_{\cona X}(a\otimes b)=\na_{\cona X}a
                     \otimes b+a\otimes\na_{\cona X}b,
                     \ \ a\in\cT_{\tau_1}^{\sa_1}M,
                     \ \ b\in\cT_{\tau_2}^{\sa_2}M;\\
\rm{(iii)}\qquad    &\na_{\cona X}\dl a,b\dr=\dl\na_{\cona X}a,b\dr
                     +\dl a,\na_{\cona X}b\dr,
                     \ \ a\in\cT_\tau^\sa M,
                     \ \ b\in\cT_\sa^\tau M.
\eal
\eeq
Then the covariant (Levi-Civita) derivative is the linear map
$$
\na=\na_{\cona g}\sco\cT_\tau^\sa M\ra\cT_{\tau+1}^\sa M
\qb a\mt\na a,
$$
defined by
$$
\dl\na a,b\otimes X\dr:=\dl\na_{\cona X}a,b\dr
\qa b\in\cT_\sa^\tau M
\qb X\in\cT M.
$$
Since it satisfies
\hb{\na g=0}, it commutes with $g_\flat$ and~$g^\sh$. From this we
infer
\beq\label{U.ip}
\na_{\cona X}(a\sn b)_g=(\na_{\cona X}a\sn b)_g+(a\sn\na_{\cona X}b)_g
\qa a,b\in\cT_\tau^\sa M
\qb X\in\cT M.
\eeq
Thus $\na$~is a metric connection on
\hb{\cT_\tau^\sa M=\bigl(T_\tau^\sa M,\prsn_g\bigr)}.

\par
Let
\hb{\vp\sco M\ra N} be a diffeomorphism. The uniqueness of the Levi-Civita
connection implies
$$
\vp_*(\na_{\cona g}a)=\na_{\cona\vp_*g}(\vp_*a)
\qa a\in\cT_\tau^\sa M.
$$
For
\hb{k\in\BN} we define
$$
\na^k\sco\cT_\tau^\sa M\ra\cT_{\tau+k}^\sa M
\qb a\mt\na^ka
$$
by
\hb{\na^0a:=a} and
\hb{\na^{k+1}:=\na\circ\na^k}.

\par
Now we are ready for the proof of the needed estimates. In the following,
$dV_{\coV g}$~denotes the Lebesgue volume measure for~$M$. Furthermore,
given
\hb{a\in\cT_\tau^\sa M} and a local chart~$\ka$, we write~$[\ka_*a]$ for
the
\hb{(m^\sa\times m^\tau)}-matrix whose general entry equals
\hb{(\ka_*a)_{(j)}^{(i)}=(a\circ\ka^{-1})_{(j)}^{(i)}}, with
\hb{(i)\in\BJ_\sa} and
\hb{(j)\in\BJ_\tau}.
\begin{lemma}\label{lem-U.g}
Let $(\rho,\gK)$ be a singularity datum for~$\Mg$. Then the following
estimates hold uniformly with respect to
\hb{\ka\in\gK}:
\begin{itemize}
\item[(i)]
${}$
\hb{\ka_*g\sim\rho_\ka^2g_m},
\ \hb{\ka_*g^*\sim\rho_\ka^{-2}g_m}.
\item[(ii)]
${}$
\hb{\rho_\ka^{-2}\,\|\ka_*g\|_{k,\iy}+\rho_\ka^2\,\|\ka_*g^*\|_{k,\iy}
   \leq c(k)},
\ \hb{k\in\BN}.
\item[(iii)]
${}$
\hb{\ka_*(dV_{\coV g})\sim\rho_\ka^m\,dV_{\coV g_m}}.
\item[(iv)]
If\/
\hb{r,\sa,\tau\in\BN}, then
\ \hb{\sum_{i=0}^r|\na_{\cona\ka_*g}^i(\ka_*a)|_{g_m}
\sim\sum_{|\al|\leq r}|\pa[\ka_*a]|_{g_m}}
\ for
\hb{a\in\cT_\tau^\sa M}.
\item[(v)]
Given
\hb{\sa,\tau\in\BN},
$$
\ka_*(|a|_g)\sim\rho_\ka^{\sa-\tau}\,|\ka_*a|_{g_m}
\qa a\in\cT_\tau^\sa M,
$$
and
$$
|\ka^*b|_g\sim\rho_\ka^{\sa-\tau}\ka^*(|b|_{g_m})
\qa b\in\cT_\tau^\sa Q_\ka^m.
$$
\end{itemize}
\end{lemma}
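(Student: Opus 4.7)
The plan is to exploit conditions \eqref{S.sd}(iii)--(vi) to reduce every assertion to a uniform estimate for $h:=\ka_*g$, which is a metric on $Q_\ka^m$ comparable to $\rho_\ka^2 g_m$ and whose normalized derivatives are bounded independently of~$\ka$.

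First, for (i) and (ii), I would write $h=(\ka_*\rho)^2\cdot\ka_*(\rho^{-2}g)$ and read off the two factors from the singularity datum: \eqref{S.sd}(vi) gives $\ka_*\rho\sim\rho_\ka$ uniformly on $Q_\ka^m$, and \eqref{S.sd}(iii) gives $\ka_*(\rho^{-2}g)\sim g_m$; their product is~(i), and inversion of matrices together with uniform positive-definiteness yields $\ka_*g^*\sim\rho_\ka^{-2}g_m$. The derivative bound~(ii) follows from Leibniz, combined with \eqref{S.sd}(v) to bound $\|\ka_*\rho\|_{k,\iy}\leq c(k)\rho_\ka$ and \eqref{S.sd}(iv) to bound $\|\ka_*(\rho^{-2}g)\|_{k,\iy}\leq c(k)$. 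For $\ka_*g^*=h^{-1}$, Cramer's rule represents each entry as a polynomial in entries of $h$ divided by $\det h$; since $\det h\sim\rho_\ka^{2m}$ by (i), iterated differentiation of $(\det h)^{-1}$ and of the cofactors delivers $\|h^{-1}\|_{k,\iy}\leq c(k)\rho_\ka^{-2}$.

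Parts (iii) and (v) are then consequences of (i). For (iii), $\ka_*(dV_{\coV g})=\sqrt{\det h}\,dV_{\coV g_m}$ and $\det h\sim\rho_\ka^{2m}$. For (v), I would apply the local formula \eqref{U.gabl} after push-forward to get
$$
|\ka_*a|_h^2 = h_{(i)(j)}\,h^{(k)(\ell)}\,(\ka_*a)^{(i)}_{(k)}\,\ol{(\ka_*a)^{(j)}_{(\ell)}},
$$
and then use (i) together with the product structure~\eqref{U.gg} to get $h_{(i)(j)}\sim\rho_\ka^{2\sa}$, $h^{(k)(\ell)}\sim\rho_\ka^{-2\tau}$, hence $\ka_*(|a|_g)=|\ka_*a|_h\sim\rho_\ka^{\sa-\tau}|\ka_*a|_{g_m}$. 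The second equivalence in~(v) is the same statement applied to $a:=\ka^*b$, for which $\ka_*a=b$.

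The remaining claim~(iv) is where the real work lies, and is the step I expect to be the main obstacle. The Christoffel symbols of $h$ on $Q_\ka^m$,
$$
\Ga^k_{ij}[h] = \frac{1}{2}h^{kl}\bigl(\pl_i h_{jl}+\pl_j h_{il}-\pl_l h_{ij}\bigr),
$$
are bounded uniformly in~$\ka$ in every $BC^r$-norm: by (i)--(ii), the factors $\rho_\ka^{-2}$ arising from $h^{-1}$ exactly cancel the factors $\rho_\ka^2$ arising from $\pl h$, giving $\|\Ga^k_{ij}[h]\|_{r,\iy}\leq c(r)$ for every $r\in\BN$. Written in components, $\na_h(\ka_*a)$ equals the first partial derivatives of $[\ka_*a]$ plus a term linear in $[\ka_*a]$ with coefficients built from $\Ga[h]$; induction on $r$ shows that the entries of $\na_h^r(\ka_*a)$ equal the derivatives $\pa[\ka_*a]$ of order $|\al|=r$ plus a linear combination of $\pa[\ka_*a]$ of order $|\al|<r$ with coefficients that are polynomials in components of derivatives of $\Ga[h]$ of order less than~$r$. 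Uniform boundedness of these polynomials then yields both directions of the claimed equivalence. The bookkeeping~---~tracking at every step that all correction terms involve strictly lower-order partial derivatives of $[\ka_*a]$ and uniformly bounded coefficients~---~is the principal technical obstacle, but once (i)--(ii) are in hand it amounts to a routine inductive exercise.
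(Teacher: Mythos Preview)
Your proposal is correct and follows essentially the same route as the paper: the same factorization $h=(\ka_*\rho)^2\ka_*(\rho^{-2}g)$ for (i)--(ii), the same Cramer/derivatives-of-inverse argument for $h^{-1}$, the determinant identity for (iii), the local formula~\eqref{U.gabl} with the scaling from (i) for (v), and for (iv) the uniform $BC^r$-bound on the Christoffel symbols via the $\rho_\ka^{\pm2}$ cancellation followed by the triangular relation between $\na^r_h$ and $\pa$. The paper's proof differs only in presentation order and in citing a lemma from \cite{Ama09a} for the derivative-of-inverse step where you invoke Cramer's rule directly.
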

\begin{proof}
(1) The first part of claim~(i) is immediate from \eqref{S.sd}(iii)
and~(vi).

\par
(2) By~(i) and the symmetry of~$g$ the spectrum of the matrix~$[\ka_*g]$ is
contained in an interval of the form
\hb{\rho_\ka^2[1/c,c]} for
\hb{\ka\in\gK}. Hence $[\ka_*g]^{-1}$ has its spectrum in
\hb{\rho_\ka^{-2}[1/c,c]} for
\hb{\ka\in\gK}. This implies the second part of statement~(i) and
\beq\label{U.gin}
\|\ka_*g^*\|_\iy\leq c\rho_\ka^{-2}
\qa \ka\in\gK.
\eeq
Furthermore,
\beq\label{U.rg}
\rho_\ka^{-2}\ka_*g
=\Bigl(\frac{\ka_*\rho}{\rho_\ka}\Bigr)^2\ka_*(\rho^{-2}g).
\eeq
Thus assertion~(ii) follows from \eqref{S.sd}\hbox{(iv)--(vi)},
\eqref{U.gin}, \eqref{U.rg}, Leibniz' rule, and the formulas for
derivatives of inverses (cf.~Lemma~1.4.2 in H.~Amann~\cite{Ama09a}).

\par
(3) Writing, as usual,
\hb{\sqrt{g}:=\sqrt{\det[g]}}, statement~(iii) follows from~(i) and
\hb{\ka_*(dV_{\coV g})=\sqrt{\ka_*g}\,dV_{\coV g_m}}.

\par
(4) Recall that, setting
\hb{\na_{\cona i}:=\na_{\cona\pl_i}} with
\hb{\pl_i=\pl/\pl x^i},
\beq\label{U.dX}
\na_{\cona i}X=(\pl_iX^k+\Ga_{ij}^kX^j)\,\frac\pl{\pl x^k}
\qa X=X^k\,\frac\pl{\pl x^k},
\eeq
where the Christoffel symbols~$\Chr$ are given by
\beq\label{U.Ch}
2\Chr=g^{k\ell}(\pl_ig_{\ell j}+\pl_jg_{\ell i}-\pl_\ell g_{ij}).
\eeq
Suppose
\hb{a\in\cT_\tau^\sa M} has the local representation~\eqref{U.aloc}.
Correspondingly,
$$
\na a
=\na_ka_{(j)}^{(i)}\frac\pl{\pl x^{(i)}}\otimes dx^{(j)}\otimes dx^k.
$$
Then it follows from \eqref{U.LT} and \eqref{U.dX} that
\beq\label{U.dai}
\na_ka_{(j)}^{(i)}=\pl_ka_{(j)}^{(i)}
+\sum_{s=1}^\sa\Ga_{k\ell}^{i_s}a_{(j)}^{(i_1,\ldots,\ell,\ldots,i_\sa)}
-\sum_{t=1}^\tau\Ga_{kj_t}^{\ell}a_{(j_1,\ldots,\ell,\ldots,i_\tau)}^{(i)},
\eeq
where $\ell$~is at position~$s$ in the first sum and at position~$t$ in the
second sum (and the terms are added up from
\hb{\ell=1} to
\hb{\ell=m}). We set
\hb{\na_{(k)}:=\na_{k_r}\circ\cdots\circ\na_{k_1}} and
\hb{\pl_{(k)}:=\pl_{k_r}\circ\cdots\circ\pl_{k_1}} for
\hb{(k)\in\BJ_r} and
\hb{r\in\BN^\times}. Then, writing
\hb{\na^ra=\bigl(\na_{(k)}a_{(j)}^{(i)}\bigr)\,\frac\pl{\pl x^{(i)}}
\otimes dx^{(j)}\otimes dx^{(k)}}, we obtain from \eqref{U.dai}
\beq\label{U.dak}
\na_{(k)}a_{(j)}^{(i)}
=\pl_{(k)}a_{(j)}^{(i)}+b_{(j)(k)}^{(i)},
\eeq
where $b_{(j)(k)}^{(i)}$~is a linear combination of the elements of
$$
\bigl\{\,\pa a_{(\wt{\jmath})}^{(\wt{\imath})}
\ ;\ |\al|\leq r-1,\ (\wt{\imath})\in\BJ_\sa,
\ (\wt{\jmath})\in\BJ_\tau\,\bigr\},
$$
the coefficients being polynomials in the derivatives of the
Christoffel symbols of order at most
\hb{r-1-|\al|}.

\par
We deduce from (ii) and \eqref{U.Ch}
\beq\label{U.Chl}
\|\Chr\circ\ka^{-1}\|_{\ell,\iy}\leq c(\ell)
\qa 1\leq i,j,k\leq m
\qb \ka\in\gK
\qb \ell\in\BN.
\eeq
Hence \eqref{U.dak} implies
$$
\sum_{i=0}^r|\na_{\cona\ka_*g}^i(\ka_*a)|_{g_m}
\leq c\sum_{|\al|\leq r}|\pa[\ka_*a]|_{g_m}
\qa a\in\cT_\tau^\sa M
\qb \ka\in\gK.
$$
By solving system~\eqref{U.dak} for $\pa a_{(j)}^{(i)}$ we obtain an
analogous expression for $\pl_{(k)}a_{(j)}^{(i)}$ in terms
of~$\na_{(\ell)}(\ka_*a)$,
\ \hb{\ell\in\BJ_\sa},
\ \hb{0\leq\sa\leq r-1}. Thus, invoking~\eqref{U.Chl} once more, we get
the second half of assertion~(iv).

\par
(5) The first part of~(v) follows from \eqref{U.gabl}, \eqref{U.gg},
and~(ii). The second part is then deduced by applying this result to
\hb{a:=\ka^*b}.
\end{proof}
From \eqref{S.sd}(v) and~(vi) and Lemma~\ref{lem-U.g}(ii) we find by the
arguments of step~(2)
\beq\label{U.bg}
\big\|\ka_*\big((\rho^{-2}g)^*\big)\big\|_{k,\iy}\leq c(k)
\qa \ka\in\gK
\qb k\in\BN.
\eeq
This, in combination with \eqref{S.sd}(iii) and~(iv), is close to the
statement that 
all covariant derivatives of the curvature tensor of 
$(M,\rho^{-2}g)$ are bounded. 
Note however that, taking
\eqref{S.eq} into consideration, \eqref{S.sd}(iv) and \eqref{U.bg} are
only true for atlases in~$\gS(M)$.

\par
Let $M$ be a manifold and $\gK$~an atlas for it consisting of normalized
charts. A~family
\hb{\bigl\{\,(\pi_\ka,\chi_\ka)\ ;\ \ka\in\gK\,\bigr\}} is~a (uniform)
\emph{localization system subordinate to}~$\gK$ if
\beq\label{U.LS}
\bal
\rm{(i)}  \qquad    &\pi_\ka\in\cD\bigl(U_{\coU\ka},[0,1]\bigr)\text{ and }
                     \{\,\pi_\ka^2\ ;\ \ka\in\gK\,\}
                     \text{ is a partition of unity subordinate to }
                     \{\,U_{\coU\ka}\ ;\ \ka\in\gK\,\};\\
\rm{(ii)} \qquad    &\chi_\ka=\ka^*\chi\text{ with }
                     \chi\in\cD\bigl(Q^m,[0,1]\bigr)
                     \text{ and }\chi\sn\supp(\ka_*\pi_\ka)=\mf{1};\\
\rm{(iii)}\qquad    &\|\ka_*\pi_\ka\|_{k,\iy}+\|\ka_*\chi_\ka\|_{k,\iy}
                     \leq c(k),
                     \ \ \ka\in\gK,
                     \ \ k\in\BN.
\eal
\eeq

\par
The crucial assumption, besides~(i), is the uniform
estimate~(iii). Assumption~(ii) will
simplify some formulas. In principle, it would suffice to require that
$\chi_\ka$~be a cut-off function for~$\supp(\pi_\ka)$.

\par
It should also be noted that, for the purpose of this paper, we could
replace~$\pi_\ka^2$ in~\eqref{U.LS}(i) by~$\pi_\ka$. In fact,
then some of the computations below would even become simpler. However,
in applications to differential equations it will be important that we can
use a partition of unity whose square root is smooth. For this reason
we employ condition~\eqref{U.LS}(i).
\begin{lemma}\label{lem-U.LS}
Let $(\rho,\gK)$ be a singularity datum for~$M$. Then there exists a
localization system subordinate to~$\gK$.
\end{lemma}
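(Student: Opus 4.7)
The plan is to build the pair $(\pi_\ka,\chi_\ka)$ by transplanting a single pair of model cut-offs from $Q^m$ via the atlas $\gK$ and then normalizing so that the resulting partition of unity has a smooth square root. Let $r\in(0,1)$ be the shrinkability constant of $\gK$ and fix $r',r''\in(r,1)$ with $r<r'<r''$. Choose $\tilde\pi\in\cD(Q^m,[0,1])$ with $\tilde\pi\equiv1$ on $\overline{rQ^m}$ and $\supp\tilde\pi\subset r'Q^m$, and $\chi\in\cD(Q^m,[0,1])$ with $\chi\equiv1$ on $\overline{r'Q^m}$ and $\supp\chi\subset r''Q^m$. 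For each $\ka\in\gK$ set $\tilde\pi_\ka:=\ka^*\tilde\pi$ and $\chi_\ka:=\ka^*\chi$, viewed as smooth functions on $U_{\coU\ka}$ and extended by zero to $M$; this is legitimate since $\tilde\pi$ and $\chi$ vanish near the walls $|x^i|=1$ of $Q^m$, and for boundary charts the restriction to $Q^m\cap\BH^m$ causes no further trouble.

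Next I set $\sigma:=\sum_{\tilde\ka\in\gK}\tilde\pi_{\tilde\ka}^2$. The sum is locally finite by the finite multiplicity of $\gK$, and uniform shrinkability gives $\sigma\geq1$ on $M$, since every $p\in M$ lies in some $\ka^{-1}(rQ_\ka^m)$, on which $\tilde\pi_\ka\equiv1$. Hence $\sqrt\sigma\in C^\iy(M,[1,\iy))$, and I would define
\begin{equation*}
\pi_\ka:=\tilde\pi_\ka/\sqrt\sigma\in\cD\bigl(U_{\coU\ka},[0,1]\bigr).
\end{equation*}
Then $\{\pi_\ka^2\}$ is a smooth partition of unity whose square root $\pi_\ka$ is again smooth, giving~\eqref{U.LS}(i). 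Condition~\eqref{U.LS}(ii) is immediate, because $\supp(\ka_*\pi_\ka)\subset\supp(\ka_*\tilde\pi_\ka)\subset\supp\tilde\pi\subset r'Q^m$, on which $\chi\equiv1$.

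The substantive work is the uniform estimate~\eqref{U.LS}(iii). For $\chi_\ka$ it is trivial, since $\ka_*\chi_\ka=\chi$ is a single function on $Q^m$. For $\pi_\ka$ I would use the identity $\ka_*\pi_\ka=\tilde\pi/\sqrt{\ka_*\sigma}$ and reduce to the bound
\begin{equation*}
\|\ka_*\sigma\|_{k,\iy}\leq c(k),\qquad \ka\in\gK,\ k\in\BN.
\end{equation*}
On $Q_\ka^m$ we have $\ka_*\sigma=\sum_{\tilde\ka}(\ka_*\tilde\pi_{\tilde\ka})^2$, and on its support each summand equals $\tilde\pi\circ(\tilde\ka\circ\ka^{-1})$; axiom~\eqref{S.sd}(ii) and Fa\`a di Bruno yield a $C^k$-bound on $\ka_*\tilde\pi_{\tilde\ka}$ that is independent of both $\ka$ and $\tilde\ka$. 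Moreover, the finite multiplicity of $\gK$ guarantees that at every $x\in Q_\ka^m$ at most boundedly many $\tilde\ka$ satisfy $\ka^{-1}(x)\in U_{\coU\tilde\ka}$, and outside this index set $\ka_*\tilde\pi_{\tilde\ka}$ vanishes together with all its derivatives at $x$; hence the pointwise overlap bound propagates to every derivative of $\ka_*\sigma$, giving the claimed estimate. The bound on $\ka_*\pi_\ka$ then follows by Leibniz' rule and Fa\`a di Bruno applied to the smooth function $t\mapsto1/\sqrt t$ on $[1,\iy)$, using $\ka_*\sigma\geq1$.

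The main obstacle is precisely this last step, namely transferring the pointwise multiplicity of chart overlaps to a uniform $C^k$-bound on the normalizing sum $\sigma$; it is resolved by the clean fact that axiom~\eqref{S.sd}(ii) controls composition with every chart transition $\tilde\ka\circ\ka^{-1}$ \emph{uniformly}, so that a single model bump $\tilde\pi$ produces summands with one common $C^k$-bound.
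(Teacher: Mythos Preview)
Your proof is correct and follows essentially the same construction as the paper's: pull back a single model bump $\tilde\pi$ via the charts, normalize by the square root of the sum of squares, and choose $\chi$ to be $1$ on $\supp\tilde\pi$. The paper is terser (it just says condition~\eqref{U.LS}(iii) ``is a consequence of~\eqref{S.sd}(ii)''), whereas you spell out the chain-rule/Fa\`a di Bruno step and the way finite multiplicity transfers the pointwise overlap bound to every derivative of $\ka_*\sigma$; but the underlying argument is identical.
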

\begin{proof}
Fix
\hb{r\in(0,1)} such that
\hb{r\gU:=\bigl\{\,\ka^{-1}(rQ_\ka^m)\ ;\ \ka\in\gK\,\bigr\}} is a cover
of~$M$. Choose
\hb{\wt{\pi}\in\cD\bigl(Q^m,[0,1]\bigr)} with
\hb{\wt{\pi}\sn rQ^m=\mf{1}}. Set
\hb{\wt{\pi}_\ka:=\ka^*\wt{\pi}}. Since $r\gU$~covers~$M$ and has finite
multiplicity,
$$
1\leq\sum_\ka\wt{\pi}_\ka^2(p)\leq c
\qa p\in M.
$$
Put
\hb{\pi_\ka:=\wt{\pi}_\ka\Big/\sqrt{\sum_{\wt{\ka}}\wt{\pi}_{\wt{\ka}}^2}}.
Then
\hb{\pi_\ka\in\cD\bigl(U_{\coU\ka},[0,1]\bigr)} and
\hb{\sum_\ka\pi_\ka^2=\mf{1}}, where $\ka_*(\pi_\ka)$~has its support
in $\supp(\wt{\pi})$. Fix
\hb{\chi\in\cD\bigl(Q^m,[0,1]\bigr)} with
\hb{\chi\sn\supp(\wt{\pi})=\mf{1}}. Set
\hb{\chi_\ka:=\ka^*\chi}. Then conditions \eqref{U.LS}(i) and~(ii) are
satisfied. The validity of \eqref{U.LS}(iii) is a consequence
of~\eqref{S.sd}(ii).
\end{proof}
\section{Distribution Sections}\label{sec-D}
Given locally convex spaces $\cX$ and~$\cY$, we denote by~$\cL\cXcY$ the
space of continuous linear maps from~$\cX$ into~$\cY$, and
\hb{\cL(\cX):=\cL\cXcX}. By $\Lis\cXcY$ we mean the set of all topological
isomorphisms in~$\cL\cXcY$. If $\cX$ and~$\cY$ are Banach spaces, then
$\cL\cXcY$~is endowed with the uniform operator norm. We write
\hb{\pw_\cX} for the duality pairing between $\cX'$ and~$\cX$, that is,
$\dl x',x\dr_\cX$ is the value of
\hb{x'\in\cX'} at
\hb{x\in\cX}.

\par
Let
\hb{M=\Mg} be a Riemannian manifold. Suppose
\hb{V=(V,\pi,M)} is a \hbox{$\BK$-vector} bundle over~$M$. For a subset $S$
of~$M$ we denote by~$V_S$ the restriction of~$V$ to~$S$, that is,
\hb{V_S=\pi^{-1}(S)}. If
\hb{k\in\BN\cup\{\iy\}} and $S$~is open in~$M$, then $C^k\SV$ is the
$C^k(S)$-module of \hbox{$C^k$-sections} over~$S$.

\par
We denote by
\hb{V'=V^*} the dual vector bundle and by
\hb{\pw} the fiber-wise defined duality pairing between $V'$ and~$V$. We
also assume that $V$~is equipped with an inner product and write
\hb{\vsdot_V} for the corresponding vector bundle norm.

\par
Given an open subset~$S$ of~$M$ and
\hb{q\in[1,\iy]}, the Lebesgue space
\hb{L_q\SV=\bigl(L_q\SV,\Vsdot_q\bigr)} is the Banach space of all
(equivalence classes of measurable) sections~$v$ of~$V$ over~$S$ such that
$$
\|v\|_q=\|v\|_{L_q\SV}:=\big\|\,|v|_V\big\|_{L_q(S)}<\iy,
$$
where
\hb{L_q(S)=L_q(S,\BK;dV_{\coV g})}.

\par
In the following, we write
\hb{U\is\is V} to mean that $U$ and~$V$ are open, $U$~is relatively compact,
and
\hb{\ol{U}\is V}. Since $M$~is locally compact, separable, and metrizable
it is \hbox{$\sa$-compact}. Thus there exists a sequence~$(M_j)$ such that
\hb{M_j\is\is M_{j+1}} and
\hb{\bigcup_jM_j=M}. Hence $L_{1,\loc}\MV$, the vector space of
sections~$v$ of~$V$ such that
\hb{v\sn S\in L_1\SV} for every
\hb{S\is\is M}, is a Fr\'echet space.

\par
We denote by $\cD\ciMV$ and~$\cD\MV$ the spaces of smooth sections
of~$V$ being compactly supported in $\ci M$ and~$M$, respectively. For
\hb{S\is\is\ci M}, or
\hb{S\is\is M}, we write $\cD_S\ciMV$, respectively $\cD_S\MV$, for the
linear subspace of all
\hb{v\in\cD\ciMV}, respectively
\hb{v\in\cD\MV}, with
\hb{\supp(v)\is\ol{S}}. Then $\cD_S\ciMV$ and $\cD_S\MV$ are Fr\'echet spaces
(e.g., Section~VII.2 of J.~Dieudonn\'e~\cite{Die69b}). If
\hb{S\is\is S_1}, then
\hb{\cD_S\ciMV\is\cD_{S_1}\ciMV} and $\cD_{S_1}\ciMV$ induces
on $\cD_S\ciMV$ its original topology. Hence we can endow
$\cD\ciMV$ with the $LF$~topology (the strict inductive limit topology)
with respect to all such subspaces of~$\cD\ciMV$. Similarly,
$\cD\MV$ is given the $LF$~topology with respect to the subspaces
$\cD_S\MV$. Then
\beq\label{D.DDs}
\cD'\ciMV :=\cD\ciMVs_{w^*}'
\eeq
is the space of distribution sections on~$\ci M$, endowed with the weak$^*$
topology.

\par
Given
\hb{v\in L_{1,\loc}\ciMV},
\beq\label{D.L1}
\Bigl(u\mt\dl v,u\dr_\cD:=\int_M\dl v,u\dr\,dV_{\coV g}\Bigr)
\in\cD'\ciMV,
\eeq
and the map
$$
L_{1,\loc}\ciMV\ra\cD'\ciMV
\qb v\mt\dl v,\cdot\dr_\cD
$$
is linear, continuous, and injective. We identify
\hb{v\in L_{1,\loc}\ciMV} with the distribution section~\eqref{D.L1} and
consider $L_{1,\loc}\ciMV$ as a linear subspace of $\cD'\ciMV$. Then
\beq\label{D.DDL}
\cD\ciMV\hr\cD\MV\sdh L_{1,\loc}\MV\sdh L_{1,\loc}\ciMV\hr\cD'\ciMV,
\eeq
where
\hb{{}\hr{}} means `continuous' and
\hb{{}\sdh{}} `continuous and dense' embedding. Given
\hb{f\in C^\iy(M)}, the point-wise multiplication
\hb{u\mt fu} belongs to $\cL\bigl(\cD(\ci M,V')\bigr)$. Hence, setting
$$
(fT)(u):=T(fu)
\qa T\in\cD'\ciMV
\qb u\in\cD(\ci M,V'),
$$
it follows
\hb{(T\mt fT)\in\cL\bigl(\cD'\ciMV\bigr)}. We often identify~$f$ with
this `point-wise multiplication' operator.

\par
Suppose
\hb{k,\ell\in\BN} satisfy
\hb{k+\ell\geq1} and
\hb{E=\bigl(\BK^{k\times\ell},\pr_{HS}\bigr)}, where
$$
\pr_{HS}\sco E\times E\ra\BK
\qb (a,b)\mt\trace(b^*a)
$$
is the Hilbert-Schmidt inner product,
\hb{b^*\in\BK^{\ell\times k}} being the conjugate matrix of~$b$. Then
\beq\label{D.E}
E\times E\ra\BK
\qb (a,b)\mt(a\sn\ol{b})_{HS}
\eeq
is a separating bilinear form, the duality pairing of~$E$, by which we
identify~$E'$ with~$E$.

\par
Consider the trivial bundle
\hb{M\times E}. As usual, we write $\cD\ME$ for
\hb{\cD(M,M\times E)} etc. By juxtaposition of the rows of a matrix
\hb{a\in\BK^{k\times\ell}} we fix an isomorphism from $\BK^{k\times\ell}$
onto~$\BK^n$, where
\hb{n=k\ell}.  By means of it we identify $\cD\ME$ with $\cD(M)^n$,~etc.
Then
\beq\label{D.pr}
T(u)=\sum_{i=1}^nT_i(u_i)
\qa (T,u)\in\cD'\ciME\times\cD\ciME,
\eeq
where
\hb{u=(u_1,\ldots,u_n)\in\cD(\ci M)^n},~etc.

\par
Assume
\hb{\BX=\bigl(\BX,\prsn_{g_m}\bigr)} with
\hb{\BX\in\{\BR^m,\BH^m\}}. Let $\cS\BXE$ be the Schwartz space of rapidly
decreasing smooth \hbox{$E$-valued} functions on~$\BX$. Then
$\cS\ciBXE$~is the closure of $\cD\ciBXE$ in $\cS\BXE$, and
$$
\cS'\ciBXE:=\cS\ciBXE_{w^*}'
$$
is the space of \hbox{$E$-valued} tempered
distributions on~$\ci\BX$. Since
\hb{\ci\BX=\BR^m} if
\hb{\BX=\BR^m}, our notation is consistent with the well-known fact
\hb{\cD\RmE\sdh\cS\RmE}.

\par
Set
\hb{V:=\bigl(\BX\times E,\prsn_{HS}\bigr)} and note that
\hb{\dl v,\cdot\dr_\cD}, defined by \eqref{D.L1} and \eqref{D.E}, is for each
\hb{v\in\cD\MV} continuous with respect to the topology induced by
$\cS\BXE$ on~$\cD\ciBXE$. From this it follows
\beq\label{D.DSS}
\cD\ciBXE\sdh\cS\ciBXE\hr\cS\BXE\hr\cS'\ciBXE\hr\cD'\ciBXE.
\eeq
By mollifying we further obtain
\beq\label{D.DdD}
\cD\ciBXE\sdh\cD'\ciBXE.
\eeq

\par
For
\hb{u\in\cS'\RmE} we let $r^+$ be the restriction of~$u$ to~$\ci\BH^m$
in the sense of distributions, that is,
$$
\dl r^+u,\vp\dr_{\cS\ciBHmE}=\dl u,\vp\dr_{\cS\RmE}
\qa \vp\in\cS\ciBHmE.
$$
Then
\hb{r^+\in\cL\bigl(\cS'\RmE,\cS'\ciBHmE\bigr)}.

\par
If no confusion seems likely we use the same symbol for a linear map and its
restriction to a linear subspace of its domain. Furthermore, in a diagram
arrows always represent continuous linear maps.

\par
Recall that~a \emph{retraction}
\hb{\cX\ra\cY}, where $\cX$ and~$\cY$ are locally convex spaces, is a
continuous linear map possessing a continuous right inverse, a~coretraction.
Thus the following lemma guarantees that $r^+$~is a retraction.
\begin{lemma}\label{lem-D.ex}
There exists an extension operator~$e^+$ such that the diagram
 $$
 \begin{picture}(242,63)(-73,-5)        
 \put(0,50){\makebox(0,0)[b]{\small{$e^+$}}}
 \put(0,5){\makebox(0,0)[b]{\small{$e^+$}}}
 \put(100,50){\makebox(0,0)[b]{\small{$r^+$}}}
 \put(100,5){\makebox(0,0)[b]{\small{$r^+$}}}
 \put(-50,45){\makebox(0,0)[c]{\small{$\cS\BHmE$}}}
 \put(-50,0){\makebox(0,0)[c]{\small{$\cS'\ciBHmE$}}}
 \put(50,45){\makebox(0,0)[c]{\small{$\cS\RmE$}}}
 \put(50,0){\makebox(0,0)[c]{\small{$\cS'\RmE$}}}
 \put(150,45){\makebox(0,0)[c]{\small{$\cS\BHmE$}}}
 \put(150,0){\makebox(0,0)[c]{\small{$\cS'\ciBHmE$}}}
 \put(-45,22.5){\makebox(0,0)[l]{\small{$d$}}}
 \put(55,22.5){\makebox(0,0)[l]{\small{$d$}}}
 \put(155,22.5){\makebox(0,0)[l]{\small{$d$}}}
 \put(-20,45){\vector(1,0){40}}
 \put(-20,0){\vector(1,0){40}}
 \put(80,45){\vector(1,0){40}}
 \put(80,0){\vector(1,0){40}}
 \put(-50,33){\vector(0,-1){23}}
 \put(-48,33){\oval(4,4)[t]}
 \put(50,33){\vector(0,-1){23}}
 \put(52,33){\oval(4,4)[t]}
 \put(150,33){\vector(0,-1){23}}
 \put(152,33){\oval(4,4)[t]}
 \end{picture}
 $$
is commuting and
\hb{r^+e^+=\id}.
\end{lemma}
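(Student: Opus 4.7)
The plan is to invoke a Seeley-type reflection extension on $\cS\BHmE$ and verify that the same formula continues to make sense at the distributional level, so that both rows of the diagram are served by a single operator $e^+$.

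First I would reduce to the scalar case $E=\BK$: via the juxtaposition identification $E\cong\BK^n$ introduced above~\eqref{D.pr}, the problem splits into $n$ copies of the scalar statement and $e^+$ is defined component-wise. Second, on $\cS(\BH^m)$ I would use the classical Seeley construction: fix a cutoff $\chi\in\cD(\BR)$ with $\chi\equiv1$ near $0$, and sequences $(a_k)$, $(b_k)$ with $b_k\nearrow\iy$, with $|a_k|$ decaying faster than every power of $b_k$, and such that $\sum_{k=0}^\iy a_k(-b_k)^j=1$ for every $j\in\BN$. Then set
\[
e^+u(x^1,x'):=\begin{cases} u(x^1,x'), & x^1\geq 0,\\ \displaystyle\sum_{k=0}^\iy a_k\,\chi(b_kx^1)\,u(-b_kx^1,x'), & x^1<0. \end{cases}
\]
The matching conditions ensure that the one-sided derivatives of the two branches agree to all orders at $x^1=0$, while the rapid decay of $|a_k|$ yields $e^+\in\cL\bigl(\cS\BHmE,\cS\RmE\bigr)$ with $r^+e^+=\id$.

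Third, I would transport the same formula to the distributional row via pullback of distributions. With $\rho_k(x^1,x'):=(-b_kx^1,x')$, the restriction of $\rho_k$ to $\{x^1<0\}\times\BR^{m-1}$ is a diffeomorphism onto $\ci\BH^m$, so for $T\in\cS'\ciBHmE$ the pullback $\rho_k^*T$ is a tempered distribution on the open left half-space. Multiplying by the smooth factor $\chi(b_k\,\cdot\,)$ and summing yields the left-half-space component, which is then glued to $T$ on $\ci\BH^m$. The matching conditions guarantee that the resulting distribution on $\BR^m\ssm\pl\BH^m$ extends uniquely to a tempered distribution $e^+T\in\cS'\RmE$, with no boundary-supported contribution. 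Commutativity of the diagram is then built in: for $T$ regular, arising from $u\in\cS\BHmE$ via integration, a change of variables $y^1=-b_kx^1$ in each summand identifies $e^+T$ with the regular distribution associated with the Schwartz-level $e^+u$; and $r^+e^+T=T$ holds because restriction to the open right half-space simply discards the left-half-space series.

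The main obstacle lies in the $\cS'$-convergence and continuity analysis of the third step: pulling a Schwartz test function back under $\rho_k^{-1}$ and multiplying by the cutoff rescales derivatives by powers of $b_k$, so any crude estimate grows polynomially in $b_k$; one must exploit the super-polynomial decay of $|a_k|$ to obtain continuity of $e^+$ on $\cS'\ciBHmE$ and, jointly with the matching conditions, the uniqueness of the distributional extension across $\pl\BH^m$.
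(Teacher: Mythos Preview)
Your Seeley reflection on $\cS\BHmE$ and the componentwise reduction to $E=\BK$ are correct; the paper's own proof does nothing more than this reduction followed by a citation of Theorems~4.2.2 and~4.2.4 in~\cite{Ama09a}, so you are in effect reconstructing what those theorems contain. The gap is in your third step. Framing the distributional $e^+$ as ``glue on $\BR^m\ssm\pl\BH^m$, then extend uniquely across the hyperplane'' does not work: a distribution on an open set need not extend across a hypersurface, and when it does the extension is never unique --- one can add anything supported in $\pl\BH^m$. The Seeley moment conditions match Taylor jets of \emph{smooth} functions; they carry no information about distributional extendability, so the phrase ``the matching conditions guarantee \ldots\ unique extension'' has no content as stated.

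The test-function computation you sketch at the end is the right object, but it should be the \emph{definition}, not a verification. The change of variables in $\int_{\BR^m}(e^+u)\,\phi\,dx$ produces an operator $\epsilon$ on $\cS\RmE$ by $(\epsilon\phi)(y^1,x'):=\phi(y^1,x')+\sum_k(a_k/b_k)\,\chi(-y^1)\,\phi(-y^1/b_k,x')$ for $y^1>0$, and one sets $e^+:=\epsilon'$ on $\cS'\ciBHmE$. For this transpose to act one needs $\epsilon\in\cL\bigl(\cS\RmE,\cS\ciBHmE\bigr)$, i.e., $\epsilon\phi$ must vanish to infinite order at $y^1=0$; computing the normal derivatives shows this is equivalent to the \emph{additional} moment conditions $\sum_ka_k(-b_k)^{-j}=1$ for all $j\ge1$. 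Hence a Seeley system with $\sum_ka_k(-b_k)^j=1$ for all $j\in\BZ$, not just $j\in\BN$, is required. Such systems exist, and with $\epsilon$ in hand the distributional $e^+$, its continuity, $r^+e^+=\id$, and the commutativity of the diagram all follow by transposition --- no extension argument across $\pl\BH^m$ is needed.
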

\begin{proof}
As in \eqref{D.pr} we identify $\cS\BXE$ with $\cS(\BX)^n$ and
$\cS'\ciBXE$ with $\cS'(\ci\BX)^n$. Then the assertion follows from
Theorems 4.2.2 and~4.2.4 in~\cite{Ama09a} (with
\hb{F:=\BK}).
\end{proof}
It is a consequence of this lemma, \eqref{D.DDL}, \eqref{D.DSS}, and
\eqref{D.DdD} that
$$
\cD\BXE\hr\cS\BXE\sdh\cS'\ciBXE\sdh\cD'\ciBXE
$$
and
\beq\label{D.dDu}
\cD\BXE\sdh\cD'\ciBXE,
\eeq
due to
\hb{\cD\ciBXE\is\cD\BXE}.
\section{Localization of Distribution Sections}\label{sec-L}
Let $\sA$ be a countable index set. Suppose $\cX_\al$~is for each
\hb{\al\in\sA} a~locally convex space. We endow $\prod_\al\cX_\al$ with the
product topology, that is, the coarsest locally convex topology for
which all projections
\hb{\pro_\ba\sco\prod_\al\cX_\al\ra\cX_\ba},
\ \hb{\mf{x}=(x_\al)\mt x_\ba} are continuous. By $\bigoplus_\al\cX_\al$ we
mean the locally convex direct sum. Thus $\bigoplus_\al\cX_\al$~is the
vector subspace of~$\prod_\al\cX_\al$ consisting of all finitely supported
\hb{\mf{x}\in\prod_\al\cX_\al}, equipped with the inductive topology,
that is, the finest locally convex topology for which all  injections
\hb{\cX_\ba\ra\bigoplus_\al\cX_\al} are continuous. Let
\hb{\pw_\al} be the \hbox{$\cX_\al$-duality} pairing. Then
$$
\mfpw\sco\prod_\al\cX_\al'\times\bigoplus_\al\cX_\al\ra\BK
\qb (\mf{x}',\mf{x})\mt\sum_\al\dl x_\al',x_\al\dr_\al
$$
is a separating bilinear form, and (cf.~Corollary~1 in Section~IV.4.3 of
H.H.~Schaefer~\cite{Schae71a})
\beq\label{L.dua}
\Bigl(\bigoplus_\al\cX_\al\Bigr)_{w^*}'=\prod_\al(\cX_\al)_{w^*}'
\eeq
with respect to~%
\hb{\mfpw}, (that is,
\hb{\mfpw}~is the $\bigoplus_\al\cX_\al$-duality pairing).

\par
Throughout the rest of this paper we assume
$$
\frame{
\begin{minipage}{250pt}
$$
\bal
\bt\quad
&M=\Mg\text{ is an $m$-dimensional singular manifold}.\\
\bt\quad
&\rho\in\gT(M).\\
\bt\quad
&\sa,\tau\in\BN\text{ and }V=V_\tau^\sa:=\bigl(T_\tau^\sa M,\prsn_g\bigr).\\
\noalign{\vskip2.5\jot}
\eal
$$
\end{minipage}}
$$
It follows that we can choose
\beq\label{L.sd}
\bal
\bt\quad
&\text{a singularity datum }(\rho,\gK), \\
\bt\quad
&\text{a localization system }
 \bigl\{\,(\pi_\ka,\chi_\ka)\ ;\ \ka\in\gK\,\bigr\}
 \text{ subordinate to }\gK.
\eal
\eeq
For
\hb{K\is M} we put
\hb{\gK_K:=\{\,\ka\in\gK\ ;\ U_{\coU\ka}\cap K\neq\es\,\}}. Then, given
\hb{\ka\in\gK},
$$
\BX_\ka:=
\left\{
\bal
&\BR^m  &&\quad \text{if }\ka\in\gK\ssm\gK_{\pl M},\\
&\BH^m  &&\quad \text{otherwise},
\eal
\right.
$$
endowed with the Euclidean metric~$g_m$.

\par
We set
$$
E=E_\tau^\sa:=\bigl(\BK^{m^\sa\times m^\tau},\prsn_{HS}\bigr)
$$
and consider the trivial bundles
\hb{V_{\coV\ka}:=\bigl(\BX_\ka\times E,\prsn_{g_m}\bigr)} for
\hb{\ka\in\gK}.
For abbreviation,
$$
\mf{\cD}\ciBXE:=\bigoplus_\ka\cD\ciBXkE
\qb \mf{\cD}\BXE:=\bigoplus_\ka\cD\BXkE,
$$
and
$$
\mf{\cD}'\ciBXE:=\prod_\ka\cD'\ciBXkE.
$$
It follows from \eqref{L.dua} that
\hb{\mf{\cD}'\ciBXE=\mf{\cD}(\ci\BX,E')_{w^*}'}, where
\hb{E'=E_\sa^\tau}.

\par
We introduce linear maps
$$
\vp_\ka\sco\cD\MV\ra\cD\BXkE
\qb u\mt\ka_*(\pi_\ka u)
$$
and
$$
\psi_\ka\sco\cD\BXkE\ra\cD\MV
\qb v_\ka\mt\pi_\ka\ka^*v_\ka
$$
for
\hb{\ka\in\gK}. Here and in similar situations it is understood that a
partially defined and compactly supported section of a vector bundle is
extended over the whole base manifold by identifying it with the zero
section outside its original domain. Moreover,
$$
\vp\sco\cD\MV\ra\mf{\cD}\BXE
\qb u\mt(\vp_\ka u)
$$
and
$$
\psi\sco\mf{\cD}\BXE\ra\cD\MV
\qb \mf{v}\mt{\textstyle\sum_\ka}\psi_\ka v_\ka.
$$
The following retraction theorem shows, in particular, that these maps are
well-defined and possess unique continuous linear extensions to distribution
sections.
\begin{theorem}\label{thm-L.ret}
The diagram
$$
 \begin{picture}(237,63)(-69,-5)        
 \put(0,50){\makebox(0,0)[b]{\small{$\vp$}}}
 \put(0,5){\makebox(0,0)[b]{\small{$\vp$}}}
 \put(100,50){\makebox(0,0)[b]{\small{$\psi$}}}
 \put(100,5){\makebox(0,0)[b]{\small{$\psi$}}}
 \put(-50,45){\makebox(0,0)[c]{\small{$\cD\MV$}}}
 \put(-50,0){\makebox(0,0)[c]{\small{$\cD'\ciMV$}}}
 \put(50,45){\makebox(0,0)[c]{\small{$\mf{\cD}\BXE$}}}
 \put(50,0){\makebox(0,0)[c]{\small{$\mf{\cD}'\ciBXE$}}}
 \put(150,45){\makebox(0,0)[c]{\small{$\cD\MV$}}}
 \put(150,0){\makebox(0,0)[c]{\small{$\cD'\ciMV$}}}
 \put(-45,22.5){\makebox(0,0)[l]{\small{$d$}}}
 \put(55,22.5){\makebox(0,0)[l]{\small{$d$}}}
 \put(155,22.5){\makebox(0,0)[l]{\small{$d$}}}
 \put(-20,45){\vector(1,0){40}}
 \put(-20,0){\vector(1,0){40}}
 \put(80,45){\vector(1,0){40}}
 \put(80,0){\vector(1,0){40}}
 \put(-50,33){\vector(0,-1){23}}
 \put(-48,33){\oval(4,4)[t]}
 \put(50,33){\vector(0,-1){23}}
 \put(52,33){\oval(4,4)[t]}
 \put(150,33){\vector(0,-1){23}}
 \put(152,33){\oval(4,4)[t]}
 \end{picture}
 $$
 is commuting and
 \hb{\psi\circ\vp=\id}.
\end{theorem}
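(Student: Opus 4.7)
The plan is to handle the test-section level directly and then extend $\vp$ and $\psi$ to the distributional level by duality, exploiting the density $\cD\ciMV\sdh\cD'\ciMV$ from \eqref{D.DDL}. All bundle-theoretic and differential-geometric content is encoded in Lemma~\ref{lem-U.g} and in the localization system of Lemma~\ref{lem-U.LS}; no further analytic input is needed.

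On the level of test sections, $\vp_\ka u=\ka_*(\pi_\ka u)$ is compactly supported and smooth, hence lies in $\cD\BXkE$. Since $\supp u$ is compact and $\gK$ is locally finite (from finite multiplicity plus uniform shrinkability), the tuple $(\vp_\ka u)_\ka$ has finite support, so $\vp u\in\mf{\cD}\BXE$. The continuity of each $\vp_\ka$ on $\cD_K\MV$ for $K\is\is M$ is immediate from \eqref{U.LS}(iii), and continuity of $\vp$ into the locally convex direct-sum topology follows from the LF-structure on $\cD\MV$. The map $\psi$ is a finite sum for each input and continuous by the same reasoning. The identity on test sections is then direct:
\[
\psi(\vp u)=\sum_\ka \pi_\ka\ka^*\ka_*(\pi_\ka u)=\sum_\ka \pi_\ka^2\,u=u,
\]
using $\ka^*\ka_*=\id$ on sections supported in $U_{\coU\ka}$ together with \eqref{U.LS}(i).

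For the extension to distributions the key point is that the $L_{1,\loc}$-identification \eqref{D.L1} on $\ci M$ integrates against $dV_g$, while on $\ci\BX_\ka$ one integrates against $dV_{g_m}$. Setting $J_\ka:=\sqrt{\det[\ka_*g]}\in C^\iy(\BX_\ka)$, the change-of-variables formula yields, for $u\in\cD\MV$ and $\mf{w}=(w_\ka)\in\mf{\cD}(\ci\BX,E')$,
\[
\sum_\ka\int_{\BX_\ka}\dl\ka_*(\pi_\ka u),w_\ka\dr\,dV_{g_m}
=\int_M\dl u,\textstyle\sum_\ka\pi_\ka\ka^*(w_\ka/J_\ka)\dr\,dV_g.
\]
By Lemma~\ref{lem-U.g}(iii), $J_\ka\sim\rho_\ka^m$ uniformly in $\ka$, and by~(ii) together with \eqref{S.sd}(iv)--(vi) all derivatives of $J_\ka$ and $1/J_\ka$ are uniformly controlled. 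Hence $A\colon\mf{w}\mapsto\sum_\ka\pi_\ka\ka^*(w_\ka/J_\ka)$ is a continuous linear map $\mf{\cD}(\ci\BX,E')\to\cD(\ci M,V')$, and symmetrically $B\colon u^*\mapsto(J_\ka\sdot\ka_*(\pi_\ka u^*))_\ka$ is continuous from $\cD(\ci M,V')$ into $\mf{\cD}(\ci\BX,E')$. I then define the distributional extensions of $\vp$ and $\psi$ as the transposes of $A$ and $B$, respectively; these are continuous between the weak-$*$ duals.

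Commutativity of the diagram is now precisely the displayed identity (and its analogue for $\psi$), which says that on a test section $u$ viewed in $\cD'\ciMV$ via \eqref{D.L1} the distributional $\vp u$ coincides with the original $\vp u$ pushed into $\mf{\cD}'\ciBXE$. Finally, $\psi\circ\vp=\id$ on $\cD'\ciMV$ follows from its validity on the dense subspace $\cD\ciMV$ and the continuity of both extensions. The main obstacle is the careful bookkeeping of the Jacobian $J_\ka$ in the adjoint identification; once the uniform estimates of Lemma~\ref{lem-U.g} are in place, there is no further technical difficulty.
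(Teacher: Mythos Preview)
Your argument is correct and follows essentially the same route as the paper. Your maps $A$ and $B$ are precisely the paper's $\ci\psi$ and $\ci\vp$ (up to the cutoff~$\chi$, see~\eqref{L.ph0k} and~\eqref{L.ps0k}), and your transposes $A',B'$ are the paper's $\Phi,\Psi$; the change-of-variables identity you write down is the content of steps~(7) and~(8) there. Two small points. First, the density $\cD\ciMV\sdh\cD'\ciMV$ is not literally part of~\eqref{D.DDL} (that line only records a continuous injection at the last step); it is, however, standard distribution theory on the boundaryless manifold~$\ci M$, and the paper obtains the stronger density $\cD\MV\sdh\cD'\ciMV$ in step~(9) as a by-product of the retraction itself. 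Second, your density argument for $\psi\circ\vp=\id$ on $\cD'\ciMV$ is fine but unnecessary: since you defined the extensions as $A'$ and $B'$, you have $\psi\circ\vp=B'A'=(AB)'$, and $AB=\id$ on $\cD(\ci M,V')$ follows exactly as in your test-section computation (the Jacobians cancel and $\sum_\ka\pi_\ka^2=\mf{1}$), giving the identity on all of $\cD'\ciMV$ immediately. The paper inserts the cutoff~$\chi$ in $\ci\psi_\ka$ to make the pull-back $\ka^*$ unambiguously defined on functions not supported in~$Q_\ka^m$; your version works because the subsequent multiplication by~$\pi_\ka$ provides the same localization.
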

\begin{proof}
(1) We set
\beq\label{L.ph0k}
\ci\vp_\ka u:=\sqrt{\ka_*g}\,\ka_*(\pi_\ka u)
\qa u\in\cD(\ci M,V')
\qb \ka\in\gK.
\eeq
Suppose
\hb{K\is\is\ci M}. Then
\hb{L_\ka:=\ka\bigl(K\cap\dom(\chi_\ka)\bigr)\is\is\ci\BX_\ka}. Assume
\hb{u\in\cD_K(\ci M,V')}. Then $\ka_*(\pi_\ka u)$ belongs to
$\cD_{L_\ka}(\ci\BX_\ka,V_{\coV\ka}')$. Since
\hb{\sqrt{\ka_*g}\in C^\iy(Q_\ka^m)}, it follows
$$
\ci\vp_\ka\in\cL\bigl(\cD_K(\ci M,V'),\cD(\ci\BX_\ka,V_{\coV\ka}')\bigr)
\qa \ka\in\gK,
$$
due to
\hb{\cD_{L_\ka}(\ci\BX_\ka,V_{\coV\ka}')\hr\cD(\ci\BX_\ka,V')}.
This being true for each
\hb{K\is\is\ci M}, we obtain
$$
\ci\vp_\ka\in\cL\bigl(\cD(\ci M,V'),\cD(\ci\BX_\ka,V_{\coV\ka}')\bigr)
\qa \ka\in\gK.
$$

\par
(2) We put
\beq\label{L.ps0k}
\ci\psi_\ka v:=\pi_\ka\ka^*\Bigl(\bigl(\sqrt{\ka_*g}\bigr)^{-1}\chi v\Bigr)
\qa v\in\cD(\ci\BX_\ka,V_{\coV\ka}')
\qb \ka\in\gK.
\eeq
Suppose
\hb{L_\ka\is\is\ci\BX_\ka} and set
\hb{K_\ka:=\ka^{-1}\bigl(L_\ka\cap\dom(\chi)\bigr)}. Then
\hb{K_\ka\is\is\ci M}. Similarly as above, we find that $\ci\psi_\ka$~maps
$\cD_{L_\ka}(\ci\BX_\ka,V_{\coV\ka}')$ continuously into
$\cD(\ci M,V')$. Consequently,
$$
\ci\psi_\ka\in\cL\bigl(\cD(\ci\BX_\ka,V_{\coV\ka}'),\cD(\ci M,V')\bigr).
$$

\par
(3) Set
$$
\ci\vp u:=(\ci\vp_\ka u)
\qa u\in \cD(\ci M,V').
$$
Assume
\hb{K\is\is\ci M}. Since $\gK$~is uniformly shrinkable there exist
\hb{r\in(0,1)} and a finite subset~$\gL_K$ of~$\gK$ such that
\hb{\bigl\{\,\ka^{-1}(rQ_\ka^m)\ ;\ \ka\in\gL_K\,\bigr\}} is a cover
of~$K$. Put
$$
\gM_K:=\{\,\ka\in\gK
\ ;\ \text{there exists $\wt{\ka}\in\gL_K$ with }
U_{\coU\wt{\ka}}\cap U_{\coU\ka}\neq\es\,\}.
$$
Then $\gM_K$~is a finite set, due to the finite multiplicity of~$\gK$.
Since
\hb{\ci\vp_\ka u=0} for
\hb{u\in\cD_K(\ci M,V')} and
\hb{\ka\in\gK\ssm\gM_K} it follows from step~(1) that $\ci\vp$~maps
$\cD_K(\ci M,V')$ continuously into the closed linear subspace
$$
\bigl\{\,\mf{v}\in\mf{\cD}(\ci\BX,E')
\ ;\ v_\ka=0\text{ for }\ka\in\gK\ssm\gM_K\,\bigr\}
$$
of $\mf{\cD}(\ci\BX,E')$, hence into $\mf{\cD}(\ci\BX,E')$. Since this is
true for all
\hb{K\is\is\ci M},
\beq\label{L.ph0D}
\ci\vp\in\cL\bigl(\cD(\ci M,V'),\mf{\cD}(\ci\BX,E')\bigr).
\eeq

\par
(4) Put
$$
\ci\psi\mf{v}:=\sum_\ka\ci\psi_\ka v_\ka
\qa \mf{v}=(v_\ka)\in\mf{\cD}(\ci\BX,E').
$$
Let $\gL$ be a finite subset of~$\gK$ and put
$$
\cX_\gL:=\bigl\{\,\mf{v}\in\mf{\cD}(\ci\BX,E')
\ ;\ v_\ka=0\text{ if }\ka\in\gK\ssm\gL\,\bigr\}.
$$
Step~(2) implies that $\ci\psi$~maps~$\cX_\gL$ continuously into
$\cD(\ci M,V')$. Thus, since this holds for all finite subset~$\gL$ of~$\gK$,
\beq\label{L.ps0D}
\ci\psi\in\cL\bigl(\mf{\cD}(\ci\BX,E'),\cD(\ci M,V')\bigr).
\eeq

\par
(5) For
\hb{u\in \cD(\ci M,V')} and
\hb{\ka\in\gK} it follows from
\hb{\pi_\ka\chi_\ka=\pi_\ka} and
\hb{\chi_\ka=\ka^*\chi} that
\hb{(\ci\psi_\ka\circ\ci\vp_\ka)u=\pi_\ka^2u}. Hence
\hb{\sum_\ka\pi_\ka^2=\mf{1}} implies
$$
(\ci\psi\circ\ci\vp)u=\sum_\ka\psi_\ka(\vp_\ka u)
=\sum_\ka\pi_\ka^2u=u
\qa u\in\cD(\ci M,V').
$$
Thus $\ci\psi$~is a retraction from $\mf{\cD}(\ci\BX,E')$ onto
$\cD(\ci M,V')$, and $\ci\vp$~is a coretraction.

\par
(6) Steps (3) and~(4) and relations \eqref{D.DDs} and \eqref{L.dua} imply
$$
\Psi:=(\ci\vp)'\in\cL\bigl(\mf{\cD}'\ciBXE,\cD'\ciMV\bigr)\ph{.}
$$
and
$$
\Phi:=(\ci\psi)'\in\cL\bigl(\cD'\ciMV,\mf{\cD}'\ciBXE\bigr).
$$
By step~(5),
$$
\Psi\circ\Phi=(\ci\psi\circ\ci\vp)'
=(\id_{\cD(\ci M,V')})'=\id _{\cD'(\ci M,V)}.
$$

\par
(7) Suppose
\hb{v\in\cD\MV} and
\hb{\mf{u}\in\mf{\cD}(\ci\BX,E')}. Then, see \eqref{D.L1},
$$
\bal
\mfdl\Phi v,\mf{u}\mfdr
 =\dl v,\ci\psi\mf{u}\dr_\cD
&=\sum_\ka\dl v,\ci\psi_\ka u_\ka\dr_\cD
 =\sum_\ka\int_M\pi_\ka\big\dl v,(\sqrt{\ka_*g}\,)^{-1}
 \ka^*(\chi u_\ka)\big\dr\,dV_{\coV g}\\
&=\sum_\ka\int_{U_{\coU\ka}}\ka^*\bigl(\dl\ka_*(\pi_\ka v),u_\ka\dr
 \,dV_{\coV g_m}\bigr)
 =\sum_\ka\int_{\BX_\ka}\dl\vp_\ka v,u_\ka\dr\,dV_{\coV g_m}
 =\mfdl\vp v,\mf{u}\mfdr.
\eal
$$
This proves
$$
\vp=\Phi\sn\cD\MV.
$$
By the arguments of steps~(1) and~(3), with $\ci M$ replaced by~$M$ and
$\ci\BX_\ka$ by~$\BX_\ka$, respectively, we find
$$
\vp\in\cL\bigl(\cD\MV,\mf{\cD}\BXE\bigr).
$$

\par
(8) Let
\hb{v\in\mf{\cD}\BXE} and
\hb{u\in\cD(\ci M,V')}. Then
$$
\bal
\dl\Psi\mf{v},u\dr_\cD
 =\mfdl\mf{v},\ci\vp u\mfdr
&=\sum_\ka\int_{\BX_\ka}\big\dl v_\ka,\ka_*(\pi_\ka u)\big\dr
 \sqrt{\ka_*g}\,dV_{\coV g_m}
 =\sum_\ka\int_{Q_\ka^m}\ka_*\bigl(\dl\pi_\ka\ka^*v_\ka,u\dr
 \,dV_{\coV g}\bigr)\\
&=\sum_\ka\int_M\dl\psi_\ka v_\ka,u\dr \,dV_{\coV g}
 =\int_M\dl\psi\mf{v},u\dr \,dV_{\coV g}
 =\dl\psi\mf{v},u\dr_\cD.
\eal
$$
Consequently,
$$
\psi=\Psi\sn\mf\cD\BXE.
$$
Modifying the arguments of steps (2) and~(4) in the obvious way gives
\hb{\Psi\in\cL\bigl(\mf{\cD}\BXE,\cD\MV\bigr)}.

\par
(9) By collecting what has been proved so far we see that the diagram
 $$
 \begin{picture}(215,63)(-59,-5)        
 \put(6,50){\makebox(0,0)[b]{\small{$\vp$}}}
 \put(6,5){\makebox(0,0)[b]{\small{$\Phi$}}}
 \put(94,50){\makebox(0,0)[b]{\small{$\psi$}}}
 \put(94,5){\makebox(0,0)[b]{\small{$\Psi$}}}
 \put(-38,45){\makebox(0,0)[c]{\small{$\cD\MV$}}}
 \put(-38,0){\makebox(0,0)[c]{\small{$\cD'\ciMV$}}}
 \put(50,45){\makebox(0,0)[c]{\small{$\mf{\cD}\BXE$}}}
 \put(50,0){\makebox(0,0)[c]{\small{$\mf{\cD}'\ciBXE$}}}
 \put(138,45){\makebox(0,0)[c]{\small{$\cD\MV$}}}
 \put(138,0){\makebox(0,0)[c]{\small{$\cD'\ciMV$}}}
 \put(55,22.5){\makebox(0,0)[l]{\small{$d$}}}
 \put(-14,45){\vector(1,0){40}}
 \put(-14,0){\vector(1,0){40}}
 \put(74,45){\vector(1,0){40}}
 \put(74,0){\vector(1,0){40}}
 \put(-38,33){\vector(0,-1){23}}
 \put(-36,33){\oval(4,4)[t]}
 \put(50,33){\vector(0,-1){23}}
 \put(52,33){\oval(4,4)[t]}
 \put(138,33){\vector(0,-1){23}}
 \put(140,33){\oval(4,4)[t]}
 \end{picture}
 $$
is commuting, where the embeddings symbolized by the vertical arrows
follow from \eqref{D.DDL} and \eqref{D.dDu}. Furthermore, $\Psi$~is a
retraction and $\Phi$~is a coretraction. Thus we read off this diagram that
$\Psi\bigl(\mf{\cD}\BXE\bigr)$ is dense in $\cD'(\ci M,V)$
(cf.~Lemma~4.1.6 in~\cite{Ama09a}).

\par
Let $U$ be a neighborhood of~$0$ in $\cD'(\ci M,V)$. Then there exists
\hb{\mf{u}\in\mf{\cD}\BXE} such that
\hb{\Psi\mf{u}\in U}. Hence
\hb{\Psi\mf{u}=\psi\mf{u}\in\cD\MV} shows that
\hb{U\cap\cD\MV\neq\es}. This implies that $\cD\MV$ is dense
in~$\cD'(\ci M,V)$. Since $\Phi$ and~$\Psi$ are continuous linear extensions
of $\vp$ and~$\psi$, respectively, they are uniquely determined by the
density of the `vertical' embeddings in the above diagram. Thus we can
denote $\Phi$ and~$\Psi$ also by $\vp$ and~$\psi$, respectively, without
fearing confusion. This establishes the theorem.
\end{proof}
\section{Sobolev Spaces}\label{sec-W}
Henceforth, we always assume
$$
\bt\quad
1<p<\iy
\qb \lda\in\BR.
$$
Suppose
\hb{k\in\BN}. The weighted \emph{Sobolev space}
$W_{\coW p}^{k,\lda}(V;\rho)$ of
$(\sa,\tau)$-tensor fields is the completion of~$\cD\MV$ in $L_{1,\loc}\MV$
with respect to the norm
\beq\label{W.norm}
u\mt\Bigl(\sum_{i=0}^k
\big\|\rho^{\lda+\tau-\sa+i}\ |\na^iu|_g\big\|_p^p\Bigr)^{1/p}.
\eeq
If
\hb{\rho'\in\gT(M)}, then
\hb{\rho'\sim\rho} and we obtain an equivalent norm by replacing~$\rho$ in
\eqref{W.norm} by~$\rho'$. Thus the topology of
$W_{\coW p}^{k,\lda}(V;\rho)$ depends on the singularity type~$\gT(M)$ only.
Henceforth, we simply write $W_{\coW p}^{k,\lda}(V)$ for
$W_{\coW p}^{k,\lda}(V;\rho)$ and denote the norm~\eqref{W.norm} by~%
\hb{\Vsdot_{k,p;\lda}}. Moreover,
\hb{L_p^\lda(V):=W_{\coW p}^{0,\lda}(V)} and
\hb{\Vsdot_{p;\lda}:=\Vsdot_{0,p;\lda}}. If
\hb{\gT(M)=[\![\mf{1}]\!]}, then all these spaces are independent of~$\lda$
and we obtain the `standard' Sobolev spaces~$W_{\coW p}^k(V)$. The reader
should be careful not to confuse $W_{\coW p}^{k,0}(V)$
with~$W_{\coW p}^k(V)$.

\par
We also define weighted spaces of bounded smooth
$(\sa,\tau)$-tensor fields by
$$
BC^{k,\lda}(V)
:=
\bigl(\bigl\{\,u\in C^k\MV\ ;\ \|u\|_{k,\iy;\lda}<\iy\,\bigr\},
\ \Vsdot_{k,\iy;\lda}\bigr),
$$
where
$$
\|u\|_{k,\iy;\lda}
:=\max_{0\leq i\leq k}\big\|\rho^{\lda+\tau-\sa+i}\ |\na^iu|_g\big\|_\iy.
$$
The topology of $BC^{k,\lda}(V)$ is independent of the
particular choice of
\hb{\rho\in\gT(M)}.

\par
The following basic retraction theorems show that these spaces can be
characterized by means of local coordinates, similarly as in the case of
function spaces on compact manifolds. Below we make free use, usually
without further mention, of the theory of function spaces on $\BR^m$
and~$\BH^m$. Everything for which we do not give specific references can
be found in H.~Triebel~\cite{Tri78a}, for example.

\par
Let $E_\al$ be a Banach space for each~$\al$ in a countable index set. Then
\hb{\mf{E}:=\prod_\al E_\al}. For
\hb{1\leq q\leq\iy} we denote by~$\ell_q(\mf{E})$ the linear subspace
of~$\mf{E}$ consisting of all
\hb{\mf{x}=(x_\al)} such that
$$
\|\mf{x}\|_{\ell_q(\mf{E})}:=
\left\{
\bal
{\textstyle \bigl(\sum_\al} &\|x_\al\|_{E_\al}^q\bigr)^{1/q},
                            &\quad 1\leq q  &<\iy,\\
{\textstyle \sup_\al}       &\|x_\al\|_{E_\al},
                            &\quad       q  &=\iy,
\eal
\right.
$$
is finite. Then $\ell_q(\mf{E})$ is a Banach space with norm~%
\hb{\Vsdot_{\ell_q(\mf{E})}}, and
\beq\label{W.ll}
\ell_p(\mf{E})\hr\ell_q(\mf{E})
\qa 1\leq p<q\leq\iy.
\eeq
We also set
\hb{c_c(\mf{E}):=\bigoplus_\al E_\al}. Then
\beq\label{W.cl}
c_c(\mf{E})\hr\ell_q(\mf{E})
\qb 1\leq q\leq\iy
\qa  c_c(\mf{E})\sdh\ell_q(\mf{E})
\qb q<\iy.
\eeq
Furthermore, $c_0(\mf{E})$~is the closure of~$c_c(\mf{E})$
in~$\ell_\iy(\mf{E})$.

\par
If each $E_\al$ is reflexive, then $\ell_p(\mf{E})$ is reflexive as well,
and
\hb{\ell_p(\mf{E})'=\ell_{p'}(\mf{E}')} with respect to the duality pairing
\hb{\mfpw:=\sum_\al\pw_\al}. Of course,
\hb{p':=p/(p-1)},
\ \hb{\mf{E}':=\prod_\al E_\al'}, and
\hb{\pw_\al}~is the \hbox{$E_\al$-duality} pairing.

\par
Let \eqref{L.sd} be chosen. For
\hb{1\leq q\leq\iy} we set
$$
\vp_{q,\ka}^\lda:=\rho_\ka^{\lda+m/q}\vp_\ka
\qb \psi_{q,\ka}^\lda:=\rho_\ka^{-\lda-m/q}\psi_\ka
\qa \ka\in\gK,
$$
and
$$
\vp_q^\lda u:=(\vp_{q,\ka}^\lda u)
\qb \psi_q^\lda\mf{v}:=\sum_\ka\psi_{q,\ka}^\lda v_\ka
$$
for
\hb{u\in\cD'(\ci M,V)} and
\hb{\mf{v}\in\mf{\cD}'\ciBXE}. If the dependence on $(\sa,\tau)$ is
important, then we write $\vp_{q,(\sa,\tau)}^\lda$,~etc. Note
\hb{(\vp_{p,\ka}^\lda,\psi_{p,\ka}^\lda)=(\vp_{p,\ka},\psi_{p,\ka})} if
\hb{\rho=\mf{1}}.

\par
Suppose $\gF$~is a symbol for one of the standard function spaces, say,
Sobolev, Slobodeckii, Besov spaces,~etc., on~$\BR^m$. Then we put
\hb{\mf{\gF}:=\prod_\ka\gF_\ka} and
\hb{\gF_\ka:=\gF\BXkE}. For example,
\hb{\mf{W}_{\coW p}^k=\prod_\ka W_{\coW p,\ka}^k
 =\prod_\ka W_{\coW p}^k\BXkE}.
\begin{theorem}\label{thm-W.ret}
Suppose
\hb{k\in\BN}. The diagram
 $$
 \begin{picture}(214,112)(-58,-51)        
 \put(6,50){\makebox(0,0)[b]{\small{$\vp_p^\lda$}}}
 \put(6,5){\makebox(0,0)[b]{\small{$\vp_p^\lda$}}}
 \put(94,50){\makebox(0,0)[b]{\small{$\psi_p^\lda$}}}
 \put(94,5){\makebox(0,0)[b]{\small{$\psi_p^\lda$}}}
 \put(-38,45){\makebox(0,0)[c]{\small{$\cD\MV$}}}
 \put(-38,0){\makebox(0,0)[c]{\small{$W_{\coW p}^{k,\lda}(V)$}}}
 \put(50,45){\makebox(0,0)[c]{\small{$\mf{\cD}\BXE$}}}
 \put(50,0){\makebox(0,0)[c]{\small{$\ell_p(\mf{W}_{\coW p}^k)$}}}
 \put(138,45){\makebox(0,0)[c]{\small{$\cD\MV$}}}
 \put(138,0){\makebox(0,0)[c]{\small{$W_{\coW p}^{k,\lda}(V)$}}}
 \put(-31,22.5){\makebox(0,0)[l]{\small{$d$}}}
 \put(55,22.5){\makebox(0,0)[l]{\small{$d$}}}
 \put(143,22.5){\makebox(0,0)[l]{\small{$d$}}}
 \put(-14,45){\vector(1,0){40}}
 \put(-14,0){\vector(1,0){40}}
 \put(74,45){\vector(1,0){40}}
 \put(74,0){\vector(1,0){40}}
 \put(-38,33){\vector(0,-1){23}}
 \put(-36,33){\oval(4,4)[t]}
 \put(50,33){\vector(0,-1){23}}
 \put(52,33){\oval(4,4)[t]}
 \put(138,33){\vector(0,-1){23}}
 \put(140,33){\oval(4,4)[t]}
 \put(6,-40){\makebox(0,0)[b]{\small{$\vp_p^\lda$}}}
 \put(94,-40){\makebox(0,0)[b]{\small{$\psi_p^\lda$}}}
 \put(-38,-45){\makebox(0,0)[c]{\small{$\cD'\ciMV$}}}
 \put(50,-45){\makebox(0,0)[c]{\small{$\mf{\cD}'\ciBXE$}}}
 \put(138,-45){\makebox(0,0)[c]{\small{$\cD'\ciMV$}}}
 \put(-31,-22.5){\makebox(0,0)[l]{\small{$d$}}}
 \put(55,-22.5){\makebox(0,0)[l]{\small{$d$}}}
 \put(143,-22.5){\makebox(0,0)[l]{\small{$d$}}}
 \put(-14,-45){\vector(1,0){40}}
 \put(74,-45){\vector(1,0){40}}
 \put(-38,-12){\vector(0,-1){23}}
 \put(-36,-12){\oval(4,4)[t]}
 \put(50,-12){\vector(0,-1){23}}
 \put(52,-12){\oval(4,4)[t]}
 \put(138,-12){\vector(0,-1){23}}
 \put(140,-12){\oval(4,4)[t]}
 \end{picture}
 $$
is commuting and
\hb{\psi_p^\lda\circ\vp_p^\lda=\id}.
\end{theorem}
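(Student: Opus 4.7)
The plan is to upgrade the $\cD$-level retraction of Theorem~\ref{thm-L.ret} to the Sobolev level, and thereafter to the distribution level by density; the substantive content is the boundedness of $\vp_p^\lda$ and $\psi_p^\lda$ between the Sobolev-row spaces. Note that $\vp_p^\lda$ and $\psi_p^\lda$ differ from $\vp,\psi$ of Theorem~\ref{thm-L.ret} only by the scalar prefactors $\rho_\ka^{\pm(\lda+m/p)}$, which cancel term-by-term in the composition; hence the $\cD$-level row commutes and $\psi_p^\lda\circ\vp_p^\lda=\id$ on $\cD\MV$ follow directly from Theorem~\ref{thm-L.ret}. The vertical dense embeddings in the Sobolev row arise from $\cD\MV\sdh W_{\coW p}^{k,\lda}(V)$ (by definition of the space) and, using \eqref{W.cl} and the classical $\cD\BXkE\sdh W_{\coW p}^k\BXkE$, from $\mf\cD\BXE\sdh\ell_p(\mf W_{\coW p}^k)$.

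For the continuity of $\vp_p^\lda$ between Sobolev spaces, I would start from
\[
\|\vp_{p,\ka}^\lda u\|_{W_p^k(\BX_\ka,E)}^p
=\rho_\ka^{p\lda+m}\sum_{|\al|\leq k}\|\pa\ka_*(\pi_\ka u)\|_p^p
\]
and apply Lemma~\ref{lem-U.g}: part~(iv) replaces $\sum_{|\al|\leq k}|\pa[\ka_*a]|_{g_m}$ by $\sum_{i\leq k}|\na_{\ka_*g}^i\ka_*a|_{g_m}$; part~(v), applied to $\na^i(\pi_\ka u)\in\cT_{\tau+i}^\sa M$, produces a factor $\rho_\ka^{\tau+i-\sa}$ when passing from $|\ka_*\na^i(\pi_\ka u)|_{g_m}$ to $\ka_*(|\na^i(\pi_\ka u)|_g)$; and part~(iii) converts $dV_{\coV g_m}$ back to $dV_{\coV g}$ at the cost of $\rho_\ka^{-m}$. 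After \eqref{S.sd}(vi) replaces $\rho_\ka$ by $\rho$ on $U_{\coU\ka}$ and the prefactors combine, the result is
\[
\|\vp_{p,\ka}^\lda u\|_{W_p^k(\BX_\ka,E)}^p
\sim\sum_{i=0}^k\int_{U_{\coU\ka}}\rho^{p(\lda+\tau-\sa+i)}|\na^i(\pi_\ka u)|_g^p\,dV_{\coV g}.
\]
Leibniz' rule, combined with the uniform pointwise bound $\rho^{i-j}|\na^{i-j}\pi_\ka|_g\leq c$ (itself a consequence of \eqref{U.LS}(iii) via Lemma~\ref{lem-U.g}(iv),(v)), dominates this integrand by $c\sum_{j\leq i}\rho^{p(\lda+\tau-\sa+j)}|\na^ju|_g^p$ on $\supp\pi_\ka$; summation over $\ka\in\gK$ and the finite multiplicity of $\gK$ then yield $\|\vp_p^\lda u\|_{\ell_p(\mf W_p^k)}\leq c\,\|u\|_{k,p;\lda}$.

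For $\psi_p^\lda$, the same chain is run in reverse. The finite multiplicity of $\{\supp\pi_\ka\}$ gives the pointwise bound $|\na^i\psi_p^\lda\mf v|_g^p\leq c\sum_\ka\rho_\ka^{-p\lda-m}|\na^i(\pi_\ka\ka^*v_\ka)|_g^p$; Leibniz, Lemma~\ref{lem-U.g}(v) applied to $\na^j\ka^*v_\ka$, the change of variables~(iii), and Lemma~\ref{lem-U.g}(iv) reduce the remaining integral to $c\,\rho_\ka^{p(\sa-\tau-i)+m}\|v_\ka\|_{W_p^k(\BX_\ka,E)}^p$, and the three accumulated powers combine as
\[
\rho_\ka^{-p\lda-m}\cdot\rho_\ka^{p(\lda+\tau-\sa+i)}\cdot\rho_\ka^{p(\sa-\tau-i)+m}=1,
\]
cancelling exactly to yield $\|\psi_p^\lda\mf v\|_{k,p;\lda}^p\leq c\sum_\ka\|v_\ka\|_{W_p^k}^p$. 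Once both Sobolev-level maps are bounded, the identity $\psi_p^\lda\circ\vp_p^\lda=\id$ extends from $\cD\MV$ to $W_{\coW p}^{k,\lda}(V)$ by density, and the bottom row is recovered by composing the distribution-level maps of Theorem~\ref{thm-L.ret} with the continuous multiplications by $\rho_\ka^{\pm(\lda+m/p)}$ on $\mf\cD'\ciBXE$; commutativity with the Sobolev row transfers by density via the embeddings in \eqref{D.DDL}. The main obstacle is the careful bookkeeping of powers of $\rho_\ka$ through Lemma~\ref{lem-U.g}: the factor $m/p$ in $\rho_\ka^{\pm(\lda+m/p)}$ is exactly what absorbs $(\rho_\ka^m)^{1/p}$ from the volume comparison~(iii), and the shift $\tau-\sa$ in the weight exponent is precisely what the tensorial scaling in (v) demands for all prefactors to collapse to~$1$.
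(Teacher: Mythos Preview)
Your proof is correct and follows essentially the same route as the paper: reduce to Theorem~\ref{thm-L.ret} for the top and bottom rows, and establish the Sobolev-level boundedness of $\vp_p^\lda$ and $\psi_p^\lda$ via Lemma~\ref{lem-U.g}(iii)--(v), Leibniz' rule, \eqref{U.LS}(iii), and the finite multiplicity of~$\gK$. The only cosmetic difference is the order of operations in step~(2): the paper first strips~$\pi_\ka$ in local coordinates via~\eqref{W.Q} (bounding $\ka_*(\pi_\ka u)$ by $\ka_*(\chi_\ka u)$ using $\|\ka_*\pi_\ka\|_{k,\iy}\leq c$), and only then passes to covariant derivatives of~$u$ itself, whereas you carry $\na^i(\pi_\ka u)$ to the manifold side and apply Leibniz there, which requires the intrinsic bound $\rho^{i-j}|\na^{i-j}\pi_\ka|_g\leq c$ that you correctly derive from \eqref{U.LS}(iii) via Lemma~\ref{lem-U.g}(iv),(v); both orderings are equivalent. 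One small point you leave implicit is the continuous embedding $\ell_p(\mf W_{\coW p}^k)\hr\mf\cD'\ciBXE$ (the paper's step~(5)), needed for the middle column; this follows from $W_{\coW p,\ka}^k\hr\cD'\ciBXkE$ and the definition of the product topology.
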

\begin{proof}
(1) It is an obvious consequence of Theorem~\ref{thm-L.ret} that
$\psi_p^\lda$~is a retraction from $\mf{\cD}\BXE$ onto~$\cD\MV$, and
from $\cD'\ciBXE$ onto~$\cD'(\ci M,V)$, and that $\vp_p^\lda$~is a
coretraction in each case.

\par
(2) Estimate~\eqref{U.LS}(iii), Leibniz' rule, and
\hb{\ka_*(\pi_\ka u)=(\ka_*\pi_\ka)\ka_*u} imply, due to
\hb{\chi_\ka\sn\supp(\pi_\ka)=\mf{1}},
\beq\label{W.Q}
\|\ka_*(\pi_\ka u)\|_{W_{\coW p,\ka}^k}
\leq c\,\|\ka_*(\chi_\ka u)\|_{W_{\coW p}^k(Q_\ka^m,E)}
\qa \ka\in\gK.
\eeq
From Lemma~\ref{lem-U.g}(iv) we deduce
\beq\label{W.Wkp}
\bal
\|\ka_*(\chi_\ka u)\|_{W_{\coW p}^k(Q_\ka^m,E)}^p
=\int_{Q_\ka^m}\chi\sum_{|\al|\leq k}|\pa(\ka_*u)|_{g_m}^p\,dV_{\coV g_m}
\leq\sum_{i=0}^k\int_{Q_\ka^m}\chi\,
 |\na_{\cona\ka_*g}^i(\ka_*u)|_{g_m}^p\,dV_{\coV g_m}.
\eal
\eeq
By part~(v) of Lemma~\ref{lem-U.g} we get, due to
\hb{\na^iu\in\cD(M,T_{\tau+i}^\sa M)} for
\hb{u\in\cD\MV},
$$
|\na_{\cona\ka_*g}^i(\ka_*u)|_{g_m}
\sim\ka_*(\rho_\ka^{\tau-\sa+i}\,|\na^iu|_g)
\qa \ka\in\gK.
$$
Thus, observing Lemma~\ref{lem-U.g}(iii) and \eqref{S.sd}(vi),
$$
\bal
\int_{Q_\ka^m}\chi\,|\na_{\cona\ka_*g}^i(\ka_*u)|_{g_m}^p\,dV_{\coV g_m}
&\sim\int_{\ka(U_{\coU\ka})}\ka_*
 \Bigl(\bigl(\chi_\ka\rho_\ka^{\tau-\sa+i-m/p}
 \,|\na^iu|_g\bigr)^p\,dV_{\coV g}\Bigr)\\
&\sim\rho_\ka^{-m}\int_M\chi_\ka\bigl(\rho^{\tau-\sa+i}
 \,|\na^iu|_g\bigr)^p\,dV_{\coV g}
\eal
$$
for
\hb{\ka\in\gK}. Thus we get from \eqref{W.Q} and \eqref{W.Wkp}
$$
\|\vp_{p,\ka}^\lda u\|_{W_{\coW p,\ka}^k}^p
\leq c\sum_{i=0}^k\int_M\chi_\ka\bigl(\rho^{\lda+\tau-\sa+i}
\,|\na^iu|_g\bigr)^p\,dV_{\coV g}.
$$
The finite multiplicity of~$\gK$ implies
\hb{0\leq\sum_\ka\chi_\ka\leq c\mf{1}_M}. Consequently,
\beq\label{W.phi}
\|\vp_p^\lda u\|_{\ell_p(\mf{W}_{\coW p}^k)}
\leq c\,\|u\|_{k,p;\lda}
\qa u\in\cD\MV.
\eeq
Since $\cD\MV$ is dense in $W_{\coW p}^{k,\lda}(V)$ it follows
\hb{\vp_p^\lda
\in\cL\bigl(W_{\coW p}^{k,\lda}(V),\ell_p(\mf{W}_{\coW p}^k)\bigr)}.

\par
(3) Similarly as in the preceding step we find
$$
\|\psi_{p,\ka}^\lda v_\ka\|_{k,p;\lda}
\leq c\,\|v_\ka\|_{W_{\coW p,\ka}^k}
\qa \ka\in\gK.
$$
Since
\hb{\chi_\ka\sn\im(\psi_{p,\ka}^\lda)=\mf{1}} it follows from the finite
multiplicity of~$\gK$ and H\"older's inequality that
$$
|\na^i(\psi_p^\lda\mf{v})|_g^p
=\Big|\sum_\ka\chi_\ka\na^i(\psi_{p,\ka}^\lda v_\ka)\Big|_g^p
\leq c\sum_\ka|\na^i(\psi_{p,\ka}^\lda v_\ka)|_g^p.
$$
Consequently,
$$
\|\psi_p^\lda\mf{v}\|_{k,p;\lda}
\leq c\,\|\mf{v}\|_{\ell_p(\mf{W}_{\coW p}^k)}
\qa v\in\ell_p(\mf{W}_{\coW p}^k).
$$
Since
\hb{\psi_p^\lda\vp_p^\lda u=u} for
\hb{u\in W_{\coW p}^{k,\lda}(V)} we have shown that $\psi_p^\lda$~is a
retraction from $\ell_p(\mf{W}_{\coW p}^k)$
onto~$W_{\coW p}^{k,\lda}(V)$.

\par
(4) For each
\hb{\ka\in\gK} it holds
\hb{\cS\BXkE\sdh W_{\coW p}^k\BXkE}. This is well-known if
\hb{\BX_\ka=\BR^m} (e.g.,~\cite{Tri78a}) and follows from (4.4.3)
in~\cite{Ama09a} if
\hb{\BX_\ka=\BH^m}. Furthermore,
\hb{\cD\BXkE\sdh\cS\BXkE}. In fact, this is standard knowledge if
\hb{\BX_\ka=\BR^m}; otherwise it follows from Section~4.2 in~\cite{Ama09a}.
Hence
\beq\label{W.DW}
\cD\BXkE\sdh W_{\coW p}^k\BXkE
\qa \ka\in\gK.
\eeq
Thus, since $c_c(\mf{W}_{\coW p}^k)$ is dense in
$\ell_p(\mf{W}_{\coW p}^k)$, we obtain
\beq\label{W.Dl}
\mf{\cD}\BXE\sdh\ell_p(\mf{W}_{\coW p}^k).
\eeq

\par
(5) Analogously we find
\hb{W_{\coW p}^k\BXkE\hr\cD'\ciBXkE} for
\hb{\ka\in\gK}. From this and the definition of the product topology it
follows
$$
\ell_p(\mf{W}_{\coW p}^k)\hr\prod_\ka W_{\coW p}^k\BXkE\hr\mf{\cD}\ciBXE.
$$
Since
\hb{\mf{\cD}\BXE\sdh\mf{\cD}'\ciBXE} we thus obtain from \eqref{W.Dl} that
\hb{\ell_p(\mf{W}_{\coW p}^k)\sdh\mf{\cD}\ciBXE}. The theorem is proved.
\end{proof}
\begin{corollary}\label{cor-W.ret}
Suppose
\hb{M=\BR^m} or
\hb{M=\BH^m}, and
\hb{V=M\times\BK}. Then the above definition yields the usual Sobolev
spaces.
\end{corollary}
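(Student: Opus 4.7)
The plan is to verify, one at a time, that each ingredient of the definition of $W_{\coW p}^{k,\lda}(V)$ collapses to the corresponding ingredient of the classical Sobolev space on $\BR^m$ or~$\BH^m$.

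First, by Example~\ref{exa-S.ex}(c) the manifold $M\in\{\BR^m,\BH^m\}$, endowed with $g_m$, is uniformly regular; hence $\gT(M)=[\![\mf{1}]\!]$ and we may choose $\rho=\mf{1}\in\gT(M)$. By the remark following \eqref{W.norm}, the topology of $W_{\coW p}^{k,\lda}(V)$ is independent of the representative of $\gT(M)$, and, in the uniformly regular case, also of~$\lda$. Moreover $V=M\times\BK$ corresponds to $(\sa,\tau)=(0,0)$, so the weight $\rho^{\lda+\tau-\sa+i}$ in \eqref{W.norm} equals $\mf{1}$ for every $i$.

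Second, since the Christoffel symbols of $g_m$ vanish identically, formula \eqref{U.dai} shows that the Levi-Civita covariant derivative $\na^i u$ coincides with the usual tensor of $i$-th order partial derivatives of $u$. With $g_{ij}=\da_{ij}$, the pointwise norm formula \eqref{U.gabl} reduces to
$$
|\na^i u|_{g_m}^2
 = \sum_{(j)\in\BJ_i}\bigl|\pl_{(j)}u\bigr|^2,
$$
so $|\na^i u|_{g_m}\sim\sum_{|\al|=i}|\pa u|$ with constants depending only on $m$ and~$i$. The volume measure $dV_{\coV g_m}$ is the Lebesgue measure on $M$. Hence the norm \eqref{W.norm} is equivalent to the classical Sobolev norm $u\mt\bigl(\sum_{|\al|\le k}\|\pa u\|_{L_p(M)}^p\bigr)^{1/p}$, and $L_{1,\loc}(M,V)$ reduces to the usual $L_{1,\loc}(M)$.

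Finally, the completion of $\cD(M)$ in $L_{1,\loc}(M)$ with respect to the classical Sobolev norm is well known to coincide with the standard space $W_{\coW p}^k(M)$; for $M=\BR^m$ this is textbook material, and for $M=\BH^m$ density of $\cD(\BH^m)$ in the classical $W_{\coW p}^k(\BH^m)$ follows from the existence of the extension operator $e^+$ recalled in Lemma~\ref{lem-D.ex} (cf.\ \cite{Tri78a} or~\cite{Ama09a}). Combining the three steps identifies $W_{\coW p}^{k,\lda}(V)$ with $W_{\coW p}^k(M)$ as Banach spaces. There is no genuine obstacle; the only point requiring a moment of care is the $\BH^m$ case, where one must make sure that `compact support in~$M$' is interpreted as compact support in the closed half-space, so that the completion produces the classical, rather than the `zero-trace', Sobolev space.
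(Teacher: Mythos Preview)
Your proof is correct and follows essentially the same approach as the paper: reduce to $\rho=\mf{1}$ via Example~\ref{exa-S.ex}(c), observe that on $(\BX,g_m)$ the covariant derivative coincides with the ordinary partial derivatives so that \eqref{W.norm} becomes the classical Sobolev norm, and then invoke the density $\cD(\BX)\sdh W_{\coW p}^k(\BX)$. The paper's own one-line proof cites precisely this density statement~\eqref{W.DW} (established inside the proof of Theorem~\ref{thm-W.ret}) together with Example~\ref{exa-S.ex}(c); your version simply spells out the intermediate reductions in more detail.
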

\begin{proof}
This follows from \eqref{W.DW} and Example~\ref{exa-S.ex}(c).
\end{proof}
\begin{theorem}\label{thm-W.retBC}
Suppose
\hb{k\in\BN}. The diagram
 $$
 \begin{picture}(215,112)(-58,-51)        
 \put(6,50){\makebox(0,0)[b]{\small{$\vp_\iy^\lda$}}}
 \put(6,5){\makebox(0,0)[b]{\small{$\vp_\iy^\lda$}}}
 \put(94,50){\makebox(0,0)[b]{\small{$\psi_\iy^\lda$}}}
 \put(94,5){\makebox(0,0)[b]{\small{$\psi_\iy^\lda$}}}
 \put(-38,45){\makebox(0,0)[c]{\small{$\cD\MV$}}}
 \put(-38,0){\makebox(0,0)[c]{\small{$BC^{k,\lda}(V)$}}}
 \put(50,45){\makebox(0,0)[c]{\small{$\mf{\cD}\BXE$}}}
 \put(50,0){\makebox(0,0)[c]{\small{$\ell_\iy(\mf{BC}^k)$}}}
 \put(138,45){\makebox(0,0)[c]{\small{$\cD\MV$}}}
 \put(138,0){\makebox(0,0)[c]{\small{$BC^{k,\lda}(V)$}}}
 \put(-14,45){\vector(1,0){40}}
 \put(-14,0){\vector(1,0){40}}
 \put(74,45){\vector(1,0){40}}
 \put(74,0){\vector(1,0){40}}
 \put(-38,33){\vector(0,-1){23}}
 \put(-36,33){\oval(4,4)[t]}
 \put(50,33){\vector(0,-1){23}}
 \put(52,33){\oval(4,4)[t]}
 \put(138,33){\vector(0,-1){23}}
 \put(140,33){\oval(4,4)[t]}
 \put(6,-40){\makebox(0,0)[b]{\small{$\vp_\iy^\lda$}}}
 \put(94,-40){\makebox(0,0)[b]{\small{$\psi_\iy^\lda$}}}
 \put(-38,-45){\makebox(0,0)[c]{\small{$\cD'\ciMV$}}}
 \put(50,-45){\makebox(0,0)[c]{\small{$\mf{\cD}'\ciBXE$}}}
 \put(138,-45){\makebox(0,0)[c]{\small{$\cD'\ciMV$}}}
 \put(-14,-45){\vector(1,0){40}}
 \put(74,-45){\vector(1,0){40}}
 \put(-38,-12){\vector(0,-1){23}}
 \put(-36,-12){\oval(4,4)[t]}
 \put(50,-12){\vector(0,-1){23}}
 \put(52,-12){\oval(4,4)[t]}
 \put(138,-12){\vector(0,-1){23}}
 \put(140,-12){\oval(4,4)[t]}
 \end{picture}
 $$
is commuting and
\hb{\psi_\iy^\lda\circ\vp_\iy^\lda=\id}.
\end{theorem}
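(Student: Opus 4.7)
The plan is to follow the strategy of the proof of Theorem~\ref{thm-W.ret}, with $\ell_p$-type summations replaced throughout by $\ell_\iy$-type suprema. The identity $\psi_\iy^\lda\circ\vp_\iy^\lda=\id$ comes essentially for free from Theorem~\ref{thm-L.ret}: each $\rho_\ka=\rho(\ka^{-1}(0))$ is a positive scalar, so the factors in $\vp_{\iy,\ka}^\lda=\rho_\ka^\lda\vp_\ka$ and $\psi_{\iy,\ka}^\lda=\rho_\ka^{-\lda}\psi_\ka$ cancel and $\psi_\iy^\lda\vp_\iy^\lda=\psi\vp=\id$ on $\cD\MV$ and on $\cD'\ciMV$.

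The substance lies in showing that $\vp_\iy^\lda$ and $\psi_\iy^\lda$ are continuous between the indicated $BC$-type spaces. For $\vp_\iy^\lda$, fix $u\in\cD\MV$ and $\ka\in\gK$. Using $\chi_\ka\pi_\ka=\pi_\ka$, Leibniz' rule and the uniform bounds~\eqref{U.LS}(iii), one obtains $\|\ka_*(\pi_\ka u)\|_{k,\iy}\leq c\,\|\chi(\ka_*u)\|_{k,\iy}$. Part~(iv) of Lemma~\ref{lem-U.g} converts Euclidean derivatives into covariant derivatives with respect to $\ka_*g$, and part~(v) combined with \eqref{S.sd}(vi) gives
$$
\rho_\ka^\lda\,|\na_{\cona\ka_*g}^i(\ka_*u)|_{g_m}\sim\ka_*\bigl(\rho^{\lda+\tau-\sa+i}\,|\na^iu|_g\bigr),\qquad 0\leq i\leq k.
$$
Taking the supremum over $Q_\ka^m$ (using the cutoff $\chi$) and then over $\ka\in\gK$ yields $\|\vp_\iy^\lda u\|_{\ell_\iy(\mf{BC}^k)}\leq c\,\|u\|_{k,\iy;\lda}$, so $\vp_\iy^\lda$ extends to a continuous map $BC^{k,\lda}(V)\ra\ell_\iy(\mf{BC}^k)$.

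For $\psi_\iy^\lda$ the per-chart estimate $\|\psi_{\iy,\ka}^\lda v_\ka\|_{k,\iy;\lda}\leq c\,\|v_\ka\|_{k,\iy}$, uniform in $\ka$, is obtained symmetrically. The only step where the sup-norm setting differs seriously from the $L_p$-proof is the passage from local to global: where before one summed $p$-th powers, one now needs a pointwise bound. This is where finite multiplicity of $\gK$ is used: at each $p\in M$ only finitely many charts contribute to $\psi_\iy^\lda\mf{v}(p)$, and $\chi_\ka=\mf{1}$ on $\supp(\pi_\ka\ka^*v_\ka)$, so
$$
|\na^i(\psi_\iy^\lda\mf{v})|_g(p)\leq c\max_{\ka\sco p\in U_{\coU\ka}}|\na^i(\psi_{\iy,\ka}^\lda v_\ka)|_g(p).
$$
Multiplying by $\rho(p)^{\lda+\tau-\sa+i}$ and invoking \eqref{S.sd}(vi) to replace $\rho(p)$ by $\rho_\ka$ on the right, the right-hand side is controlled by $c\,\|\mf{v}\|_{\ell_\iy(\mf{BC}^k)}$, proving $\psi_\iy^\lda\in\cL\bigl(\ell_\iy(\mf{BC}^k),BC^{k,\lda}(V)\bigr)$.

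Commutativity of the diagram is then routine: the vertical embeddings $\cD\MV\hr BC^{k,\lda}(V)\hr\cD'\ciMV$ and $\mf{\cD}\BXE\hr\ell_\iy(\mf{BC}^k)\hr\mf{\cD}'\ciBXE$ are immediate (compactly supported smooth sections have finite $BC^{k,\lda}$-norm, and $BC^{k,\lda}(V)\is L_{1,\loc}\MV$), while commutativity of the top and bottom squares is inherited from Theorem~\ref{thm-L.ret}, the scalar factors $\rho_\ka^{\pm\lda}$ posing no obstruction. The main (and really the only) obstacle is the correct bookkeeping in the $\ell_\iy$-estimate for $\psi_\iy^\lda$; note that, in contrast to Theorem~\ref{thm-W.ret}, $\cD\MV$ is not dense in $BC^{k,\lda}(V)$, which is why no `$d$'-labels appear on the vertical arrows here.
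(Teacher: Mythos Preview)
Your proposal is correct and follows exactly the approach the paper indicates: the paper's own proof reads in full ``This is verified by modifying the preceding proof in the obvious way,'' and you have carried out precisely those modifications, including the key observation that finite multiplicity replaces the $\ell_p$-summation by a pointwise maximum in the $\psi_\iy^\lda$-estimate. One minor wording quibble: since $\cD\MV$ is not dense in $BC^{k,\lda}(V)$ (as you yourself note), the word ``extends'' for $\vp_\iy^\lda$ is misleading---but your estimates apply verbatim to any $u\in BC^{k,\lda}(V)$ because $\pi_\ka u$ is compactly supported regardless, so no extension argument is actually needed.
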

\begin{proof}
This is verified by modifying the preceding proof in the obvious way.
\end{proof}
\begin{remark}\label{rem-W.ret}
Define $\wt{\vp}_q^\lda$ and~$\wt{\psi}_q^\lda$ by replacing~$\pi_\ka$ in
the definition of~$(\vp_q^\lda,\psi_q^\lda)$ by~$\chi_\ka$. Then
$\wt{\vp}_q^\lda$ and~$\wt{\psi}_q^\lda$ possess the same mapping
properties as $\vp_q^\lda$ and~$\psi_q^\lda$. (Of course,
$\wt{\psi}_q^\lda$~is not a retraction.)
\end{remark}
\begin{proof}
This is clear from the preceding proofs.
\end{proof}
\section{Sobolev-Slobodeckii and Bessel Potential Spaces}\label{sec-B}
We denote by
\hb{\pe_\ta} the complex and by
\hb{\pr_{\ta,q}},
\ \hb{1\leq q\leq\iy}, the real interpolation functor for
\hb{0<\ta<1}. Definitions and proofs of the results from interpolation
theory which we use below without further mention can be found
in~\cite{Tri78a}. (Also see Section~I.2 of~\cite{Ama95a} for a summary.)
We write
\hb{X\doteq Y} if $X$ and~$Y$ are Banach spaces which are equal, except for
equivalent norms.

\par
For
\hb{s\geq0} we define weighted \emph{Bessel potential spaces} of
$(\sa,\tau)$-tensor fields by
$$
H_p^{s,\lda}=H_p^{s,\lda}(V):=
\left\{
\bal
{}
[   &W_{\coW p}^{k,\lda},W_{\coW p}^{k+1,\lda}]_{s-k},
        &\quad k<{} &s<k+1,\ \ k\in\BN,\\
[       &W_{\coW p}^{k-1,\lda},W_{\coW p}^{k+1,\lda}]_{1/2},
        &           &s=k\in\BN^\times,\\
        &L_p^\lda,
        &           &s=0,
\eal
\right.
$$
where
\hb{W_{\coW p}^{k,\lda}=W_{\coW p}^{k,\lda}(V)}. Similarly, weighted
\emph{Besov spaces} are defined for
\hb{s>0} by
$$
B_p^{s,\lda}=B_p^{s,\lda}(V):=
\left\{
\bal
{}
&(W_{\coW p}^{k,\lda},W_{\coW p}^{k+1,\lda})_{s-k,p},
        &\quad k<{} &s<k+1,\ \ k\in\BN,\\
&(W_{\coW p}^{k-1,\lda},W_{\coW p}^{k+1,\lda})_{1/2,p},
        &           &s=k\in\BN^\times.
\eal
\right.
$$

\par
In the remainder of this paper
$$
\bt\quad
\gF\in\{H,B\}.
$$
This allows us to develop the theory of Bessel potential and Besov spaces to
a large extent in one and the same setting.
\begin{theorem}\label{thm-B.ret}
Let\/ \eqref{L.sd} be chosen and
\hb{s>0}. Then $\psi_p^\lda$~is a retraction from
$\ell_p(\mf{\gF}_p^s)$ onto~$\gF_p^{s,\lda}$, and $\vp_p^\lda$~is a
coretraction.
\end{theorem}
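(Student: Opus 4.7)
The strategy is to derive the claim from Theorem~\ref{thm-W.ret} by interpolation, exploiting the very definition of $H_p^{s,\lda}$ and $B_p^{s,\lda}$ as complex and real interpolants of the integer-order weighted Sobolev scale $W_{\coW p}^{k,\lda}(V)$. I would invoke two standard principles: (a)~retractions and coretractions are preserved by every interpolation functor, and (b)~interpolation commutes with $\ell_p$-direct sums.

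To apply~(a), observe that, by Theorem~\ref{thm-L.ret}, $\vp_p^\lda$ and $\psi_p^\lda$ are defined once and for all on the ambient distribution spaces, so Theorem~\ref{thm-W.ret} produces a coretraction-retraction system between the Banach couples $\bigl(W_{\coW p}^{k,\lda}(V),W_{\coW p}^{k+1,\lda}(V)\bigr)$ and $\bigl(\ell_p(\mf{W}_{\coW p}^k),\ell_p(\mf{W}_{\coW p}^{k+1})\bigr)$, for every $k\in\BN$. Applying the complex functor $[\cdot,\cdot]_\ta$ with $\ta\in(0,1)$ (respectively the real functor $(\cdot,\cdot)_{\ta,p}$) and using the very definition of $H_p^{s,\lda}(V)$ (respectively $B_p^{s,\lda}(V)$) for $s=k+\ta$, one obtains that $\vp_p^\lda$ is a coretraction and $\psi_p^\lda$ a retraction between $\gF_p^{s,\lda}(V)$ and the corresponding interpolant of the $\ell_p$-sums. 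The integer values $s=k\in\BN^\times$ are covered in the same fashion by applying the $1/2$-functor to the couple $\bigl(W_{\coW p}^{k-1,\lda},W_{\coW p}^{k+1,\lda}\bigr)$.

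It remains to identify the interpolant on the $\ell_p$-side with $\ell_p(\mf{\gF}_p^s)$. For this I would use the general $\ell_p$-interpolation identities
$$
\bigl[\ell_p(\mf{E}_0),\ell_p(\mf{E}_1)\bigr]_\ta \doteq \ell_p\bigl([\mf{E}_0,\mf{E}_1]_\ta\bigr),\qquad
\bigl(\ell_p(\mf{E}_0),\ell_p(\mf{E}_1)\bigr)_{\ta,p} \doteq \ell_p\bigl((\mf{E}_0,\mf{E}_1)_{\ta,p}\bigr),
$$
(see~\cite{Tri78a}, Section 1.18.1) applied componentwise to $\mf{E}_j=\mf{W}_{\coW p}^{k+j}$. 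Combined with the classical interpolation identifications of $H_p^s$ and $B_p^s$ on $\BR^m$ and $\BH^m$ as intermediate spaces of the integer Sobolev scale, and with Corollary~\ref{cor-W.ret} guaranteeing that these model spaces are precisely $\gF_p^s\BXkE$, this delivers $\ell_p(\mf{\gF}_p^s)$ as required; the relation $\psi_p^\lda\circ\vp_p^\lda=\id$ on $\gF_p^{s,\lda}(V)$ persists by continuity and the density of $\cD\MV$.

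The main obstacle is the $\ell_p$-interpolation identity: its proof realizes $\ell_p(\mf{E}_j)$ as an $L_p$-space of Banach-space-valued functions on the discrete index set $\gK$ and reduces the identification to a diagonal retraction onto the factors, so it hinges on having uniform interpolation constants across $\ka\in\gK$. This uniformity is granted here because every factor is one of the two model spaces $W_{\coW p}^k\BXkE$ with $\BX_\ka\in\{\BR^m,\BH^m\}$, so the constants coming from the Euclidean and half-space theory are universal; no $\ka$-dependent estimates enter. Once this is in place, the retraction-coretraction statement follows directly from~(a) and~(b).
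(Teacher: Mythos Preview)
Your proposal is correct and follows essentially the same route as the paper: interpolate the retraction-coretraction pair from Theorem~\ref{thm-W.ret}, then identify the interpolants on the $\ell_p$-side via Theorem~1.18.1 in~\cite{Tri78a} and the classical interpolation of the model spaces on $\BR^m$ and $\BH^m$. The only cosmetic difference is that the paper does not invoke Corollary~\ref{cor-W.ret} (the $W_{\coW p,\ka}^k$ are the standard Sobolev spaces by definition) and obtains $\psi_p^\lda\circ\vp_p^\lda=\id$ directly from functoriality rather than via density.
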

\begin{proof}
Suppose
\hb{k,\ell\in\BN} satisfy
\hb{k<\ell}. Theorem~\ref{thm-W.ret} implies that the diagram
 $$
 \begin{picture}(196,66)(-48,-5)        
 \put(6,50){\makebox(0,0)[b]{\small{$\vp_p^\lda$}}}
 \put(6,5){\makebox(0,0)[b]{\small{$\vp_p^\lda$}}}
 \put(94,50){\makebox(0,0)[b]{\small{$\psi_p^\lda$}}}
 \put(94,5){\makebox(0,0)[b]{\small{$\psi_p^\lda$}}}
 \put(-38,45){\makebox(0,0)[c]{\small{$W_{\coW p}^{\ell,\lda}$}}}
 \put(-38,0){\makebox(0,0)[c]{\small{$W_{\coW p}^{k,\lda}$}}}
 \put(50,45){\makebox(0,0)[c]{\small{$\ell(\mf{W}_{\coW p}^\ell)$}}}
 \put(50,0){\makebox(0,0)[c]{\small{$\ell(\mf{W}_{\coW p}^k)$}}}
 \put(138,45){\makebox(0,0)[c]{\small{$W_{\coW p}^{\ell,\lda}$}}}
 \put(138,0){\makebox(0,0)[c]{\small{$W_{\coW p}^{k,\lda}$}}}
 \put(-33,22.5){\makebox(0,0)[l]{\small{$d$}}}
 \put(55,22.5){\makebox(0,0)[l]{\small{$d$}}}
 \put(143,22.5){\makebox(0,0)[l]{\small{$d$}}}
 \put(-14,45){\vector(1,0){40}}
 \put(-14,0){\vector(1,0){40}}
 \put(74,45){\vector(1,0){40}}
 \put(74,0){\vector(1,0){40}}
 \put(-38,33){\vector(0,-1){23}}
 \put(-36,33){\oval(4,4)[t]}
 \put(50,33){\vector(0,-1){23}}
 \put(52,33){\oval(4,4)[t]}
 \put(138,33){\vector(0,-1){23}}
 \put(140,33){\oval(4,4)[t]}
 \end{picture}
 $$
is commuting, and
\hb{\psi_p^\lda\circ\vp_p^\lda=\id}. From this it follows that
$\psi_p^\lda$~is a retraction from
\hb{\bigl[\ell_p(\mf{W}_{\coW p}^k),
   \ell_p(\mf{W}_{\coW p}^\ell)\bigr]_\ta } onto
\hb{[W_{\coW p}^{k,\lda},W_{\coW p}^{\ell,\lda}]_\ta}
and from
\hb{\bigl(\ell_p(\mf{W}_{\coW p}^k),
   \ell_p(\mf{W}_{\coW p}^\ell)\bigr)_{\ta,p}}  onto
\hb{(W_{\coW p}^{k,\lda},W_{\coW p}^{\ell,\lda})_{\ta,p}} for
\hb{0<\ta<1}.

\par
By Theorem~1.18.1 in~\cite{Tri78a} we obtain, using obvious notation,
$$
\bigl[\ell_p(\mf{W}_{\coW p}^k),\ell_p(\mf{W}_{\coW p}^\ell)\bigr]_\ta
 =\ell_p\bigl([\mf{W}_{\coW p}^k,\mf{W}_{\coW p}^\ell]_\ta\bigr)
\qb
 \bigl(\ell_p(\mf{W}_{\coW p}^k),\ell_p(\mf{W}_{\coW p}^\ell)\bigr)_{\ta,p}
 \doteq
 \ell_p\bigl((\mf{W}_{\coW p}^k,\mf{W}_{\coW p}^\ell)_{\ta,p}\bigr).
$$
Since
\hb{[W_{\coW p,\ka}^k,W_{\coW p,\ka}^\ell]_\ta
   \doteq H_{p,\ka}^{(1-\ta)k+\ta\ell}}, the assertion follows.
\end{proof}
For
\hb{\xi_0,\xi_1\in\BR} and
\hb{0<\ta<1} we set
\hb{\xi_\ta:=(1-\ta)\xi_0+\ta\xi_1}.
\begin{corollary}\label{cor-B.ret}
\begin{itemize}
\item[(i)]
${}$
\hb{H_p^{k,\lda}(V)\doteq W_{\coW p}^{k,\lda}(V)},
\ \hb{k\in\BN}.
\item[(ii)]
${}$
Suppose
\hb{0\leq s_0<s_1<\iy} and
\hb{\ta\in(0,1)}. Then
$$
[H_p^{s_0,\lda},H_p^{s_1,\lda}]_\ta\doteq H_p^{s_\ta,\lda}
\qb (B_p^{s_0,\lda},B_p^{s_1,\lda})_{\ta,p}=B_p^{s_\ta,\lda},
$$
provided
\hb{s_0>0} in the latter case.
\end{itemize}
\end{corollary}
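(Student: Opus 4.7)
The plan is to deduce both assertions from the retraction theorems~\ref{thm-W.ret} and~\ref{thm-B.ret}, combined with Triebel's commutation of interpolation with $\ell_p$-sums (Theorem~1.18.1 in~\cite{Tri78a}) and the classical interpolation identities on $\BR^m$ and~$\BH^m$. The underlying principle is standard: if $\psi$ is a retraction from $X_j$ onto $Y_j$ for $j=0,1$ with a common coretraction~$\vp$, then $\psi$ remains a retraction between the corresponding (complex or real) interpolation spaces with coretraction $\vp$, and the norm on $Y_\ta$ is equivalent to the one induced on the range of $\vp\circ\psi$ in $X_\ta$.

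For~(i), the case $k=0$ is built into the definition, and for $k\geq 1$ we have $H_p^{k,\lda}(V) = [W_{\coW p}^{k-1,\lda}, W_{\coW p}^{k+1,\lda}]_{1/2}$. Theorem~\ref{thm-W.ret} makes $\psi_p^\lda$ a retraction from $\ell_p(\mf{W}_{\coW p}^j)$ onto $W_{\coW p}^{j,\lda}$ for every integer $j\geq 0$, with common coretraction~$\vp_p^\lda$. The principle above together with Theorem~1.18.1 of~\cite{Tri78a}, which commutes $[\cdot,\cdot]_{1/2}$ with $\ell_p$, reduces matters to the classical Euclidean identity $[W_{\coW p}^{k-1}\BXkE, W_{\coW p}^{k+1}\BXkE]_{1/2} \doteq W_{\coW p}^k\BXkE$ on each~$\BX_\ka$. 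Retracting back through $\psi_p^\lda$ at level~$k$ gives $W_{\coW p}^{k,\lda}(V)$.

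For~(ii), Theorem~\ref{thm-B.ret}, extended to $s=0$ by~(i) so that $\gF_p^{0,\lda} \doteq L_p^\lda$, shows that $\psi_p^\lda$ is a retraction from $\ell_p(\mf{\gF}_p^{s_j})$ onto $\gF_p^{s_j,\lda}(V)$ with common coretraction~$\vp_p^\lda$ for $j=0,1$. Consequently
\[
[H_p^{s_0,\lda}, H_p^{s_1,\lda}]_\ta \doteq \psi_p^\lda\bigl([\ell_p(\mf{H}_p^{s_0}), \ell_p(\mf{H}_p^{s_1})]_\ta\bigr),
\]
and analogously for the Besov pair under $(\cdot,\cdot)_{\ta,p}$. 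Applying Theorem~1.18.1 of~\cite{Tri78a} once more to commute each functor with the $\ell_p$-sum, and invoking the Euclidean identities $[H_p^{s_0}\BXkE, H_p^{s_1}\BXkE]_\ta \doteq H_p^{s_\ta}\BXkE$ and $(B_p^{s_0}\BXkE, B_p^{s_1}\BXkE)_{\ta,p} \doteq B_p^{s_\ta}\BXkE$ on each~$\BX_\ka$, one retracts a final time by Theorem~\ref{thm-B.ret} to obtain~(ii).

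The principal technical obstacle is the piecewise character of the definition of $\gF_p^{s,\lda}$ across integer values of~$s$: interpolating between $\gF_p^{s_0,\lda}$ and $\gF_p^{s_1,\lda}$ whose local definitions cross one or more integers does not directly match the prescription of $\gF_p^{s_\ta,\lda}$. Part~(i) together with the reiteration theorem resolves this: one rewrites the pair $(H_p^{s_0,\lda}, H_p^{s_1,\lda})$ (up to equivalent norms) as an interpolation pair of weighted Sobolev spaces $(W_{\coW p}^{k_0,\lda}, W_{\coW p}^{k_1,\lda})$ at integer endpoints with $k_0 \leq s_\ta \leq k_1$, and the preceding retraction-reduction argument then applies verbatim with these integer endpoints.
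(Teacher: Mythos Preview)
Your proposal is correct and follows essentially the same route as the paper. The paper's proof is terse~---~it cites $H_{p,\ka}^k\doteq W_{\coW p,\ka}^k$ for~(i) and ``the reiteration theorems'' for~(ii)~---~but what this unpacks to is precisely the retraction argument you wrote out: transport the interpolation to $\ell_p(\mf{\gF}_p^{s_j})$ via Theorem~\ref{thm-B.ret}, commute with $\ell_p$ by Theorem~1.18.1 of~\cite{Tri78a}, apply the Euclidean reiteration identities on each~$\BX_\ka$, and retract back.
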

\begin{proof}
(i)~follows from
\hb{H_{p,\ka}^k\doteq W_{\coW p,\ka}^k} for
\hb{k\in\BN}.

\par
(ii)~is a consequence of the reiteration theorems for the complex and real
interpolation functors.
\end{proof}
The following theorem shows that weighted Bessel potential and Besov spaces
can be characterized locally by intrinsic norms, since this is the case for
the spaces~$\gF_{p,\ka}^s$. In particular,
\hb{B_{p,\ka}^s\doteq W_{\coW p,\ka}^s} for
\hb{s\notin\BN}. For this reason we call
$$
W_{\coW p}^{s,\lda}=W_{\coW p}^{s,\lda}(V):=B_p^{s,\lda}
\qa s\in\BR^+\ssm\BN,
$$
weighted \emph{Slobodeckii space}.
\begin{theorem}\label{thm-B.loc}
Let\/ \eqref{L.sd} be selected. Suppose
\hb{s\geq0} with
\hb{s>0} if
\hb{\gF=B}. Then
\hb{u\in L_{1,\loc}\MV} belongs to~$\gF_p^{s,\lda}(V)$ iff
\hb{\ka_*(\pi_\ka u)\in\gF_{p,\ka}^s} and
$$
\vd u\vd_{\gF_p^{s,\lda}}
:=
\Bigl(\sum_\ka\bigl(\rho_\ka^{\lda+m/p}
\,\|\ka_*(\pi_\ka u)\|_{\gF_{p,\ka}^s}\bigr)^p\Bigr)^{1/p}<\iy.
$$
Moreover, $\Vvsdot_{\gF_p^{s,\lda}}$~is a norm for~$\gF_p^{s,\lda}$.
\end{theorem}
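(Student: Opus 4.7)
My proof plan is to reduce everything to Theorem~\ref{thm-B.ret}, observing that $\vd u\vd_{\gF_p^{s,\lda}}$ is literally the $\ell_p(\mf{\gF}_p^s)$-norm of $\vp_p^\lda u$. Unpacking the definition gives
\[
\|\vp_{p,\ka}^\lda u\|_{\gF_{p,\ka}^s}
=\|\rho_\ka^{\lda+m/p}\ka_*(\pi_\ka u)\|_{\gF_{p,\ka}^s}
=\rho_\ka^{\lda+m/p}\,\|\ka_*(\pi_\ka u)\|_{\gF_{p,\ka}^s},
\]
so $\|\vp_p^\lda u\|_{\ell_p(\mf{\gF}_p^s)}=\vd u\vd_{\gF_p^{s,\lda}}$.

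For the forward direction, assume $u\in\gF_p^{s,\lda}(V)$. By Theorem~\ref{thm-B.ret} the coretraction $\vp_p^\lda$ sends $u$ continuously into $\ell_p(\mf{\gF}_p^s)$, so $\ka_*(\pi_\ka u)\in\gF_{p,\ka}^s$ for every $\ka$ and $\vd u\vd_{\gF_p^{s,\lda}}\leq c\,\|u\|_{\gF_p^{s,\lda}}$. For the converse, take $u\in L_{1,\loc}\MV$ with $\vd u\vd_{\gF_p^{s,\lda}}<\iy$. Viewing $u$ as a distribution section via \eqref{D.DDL}, the tuple $\vp_p^\lda u=(\rho_\ka^{\lda+m/p}\ka_*(\pi_\ka u))_\ka$ lies in $\mf{\cD}'\ciBXE$, and the finiteness assumption means it already lies in the subspace $\ell_p(\mf{\gF}_p^s)$. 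Applying the retraction $\psi_p^\lda$ of Theorem~\ref{thm-B.ret} produces an element of $\gF_p^{s,\lda}(V)$ with norm bounded by $c\,\vd u\vd_{\gF_p^{s,\lda}}$. It remains to identify this element with $u$: by the calculation
\[
\psi_p^\lda(\vp_p^\lda u)
=\sum_\ka\rho_\ka^{-\lda-m/p}\pi_\ka\ka^*\bigl(\rho_\ka^{\lda+m/p}\ka_*(\pi_\ka u)\bigr)
=\sum_\ka\pi_\ka^2\,u=u,
\]
which is valid in $\cD'\ciMV$ by Theorem~\ref{thm-L.ret} together with $\sum_\ka\pi_\ka^2=\mf{1}$. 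Hence $u\in\gF_p^{s,\lda}(V)$ with $\|u\|_{\gF_p^{s,\lda}}\leq c\,\vd u\vd_{\gF_p^{s,\lda}}$, completing the norm equivalence.

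Finally, $\Vvsdot_{\gF_p^{s,\lda}}$ is positively homogeneous and satisfies the triangle inequality because the map $u\mapsto(\|\ka_*(\pi_\ka u)\|_{\gF_{p,\ka}^s})_\ka$ composes a linear map with the seminorms $\Vsdot_{\gF_{p,\ka}^s}$ and is then taken in the weighted $\ell_p$-norm. Definiteness is the only nontrivial point: if $\vd u\vd_{\gF_p^{s,\lda}}=0$, then $\ka_*(\pi_\ka u)=0$, hence $\pi_\ka u=0$ for every $\ka\in\gK$, and therefore $u=\sum_\ka\pi_\ka^2u=0$ in $\cD'\ciMV$, hence as an element of $L_{1,\loc}\MV$.

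The only step that requires genuine care is the identity $\psi_p^\lda\vp_p^\lda u=u$ for the \emph{a~priori} rough section $u$; this is not a norm-level statement but a distributional one, and it follows by noting that $\vp_p^\lda,\psi_p^\lda$ are continuous on the distribution-level diagram of Theorem~\ref{thm-L.ret} (the weights $\rho_\ka^{\pm(\lda+m/p)}$ are just numerical constants attached to each chart). Everything else is bookkeeping around Theorem~\ref{thm-B.ret} and the definition of $\vp_p^\lda$.
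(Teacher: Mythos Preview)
Your proof is correct and follows essentially the same route as the paper: both recognize that $\vd u\vd_{\gF_p^{s,\lda}}=\|\vp_p^\lda u\|_{\ell_p(\mf{\gF}_p^s)}$ and invoke the retraction--coretraction pair of Theorem~\ref{thm-B.ret} to get the norm equivalence. The paper compresses this into the abstract observation that $\|e\cdot\|_X\sim\Vsdot_Y$ whenever $(r,e)$ is a retraction--coretraction pair, whereas you unpack the converse membership statement explicitly via the distribution-level identity $\psi_p^\lda\vp_p^\lda=\id$ from Theorem~\ref{thm-L.ret}; this extra care is appropriate since the paper's one-line argument tacitly uses that identity for $u$ only assumed to lie in $L_{1,\loc}\MV$.
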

\begin{proof}
Let $X$ and~$Y$ be Banach spaces,
\hb{r\in\cL\XY} a~retraction, and
\hb{e\in\cL\YX} a~coretraction. Then
$$
\|ey\|\leq\|e\|\,\|y\|=\|e\|\,\|rey\|
\leq\|e\|\,\|r\|\,\|ey\|
\qa y\in Y,
$$
implies
\hb{\Vsdot_Y\sim\|e\cdot\|_X}. Thus the assertion follows from
Theorem~\ref{thm-B.ret}, setting
\hb{e:=\vp_p^\lda}.
\end{proof}
Of course, $\Vvsdot_{\gF_p^{s,\lda}}$~depends on the particular singularity
datum $(\rho,\gK)$ and on  the chosen localization system subordinate
to~$\gK$. Since $\gF_p^{s,\lda}$~has been invariantly defined it follows
that another choice of these data results in an equivalent norm.
\begin{theorem}\label{thm-B.ref}
$\gF_p^{s,\lda}(V)$~is a reflexive Banach space.
\end{theorem}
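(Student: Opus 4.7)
The plan is to deduce reflexivity of $\gF_p^{s,\lda}(V)$ from the retraction-coretraction presentation provided by Theorem~\ref{thm-B.ret}, i.e., from its realization as a topological direct summand of $\ell_p(\mf{\gF}_p^s)$. Concretely, I would proceed in three steps.

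First, recall the building blocks: each $\gF_{p,\ka}^s = \gF_p^s(\BX_\ka,E)$ with $\BX_\ka\in\{\BR^m,\BH^m\}$ and $E$ finite-dimensional is a standard Bessel potential ($\gF=H$) space or Besov ($\gF=B$) space on a Euclidean (half-)space. For $1<p<\iy$ and $s\geq 0$ (resp.\ $s>0$) these spaces are reflexive; this is classical, see for instance Triebel~\cite{Tri78a}, together with the trivial vector-valued case since $E$ is finite-dimensional. Second, use the statement recalled in Section~\ref{sec-W}: if every factor $E_\al$ is reflexive, then so is the $\ell_p$-sum $\ell_p(\mf{E})$ for $1<p<\iy$ (with dual $\ell_{p'}(\mf{E}')$ under $\mfpw$). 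Applying this with $E_\al:=\gF_{p,\ka}^s$ shows that $\ell_p(\mf{\gF}_p^s)$ is a reflexive Banach space.

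Third, invoke the general principle that a retract of a reflexive Banach space is reflexive. By Theorem~\ref{thm-B.ret}, $\vp_p^\lda\in\cL\bigl(\gF_p^{s,\lda}(V),\ell_p(\mf{\gF}_p^s)\bigr)$, $\psi_p^\lda\in\cL\bigl(\ell_p(\mf{\gF}_p^s),\gF_p^{s,\lda}(V)\bigr)$, and $\psi_p^\lda\circ\vp_p^\lda=\id$. Hence $P:=\vp_p^\lda\circ\psi_p^\lda$ is a continuous projection on $\ell_p(\mf{\gF}_p^s)$; its range $\im(P)=\vp_p^\lda\bigl(\gF_p^{s,\lda}(V)\bigr)$ is a closed subspace of the reflexive space $\ell_p(\mf{\gF}_p^s)$ and therefore reflexive, and $\vp_p^\lda$ is a topological isomorphism from $\gF_p^{s,\lda}(V)$ onto $\im(P)$ (with inverse $\psi_p^\lda\sn\im(P)$). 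Reflexivity transfers through this isomorphism, so $\gF_p^{s,\lda}(V)$ is reflexive, and it is a Banach space as required.

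There is essentially no obstacle here beyond bookkeeping: the whole argument is formal, since the hard analytic work has been concentrated in Theorem~\ref{thm-B.ret}. The only point at which one must be careful is the classical input, namely reflexivity of $H_p^s$ and $B_p^s$ on $\BR^m$ and $\BH^m$ for $1<p<\iy$, and the preservation of reflexivity under $\ell_p$-sums; both are stated or cited earlier in the paper.
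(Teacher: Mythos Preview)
Your proposal is correct and follows essentially the same approach as the paper: reflexivity of the local models $\gF_{p,\ka}^s$, hence of $\ell_p(\mf{\gF}_p^s)$, and then transfer to $\gF_p^{s,\lda}(V)$ via the retraction-coretraction of Theorem~\ref{thm-B.ret}, which realizes it as (isomorphic to) a closed complemented subspace. The paper's version is just a more compressed statement of the same three steps.
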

\begin{proof}
Since $\gF_{p,\ka}^s$ is reflexive (cf.~Theorem~4.4.4
of~\cite{Ama09a} if
\hb{\BX_\ka=\BH^m}),
\ $\ell_p(\mf{\gF}_p^s)$~is reflexive.
Theorem~\ref{thm-B.ret} implies that $\gF_p^{s,\lda}(V)$ is isomorphic
to a closed linear subspace of~$\ell_p(\mf{\gF}_p^s)$ (e.g., Lemma~I.2.3.1
in~\cite{Ama95a}). Hence  $\gF_p^{s,\lda}(V)$ is reflexive as well.
\end{proof}
The following theorem shows that the weighted Bessel potential and Besov
spaces are natural with respect to~$\na$.
\begin{theorem}\label{thm-B.Na}
Suppose
\hb{s\geq0} with
\hb{s>0} if\/
\hb{\gF=B}, and
\hb{k\in\BN^\times}. Then
$$
\na^k\in\cL\bigl(\gF_p^{s+k,\lda}(V_{\coV\tau}^\sa),
\gF_p^{s,\lda}(V_{\coV\tau+k}^\sa)\bigr).
$$
\end{theorem}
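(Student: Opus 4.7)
My plan is to prove the statement first on integer-order Sobolev spaces, where the bound is essentially tautological, and then interpolate to reach arbitrary $s\geq 0$.

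\medskip\noindent\textbf{Step 1: Integer case.}
I would first show that for every $j\in\BN$ one has
\[
\na^k\in\cL\bigl(W_{\coW p}^{j+k,\lda}(V_\tau^\sa),W_{\coW p}^{j,\lda}(V_{\tau+k}^\sa)\bigr).
\]
This follows immediately from the definition~\eqref{W.norm}: for $u\in\cD(M,V_\tau^\sa)$, the terms appearing in the norm of $\na^k u$ as an element of $W_{\coW p}^{j,\lda}(V_{\tau+k}^\sa)$ are
\[
\bigl\|\rho^{\lda+(\tau+k)-\sa+i}\,|\na^i(\na^k u)|_g\bigr\|_p
=\bigl\|\rho^{\lda+\tau-\sa+(i+k)}\,|\na^{i+k}u|_g\bigr\|_p\qb i=0,\ldots,j,
\]
since $\na^i\na^k=\na^{i+k}$ and the weight exponents line up exactly. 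Summing over $i$ and taking $p$-th roots yields
$\|\na^k u\|_{j,p;\lda}\leq\|u\|_{j+k,p;\lda}$. By density of $\cD\MV$ in the weighted Sobolev spaces the inequality extends to all of $W_{\coW p}^{j+k,\lda}(V_\tau^\sa)$.

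\medskip\noindent\textbf{Step 2: Interpolation.}
Given $s\geq 0$ (with $s>0$ if $\gF=B$), I would choose integers $k_0,k_1\in\BN$ with $k_0\leq s\leq k_1$, $k_0<k_1$, and $\ta\in[0,1]$ such that $s=(1-\ta)k_0+\ta k_1$; for the Besov case I pick $k_0\geq 1$ so that Corollary~\ref{cor-B.ret}(ii) applies. By Step~1,
\[
\na^k\in\cL\bigl(W_{\coW p}^{k_j+k,\lda}(V_\tau^\sa),
W_{\coW p}^{k_j,\lda}(V_{\tau+k}^\sa)\bigr)\qb j=0,1.
\]
Applying the complex functor $[\cdot,\cdot]_\ta$ (when $\gF=H$) or the real functor $(\cdot,\cdot)_{\ta,p}$ (when $\gF=B$) and invoking Corollary~\ref{cor-B.ret}(i),(ii) to identify
\[
[W_{\coW p}^{k_0+k,\lda},W_{\coW p}^{k_1+k,\lda}]_\ta\doteq H_p^{s+k,\lda}\qa
[W_{\coW p}^{k_0,\lda},W_{\coW p}^{k_1,\lda}]_\ta\doteq H_p^{s,\lda}
\]
(and analogously with $(\cdot,\cdot)_{\ta,p}$ on the $B$-side) gives the desired continuity.

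\medskip\noindent\textbf{Step 3: Borderline cases.}
For $s=0$ and $\gF=H$ the claim reduces to $\na^k\in\cL(W_{\coW p}^{k,\lda}(V_\tau^\sa),L_p^\lda(V_{\tau+k}^\sa))$, which is the $j=0$ case of Step~1. For integer $s\geq1$ and $\gF=B$, the space $B_p^{s,\lda}$ is defined as $(W_{\coW p}^{s-1,\lda},W_{\coW p}^{s+1,\lda})_{1/2,p}$, and Step~2 still applies with $k_0=s-1$, $k_1=s+1$, $\ta=1/2$.

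\medskip
The plan carries no real obstacle: the only structural point is the bookkeeping in Step~1 showing that raising the covariant order by $k$ is exactly compensated by the weight-shift $\lda+\tau-\sa\mapsto\lda+(\tau+k)-\sa$, so that $\na^k$ reduces to a coordinate projection on the defining sums of the Sobolev norm. Once that is observed, interpolation delivers the Bessel potential and Slobodeckii/Besov cases automatically.
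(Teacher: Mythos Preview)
Your proposal is correct and follows exactly the same approach as the paper: establish the integer case directly from the definition of the weighted Sobolev norm (the paper calls this ``obvious''), then interpolate via Corollary~\ref{cor-B.ret}. Your write-up is simply a more detailed version of what the paper compresses into two sentences.
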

\begin{proof}
Since $\na^ku$~is a
$(\sa,\tau+k)$-tensor field if $u$~is a
$(\sa,\tau)$-tensor field, it is obvious that
$$
\na^k\in\cL\bigl(W_{\coW p}^{s+k,\lda}(V_{\coV\tau}^\sa),
W_{\coW p}^{s,\lda}(V_{\coV\tau+k}^\sa)\bigr)
$$
for
\hb{s\in\BN}. Now we obtain the assertion by interpolation, due to
Corollary~\ref{cor-B.ret}.
\end{proof}
\begin{remarks}\label{rem-B.Rm}
\hh{(a)}\quad
We consider the simplest case:
\hb{M=(\BR^m,g_m)} and
\hb{V=M\times\BK} with
\hb{\gT(M)=[\![\mf{1}]\!]}. By the arguments of the proof of
Lemma~\ref{lem-U.LS} we construct
\hb{\pi\in\cD\bigl(Q^m,[0,1]\bigr)} such that
\hb{\bigl\{\,\pi^2(\cdot+z)\ ;\ z\in\BZ^m\,\bigr\}} is a partition of unity
subordinate to the open covering
\hb{\{\,z+Q^m\ ;\ z\in\BZ^m\}} of~$\BR^m$. Consequently, fixing
\hb{\chi\in\cD\bigl(Q^m,[0,1]\bigr)} with
\hb{\chi\sn\supp(\pi)=\mf{1}}, it follows that
\hb{\bigl\{\,\pi(\cdot+z),\chi(\cdot+z)\ ;\ z\in\BZ\,\bigr\}} is~a
localization system subordinate to the `translation atlas' constructed in
the proof of Example~\ref{exa-S.ex}(c). Hence Theorem~\ref{thm-B.loc}
guarantees that
\beq\label{B.Rm}
u\mt\Bigl(\sum_{z\in\BZ^m}\|\pi u(\cdot+z)\|_{\gF_p^s\Rm}^p\Bigr)^{1/p}
=\Bigl(\sum_{z\in\BZ^m}\|\pi(\cdot-z)u\|_{\gF_p^s\Rm}^p\Bigr)^{1/p}
\eeq
is an equivalent norm for~$\gF_p^s\Rm$, where
\hb{s>0} if
\hb{\gF=B}. This assertion is equivalent to the `localization principle'
of Theorem~2.4.7 of~\cite{Tri92a} for the Bessel potential spaces $H_p^s\Rm$
with
\hb{s\geq0} and the Besov spaces  $B_p^s\Rm$ with
\hb{s>0}.

\par
\hh{(b)}\quad
Of course, it is natural to define $B_{p,q}^{s,\lda}(V)$ with
\hb{1\leq q\leq\iy} by replacing $\pr_{\ta,p}$ in the definition
of $B_p^{s,\lda}(V)$ by~$\pr_{\ta,q}$. However, in this case the proof of
Theorem~\ref{thm-B.ret} does not apply. In fact, it follows from
Theorem~2.4.7 in~\cite{Tri92a} that there is no characterization of
$B_{p,q}^s\Rm$ analogous to \eqref{B.Rm} if
\hb{p\neq q}. For this reason the spaces $B_{p,q}^{s,\lda}(V)$ with
\hb{q\neq p} are less useful and we refrain from considering them
here.\hfill$\Box$
\end{remarks}
In the case where
\hb{M=\BR^m}, a~retraction-coretraction pair~$(\psi_p,\vp_p)$ based on a
localization system equivalent to the one of Remark~\ref{rem-B.Rm}(a) has
been
introduced in H.~Amann, M.~Hieber, and G.~Simonett~\cite{AHS94a}. In that
paper, besides establishing the analogue of~\eqref{B.Rm}, it is shown
that $(\psi_p,\vp_p)$~is useful to localize partial differential equations
for deriving maximal regularity results. This localization technique has
since been applied by several authors for the study of parabolic equations
on~$\BR^m$ (eg.,~\cite{KuW04a} and the references therein). An
abstract formulation has been given by
S.~Angenent~\cite{Ang99a}. As mentioned in the introduction, the
retraction-coretraction pair~$(\psi_p^\lda,\vp_p^\lda)$ is part of the
fundament on which we build (elsewhere) a~theory of parabolic equations
on singular manifolds.
\section{H\"older Spaces}\label{sec-H}
Let \eqref{L.sd} be chosen. For
\hb{k<s<k+1} with
\hb{k\in\BN} we denote by
\hb{BC_\ka^s:=BC^s\BXkE} the Banach space of all
\hb{u\in BC^k\BXkE} such that $\pa u$~is uniformly
\hb{(s-k)}-H\"older continuous for
\hb{|\al|=k}, endowed with one of its standard norms.

\par
From
\hb{BC_\ka^{k+1}\hr BC_\ka^s\hr BC_\ka^k} and Theorem~\ref{thm-W.retBC}
it follows
 $$
 \begin{picture}(195,55)(-54,-5)        
 \put(-29,45){\makebox(0,0)[c]{\small{$\ell_\iy(\mf{BC}^{k+1})$}}}
 \put(-29,0){\makebox(0,0)[c]{\small{$BC^{{k+1},\lda}$}}}
 \put(50,45){\makebox(0,0)[c]{\small{$\ell_\iy(\mf{BC}^s)$}}}
 \put(123,45){\makebox(0,0)[c]{\small{$\ell_\iy(\mf{BC}^k)$}}}
 \put(123,0){\makebox(0,0)[c]{\small{$BC^{k,\lda}$}}}
 \put(-34,22.5){\makebox(0,0)[r]{\small{$\psi_\iy^\lda$}}}
 \put(128,22.5){\makebox(0,0)[l]{\small{$\psi_\iy^\lda$}}}
 \put(5,45){\vector(1,0){18}}
 \put(5,47){\oval(4,4)[l]}
 \put(79,45){\vector(1,0){18}}
 \put(79,47){\oval(4,4)[l]}
 \put(0,0){\vector(1,0){102}}
 \put(0,2){\oval(4,4)[l]}
 \put(-29,35){\vector(0,-1){25}}
 \put(123,35){\vector(0,-1){25}}
 \end{picture}
 $$
Now we define
\hb{BC^{s,\lda}:=BC^{s,\lda}(V)},
the weighted \emph{space of} $(s\text{-})$\emph{H\"older continuous
$(\sa,\tau)$-tensor fields}, to be the image space of
\hb{\psi_\iy^\lda\sn\ell_\iy(\mf{BC}^s)}, so that the diagram
$$
 \begin{picture}(235,55)(-74,-5)        
 \put(-49,45){\makebox(0,0)[c]{\small{$\ell_\iy(\mf{BC}^{k+1})$}}}
 \put(-49,0){\makebox(0,0)[c]{\small{$BC^{k+1,\lda}$}}}
 \put(50,45){\makebox(0,0)[c]{\small{$\ell_\iy(\mf{BC}^s)$}}}
 \put(50,0){\makebox(0,0)[c]{\small{$BC^{s,\lda}$}}}
 \put(143,45){\makebox(0,0)[c]{\small{$\ell_\iy(\mf{BC}^k)$}}}
 \put(143,0){\makebox(0,0)[c]{\small{$BC^{k,\lda}$}}}
 \put(-43,22.5){\makebox(0,0)[l]{\small{$\psi_\iy^\lda$}}}
 \put(55,22.5){\makebox(0,0)[l]{\small{$\psi_\iy^\lda$}}}
 \put(148,22.5){\makebox(0,0)[l]{\small{$\psi_\iy^\lda$}}}
 \put(-15,45){\vector(1,0){38}}
 \put(-15,47){\oval(4,4)[l]}
 \put(-15,0){\vector(1,0){38}}
 \put(-15,2){\oval(4,4)[l]}
 \put(79,45){\vector(1,0){38}}
 \put(79,47){\oval(4,4)[l]}
 \put(79,0){\vector(1,0){38}}
 \put(79,2){\oval(4,4)[l]}
 \put(-49,35){\vector(0,-1){25}}
 \put(50,35){\vector(0,-1){25}}
 \put(143,35){\vector(0,-1){25}}
 \end{picture}
 $$
is commuting. Of course, this definition depends on the choice of the
singularity datum $(\rho,\gK)$ and the localization system subordinate
to~$\gK$. The following theorem shows, however, that the topology
of~$BC^{s,\lda}$ is determined by the singularity type $\gT(M)$ only.
\begin{theorem}\label{thm-H.eq}
Suppose
\hb{k<s<k+1} with
\hb{k\in\BN}.
\begin{itemize}
\item[(i)]
$\psi_\iy^\lda$~is a retraction onto $BC^{s,\lda}$ and $\vp_\iy^\lda$~is a
coretraction.
\item[(ii)]
$BC^{s,\lda}$~is a Banach space and
$$
u\mt\vd u\vd_{s,\iy;\lda}
:=\sup_\ka\rho_\ka^\lda\,\|\ka_*(\pi_\ka u)\|_{BC_\ka^s}
$$
is a norm for it. Other choices of singularity data and  localization
systems lead to equivalent norms.
\end{itemize}
\end{theorem}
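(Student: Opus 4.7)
The starting observation is the tautological identity
\[
\vd u\vd_{s,\iy;\lda}
 =\sup_\ka\big\|\rho_\ka^{\lda}\ka_*(\pi_\ka u)\big\|_{BC_\ka^s}
 =\|\vp_\iy^\lda u\|_{\ell_\iy(\mf{BC}^s)},
\]
since $m/\iy=0$ and $\vp_{\iy,\ka}^\lda=\rho_\ka^\lda\vp_\ka$. Thus proving~(i) and~(ii) reduces to showing that the restriction of $\psi_\iy^\lda$ to $\ell_\iy(\mf{BC}^s)$ is a retraction with $\vp_\iy^\lda$ a coretraction, after which $BC^{s,\lda}$, endowed with $\vd\cdot\vd_{s,\iy;\lda}$, is automatically isomorphic to the closed subspace $\im(\vp_\iy^\lda\circ\psi_\iy^\lda)$ of $\ell_\iy(\mf{BC}^s)$ and is therefore a Banach space.

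The retraction identity $\psi_\iy^\lda\circ\vp_\iy^\lda=\id$ is already known on $BC^{k,\lda}$ by Theorem~\ref{thm-W.retBC}, and since (by definition) $BC^{s,\lda}$ sits inside $BC^{k,\lda}$, the identity is inherited. The real work is therefore to establish the mapping properties
\[
\vp_\iy^\lda\in\cL\bigl(BC^{s,\lda},\ell_\iy(\mf{BC}^s)\bigr),
\qquad
\psi_\iy^\lda\in\cL\bigl(\ell_\iy(\mf{BC}^s),BC^{s,\lda}\bigr),
\]
which together amount to showing that $T:=\vp_\iy^\lda\circ\psi_\iy^\lda$ is a bounded projection on $\ell_\iy(\mf{BC}^s)$. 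A direct computation gives
\[
(T\mf{v})_{\wt\ka}
 =\sum_\ka\rho_{\wt\ka}^\lda\rho_\ka^{-\lda}\,
 \wt\ka_*\bigl(\pi_{\wt\ka}\pi_\ka\chi_\ka\ka^*v_\ka\bigr)
 =\sum_\ka\rho_{\wt\ka}^\lda\rho_\ka^{-\lda}\,
 (\ka_*\pi_{\wt\ka})\cdot(\ka_*\pi_\ka)\cdot\chi\cdot v_\ka\circ(\ka\circ\wt\ka^{-1}),
\]
where only those $\ka\in\gN(\wt\ka)$ contribute. Three facts then combine to bound $\|T\mf{v}\|_{\ell_\iy(\mf{BC}^s)}$ by a multiple of $\|\mf{v}\|_{\ell_\iy(\mf{BC}^s)}$: (a) finite multiplicity of $\gK$ caps $\card\gN(\wt\ka)$; (b) the uniform Hölder estimates \eqref{U.LS}(iii) on $\ka_*\pi_\ka,\chi_\ka$ control the smooth factors; and (c) the uniform $C^{k+1}$-bounds \eqref{S.sd}(ii) on the transition maps $\ka\circ\wt\ka^{-1}$ make composition with these diffeomorphisms a uniformly bounded operator $BC_\ka^s\to BC_{\wt\ka}^s$ (this is the classical invariance of Hölder spaces under bi-Lipschitz$^{k+1}$ changes of coordinates, applied with uniform constants). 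Finally, $\rho_{\wt\ka}^\lda/\rho_\ka^{-\lda}\leq c$ on overlaps by \eqref{S.sd}(v),(vi) and \eqref{S.eq}(i).

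The hard part is assembling (a)–(c) into a clean, chart-uniform Hölder estimate: since $s\notin\BN$, one must check that not only the top derivatives but also their $(s-k)$-Hölder seminorms transform acceptably under $\ka\circ\wt\ka^{-1}$, and that products with the compactly supported cut-offs $\ka_*\pi_\ka,\chi$ preserve these seminorms with constants independent of $\ka,\wt\ka$. Granted this, Part~(i) follows, Part~(ii) is the standard principle that a retraction from a Banach space equips its image with a complete norm (equivalent to the one induced by the coretraction), and the independence of $(\rho,\gK)$ and $\{(\pi_\ka,\chi_\ka)\}$ within a singularity structure is obtained by applying the same estimate~(a)–(c) to the pair of coretractions coming from two equivalent data, using conditions \eqref{S.eq}(ii),(iii).
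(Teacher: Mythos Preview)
Your approach is correct and follows the same skeleton as the paper's proof: both arguments reduce (i) and the norm statement in (ii) to the boundedness of the projection $T=\vp_\iy^\lda\circ\psi_\iy^\lda$ on $\ell_\iy(\mf{BC}^s)$, and both reduce the independence claim to a uniform change-of-variables estimate of the type
\[
\big\|\ka_*\wt{\ka}^*(\wt{\chi}w)\big\|_{BC_\ka^s}\leq c\,\|w\|_{BC_{\wt{\ka}}^s}
\qa \ka\in\gK,\ \wt{\ka}\in\gN(\ka),
\]
combined with finite multiplicity and \eqref{S.sd}(vi). The one genuine methodological difference is how this H\"older estimate is obtained. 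You propose to check it \emph{directly}, tracking the $(s-k)$-H\"older seminorm of the top derivatives through composition with the transition maps and multiplication by the cut-offs. The paper instead first establishes the estimate for integer orders~$j\in\BN$ (which is Leibniz' rule and \eqref{S.sd}(ii), \eqref{U.LS}(iii)), and then invokes the \emph{local} real interpolation identity $BC_\ka^s\doteq(BC_\ka^k,BC_\ka^{k+1})_{s-k,\iy}$ to pass to fractional~$s$. This interpolation shortcut avoids any hands-on manipulation of H\"older seminorms and delivers the chart-uniform constant automatically; it is both shorter and more in keeping with the rest of the paper, where all non-integer results are derived by interpolating integer ones. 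Your direct route works too, but carrying out the ``hard part'' you flag is noticeably more laborious than the one-line interpolation argument.

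Two small points to clean up: in your displayed formula for $(T\mf{v})_{\wt\ka}$ the push-forwards should be by~$\wt\ka$, not~$\ka$ (you have $(\ka_*\pi_{\wt\ka})$ etc.\ where $(\wt\ka_*\pi_{\wt\ka})$ is meant), and the factor~$\chi$ is superfluous since $\pi_\ka\ka^*v_\ka$ is already compactly supported in~$U_\ka$. Also, the ratio you write as $\rho_{\wt\ka}^\lda/\rho_\ka^{-\lda}$ should read $(\rho_{\wt\ka}/\rho_\ka)^\lda$.
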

\begin{proof}
(1) Assertion~(i) and the claim that $BC^{s,\lda}$ is a Banach space and
\hb{\Vvsdot_{s,\iy;\lda}} a norm are clear.

\par
(2) Let $(\wt{\rho},\wt{\gK})$ be a singularity datum and
\hb{\bigl\{\,(\wt{\pi}_{\wt{\ka}},\wt{\chi}_{\wt{\ka}})
    \ ;\ \wt{\ka}\in\wt{\gK}\,\bigr\}} a~localization system subordinate
to~$\wt{\gK}$. Suppose
\hb{j\in\BN} and
\hb{w\in BC_{\wt{\ka}}^j}. Then
$$
\ka_*\wt{\ka}^*(\wt{\chi}w)
=(\wt{\chi}w)\circ(\wt{\ka}\circ\ka^{-1})
=(\wt{\ka}\circ\ka^{-1})^*(\wt{\chi}w).
$$
Thus it follows from Leibniz' rule, \eqref{U.LS}, and \eqref{S.eq}(iii) that
\beq\label{H.kkj}
\|\ka_*\wt{\ka}^*(\wt{\chi}w)\|_{BC_\ka^j}
\leq c(j)\,\|w\|_{BC_{\wt{\ka}}^j},
\eeq 
that is,
$$
\bigl(w\mt\ka_*\wt{\ka}^*(\wt{\chi}w)\bigr)
\in\cL(BC_{\wt{\ka}}^j,BC_\ka^j)
\qa j\in\BN.
$$
Since
\hb{BC_\ka^s\doteq(BC_\ka^k,BC_\ka^{k+1})_{s-k,\iy}},
we thus obtain
\beq\label{H.s}
\bigl(w\mt\ka_*\wt{\ka}^*(\wt{\chi}w)\bigr)
\in\cL(BC_{\wt{\ka}}^s,BC_\ka^s).
\eeq

\par
(3) Using
\hb{\sum_{\wt{\ka}}\wt{\pi}_{\wt{\ka}}^2=\mf{1}} we find
\beq\label{H.kk}
\bal
\ka_*(\rho_\ka^\lda\pi_\ka u)
&=\ka_*\Bigl(\rho_\ka^\lda\pi_\ka
 \sum_{\wt{\ka}}\wt{\pi}_{\wt{\ka}}^2u\Bigr)\\
&=(\ka_*\pi_\ka)\sum_{\wt{\ka}\in\gN(\ka)}
 (\rho_\ka/\wt{\rho}_{\wt{\ka}})^\lda
 \bigl(\ka_*\wt{\ka}^*(\wt{\ka}_*\wt{\pi}_{\wt{\ka}})\bigr)
 \,\wt{\rho}_{\wt{\ka}}^\lda\bigl(\ka_*\wt{\ka}^*
 \bigl(\wt{\ka}_*(\wt{\pi}_{\wt{\ka}}\wt{\chi}u)\bigr)\bigr).
\eal
\eeq
From \eqref{S.sd}(vi) and \eqref{S.eq}(ii) it follows
\hb{\rho_\ka\sim\wt{\rho}_{\wt{\ka}}} for
\hb{\ka\in\gK} and
\hb{\wt{\ka}\in\gN(\ka)}. Thus we infer from \eqref{U.LS}, \eqref{H.s}, and
\eqref{H.kk} that
$$
\|\rho_\ka^\lda\ka_*(\pi_\ka u)\|_{BC_\ka^s}
\leq c\sum_{\wt{\ka}\in\gN(\ka)}
\|\wt{\rho}_{\wt{\ka}}^\lda\wt{\ka}_*(\wt{\pi}_{\wt{\ka}}u)\|
_{BC_{\wt{\ka}}^s}
\qa \ka\in\gK.
$$
This implies that the norm associated with $(\wt{\rho},\wt{\gK})$ and the
corresponding localization system is stronger than the original one.
Thus the last part of the assertion follows by
interchanging the roles of the singularity data.
\end{proof}
We fix now any one of the equivalent norms for~$BC^{s,\lda}$. Then
\hb{\bigl[\,BC^{s,\lda}(V)\ ;\ s\geq0\,\bigr]} is the weighted
\emph{H\"older scale} of $(\sa,\tau)$-tensor fields on~$M$.
\begin{remark}\label{rem-H.int}
We expect
\beq\label{H.int}
BC^{s,\lda}\doteq(BC^{k,\lda},BC^{{k+1},\lda})_{s-k,\iy}
\qa k<s<k+1
\qb k\in\BN.
\eeq
However, we cannot prove this relation since we do not know whether
$$
\bigl(\ell_\iy(\mf{BC}^k),\ell_\iy(\mf{BC}^{k+1})\bigr)_{s-k,\iy}
\doteq\ell_\iy\bigl((\mf{BC}^k,\mf{BC}^{k+1})_{s-k,\iy}\bigr).
$$
Thus we leave \eqref{H.int} as an open problem.\hfill$\Box$
\end{remark}
We denote by $C_0^{s,\lda}(V)$ the closure of~$\cD\MV$ in~$BC^{s,\lda}(V)$
for
\hb{s\geq0}. Then
\hb{[\,C_0^{s,\lda}(V)\ ;\ s\geq0\,]} is called weighted \emph{small
H\"older scale}. The small H\"older space~$C_0^{s,\lda}$ should not be 
confused with the \emph{little} H\"older space~$bc^{s,\lda}$ which is the 
closure of $BC^{s+1,\lda}$ in~$BC^{s,\lda}$. Of course, 
\hb{bc^{s,\lda}=C_0^{s,\lda}} if $M$~is compact. 
\begin{theorem}\label{thm-H.ret}
Suppose
\hb{s\geq0}. Then $\psi_\iy^\lda$~is a retraction from
$c_0\bigl(\mf{C}_0^s\BXE\bigr)$ onto~$C_0^{s,\lda}(V)$, and 
$\vp_\iy^\lda$~is a coretraction.
\end{theorem}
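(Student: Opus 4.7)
The plan is to deduce this by restricting the ambient retraction result~---~Theorem~\ref{thm-W.retBC} when $s\in\BN$ and Theorem~\ref{thm-H.eq} otherwise~---~to the ``small'' spaces on both sides. Once I verify that $\vp_\iy^\lda$ maps $C_0^{s,\lda}(V)$ into $c_0(\mf{C}_0^s\BXE)$ and that $\psi_\iy^\lda$ maps $c_0(\mf{C}_0^s\BXE)$ into $C_0^{s,\lda}(V)$, the identity $\psi_\iy^\lda\circ\vp_\iy^\lda=\id$ will be inherited automatically from the ambient situation.

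For the first of these inclusions, I will note that if $u\in\cD\MV$, compactness of $\supp(u)$ together with the finite multiplicity and uniform shrinkability of~$\gK$ forces all but finitely many of the sections $\pi_\ka u$ to vanish; each surviving $\ka_*(\pi_\ka u)$ lies in $\cD\BXkE\is C_0^s\BXkE$, so $\vp_\iy^\lda u\in c_c(\mf{C}_0^s\BXE)\is c_0(\mf{C}_0^s\BXE)$. The key observation is then that $c_0(\mf{C}_0^s\BXE)$ is closed in $\ell_\iy(\mf{BC}^s)$: indeed $C_0^s\BXkE$ is by definition closed in $BC^s\BXkE$, hence $\ell_\iy(\mf{C}_0^s)$ sits as a closed subspace of $\ell_\iy(\mf{BC}^s)$, and $c_0(\mf{C}_0^s\BXE)$ is in turn closed in $\ell_\iy(\mf{C}_0^s)$ by the very definition of~$c_0$. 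Combining this closedness with the continuity of $\vp_\iy^\lda\sco BC^{s,\lda}(V)\ra\ell_\iy(\mf{BC}^s)$ and the density $\cD\MV\sdh C_0^{s,\lda}(V)$ will then give $\vp_\iy^\lda\bigl(C_0^{s,\lda}(V)\bigr)\is c_0(\mf{C}_0^s\BXE)$.

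For the reverse inclusion I would start with $\mf{v}\in c_c(\mf{C}_0^s\BXE)$, supported in a finite subset $\gL\is\gK$. Approximating each $v_\ka$, $\ka\in\gL$, in the $BC^s$-norm by some $v_{\ka,n}\in\cD\BXkE$ and extending by zero outside $\gL$ gives $\mf{v}_n\to\mf{v}$ in $\ell_\iy(\mf{BC}^s)$, while $\psi_\iy^\lda\mf{v}_n=\sum_{\ka\in\gL}\rho_\ka^{-\lda}\pi_\ka\ka^*v_{\ka,n}$ is a finite sum of smooth, compactly supported sections, hence lies in $\cD\MV$. The continuity of $\psi_\iy^\lda\sco\ell_\iy(\mf{BC}^s)\ra BC^{s,\lda}(V)$ then forces $\psi_\iy^\lda\mf{v}\in C_0^{s,\lda}(V)$, and a further application of continuity together with the density of $c_c$ in $c_0$ and the closedness of $C_0^{s,\lda}(V)$ in $BC^{s,\lda}(V)$ will extend the inclusion to all of $c_0(\mf{C}_0^s\BXE)$. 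I expect the main obstacle to be the first inclusion: one has to place $\vp_\iy^\lda u$ in the strictly smaller space $c_0$ rather than just in $\ell_\iy$, and for a generic $u\in C_0^{s,\lda}(V)$ this cannot be done component-wise by direct estimation of $\rho_\ka^\lda\,\|\ka_*(\pi_\ka u)\|_{C_0^s}$; the closedness argument sketched above is the device that circumvents any such explicit decay computation.
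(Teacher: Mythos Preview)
Your proposal is correct and follows essentially the same approach as the paper's proof: both restrict the ambient retraction $\psi_\iy^\lda\sco\ell_\iy(\mf{BC}^s)\ra BC^{s,\lda}(V)$ (and its coretraction~$\vp_\iy^\lda$) to the small spaces by a density--closedness argument. The paper compresses the two inclusions you spell out into a single commutative-diagram argument~---~placing $\mf{\cD}\BXE\sdh c_0(\mf{C}_0^s)\hr\ell_\iy(\mf{BC}^s)$ over $\cD\MV\sdh C_0^{s,\lda}(V)\hr BC^{s,\lda}(V)$ with outer vertical arrows from Theorems~\ref{thm-L.ret} and~\ref{thm-H.eq}, and then reads off the middle arrow~---~but the underlying mechanism is identical to yours, and your explicit verification that $c_0(\mf{C}_0^s)$ is closed in $\ell_\iy(\mf{BC}^s)$ is precisely the fact the paper leaves implicit when it ``inserts the missing vertical arrow.''
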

\begin{proof}
Since $\cD\BXkE$ is dense in 
\hb{C_{0,\ka}^s:=C_0^s\BXkE}, it follows that
$\mf{\cD}\BXE$ is dense in 
$c_0\bigl(\mf{C}_0^s\BXE\bigr)$. By Theorems
\ref{thm-L.ret} and \ref{thm-H.eq} the diagram
 $$
 \begin{picture}(214,62)(-57,-5)        
 \put(-38,45){\makebox(0,0)[c]{\small{$\mf{\cD}\BXE$}}}
 \put(-38,0){\makebox(0,0)[c]{\small{$\cD\MV$}}}
 \put(50,45){\makebox(0,0)[c]{\small{$c_0(\mf{C}_0^s)$}}}
 \put(50,0){\makebox(0,0)[c]{\small{$C_0^{s,\lda}(V)$}}}
 \put(138,45){\makebox(0,0)[c]{\small{$\ell_\iy(\mf{BC}^s)$}}}
 \put(138,0){\makebox(0,0)[c]{\small{$BC^{s,\lda}(V)$}}}
 \put(-43,22.5){\makebox(0,0)[r]{\small{$\psi_\iy^\lda$}}}
 \put(143,22.5){\makebox(0,0)[l]{\small{$\psi_\iy^\lda$}}}
 \put(6,50){\makebox(0,0)[b]{\small{$d$}}}
 \put(6,5){\makebox(0,0)[b]{\small{$d$}}}
 \put(-12,45){\vector(1,0){38}}
 \put(-12,47){\oval(4,4)[l]}
 \put(-12,0){\vector(1,0){38}}
 \put(-12,2){\oval(4,4)[l]}
 \put(72,45){\vector(1,0){38}}
 \put(72,47){\oval(4,4)[l]}
 \put(72,0){\vector(1,0){38}}
 \put(72,2){\oval(4,4)[l]}
 \put(-38,35){\vector(0,-1){25}}
 \put(138,35){\vector(0,-1){25}}
 \end{picture}
 $$
is commuting. From this we read off that we can insert the missing
vertical arrow. This gives the assertion.
\end{proof}
\begin{corollary}\label{cor-H.emb}
Suppose
\hb{0\leq s_0<s_1<s_2<\iy}. Then
$$
C_0^{s_2,\lda}\sdh C_0^{s_1,\lda}\hr BC^{s_1,\lda}\hr BC^{s_0,\lda}.
$$
\end{corollary}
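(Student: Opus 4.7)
The plan is to transport the desired embeddings from the model spaces on $\BR^m$ and $\BH^m$ to $M$ via the retraction-coretraction pairs $(\psi_\iy^\lda,\vp_\iy^\lda)$ supplied by Theorems \ref{thm-H.eq} and \ref{thm-H.ret}. The key observation is that at the local level, the classical embeddings
\[
BC^{s_2}\BXkE \hookrightarrow BC^{s_1}\BXkE \hookrightarrow BC^{s_0}\BXkE
\]
hold uniformly in $\ka\in\gK$, because $\BX_\ka\in\{\BR^m,\BH^m\}$ and hence the fibre spaces depend on $\ka$ only through the choice $\BR^m$ vs.\ $\BH^m$; in particular the operator norm of each local embedding is bounded by a constant $c=c(s_0,s_1,s_2,m)$.

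From this uniform bound I would conclude the componentwise embeddings
\[
\ell_\iy(\mf{BC}^{s_1})\hookrightarrow\ell_\iy(\mf{BC}^{s_0})
\qquad\text{and}\qquad
c_0(\mf{C}_0^{s_2})\hookrightarrow c_0(\mf{C}_0^{s_1});
\]
the second one uses in addition that bounded linear maps send closures of $\cD\BXkE$ to closures. The middle embedding $C_0^{s_1,\lda}\hr BC^{s_1,\lda}$ of the corollary holds by definition of $C_0^{s_1,\lda}$ as the closure of $\cD\MV$ in $BC^{s_1,\lda}$. To obtain $BC^{s_1,\lda}\hr BC^{s_0,\lda}$, I apply Theorem \ref{thm-H.eq}(i): for $u\in BC^{s_1,\lda}$ one has $\vp_\iy^\lda u\in\ell_\iy(\mf{BC}^{s_1})\hr\ell_\iy(\mf{BC}^{s_0})$ with controlled norm, so that $u=\psi_\iy^\lda\vp_\iy^\lda u\in BC^{s_0,\lda}$ with $\|u\|_{s_0,\iy;\lda}\leq c\,\|u\|_{s_1,\iy;\lda}$. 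Exactly the same argument, now using Theorem \ref{thm-H.ret} and the $c_0$-embedding above, yields the continuous embedding $C_0^{s_2,\lda}\hr C_0^{s_1,\lda}$.

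It remains to verify density. By definition $\cD\MV$ is dense in $C_0^{s_1,\lda}$; but $\cD\MV\subset C_0^{s_2,\lda}$ (again by definition), so $\cD\MV$ is contained in the image of the just-constructed embedding $C_0^{s_2,\lda}\hr C_0^{s_1,\lda}$, which therefore has dense image. This gives $C_0^{s_2,\lda}\sdh C_0^{s_1,\lda}$, and the chain of embeddings asserted in the corollary is complete.

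I do not anticipate any serious obstacle: the one point that might look delicate, namely uniformity in $\ka$ of the local embedding constants, is trivial here because $BC^s(\BX_\ka,E)$ is a standard function space on one of two fixed model domains, with no dependence on $\ka$ at all. The weight factors $\rho_\ka^\lda$ play no role since the same weight appears on both sides of every embedding.
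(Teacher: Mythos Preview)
Your argument is correct and is exactly what the paper intends: the corollary is stated without proof immediately after Theorem~\ref{thm-H.ret}, as a direct consequence of the retraction-coretraction framework combined with the trivial local embeddings on~$\BX_\ka$. One cosmetic point: for integer exponents the relevant retraction theorem is Theorem~\ref{thm-W.retBC} rather than Theorem~\ref{thm-H.eq}, so strictly speaking you should cite both (or simply note that $(\psi_\iy^\lda,\vp_\iy^\lda)$ is a retraction-coretraction pair on $\ell_\iy(\mf{BC}^s)$ and $BC^{s,\lda}$ for every $s\geq0$).
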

\begin{remarks}\label{rem-H.F}
\hh{(a)}\quad
Let \eqref{L.sd} be chosen. For
\hb{q,r\in[1,\iy]} and
\hb{s\in\BR} denote by~$F_{q,r;\ka}^s$ the \hbox{$E$-valued}
Triebel-Lizorkin spaces on~$\BX_\ka$. Define
\hb{F_{q,r}^{s,\lda}=F_{q,r}^{s,\lda}(V)} by requiring that the diagram
 $$
 \begin{picture}(212,56)(-57,-5)        
 \put(-38,45){\makebox(0,0)[c]{\small{$\mf{\cD}\BXE$}}}
 \put(-38,0){\makebox(0,0)[c]{\small{$\cD\MV$}}}
 \put(50,45){\makebox(0,0)[c]{\small{$\ell_q(\mf{F}_{q,r}^s)$}}}
 \put(50,0){\makebox(0,0)[c]{\small{$F_{q,r}^{s,\lda}(V)$}}}
 \put(138,45){\makebox(0,0)[c]{\small{$\mf{\cD}'\ciBXE$}}}
 \put(138,0){\makebox(0,0)[c]{\small{$\cD'\ciMV$}}}
 \put(-33,22.5){\makebox(0,0)[l]{\small{$\psi_p^\lda$}}}
 \put(55,22.5){\makebox(0,0)[l]{\small{$\psi_p^\lda$}}}
 \put(143,22.5){\makebox(0,0)[l]{\small{$\psi_p^\lda$}}}
 \put(-12,45){\vector(1,0){38}}
 \put(-12,47){\oval(4,4)[l]}
 \put(-12,0){\vector(1,0){38}}
 \put(-12,2){\oval(4,4)[l]}
 \put(72,45){\vector(1,0){38}}
 \put(72,47){\oval(4,4)[l]}
 \put(72,0){\vector(1,0){38}}
 \put(72,2){\oval(4,4)[l]}
 \put(-38,35){\vector(0,-1){25}}
 \put(50,35){\vector(0,-1){25}}
 \put(138,35){\vector(0,-1){25}}
 \end{picture}
 $$
be commuting. Then $F_{q,r}^{s,\lda}(V)$ is a Banach space, a~weighted
Triebel-Lizorkin space of $(\sa,\tau)$-tensor fields on~$M$, and
$$
u\mt\|\vp_q^\lda u\|_{\ell_q(\mf{F}_{q,r}^s)}
$$
is a norm for it. The topology of~$F_{q,r}^{s,\lda}$ is independent of the
particular choice of the singularity datum and the localization system.
If
\hb{M=(\BR^m,g_m)} and
\hb{\gT(M)=[\![\mf{1}]\!]}, then we recover~$F_{q,r}^s\Rm$.
\begin{proofb}
The first part of the assertion follows by obvious modifications of the
proof of Theorem~\ref{thm-H.eq} using the fact that $BC^k\Rm$~is a
point-wise multiplier space for~$F_{q,r}^s\Rm$, provided
\hb{k=k(s,q,r)} is sufficiently large (cf.~Theorem~6.1 in
W.~Yuan, W.~Sickel, and D.~Yang~\cite{YSY10a} or, if
\hb{q<\iy}, Theorem~4.2.2 in~\cite{Tri92a}). The last part is a
consequence of the invariance of~$F_{q,r}^s\Rm$ under diffeomorphisms (see
Theorem~6.7 in~\cite{YSY10a}).
\end{proofb}
\hh{(b)}\quad
It is clear that we can replace in the above construction the
Triebel-Lizorkin spaces  $F_{q,r}^s\Rm$ by any scale of spaces for which a
$BC^k$-point-wise multiplier and
the diffeomorphism theorem are valid. Thus, due to Theorems 6.1 and~6.7
in~\cite{YSY10a}, we can replace $F_{q,r}^s\Rm$ by the scales
$F_{q,r}^{s,\tau}\Rm$ and~$B_{q,r}^{s,\tau}\Rm$ of Triebel-Lizorkin and
Besov type (see~\cite{YSY10a} for precise definitions). However, this has
to be done with care. In fact, we could take, in particular, a~scale
$B_{p,q}^s\Rm$ with
\hb{q\neq p}. But then, due to Remark~\ref{rem-B.Rm}(b), the spaces
$B_{p,q}^{s,\lda}(V)$ constructed this way do not coincide with the Besov
spaces obtained in Remark~\ref{rem-B.Rm}(b) by interpolation.\hfill$\Box$
\end{remarks}
\section{Point-Wise Multipliers}\label{sec-P}
Suppose
\hb{\sa_i,\tau_i\in\BN} for
\hb{i=0,1,2}. Then
\beq\label{P.m}
V_{\tau_1}^{\sa_1}\times V_{\tau_2}^{\sa_2}\ra V_{\tau_0}^{\sa_0}
\qb (v_1,v_2)\mt v_1\bt v_2
\eeq
is called vector bundle \emph{multiplication} if it is (fiber-wise)
bilinear and satisfies
$$
|v_1\bt v_2|_g\leq c\,|v_1|_g\,|v_2|_g
\qa v_i\in V_{\tau_i}^{\sa_i}
\qb i=1,2.
$$
\begin{examples}\label{exa-P.ex}
\hh{(a)}\quad
The duality pairing
\hb{\pw\sco V_\tau^\sa\times V_\sa^\tau\ra V_0^0} is a multiplication.

\par
\hh{(b)}\quad
The map\quad
\hb{V_\tau^\sa\times V_\tau^\sa\ra V_0^0},
\ \hb{(u,v)\mt(u\sn\ol{v})_g}\quad is a multiplication.

\par
\hh{(c)}\quad
The tensor product\quad
\hb{{\otimes}\sco V_{\tau_1}^{\sa_1}\times V_{\tau_2}^{\sa_2}
   \ra V_{\tau_1+\tau_2}^{\sa_1+\sa_2}}\quad is a multiplication.

\par
\hh{(d)}\quad
Assume
\hb{1\leq i\leq\sa} and
\hb{1\leq j\leq\tau}. We denote by
\hb{\sC_j^i\sco V_\tau^\sa\ra V_{\tau-1}^{\sa-1}},
\ \hb{a\mt\sC_j^ia} \ the contraction with respect to positions $i$ and~$j$.
Then
\hb{|\sC_j^ia|_g\leq|a|_g} for
\hb{a\in V_\tau^\sa}.

\par
Suppose
\hb{1\leq i\leq\sa_1+\sa_2} and
\hb{1\leq j\leq\tau_1+\tau_2}. Then
$$
\sC_j^i\sco V_{\tau_1}^{\sa_1}\times V_{\tau_2}^{\sa_2}
\ra V_{\tau_1+\tau_2-1}^{\sa_1+\sa_2-1}
\qb (a,b)\mt\sC_j^i(a\otimes b)
$$
is a multiplication, a~\emph{contraction}.\hfill$\Box$
\end{examples}
In the following, we call the point-wise extension of \eqref{P.m}
\emph{point-wise multiplication induced} by \eqref{P.m} and denote it again
by~%
\hb{{}\bt{}}.
\begin{theorem}\label{thm-P.C}
Let\/ \eqref{P.m} be one of the  multiplications of Examples~\ref{exa-P.ex}.
Suppose
\hb{0\leq s\leq t},
\ \hb{\lda_1,\lda_2\in\BR}, and
\hb{\lda_0=\lda_1+\lda_2}. Then point-wise multiplication induced by
\eqref{P.m} is a continuous bilinear map from
$$
BC^{t,\lda_1}(V_{\tau_1}^{\sa_1})\times H_p^{s,\lda_2}(V_{\tau_2}^{\sa_2})
\text{ into }H_p^{s,\lda_0}(V_{\tau_0}^{\sa_0})
$$
if either
\hb{s=t\in\BN} or
\hb{t>s}, from
$$
BC^{t,\lda_1}(V_{\tau_1}^{\sa_1})\times B_p^{s,\lda_2}(V_{\tau_2}^{\sa_2})
\text{ into }B_p^{s,\lda_0}(V_{\tau_0}^{\sa_0})
$$
if\/
\hb{0<s<t}, and from
$$
BC^{s,\lda_1}(V_{\tau_1}^{\sa_1})\times BC^{s,\lda_2}(V_{\tau_2}^{\sa_2})
\text{ into }BC^{s,\lda_0}(V_{\tau_0}^{\sa_0}).
$$
\end{theorem}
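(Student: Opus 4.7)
The plan is to reduce the claim to the classical pointwise multiplier theorems for $\gF_p^s$ and $BC^s$ on $\BR^m$ and $\BH^m$ (see e.g.\ Section~2.8 of~\cite{Tri78a} and Theorem~4.2.2 of~\cite{Tri92a}), by way of the localization principles of Theorems~\ref{thm-B.loc} and~\ref{thm-H.eq}. First I would exploit the identity $\chi_\ka\pi_\ka=\pi_\ka$ from~\eqref{U.LS}(ii) together with the bilinearity of $\bt$ to write
\[
\pi_\ka(u\bt v)=(\chi_\ka u)\bt(\pi_\ka v),
\]
and push this forward by~$\ka$. For the multiplications in Examples~\ref{exa-P.ex}(a), (c), (d) the pushed-forward operation is a $\ka$-independent bilinear operation on $E$-valued functions (tensor product, contractions, duality pairing acting on matrix entries); for the metric-dependent case~(b) it additionally involves coefficients built from components of $\ka_*g$ and~$\ka_*g^*$, whose $BC^k_{\coV\ka}$-norms are uniformly bounded by explicit powers of $\rho_\ka$ in view of Lemma~\ref{lem-U.g}(ii). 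Combined with Lemma~\ref{lem-U.g}(v), which relates the Euclidean and Riemannian pointwise norms through a compensating power of $\rho_\ka$, the bookkeeping of these $\rho_\ka$-factors is consistent with the weight conventions of Theorem~\ref{thm-B.loc} precisely when $\lda_0=\lda_1+\lda_2$.

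Next I apply chart-by-chart the Euclidean pointwise multiplier theorem on $Q^m_\ka$: under the restrictions stated in the theorem (i.e.\ $s=t\in\BN$ when $\gF=H$, or $t>s$ in general), pointwise multiplication is continuous from $BC^t(Q^m_\ka,E_1)\times\gF_p^s(Q^m_\ka,E_2)$ to $\gF_p^s(Q^m_\ka,E_0)$, with constant depending only on $m,s,t,p$. Coupled with the chart-level identity of the previous step this yields, uniformly in~$\ka$,
\[
\rho_\ka^{\lda_0+m/p}\,\|\ka_*(\pi_\ka(u\bt v))\|_{\gF_{p,\ka}^s}
\leq c\,\bigl(\rho_\ka^{\lda_1}\|\ka_*(\chi_\ka u)\|_{BC_\ka^t}\bigr)\,\bigl(\rho_\ka^{\lda_2+m/p}\|\ka_*(\pi_\ka v)\|_{\gF_{p,\ka}^s}\bigr).
\]
Taking $\ell_p$-norms over~$\ka$, the $u$-factor is pulled out as a supremum; by Theorem~\ref{thm-H.eq} (using Remark~\ref{rem-W.ret}, which allows $\chi_\ka$ to replace $\pi_\ka$ in the localization) this supremum is $\leq c\,\|u\|_{BC^{t,\lda_1}}$, while the $\ell_p$-norm of the $v$-factor is equivalent to $\|v\|_{\gF_p^{s,\lda_2}}$ by Theorem~\ref{thm-B.loc}. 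This gives the continuous bilinear mapping $BC^{t,\lda_1}\times\gF_p^{s,\lda_2}\ra\gF_p^{s,\lda_0}$. The case $BC^{s,\lda_1}\times BC^{s,\lda_2}\ra BC^{s,\lda_0}$ is obtained by the same scheme, with $\ell_\iy$ replacing $\ell_p$ and the easy Euclidean estimate $\|\vp f\|_{BC^s_\ka}\leq c\,\|\vp\|_{BC^s_\ka}\|f\|_{BC^s_\ka}$ (Leibniz plus a H\"older seminorm estimate when $s\notin\BN$).

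I expect the main obstacle to be the careful bookkeeping of $\rho_\ka$-powers, particularly for the metric-dependent multiplication of Example~\ref{exa-P.ex}(b): one must verify that the excess factors produced by pushing forward $g$ and~$g^*$ cancel exactly those arising from the discrepancy between the Euclidean and Riemannian intrinsic norms (Lemma~\ref{lem-U.g}(v)) and from the weight conventions of the intrinsic norms on tensors of differing covariance type. Once this bookkeeping is in place, the whole theorem reduces to the known Euclidean multiplier theorems combined with the retraction-coretraction machinery already developed in Theorems~\ref{thm-B.ret} and~\ref{thm-H.eq}.
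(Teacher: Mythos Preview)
Your approach is correct and takes a genuinely different, somewhat more direct route than the paper's proof. The paper's argument starts from the decomposition
\[
\ka_*\bigl(\pi_\ka(u\bt v)\bigr)
=\sum_{\wt{\ka}\in\gN(\ka)}\ka_*(\pi_\ka u)\bt\ka_*(\pi_{\wt{\ka}}^2v),
\]
obtained by inserting the partition of unity $\sum_{\wt{\ka}}\pi_{\wt{\ka}}^2=\mf{1}$ on the $v$-side. This keeps $\pi_\ka$ on the $u$-factor (so Theorem~\ref{thm-H.eq} applies directly to bound it), but produces a sum over neighboring charts for $v$; the paper then needs the chart-change estimate
\hb{\|\ka_*(\pi_{\wt{\ka}}^2v)\|_{\gF_{p,\ka}^s}\leq c\,\|\wt{\ka}_*(\pi_{\wt{\ka}}v)\|_{\gF_{p,\wt{\ka}}^s}}
together with the finite multiplicity of~$\gK$ to close. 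Your decomposition
\hb{\pi_\ka(u\bt v)=(\chi_\ka u)\bt(\pi_\ka v)} places the auxiliary cutoff on the $u$-side instead, so no neighbor sum and no chart-change lemma are needed; the price is that you must control
\hb{\sup_\ka\rho_\ka^{\lda_1}\|\ka_*(\chi_\ka u)\|_{BC_\ka^t}} by
\hb{\|u\|_{BC^{t,\lda_1}}}, which is where you invoke Remark~\ref{rem-W.ret}. Strictly speaking that remark is stated for integer-order $BC^k$ and $W_{\coW p}^k$; for non-integer $t$ the claim follows by interpolation (via
\hb{BC_\ka^t\doteq(BC_\ka^k,BC_\ka^{k+1})_{t-k,\iy}}), so this is only a one-line addendum. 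Your remark on the $\rho_\ka$-bookkeeping for the metric-dependent case~(b) is a point the paper leaves implicit; both proofs rely on Lemma~\ref{lem-U.g}(ii) in exactly the way you describe. Net effect: your route trades the paper's neighbor-sum/chart-change machinery for a single appeal to the $\chi_\ka$-variant of the localization norm, which is cleaner but slightly less self-contained.
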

\begin{proof}
Suppose
\hb{s>0} if
\hb{\gF=B}. Let assumption~\eqref{L.sd} be satisfied. Then, given
\hb{u\in BC^{t,\lda_1}(V_{\tau_1}^{\sa_1})} and
\hb{v\in\cD(M,V_{\tau_2}^{\sa_2})}, it follows from
\hb{\sum_{\wt{\ka}}\pi_{\wt{\ka}}^2=\mf{1}} and the definition of~$\gN(\ka)$
that
\beq\label{P.S}
\ka_*\bigl(\pi_\ka(u\bt v)\bigr)
=\sum_{\wt{\ka}\in\gN(\ka)}\ka_*(\pi_\ka u)\bt\ka_*(\pi_{\wt{\ka}}^2v)
\qa \ka\in\gK.
\eeq
Hence the point-wise multiplier properties of the H\"older spaces
\hb{BC_\ka^t=BC^t\BXkE} (see, for example, Theorem~4.7.1 in
Th.~Runst and W.~Sickel~\cite{RuS96a} for the case
\hb{t>s}; the case
\hb{s=t\in\BN} follows easily from Leibniz' rule) imply
\beq\label{P.k}
\big\|\ka_*\big(\pi_\ka(u\bt v)\big)\big\|_{\gF_{p,\ka}^s}
\leq c\,\|\ka_*(\pi_\ka u)\|_{BC_\ka^t}
\sum_{\wt{\ka}\in\gN(\ka)}\|\ka_*(\pi_{\wt{\ka}}^2v)\|_{\gF_{p,\wt{\ka}}^s}
\eeq
for
\hb{\ka\in\gK}. Note that
\beq\label{P.mu}
\card\bigl(\gN(\ka)\bigr)\leq c
\qa \ka\in\gK,
\eeq
by the finite multiplicity of~$\gK$.

\par
It is a consequence of \eqref{S.sd}(ii) and
$$
\ka_*(\pi_{\wt{\ka}}^2v)
=\ka_*\wt{\ka}^*\wt{\ka}_*(\pi_{\wt{\ka}}v)
=\bigl((\wt{\ka}_*\pi_{\wt{\ka}})(\wt{\ka}_*v)\bigr)
\circ(\wt{\ka}\circ\ka^{-1})
$$
that (cf.~\eqref{H.s} and \eqref{H.kk})
\beq\label{P.kk}
\|\ka_*(\pi_{\wt{\ka}}^2v)\|_{\gF_{p,\ka}^s}
\leq c\,\|\wt{\ka}_*(\pi_{\wt{\ka}}v)\|_{\gF_{p,\wt{\ka}}^s}
\qa \wt{\ka}\in\gN(\ka)
\qb \ka\in\gK.
\eeq
Indeed, this follows from Leibniz' rule if
\hb{s\in\BN}, and then, by interpolation if
\hb{s\notin\BN} (also see Theorem~4.3.2 in~\cite{Tri92a}). Thus we obtain
from \hbox{\eqref{P.k}--\eqref{P.kk}} and the density of $\cD\MV$
in~$\gF_p^{s,\lda_2}$
$$
\vd u\bt v\vd_{\gF_p^{s,\lda}}\leq c\,\vd u\vd_{BC^{t,\lda_1}}
\,\vd v\vd_{\gF_p^{s,\lda_2}}
$$
for
\hb{u\in BC^{t,\lda_1}(V_{\tau_1}^{\sa_1})} and
\hb{v\in\gF_p^{s,\lda_2}(V_{\tau_2}^{\sa_2})}. Now the first two assertions
are implied by Theorems \ref{thm-B.loc} and~\ref{thm-H.eq}. The last one
is a consequence of the fact that $BC^s(\BX_\ka)$ is a point-wise
multiplication algebra.
\end{proof}
In applications this theorem is perhaps the most useful multiplier
theorem. The next theorem is an extension of known multiplication
algebra results to the present setting.
\begin{theorem}\label{thm-P.p}
Suppose
\hb{\lda_1,\lda_2\in\BR},
\ \hb{\lda_0=\lda_1+\lda_2}, and
\hb{s>m/p}. Then point-wise multiplication induced by \eqref{P.m} is a
continuous bilinear map from
$$
\gF_p^{s,\lda_1}(V_{\tau_1}^{\sa_1})
\times\gF_p^{s,\lda_2}(V_{\tau_2}^{\sa_2})
\text{ into }\gF^{s,\lda_0+m/p}(V_{\tau_0}^{\sa_0}).
$$
\end{theorem}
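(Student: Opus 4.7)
The plan is to reduce the estimate to the classical multiplication-algebra property of the model spaces $\gF_{p,\ka}^s$ on $\BX_\ka$ (valid for $s>m/p$; see, e.g., Theorem~4.6.4/1 of Runst--Sickel~\cite{RuS96a} or Sections~2.8.3 and~4.2 of~\cite{Tri78a}), and then to push this estimate through the local characterization supplied by Theorem~\ref{thm-B.loc}. The shift by $m/p$ in the target weight is dictated by a transparent bookkeeping: each input factor contributes a $\rho_\ka^{\lda_i+m/p}$ in its local norm, while the product only carries a single $\rho_\ka^{m/p}$, so the deficit must be absorbed by raising the target weight exponent by $m/p$.

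Concretely, I would proceed in parallel with the proof of Theorem~\ref{thm-P.C}. From $\sum_{\wt\ka}\pi_{\wt\ka}^2=\mf{1}$ and the definition of $\gN(\ka)$ one has
\[
\ka_*\bigl(\pi_\ka(u\bt v)\bigr)
=\sum_{\wt\ka\in\gN(\ka)}\ka_*(\pi_\ka u)\bt\ka_*(\pi_{\wt\ka}^2 v),
\qquad\ka\in\gK.
\]
Applying the multiplication-algebra estimate on each $\BX_\ka$ together with the coordinate-change bound~\eqref{P.kk} (which is valid for $\gF_{p,\ka}^s$ by the same Leibniz-and-interpolation argument used in the proof of Theorem~\ref{thm-P.C}) and the equivalence $\rho_\ka\sim\rho_{\wt\ka}$ for $\wt\ka\in\gN(\ka)$, the identity $(\lda_1+m/p)+(\lda_2+m/p)=(\lda_0+m/p)+m/p$ converts the local bound into
\[
\rho_\ka^{(\lda_0+m/p)+m/p}\,\|\ka_*(\pi_\ka(u\bt v))\|_{\gF_{p,\ka}^s}
\leq c\,a_\ka\sum_{\wt\ka\in\gN(\ka)}b_{\wt\ka},
\]
with scalar sequences $a_\ka:=\rho_\ka^{\lda_1+m/p}\|\ka_*(\pi_\ka u)\|_{\gF_{p,\ka}^s}$ and $b_\ka:=\rho_\ka^{\lda_2+m/p}\|\ka_*(\pi_\ka v)\|_{\gF_{p,\ka}^s}$.

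It remains to take the $\ell^p$-norm in $\ka$. The bounded-overlap property $\card\bigl(\gN(\ka)\bigr)\leq c$ from~\eqref{S.eq}(ii) gives $\sum_{\wt\ka\in\gN(\ka)}b_{\wt\ka}\leq c\sup_\ka b_\ka$, whence
\[
\Bigl(\sum_\ka a_\ka^p\Bigl(\sum_{\wt\ka\in\gN(\ka)}b_{\wt\ka}\Bigr)^{\!p}\Bigr)^{\!1/p}
\leq c\,\Bigl(\sum_\ka a_\ka^p\Bigr)^{\!1/p}\!\sup_\ka b_\ka
\leq c\,\Bigl(\sum_\ka a_\ka^p\Bigr)^{\!1/p}\Bigl(\sum_\ka b_\ka^p\Bigr)^{\!1/p},
\]
the last step being the trivial embedding $\ell_p\hr\ell_\iy$. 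By Theorem~\ref{thm-B.loc} this is precisely the required bilinear estimate on the dense subspace $\cD\MV\times\cD\MV$, and density in $\gF_p^{s,\lda}(V)$ (inherited from the integer-order case by interpolation) extends it to the full product. The one step I view as a genuine obstacle is this $\ell^p$-summation: one faces the \emph{a priori} obstruction $\ell_p\cdot\ell_p\not\hr\ell_p$, and it is only the bounded-overlap property built into the definition of a singularity datum, combined with $\ell_p\hr\ell_\iy$, that saves the argument.
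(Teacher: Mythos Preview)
Your proof is correct and follows essentially the same route as the paper's: localize via~\eqref{P.S}, apply the multiplication-algebra property of~$\gF_{p,\ka}^s$ for $s>m/p$, use the coordinate-change bound~\eqref{P.kk}, and then sum in~$\ell_p$ by sacrificing one factor to~$\ell_\iy$ via $\ell_p\hr\ell_\iy$ combined with the finite-overlap bound~\eqref{P.mu}. The only quibble is a citation: the bound $\card(\gN(\ka))\leq c$ you need is~\eqref{P.mu}, which follows from the finite multiplicity of~$\gK$ in~\eqref{S.sd}(i), not from~\eqref{S.eq}(ii) (which concerns two \emph{different} atlases).
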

\begin{proof}
Theorem~4.6.4 of~\cite{RuS96a} and standard extensions to the half-space case
guarantee that $\gF_{p,\ka}^s$ is a multiplication algebra. Hence we infer
from \eqref{P.S} and\eqref{P.mu}
$$
\big\|\ka_*\big(\pi_\ka(u\bt v)\big)\big\|_{\gF_{p,\ka}^s}^p
\leq c\,\|\ka_*(\pi_\ka u)\|_{\gF_{p,\ka}^s}^p
\sum_{\wt{\ka}\in\gN(\ka)}\|\wt{\ka}(\pi_{\wt{\ka}}^2v)\|
_{\gF_{p,\wt{\ka}}^s}^p
$$
for
\hb{\ka\in\gK}. This implies, due to~\eqref{P.kk},
$$
\vd u\bt v\vd_{\gF_p^{s,\lda_0+m/p}}
\leq c\,\vd u\vd_{\gF_p^{s,\lda_1}}\,\vd v\vd_{\gF_p^{s,\lda_2}},
$$
hence the assertion.
\end{proof}
\section{Traces}\label{sec-T}
Throughout this section
\hb{\pl M\neq\es}. We write~$\thV$ for the restriction~$V_{\pl M}$ of~$V$
to~$\pl M$.

\par
Since $T(\pl M)$~is a subbundle of codimension~$1$ of the vector
bundle~$(TM)_{\pl M}$ over~$\pl M$, there exists a unique vector
field~$\mf{n}$
in~$(TM)_{\pl M}$ of length~$1$, orthogonal to~$T(\pl M)$, and inward
pointing (in any local chart meeting~$\pl M$), the \emph{inward
pointing} unit \emph{normal} vector field on~$\pl M$. In local coordinates,
\hb{\ka=(x^1,\ldots,x^m)},
$$
\mf{n}=\frac1{\sqrt{g_{11}\sn\pl U_{\coU\ka}}}\,\frac\pl{\pl x^1}.
$$
Suppose
\hb{u\in\cD\MV} and
\hb{k\in\BN}. The \emph{trace of order}~$k$ of~$u$ \emph{on}~$\pl M$,
\hb{\ga_ku\in\cD(\pl M,\thV)}, is defined by
$$
\dl\ga_ku,a\dr:=\bigl\dl\na^ku\sn\pl M,a\otimes\mf{n}^{\otimes k}\bigr\dr
\qa a\in\cD(\pl M,V_{\pl M}').
$$
In local coordinates, where
\hb{u=u_{(j)}^{(i)}\frac\pl{\pl x^{(i)}}\otimes dx^{(j)}}, we infer from
\eqref{U.dak}, writing
$$
\ga_ku=(\ga_ku)_{(j)}^{(i)}\frac\pl{\pl x^{(i)}}\otimes dx^{(j)},
$$
that
\beq\label{T.gk}
\Bigl(\sqrt{g_{11}\sn\pl U_{\coU\ka}}\Bigr)^k
(\ga_ku)_{(j)}^{(i)}
=\bigg(\frac{\pl^ku_{(j)}^{(i)}}{(\pl x^1)^k}
+\sum_{\ell=0}^{k-1}b_{(j)(\wt{\imath}),\ell}^{(i)(\wt{\jmath})}
\,\frac{\pl^\ell u_{(\wt{\jmath})}^{(\wt{\imath})}}{(\pl x^1)^\ell}\bigg)
\!\bigg|\,\pl U_{\coU\ka},
\eeq
where
$b_{(j)(\wt{\imath}),\ell}^{(i)(\wt{\jmath})}$ is a polynomial in the
partial derivatives of the Christoffel symbols of order at most
\hb{k-\ell-1}. We write
\hb{\ga=\ga_0} for the \emph{trace operator on}~$\pl M$.

\par
In the next theorem, by~a \emph{universal} coretraction we mean a
continuous linear map which is the unique continuous extension of its
restriction to~$\cD(\pl M,\thV)$. In this sense it is independent of $s$
and~$p$.
\begin{theorem}\label{thm-T.tr}
Suppose
\hb{k\in\BN} and
\hb{s>k+1/p}. Then $\ga_k$~extends to a retraction from~$\gF_p^{s,\lda}(V)$
onto\\
 $B_p^{s-k-1/p,\lda+k+1/p}(\thV)$. It possesses a universal
coretraction~$\ga_k^c$ satisfying
\hb{\ga_i\circ\ga_k^c=0} for
\hb{0\leq i\leq k-1}.
\end{theorem}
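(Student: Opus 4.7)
The plan is to transfer the statement to the half-space $\BH^m$ via the retraction-coretraction systems of Theorem~\ref{thm-B.ret}. I use $(\psi_p^\lda,\vp_p^\lda)$ on $\MV$ and the analogous pair $(\thpsi_p^\mu,\thvp_p^\mu)$ on $(\pl M,\thV)$ with $\mu:=\lda+k+1/p$; the latter is available because Example~\ref{exa-S.ex}(e) provides a singularity datum $(\thrho,\thgK)$ for $\pl M$ with $\thrho_\thka=\rho_\ka$, and the restrictions $\thpi_\thka:=\pi_\ka\sn\pl M$ form a localization system subordinate to $\thgK$ (cf.\ Lemma~\ref{lem-U.LS} and Remark~\ref{rem-W.ret}). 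From the classical half-space theory (e.g., Theorems~4.6.1 and~4.7.1 of~\cite{Ama09a}) I shall use that for $s>k+1/p$ the normal-derivative operator
$$
\tau_k^\BH\sco\gF_p^s\BHmE\to B_p^{s-k-1/p}(\BR^{m-1},E),\qquad w\mapsto(\pl_1^k w)|_{x^1=0}
$$
is a retraction and admits a universal coretraction $e_k$ (independent of $s$ and $p$) satisfying simultaneously $\tau_j^\BH e_k=\delta_{jk}\,\id$ for $0\le j\le k$.

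Continuity of $\ga_k$ rests on a local reduction formula. For $\ka\in\gK_{\pl M}$ write $r_\ka:=(\sqrt{g_{11}}\circ\ka^{-1})|_{x^1=0}$. Lemma~\ref{lem-U.g}(i)(ii) yields $r_\ka\sim\rho_\ka$ with $r_\ka/\rho_\ka$ and its reciprocal bounded in every $BC^j(\BR^{m-1})$-norm uniformly in~$\ka$; combined with the Christoffel estimate~\eqref{U.Chl} and Leibniz' rule, formula~\eqref{T.gk} gives, for $u$ supported in $U_\ka$,
$$
\thvp_{p,\thka}^{\mu}(\ga_k u)
=(\rho_\ka/r_\ka)^k\,\tau_k^\BH\bigl(\vp_{p,\ka}^\lda u\bigr)
+\sum_{\ell=0}^{k-1}C_{\ka,\ell}\,\tau_\ell^\BH\bigl(\vp_{p,\ka}^\lda u\bigr),
$$
where each $C_{\ka,\ell}$ is a pointwise multiplier on $B_p^{s-k-1/p}(\BR^{m-1},E)$ with uniform-in-$\ka$ operator norm. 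The exponent bookkeeping is the whole point: the weight $\rho_\ka^{\mu+(m-1)/p}=\rho_\ka^{\lda+k+m/p}$ built into $\thvp_p^\mu$ exactly absorbs the $\rho_\ka^{-k}$ factor produced by $(\sqrt{g_{11}})^{-k}$ in~\eqref{T.gk}, leaving precisely $\vp_p^\lda u$ on the right; this is what dictates the boundary weight $\lda+k+1/p$. Using the half-space trace estimate $\|\tau_\ell^\BH w\|_{B_p^{s-k-1/p}}\le c\|w\|_{\gF_p^s}$ for $0\le\ell\le k$ (employing $B_p^{s-\ell-1/p}\hr B_p^{s-k-1/p}$) and taking $\ell_p$-norms over~$\ka$ yields $\|\thvp_p^\mu(\ga_k u)\|_{\ell_p(\mf B_p^{s-k-1/p})}\le c\|\vp_p^\lda u\|_{\ell_p(\mf\gF_p^s)}$, hence $\ga_k\in\cL\bigl(\gF_p^{s,\lda}(V),B_p^{s-k-1/p,\mu}(\thV)\bigr)$ by Theorem~\ref{thm-B.ret} and its boundary analogue.

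For the coretraction I reverse the construction. Set $f_\ka:=(r_\ka/\rho_\ka)^k$, a uniform $BC^\iy$-multiplier on $B_p^{s-k-1/p}(\BR^{m-1},E)$. Given $h\in B_p^{s-k-1/p,\mu}(\thV)$, define $w_\ka:=e_k\bigl(f_\ka\cdot\thvp_{p,\thka}^{\mu}h\bigr)$ for $\ka\in\gK_{\pl M}$ and $w_\ka:=0$ otherwise, then
$$
\ga_k^c h:=\psi_p^\lda(\mf w)\in\gF_p^{s,\lda}(V).
$$
Continuity follows from continuity of $e_k$, the $f_\ka$, and $\psi_p^\lda$, and universality of $e_k$ (in $s$ and $p$) transfers to universality of $\ga_k^c$. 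Since $\tau_j^\BH w_\ka=\delta_{jk}\,f_\ka\,\thvp_{p,\thka}^{\mu}h$, Leibniz' rule yields $\pl_1^j[\ka_*\pi_\ka\cdot w_\ka]|_{x^1=0}=0$ for every $j<k$; applying~\eqref{T.gk} with $i<k$ makes every right-hand-side term vanish, so $\ga_i\ga_k^c h=0$ holds directly without any correction. For $i=k$ only the $j=0$ term survives, and $f_\ka=(r_\ka/\rho_\ka)^k$ was chosen precisely to cancel the $(\rho_\ka/r_\ka)^k$ produced by the reduction formula, so that after using $\sum_\thka\thpi_\thka^2=\mf 1$ on~$\pl M$ one obtains $\ga_k\ga_k^c h=h$.

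The anticipated main difficulty is bookkeeping: verifying that the perturbation coefficients $C_{\ka,\ell}$, the correction functions $f_\ka$, and the transition terms arising from Leibniz' rule across neighboring charts in $\gN(\ka)$ are \emph{uniform} pointwise multipliers on the Besov spaces $B_p^{s-k-1/p}(\BR^{m-1},E)$. The uniform bounds of Lemma~\ref{lem-U.g}(ii) on $\ka_*g$ and the Christoffel estimate~\eqref{U.Chl} are tailored precisely to provide this, but they must be tracked through every step of the localization.
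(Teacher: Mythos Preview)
Your overall strategy coincides with the paper's: localize both $M$ and $\pl M$ via the
retraction–coretraction pairs of Theorem~\ref{thm-B.ret}, reduce to the classical
half-space trace theorem, and track the weight shift
\hb{\lda\mapsto\lda+k+1/p}. Your coretraction
\hb{\ga_k^c h=\psi_p^\lda(\mf{w})} with
\hb{w_\ka=e_k\bigl((r_\ka/\rho_\ka)^k\thvp_{p,\thka}^\mu h\bigr)} is exactly the paper's
\hb{\ga_k^c=\psi_p^\lda\circ\mfga_k^c\circ\thvp_p^{\lda+k+1/p}}, and your verification of
\hb{\ga_i\ga_k^c=\da_{ik}\id} for
\hb{0\leq i\leq k} is correct.

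There is, however, a genuine gap in your continuity argument: the displayed local
reduction formula
$$
\thvp_{p,\thka}^{\mu}(\ga_k u)
=(\rho_\ka/r_\ka)^k\,\tau_k^\BH\bigl(\vp_{p,\ka}^\lda u\bigr)
+\sum_{\ell=0}^{k-1}C_{\ka,\ell}\,\tau_\ell^\BH\bigl(\vp_{p,\ka}^\lda u\bigr)
$$
cannot hold with \emph{pointwise multipliers}~$C_{\ka,\ell}$, even for $u$ supported
in~$U_{\coU\ka}$. The left-hand side carries the factor~$\thpi_{\ithka}$ and therefore
vanishes wherever
\hb{\thka_*\thpi_{\ithka}=0}; but
\hb{\tau_\ell^\BH(\vp_{p,\ka}^\lda u)=\rho_\ka^{\lda+m/p}
   \tau_\ell^\BH\bigl((\ka_*\pi_\ka)\ka_*u\bigr)} picks up, via Leibniz' rule, normal
derivatives of~$\ka_*\pi_\ka$ which do \emph{not} vanish on
\hb{\pl\{\ka_*\pi_\ka>0\}}. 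No bounded multipliers~$C_{\ka,\ell}$ can cancel those
contributions for all~$u$. Equivalently: the single quantity~$\vp_{p,\ka}^\lda u$
encodes only~$\pi_\ka u$, and this does not control the lower-order traces
$\ga_j(\chi_\ka u)$ that arise from
\hb{\thpi_{\ithka}\ga_ku=\ga_k(\pi_\ka u)
   -\sum_j\bid{k}{j}(\ga_{k-j}\pi_\ka)\ga_j(\chi_\ka u)}. The paper's remedy (step~(4) of
its proof) is to rewrite
\hb{\chi_\ka u=\sum_{\wt{\ka}\in\gN(\ka)}\pi_{\wt{\ka}}^2u} and absorb the correction
into neighbour-chart operators $R_{k-1,\ka\wt{\ka}}$ acting on
\hb{(\vp_{p,\wt{\ka}}^\lda u)_{\wt{\ka}\in\gN(\ka)}}; only then does the finite
multiplicity of~$\gK$ yield the $\ell_p$-estimate. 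You anticipate this as the ``main
difficulty'' in your closing paragraph, but the formula you actually summed over~$\ka$
is the incorrect single-chart version; the proof is not complete until the
cross-chart terms are written down and bounded, as the paper does in
\eqref{T.phT}--\eqref{T.FB}.
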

\begin{proof}
(1) Let \eqref{L.sd} be chosen. It follows from Lemma~\ref{lem-U.g}(i)
and~(ii) and Lemma~1.4.2 in~\cite{Ama09a} that
$$
\|\rho_\ka^{-1}\sqrt{\ka_*g_{11}}\|_{k,\iy}
+\big\|\rho_\ka\bigl(\sqrt{\ka_*g_{11}}\bigr)^{-1}\big\|_{k,\iy}\leq c 
\qa \ka\in\gK.
$$

\par
(2) For
\hb{t>1/p} we set
$$
\thB_{p,\ka}^{t-1/p}:=
\left\{
\bal
&B_p^{t-1/p}(\BR^{m-1},E),  &&\quad \ka\in\gK_{\pl M},\\
&\{0\},                     &&\quad \ka\in\gK\ssm\gK_{\pl M},
\eal
\right.
$$
with the convention
\hb{B_p^{t-1/p}(\BR^0,E)=E}. We denote by
\hb{\ga_\ka:=\ga_{\pl\BH^m}} the usual trace operator on~$\pl\BH^m$ if
$\ka$~belongs to $\gK_{\pl M}$, and set
\hb{\ga_\ka:=0} if
\hb{\ka\in\gK\ssm\gK_{\pl M}}, where
\hb{\pl\BH^m=\{0\}\times\BR^{m-1}} is identified with~$\BR^{m-1}$. Then we
put
$$
\ga_{k,\ka}:=\rho_\ka^k\bigl(\sqrt{\ga_\ka(\ka_*g_{11})}\bigr)^{-k}
\ga_\ka\circ\pl_1^k
\qa \ka\in\gK.
$$
Note
\hb{\rho_\ka=\thrho_\ka} for
\hb{\ka\in\gK\ssm\gK_{\pl M}}.

\par
Theorems 4.6.2 and~4.6.3 of~\cite{Ama09a} imply that
\hb{\ga_\ka\circ\pl_1^k} is a retraction from~$\gF_{p,\ka}^s$
onto~$\thB_{p,\ka}^{s-k-1/p}$ and that there exists a universal
coretraction~$\wt{\ga}_{k,\ka}^c$ for it satisfying
\beq\label{T.gig}
(\ga_\ka\circ\pl_1^i)\circ\wt{\ga}_{k,\ka}^c=0
\qa 0\leq i\leq k-1,
\eeq
(setting
\hb{\wt{\ga}_{k,\ka}^c:=0} if
\hb{\ka\in\gK\ssm\gK_{\pl M}}). We put
$$
\ga_{k,\ka}^c:=\rho_\ka^{-k}\bigl(\sqrt{\ga_\ka(\ka_*g_{11})}\bigr)^k
\,\wt{\ga}_{k,\ka}^c
\qa \ka\in\gK.
$$
It follows from step~(1) that
\beq\label{T.gc}
\ga_{k,\ka}\in\cL(\gF_{p,\ka}^s,\thB_{p,\ka}^{s-k-1/p})
\qb \ga_{k,\ka}^c\in\cL(\thB_{p,\ka}^{s-k-1/p},\gF_{p,\ka}^s)
\eeq 
and
$$
\|\ga_{k,\ka}\|+\|\ga_{k,\ka}^c\|\leq c
\qa \ka\in\gK.
$$
From \eqref{T.gig} and Leibniz' rule we infer
\beq\label{T.gik}
\ga_{i,\ka}\circ\ga_{k,\ka}^c=\da_{ik}\id
\qa 0\leq i\leq k.
\eeq

\par
(3) We use the notation of Example~\ref{exa-S.ex}(e) and set
\hb{(\thpi_{\ithka},\thchi_{\ithka}):=(\pi_\ka,\chi_\ka)\sn U_{\coU\ithka}}
for
\hb{\thka\in\thgK}. Then it is verified that
\hb{\bigl\{\,(\thpi_{\ithka},\thchi_{\ithka})\ ;\ \thka\in\thgK\,\bigr\}}
is a localization system subordinate to~$\thgK$. We denote by
$$
\thpsi_p^\lda
\sco\ell_p(\thmfB_p^{s-k-1/p})\ra B_p^{s-k-1/p,\lda}(\thV)
$$
the `boundary retraction' defined analogously to~$\psi_p^\lda$.
Correspondingly, $\thvp_p^\lda$~is the `boundary coretraction'.

\par
We put
$$
T_{k,\ka}:=\thrho_\ka^k\thka_*\circ\ga_k\circ\ka^*
\qa \ka\in\gK.
$$
It follows from \eqref{T.gk} that
\beq\label{T.T}
T_{k,\ka}u_\ka
=\ga_{k,\ka}u_\ka+\sum_{\ell=0}^{k-1}b_{\ell,\ka}\ga_{\ell,\ka}u_\ka
\qa u_\ka\in\cD\BXkE,
\eeq
where, due to \eqref{U.Chl} and step~(1),
\hb{\|b_{\ell,\ka}\|_{k-1,\iy}\leq c} for
\hb{0\leq\ell\leq k-1} and
\hb{\ka\in\gK}. Hence, using
\hb{\gF_{p,\ka}^s\hr\gF_{p,\ka}^{s-k+\ell}}, we obtain
\beq\label{T.Tc}
T_{k,\ka}\in\cL(\gF_{p,\ka}^s,\thB_{p,\ka}^{s-k-1/p})
\qb \|T_{k,\ka}\|\leq c
\qa \ka\in\gK.
\eeq 

\par 
(4) For 
\hb{\wt{\ka}\in\gN(\ka)} with 
\hb{\ka,\wt{\ka}\in\gK_{\pl M}} we set 
\hb{\thS_{\ka\wt{\ka}}:=(\thka_*\thwtka^*)(\thchi_\ka\cdot)}. It 
follows from~\eqref{H.kkj} by interpolation that, given 
\hb{t>0}, 
$$ 
\thS_{\ka\wt{\ka}}\in\cL(\thB_{p,\wt{\ka}}^t,\thB_{p,\ka}^t) 
\qb \|\thS_{\ka\wt{\ka}}\|\leq c(t) 
\qa \wt{\ka}\in\gN(\ka) 
\qb \ka,\wt{\ka}\in\gK_{\pl M}. 
$$ 
From this, \eqref{T.Tc}, and 
\hb{\thB_{p,\ka}^{s-i-1/p}\hr\thB_{p,\ka}^{s-k-1/p}} we infer 
\beq\label{T.Ti} 
T_{i,\ka\wt{\ka}}:=\thS_{\ka\wt{\ka}}\circ T_{i,\wt{\ka}} 
 \in\cL(\gF_{p,\wt{\ka}}^s,\thB_{p,\ka}^{s-k-1/p}) 
\qb \|T_{i,\ka\wt{\ka}}\|\leq c 
\qa \wt{\ka}\in\gN(\ka) 
\qb \ka,\wt{\ka}\in\gK_{\pl M},  
\eeq 
for 
\hb{0\leq i\leq k}. 

\par 
The definition of~$\ga_k$ implies 
$$ 
\thpi_{\ithka}\ga_ku 
 =\ga_k(\pi_\ka u)-\sum_{j=0}^{k-1} 
 \bid{k}{j}(\ga_{k-j}\pi_\ka)\ga_j(\chi_\ka u). 
$$ 
Since 
\hb{\chi_\ka u=\sum_{\wt{\ka}\in\gN(\ka)}\pi_{\wt{\ka}}^2u} we thus get 
\beq\label{T.phT} 
\thvp_{p,\ithka}^{\lda+k+1/p}(\ga_ku) 
=T_{k,\ka}(\vp_{p,\ka}^\lda u) 
+\sum_{\wt{\ka}\in\gN(\ka)}R_{k-1,\ka\wt{\ka}}(\vp_p^\lda u), 
\eeq 
where 
$$ 
R_{k-1,\ka\wt{\ka}}\mf{v} 
:=\sum_{i=0}^{k-1}a_{i,\ka\wt{\ka}} 
 T_{i,\ka\wt{\ka}}(\chi_{\wt{\ka}}v_{\wt{\ka}}) 
\qa \mf{v}=(v_\ka), 
$$ 
with 
$$ 
a_{i,\ka\wt{\ka}} 
:=-\sum_{j=1}^{k-1}\bid{k}{j}\bid{j}{i} 
 (\rho_\ka/\rho_{\wt{\ka}})^{\lda+j+m/p} 
 T_{k-j,\ka}(\ka_*\pi_\ka)T_{j-i,\ka\wt{\ka}}(\wt{\ka}_*\pi_{\wt{\ka}})
\postdisplaypenalty=10000
$$ 
for 
\hb{\wt{\ka}\in\gN(\ka)} with 
\hb{\ka,\wt{\ka}\in\gK_{\pl M}}, and 
\hb{a_{i,\ka\wt{\ka}}:=0} otherwise. 

\par 
It follows from \eqref{S.sd}(vi), \eqref{U.LS}, \eqref{T.Tc}, \eqref{T.Ti}, 
and Leibniz' rule that 
$$ 
\|a_{i,\ka\wt{\ka}}\|_{BC^\ell(\pl\BX_\ka)}\leq c(\ell) 
\qa \ka,\wt{\ka}\in\gK 
\qb 0\leq i\leq k 
\qb \ell\in\BN. 
$$ 
Hence, using \eqref{T.Ti} once more, 
\beq\label{T.R} 
R_{k-1,\ka\wt{\ka}}\in\cL(\mf{\gF}_p^s,\thB_{p,\ka}^{s-k-1/p}) 
\qb \|R_{k-1,\ka\wt{\ka}}\|\leq c 
\qa \ka,\wt{\ka}\in\gK. 
\eeq 
Lastly, we set 
\beq\label{T.Tv} 
\mf{T}_{\coT k,\ka}\mf{v} 
:=T_{k,\ka}v_\ka+\sum_{\wt{\ka}\in\gN(\ka)}R_{k-1,\ka\wt{\ka}}(\mf{v}) 
\eeq 
and 
\hb{\mf{T}_{\coT k}\mf{v}:=(\mf{T}_{\coT k,\ka}\mf{v})}. Then we deduce 
from \eqref{T.Tc}, \eqref{T.R}, and the finite multiplicity of~$\gK$ that 
\beq\label{T.FB} 
\mf{T}_{\coT k}\in\cL(\mf{\gF}_p^s,\thmfB_p^{s-k-1/p}).  
\eeq 
Moreover, \eqref{T.phT} implies 
$$ 
\thvp_p^{\lda+k+1/p}\circ\ga_k=\mf{T}_{\coT k}\circ\vp_p^\lda. 
$$ 
Hence it follows from Theorem~\ref{thm-B.ret} and \eqref{T.FB} 
$$ 
\ga_k=\thpsi_k^{\lda+k+1/p}\circ\mf{T}_{\coT k}\circ\vp_p^\lda 
\in\cL\bigl(\gF_p^{s,\lda},\thB_p^{s-k-1/p,\lda+k+1/p}(\thV)\bigr). 
$$ 

\par 
(5) We set 
\hb{\mfga_k^c\mf{w}:=(\ga_{k,\ka}^cw_\ka)}. Then we get from \eqref{T.gc} 
$$ 
\mfga_k^c\in\cL(\thmfB_p^{s-k-1/p},\thmfgF_p^s,). 
$$ 
Note that \eqref{T.gik}, \eqref{T.T}, and \eqref{T.Tv} imply 
\beq\label{T.Tg} 
\mf{T}_{\coT i}\circ\mfga_k^c=\da_{ik}\id 
\qa 0\leq i\leq k. 
\eeq 
Furthermore, given 
\hb{\mf{v}\in\thmfgF_p^s}, 
$$ 
\bal 
\ga_k(\psi_p^\lda\mf{v}) 
&=\sum_\ka\rho_\ka^{-(\lda+m/p)}\ga_k(\pi_\ka\ka^*v_\ka)\\ 
&=\sum_\ka\rho_\ka^{-(\lda+m/p)}\Bigl(\thpi_{\ithka}\ga_\ka(\ka^*v_\ka) 
 +\sum_{j=0}^{k-1}\bid{k}{j}(\ga_{k-j}\pi_\ka)\ga_j(\ka^*v_\ka)\Bigr)\\ 
&=\sum_\ka\rho_\ka^{-(\lda+k+m/p)} 
 \Bigl(\thpi_{\ithka}\thka^*T_{k,\ka}v_\ka 
 +\thka^*\sum_{j=0}^{k-1}\bid{k}{j}T_{k-j,\ka}(\ka_*\pi_\ka) 
 T_{j,\ka}v_\ka\Bigr). 
\eal
$$ 
Thus we infer from \eqref{T.Tg} 
$$ 
\ga_k(\psi_p^\lda\mfga_k^c\mf{w}) 
=\sum_\ka\rho_\ka^{-(\lda+k+m/p)}\thpi_{\ithka}\thka^*w_\ka 
=\thpsi_p^{\lda+k+1/p}\mf{w} 
$$ 
for 
\hb{\mf{w}\in\thmfB_p^{s-k-1/p}}. Hence, by Theorem~\ref{thm-B.ret}, 
$$ 
\ga_k^c:=\psi_p^\lda\circ\mfga_k^c\circ\thvp_p^{\lda+k+1/p} 
\in\cL\bigl(B_p^{s-k-1/p,\lda+k+1/p}(\thV),\gF_p^{s,\lda}\bigr) 
$$ 
and 
\hb{\ga_k\circ\ga_k^c=\id}. This proves the theorem. 
\end{proof}
\begin{corollary}\label{cor-T.tr}
Suppose
\hb{0\leq j_1<\cdots<j_k} and
\hb{s>j_k+1/p}. Then
\beq\label{T.Bsk}
(\ga_{j_0},\ldots,\ga_{j_k})\sco\gF_p^{s,\lda}(V)
\ra\prod_{i=1}^kB_p^{s-j_i-1/p,\lda+j_i+1/p}(\thV)
\eeq
is a retraction possessing a universal coretraction.
\end{corollary}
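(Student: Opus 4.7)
The plan is to deduce the corollary from Theorem~\ref{thm-T.tr} by a triangular (Gram--Schmidt-like) construction of the coretraction. First, continuity of the map \eqref{T.Bsk} is immediate from Theorem~\ref{thm-T.tr}: each component $\ga_{j_i}$ is continuous from $\gF_p^{s,\lda}(V)$ into $B_p^{s-j_i-1/p,\lda+j_i+1/p}(\thV)$, which makes sense because $s>j_k+1/p\geq j_i+1/p$ for every $i$.

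The real work is to construct a universal right inverse. Denote by $\ga_{j_i}^c$ the universal coretraction for $\ga_{j_i}$ supplied by Theorem~\ref{thm-T.tr}, and recall the key vanishing property $\ga_\ell\circ\ga_{j_i}^c=0$ for $0\leq\ell\leq j_i-1$; in particular, $\ga_{j_\ell}\circ\ga_{j_i}^c=0$ whenever $\ell<i$, since $j_\ell<j_i$. Given $\mf{w}=(w_1,\ldots,w_k)$ in the product on the right of \eqref{T.Bsk}, I would define $u=u^{(k)}$ recursively by $u^{(0)}:=0$ and
$$
u^{(i)}:=u^{(i-1)}+\ga_{j_i}^c\bigl(w_i-\ga_{j_i}u^{(i-1)}\bigr),\qquad i=1,\ldots,k.
$$
Each $u^{(i)}$ lies in $\gF_p^{s,\lda}(V)$ because $\ga_{j_i}u^{(i-1)}$ makes sense (thanks to $s>j_i+1/p$) and $\ga_{j_i}^c$ sends the corresponding Besov space on $\pl M$ back into $\gF_p^{s,\lda}(V)$. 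By induction on $i$ one then verifies that $\ga_{j_\ell}u^{(i)}=w_\ell$ for all $\ell\leq i$: for $\ell<i$ the added term is killed by $\ga_{j_\ell}\circ\ga_{j_i}^c=0$, preserving the previously established identity, while for $\ell=i$ the retraction identity $\ga_{j_i}\circ\ga_{j_i}^c=\id$ yields exactly $w_i$. Setting $(\ga_{j_1},\ldots,\ga_{j_k})^c\mf{w}:=u^{(k)}$ therefore gives a continuous linear right inverse of \eqref{T.Bsk}.

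Universality is then essentially automatic: the formula for $(\ga_{j_1},\ldots,\ga_{j_k})^c$ is a finite composition of operators ($\ga_{j_i}^c$, $\ga_{j_i}$, addition, and subtraction) each of which is a continuous linear map that is independent of $s$ and $p$ in the sense of Theorem~\ref{thm-T.tr} and agrees on $\cD(\pl M,\thV)$ with its defining restriction; hence so does the composite, and the resulting coretraction is the unique continuous extension of its restriction to smooth compactly supported sections. The only genuine obstacle is bookkeeping: one must build $u$ from the lowest order $j_1$ upward so that every new term $\ga_{j_i}^c(\cdot)$ leaves the already-prescribed traces at orders $j_1,\ldots,j_{i-1}$ intact, and this is precisely what the vanishing condition $\ga_\ell\circ\ga_{j_i}^c=0$ for $\ell<j_i$ guarantees.
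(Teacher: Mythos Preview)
Your proof is correct and follows exactly the same strategy as the paper: an inductive construction $u^{(i)}=u^{(i-1)}+\ga_{j_i}^c(w_i-\ga_{j_i}u^{(i-1)})$ from the lowest trace order upward, relying on the vanishing property $\ga_\ell\circ\ga_{j_i}^c=0$ for $\ell<j_i$ from Theorem~\ref{thm-T.tr}. The paper's proof is in fact more terse than yours; your added explanation of the induction and of universality is fine and fills in details the paper leaves implicit.
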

\begin{proof}
For $(v_1,\ldots,v_k)$ belonging to the product space in \eqref{T.Bsk}
define~$u_i$ for
\hb{1\leq i\leq k} inductively by
\hb{u_1:=\ga_{j_1}^cv_1} and
\hb{u_i:=u_{i-1}+\ga_{j_i}^c(v_i-\ga_{j_i}u_{i-1})} for
\hb{2\leq i\leq k}. Then~$\ga^c$, given by
\hb{\ga^c(v_1,\ldots,v_k):=u_k}, has the claimed properties.
\end{proof}
\section{Spaces with Vanishing Boundary Values}\label{sec-V}
Throughout this section we assume
\hb{\pl M\neq\es}. We denote by
\hb{\ci\gF_p^{s,\lda}=\ci\gF_p^{s,\lda}(V)} the closure of~$\cD\ciMV$
in~$\gF_p^{s,\lda}$.

\par
Let \eqref{L.sd} be chosen. Recalling definitions \eqref{L.ph0k} and
\eqref{L.ps0k} we put
$$
\ci\vp_{p,\ka}^\lda u
:=\rho_\ka^{\lda-m/p'}\ci\vp_\ka u
\qa u\in\cD\MV,
$$
and
$$
\ci\psi_{p,\ka}^\lda v_\ka
:=\rho_\ka^{-\lda+m/p'}\ci\psi_\ka v_\ka
\qa v_\ka\in\cD\BXkE.
$$
Furthermore,
$$
\ci\vp_p^\lda u:=(\ci\vp_{p,\ka}^\lda u)
\qb \ci\psi_p^\lda\mf{v}:={\textstyle\sum_\ka}\ci\psi_{p,\ka}^\lda v_\ka
$$
for
\hb{u\in\cD\MV} and
\hb{\mf{v}\in\mf{\cD}\BXE}.
\begin{theorem}\label{thm-V.ret}
Suppose
\hb{s\in\BR^+\ssm(\BN+1/p)} with
\hb{s>0} if\/
\hb{\gF=B}. Then the diagram 
$$
 \begin{picture}(211,113)(-57,-51)        
 \put(6,50){\makebox(0,0)[b]{\small{$\ci\vp_p^\lda$}}}
 \put(6,5){\makebox(0,0)[b]{\small{$\ci\vp_p^\lda$}}}
 \put(94,50){\makebox(0,0)[b]{\small{$\ci\psi_p^\lda$}}}
 \put(94,5){\makebox(0,0)[b]{\small{$\ci\psi_p^\lda$}}}
 \put(-38,45){\makebox(0,0)[c]{\small{$\cD\ciMV$}}}
 \put(-38,0){\makebox(0,0)[c]{\small{$\ci\gF_p^{s,\lda}(V)$}}}
 \put(50,45){\makebox(0,0)[c]{\small{$\mf{\cD}\ciBXE$}}}
 \put(50,0){\makebox(0,0)[c]{\small{$\ell_p(\cimfgF_p^s)$}}}
 \put(138,45){\makebox(0,0)[c]{\small{$\cD\ciMV$}}}
 \put(138,0){\makebox(0,0)[c]{\small{$\ci\gF_p^{s,\lda}(V)$}}}
 \put(-31,22.5){\makebox(0,0)[l]{\small{$d$}}}
 \put(55,22.5){\makebox(0,0)[l]{\small{$d$}}}
 \put(143,22.5){\makebox(0,0)[l]{\small{$d$}}}
 \put(-14,45){\vector(1,0){40}}
 \put(-14,0){\vector(1,0){40}}
 \put(74,45){\vector(1,0){40}}
 \put(74,0){\vector(1,0){40}}
 \put(-38,33){\vector(0,-1){23}}
 \put(-36,33){\oval(4,4)[t]}
 \put(50,33){\vector(0,-1){23}}
 \put(52,33){\oval(4,4)[t]}
 \put(138,33){\vector(0,-1){23}}
 \put(140,33){\oval(4,4)[t]}
 \put(6,-40){\makebox(0,0)[b]{\small{$\ci\vp_p^\lda$}}}
 \put(94,-40){\makebox(0,0)[b]{\small{$\ci\psi_p^\lda$}}}
 \put(-38,-45){\makebox(0,0)[c]{\small{$\gF_p^{s,\lda}(V)$}}}
 \put(50,-45){\makebox(0,0)[c]{\small{$\ell_p(\mf{\gF}_p^s)$}}}
 \put(138,-45){\makebox(0,0)[c]{\small{$\gF_p^{s,\lda}(V)$}}}
 \put(-14,-45){\vector(1,0){40}}
 \put(74,-45){\vector(1,0){40}}
 \put(-38,-12){\vector(0,-1){23}}
 \put(-36,-12){\oval(4,4)[t]}
 \put(50,-12){\vector(0,-1){23}}
 \put(52,-12){\oval(4,4)[t]}
 \put(138,-12){\vector(0,-1){23}}
 \put(140,-12){\oval(4,4)[t]}
 \end{picture} 
 \postdisplaypenalty=10000
 $$
is commuting and
\hb{\ci\psi_p^\lda\circ\ci\vp_p^\lda=\id}.
\end{theorem}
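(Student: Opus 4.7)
The plan is to reduce to Theorem~\ref{thm-B.ret} via a point-wise multiplier argument, and then descend to the closures by density and support considerations. First I would compare the dotted operators to their Section~\ref{sec-W} counterparts: unpacking the definitions and using $m/p+m/p'=m$ gives
$$
\ci\vp_{p,\ka}^\lda u=\rho_\ka^{-m}\sqrt{\ka_*g}\,\vp_{p,\ka}^\lda u
\qb
\ci\psi_{p,\ka}^\lda v=\psi_{p,\ka}^\lda\bigl(\rho_\ka^m(\sqrt{\ka_*g}\,)^{-1}\chi v\bigr).
$$
By Lemma~\ref{lem-U.g}(i)--(ii) (together with the spectral bound on $[\ka_*g]$ from step~(2) of its proof), the scalar factors $\rho_\ka^{-m}\sqrt{\ka_*g}$ and $\rho_\ka^m(\sqrt{\ka_*g}\,)^{-1}$ are bounded from above and below and have their $BC^k(Q_\ka^m)$-norms uniformly controlled in $\ka\in\gK$ for every $k\in\BN$. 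Hence, by the classical $BC^k$ point-wise multiplier theorem on $\BR^m$ and $\BH^m$ (cf.~Theorem~4.2.2 in~\cite{Tri92a}), multiplication by either factor is a uniformly bounded operator on each local space $\gF_{p,\ka}^s$. Combining with Theorem~\ref{thm-B.ret} yields \hb{\ci\vp_p^\lda\in\cL\bigl(\gF_p^{s,\lda}(V),\ell_p(\mf{\gF}_p^s)\bigr)} and \hb{\ci\psi_p^\lda\in\cL\bigl(\ell_p(\mf{\gF}_p^s),\gF_p^{s,\lda}(V)\bigr)}, which already furnishes the bottom row of the diagram.

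Next I would verify $\ci\psi_p^\lda\circ\ci\vp_p^\lda=\id$. On $\cD\MV$ the constants $\rho_\ka^{\lda-m/p'}$ and $\rho_\ka^{-\lda+m/p'}$ cancel, and $\chi_\ka\sn\supp(\pi_\ka)=\mf{1}$ gives $\ci\psi_\ka\circ\ci\vp_\ka=\pi_\ka^2\id$, exactly as in step~(5) of the proof of Theorem~\ref{thm-L.ret}. Summing over $\ka\in\gK$ and using $\sum_\ka\pi_\ka^2=\mf{1}$ produces the identity on $\cD\MV$, whence on $\gF_p^{s,\lda}(V)$ by density. To pass to the middle row, I exploit that $(\ci\vp_p^\lda,\ci\psi_p^\lda)$ preserves interior support: if $u\in\cD_K\ciMV$ with $K\is\is\ci M$, each $\pi_\ka u$ sits in $U_{\coU\ka}\cap\ci M$ and, since $\sqrt{\ka_*g}\in C^\iy(Q_\ka^m)$, the image $\ci\vp_{p,\ka}^\lda u$ lies in $\cD\ciBXkE$, with only finitely many $\ka$ contributing by the finite multiplicity of $\gK$ (as in step~(3) of Theorem~\ref{thm-L.ret}). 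Symmetrically, $\ci\psi_p^\lda$ sends $\mf{\cD}\ciBXE$ into $\cD\ciMV$. Since $\cD\ciMV$ is dense in $\ci\gF_p^{s,\lda}(V)$ by definition and $\mf{\cD}\ciBXE$ is dense in $\ell_p(\cimfgF_p^s)$ (combining $\cD\ciBXkE\sdh\ci\gF_p^s\BXkE$ with~\eqref{W.cl}), the continuity already proven forces the restricted maps to land in the dotted target spaces, and the identity $\ci\psi_p^\lda\circ\ci\vp_p^\lda=\id$ on $\ci\gF_p^{s,\lda}(V)$ is then a restriction of the one established on $\gF_p^{s,\lda}(V)$.

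The main obstacle I expect is ensuring the local building blocks are uniformly well-behaved: one must know that $\cD\ciBXkE$ is genuinely dense in $\ci\gF_p^s\BXkE$ and that the multiplier and retraction machinery on $\BR^m$, $\BH^m$ applies with constants independent of $\ka$. This is precisely where the hypothesis $s\in\BR^+\ssm(\BN+1/p)$ enters, via the classical trace-kernel description of $\ci\gF_p^s$ on the half-space: it excludes the exceptional values $s=k+1/p$ at which the $k$-th trace on $\gF_p^s\BHmE$ is not well defined and $\ci\gF_p^s$ fails to admit the intrinsic characterization underpinning the density and multiplier arguments invoked above.
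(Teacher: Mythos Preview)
Your argument is correct and takes a genuinely different route from the paper's. The paper proceeds in two stages: first it establishes the retraction for integer~$s$ by redoing the Sobolev estimates of Theorem~\ref{thm-W.ret} with the extra factor~$\sqrt{\ka_*g}$ (deriving $\|\sqrt{\ka_*g}\|_{k,\iy}\leq c(k)\rho_\ka^m$ from Lemma~\ref{lem-U.g}), and then it interpolates between $L_p^\lda$ and $\ci W_{\coW p}^{k,\lda}$ to reach fractional~$s$, invoking the identification $\ci\gF_{p,\ka}^s\doteq(L_{p,\ka},\ci W_{\coW p,\ka}^k)_{s/k}$. It is precisely this last interpolation identity that requires $s\notin\BN+1/p$. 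Your approach bypasses interpolation entirely: you factor $\ci\vp_{p,\ka}^\lda$ and $\ci\psi_{p,\ka}^\lda$ through $\vp_{p,\ka}^\lda$ and $\psi_{p,\ka}^\lda$ via the smooth, compactly supported multipliers $\rho_\ka^{-m}\sqrt{\ka_*g}\,\chi$ and $\rho_\ka^m(\sqrt{\ka_*g})^{-1}\chi$, apply the $BC^k$ point-wise multiplier theorem directly on~$\gF_{p,\ka}^s$, and then descend to the dotted spaces by support preservation and density. This is tidier and leans on Theorem~\ref{thm-B.ret} as a black box rather than reopening its proof.

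One correction: your last paragraph misidentifies where the hypothesis $s\notin\BN+1/p$ enters. In your route it is not needed at all---the multiplier bounds hold for every $s\geq0$, and $\cD\ciBXkE\sdh\ci\gF_{p,\ka}^s$ is true by definition of the closure, with no restriction on~$s$. The exclusion of $\BN+1/p$ is an artefact of the paper's interpolation step, not of the retraction property itself; your argument in fact delivers the theorem without that restriction.
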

\begin{proof}
(1) It follows from \eqref{L.ph0D} and \eqref{L.ps0D} that the assertions
concerning the first row of this diagram are valid and
\hb{\ci\psi_p^\lda\circ\ci\vp_p^\lda=\id_{\cD\ciMV}}.

\par
(2) From Lemma~\ref{lem-U.g}(i) and~(ii) and the rules for differentiating
determinants we deduce
$$
\sqrt{\ka_*g}\sim\rho_\ka^m
\qb \|\pa\det(\ka_*g)\|_\iy\leq c(\al)\rho_\ka^{2m}
\qa \al\in\BN^m
\qb \ka\in\gK.
$$

\par
For
\hb{\al,\ba\in\BN^m} with
\hb{\al=\ba+e_i}, where $e_i$~is the \hbox{$i$-th} standard basis vector
of~$\BR^m$, we get
$$
\pa(\sqrt{\ka_*g})=\pl^\ba\Bigl(\frac1{\smash[b]2\sqrt{\ka_*g}}
\,\pl_i\det(\ka_*g)\Bigr).
$$
From this, Leibniz' rule, and Lemma~1.4.2 in~\cite{Ama09a} we infer
$$
\|\sqrt{\ka_*g}\|_{k,\iy}\leq c(k)\rho_\ka^m
\qa \ka\in\gK
\qb k\in\BN.
$$
This implies
$$
\|\ci\vp_\ka u\|_{W_{\coW p,\ka}^k}
\leq c(k)\rho_\ka^m\,\|\ka_*(\chi_\ka u)\|_{W_{\coW p,\ka}^k}
\qa \ka\in\gK
\qb k\in\BN.
$$
Now we obtain
\hb{\ci\vp_p^\lda
   \in\cL\bigl(W_{\coW p}^{k,\lda},\ell_p(\mf{W}_{\coW p}^k)\bigr)} for
\hb{k\in\BN} from \eqref{W.Wkp} and the arguments leading from there to
\eqref{W.phi}. Analogously we find
\hb{\ci\psi_p^\lda
   \in\cL\bigl(\ell_p(\mf{W}_{\coW p}^k),W_{\coW p}^{k,\lda}\bigr)} for
\hb{k\in\BN}  by the arguments of step~(3) of the proof of
Theorem~\ref{thm-W.ret}, as well as
\hb{\ci\psi_p^\lda\circ\ci\vp_p^\lda=\id}.

\par
(3) Since
\hb{\cD\ciBXkE\sdh\ci W_{\coW p,\ka}^k} implies
\hb{\mf{\cD}\ciBXE\sdh c_c(\cimfW_{\coW p}^k)}, we deduce from
\eqref{W.cl} that $\mf{\cD}\ciBXE$ is dense
in~$\ell_p(\cimfW_{\coW p}^k)$. Clearly,
\hb{\ci\psi_p^\lda\bigl(\mf{\cD}\ciBXE\bigr)\is\cD\ciMV}. Thus we infer
\hb{\ci\psi_p^\lda
   \in\cL\bigl(\ell_p(\cimfW_{\coW p}^k),\ci W_{\coW p}^{k,\lda}\bigr)} for
\hb{k\in\BN} from steps (1) and~(2). Similarly, we find
\hb{\ci\vp_p^\lda\bigl(\cD\ciMV\bigr)\is\ell_p(\cimfW_{\coW p}^k)}, and
thus
\hb{\ci\vp_p^\lda
   \in\cL\bigl(\ci W_{\coW p}^{k,\lda},\ell_p(\cimfW_{\coW p}^k)\bigr)} for
\hb{k\in\BN}. This proves the theorem if
\hb{s\in\BN}.

\par
(4) Suppose
\hb{s\in\BR^+\ssm(\BN+1/p)}. For
\hb{0<\ta<1} set
\hb{\pr_\ta:=\pe_\ta} if
\hb{\gF=H}, and
\hb{\pr_\ta:=\pr_{\ta,p}} otherwise. Assume
\hb{0<s<k} with
\hb{k\in\BN}. Then
\hb{s\notin\BN+1/p} implies
\hb{\ci\gF_{p,\ka}^s\doteq(L_{p,\ka},\ci\gF_{p,\ka}^p)_{s/k}}. Thus,
cf.~the proof of Theorem~\ref{thm-B.ret},
$$
\ell_p(\cimfgF_p^s)
=\bigl(\ell_p(\mf{L}_p),\ell_p(\cimfgF_p^k)\bigr)_{s/k}.
$$
Now we infer from step~(3) that $r$~is a retraction
from~$\ell_p(\cimfgF_p^s)$  onto
\hb{(L_p^\lda,\ci W_{\coW p}^{k,\lda})_{s/k}\doteq\ci\gF_p^{s,\lda}},
since the latter interpolation space is the closure of~$\cD\ciME$ in
\hb{(L_p^\lda,W_{\coW p}^{k,\lda})_{s/k}\doteq\gF_p^{s,\lda}} by the
density properties of~$\pr_\ta$.
\end{proof}
\begin{corollary}\label{cor-V.ret}
Suppose
\hb{0\leq s_0<s_1<\iy} and
\hb{\ta\in(0,1)}. If
\hb{s_0,s_1,s_\ta\notin\BN+1/p}, then
$$
[\ci H_p^{s_0,\lda},\ci H_p^{s_1,\lda}]_\ta\doteq\ci H_p^{s_\ta,\lda}
\qb (\ci B_p^{s_0,\lda},\ci B_p^{s_1,\lda})_{\ta,p}
\doteq\ci B_p^{s_\ta,\lda},
$$
provided
\hb{s_0>0} in the latter case.
\end{corollary}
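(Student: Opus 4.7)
The plan is to mimic the structure of the proof of Corollary~\ref{cor-B.ret} (via Theorem~\ref{thm-B.ret}), now using Theorem~\ref{thm-V.ret} in place of Theorem~\ref{thm-B.ret}. By Theorem~\ref{thm-V.ret}, for each $s\in\{s_0,s_1,s_\ta\}$ (all satisfying the exception $s\notin\BN+1/p$ by hypothesis), $\ci\psi_p^\lda$ is a retraction from $\ell_p(\cimfgF_p^s)$ onto $\ci\gF_p^{s,\lda}(V)$ with coretraction $\ci\vp_p^\lda$, and the underlying linear maps are independent of $s$. The standard interpolation lemma for retraction-coretraction pairs (Lemma~I.2.3.1 in~\cite{Ama95a}) then identifies $[\ci\gF_p^{s_0,\lda},\ci\gF_p^{s_1,\lda}]_\ta$ with $\ci\psi_p^\lda\bigl([\ell_p(\cimfgF_p^{s_0}),\ell_p(\cimfgF_p^{s_1})]_\ta\bigr)$, and analogously for the real functor $(\cdot,\cdot)_{\ta,p}$.

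Next I would invoke Theorem~1.18.1 of~\cite{Tri78a}, as in the proof of Theorem~\ref{thm-B.ret}, to interchange $\ell_p$ with the interpolation functor, reducing matters to the fiber-wise model identities
$$
[\ci\gF_{p,\ka}^{s_0},\ci\gF_{p,\ka}^{s_1}]_\ta\doteq\ci\gF_{p,\ka}^{s_\ta},
\qquad
(\ci B_{p,\ka}^{s_0},\ci B_{p,\ka}^{s_1})_{\ta,p}\doteq\ci B_{p,\ka}^{s_\ta},
$$
to be verified uniformly in $\ka\in\gK$. For $\ka\in\gK\ssm\gK_{\pl M}$ we have $\BX_\ka=\BR^m$ and $\ci\gF_{p,\ka}^s=\gF_{p,\ka}^s$, so this is just the classical interpolation of Bessel potential and Besov spaces on~$\BR^m$. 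Reassembling afterwards via Theorem~\ref{thm-V.ret} at $s=s_\ta$ gives $\ci\psi_p^\lda\bigl(\ell_p(\cimfgF_p^{s_\ta})\bigr)\doteq\ci\gF_p^{s_\ta,\lda}(V)$, concluding the argument.

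The only delicate step, and the main obstacle, is the local identity on $\BH^m$ (i.e., for $\ka\in\gK_{\pl M}$). Precisely under the hypothesis $s_0,s_1,s_\ta\notin\BN+1/p$, the zero-extension operator is bounded on the relevant half-space spaces and the density of $\cD(\ci\BH^m,E)$ behaves well under interpolation, so the classical identity $[\ci\gF_p^{s_0}(\BH^m),\ci\gF_p^{s_1}(\BH^m)]_\ta\doteq\ci\gF_p^{s_\ta}(\BH^m)$ holds and, for the real case under the additional requirement $s_0>0$, the analogous Besov statement follows as well; these identities can be drawn from Chapter~4 of~\cite{Ama09a} and from the corresponding sections of~\cite{Tri78a}. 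Without the half-integer exceptions, trace conditions interfere and the identity can fail, which is exactly why these exceptions appear in the statement; everything else is a routine repetition of the retraction-coretraction machinery already deployed for the non-vanishing case.
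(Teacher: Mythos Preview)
Your proposal is correct and follows exactly the approach the paper has in mind. The corollary is stated without proof immediately after Theorem~\ref{thm-V.ret}, and the intended argument is precisely the one you spell out: transfer the interpolation question via the retraction-coretraction pair $(\ci\psi_p^\lda,\ci\vp_p^\lda)$ to the $\ell_p$-sequence spaces, interchange $\ell_p$ with the interpolation functor by Theorem~1.18.1 of~\cite{Tri78a}, and invoke the local half-space identities for $\ci\gF_{p,\ka}^s$ under the hypothesis $s_0,s_1,s_\ta\notin\BN+1/p$; this mirrors verbatim the passage from Theorem~\ref{thm-B.ret} to Corollary~\ref{cor-B.ret}(ii).
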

The next theorem characterizes the spaces~$\ci\gF_p^{s,\lda}$ by means of
trace operators.
\begin{theorem}\label{thm-V.tr}
\begin{itemize}
\item[(i)]
Suppose
\hb{0\leq s<1/p} with
\hb{s>0} if
\hb{\gF=B}. Then
\hb{\ci\gF_p^{s,\lda}=\gF_p^{s,\lda}}.
\item[(ii)]
Assume
\hb{k\in\BN} and
\hb{k+1/p<s<k+1+1/p}. Set
\hb{\vec\ga_k:=(\ga_0,\ldots\ga_k)}. Then
$$
\ci\gF_p^{s,\lda}=\{\,u\in\gF_p^{s,\lda}\ ;\ \vec\ga_ku=0\,\}.
$$
\end{itemize}
\end{theorem}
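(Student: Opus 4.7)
The plan is to deduce both parts from the analogous facts on the local model spaces $\gF_{p,\ka}^s=\gF_p^s\BXkE$ by transferring them through the retraction-coretraction pair $(\ci\psi_p^\lda,\ci\vp_p^\lda)$ supplied by Theorem~\ref{thm-V.ret}. The two classical inputs I will use are: (a) in the range $0\le s<1/p$ (with $s>0$ if $\gF=B$), $\cD\ciBXkE$ is dense in $\gF_{p,\ka}^s$, so $\ci\gF_{p,\ka}^s=\gF_{p,\ka}^s$; (b) in the range $k+1/p<s<k+1+1/p$, the closure $\ci\gF_{p,\ka}^s$ of $\cD\ciBXkE$ coincides with $\{v\in\gF_{p,\ka}^s\sco\ga_\ka(\pl_1^jv)=0,\ 0\le j\le k\}$. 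Both are standard; see \cite{Tri78a} for the full space and \cite{Ama09a} for the half-space case.

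For Part~(i), fact (a) yields $\cimfgF_p^s=\mf{\gF}_p^s$, hence $\ell_p(\cimfgF_p^s)=\ell_p(\mf{\gF}_p^s)$. Theorem~\ref{thm-V.ret} asserts $\mf{\cD}\ciBXE\sdh\ell_p(\cimfgF_p^s)$, so $\mf{\cD}\ciBXE$ is also dense in $\ell_p(\mf{\gF}_p^s)$. Given $u\in\gF_p^{s,\lda}(V)$, the identity $u=\ci\psi_p^\lda\ci\vp_p^\lda u$ of Theorem~\ref{thm-V.ret} lets me pick $\mf{v}^{(n)}\in\mf{\cD}\ciBXE$ with $\mf{v}^{(n)}\to\ci\vp_p^\lda u$ in $\ell_p(\mf{\gF}_p^s)$. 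Setting $u^{(n)}:=\ci\psi_p^\lda\mf{v}^{(n)}$, I read off from the definition of $\ci\psi_{p,\ka}^\lda$ that each summand $\ci\psi_{p,\ka}^\lda v_\ka^{(n)}$ is smooth and compactly supported in $\ci M$ (since $\pi_\ka$ is compactly supported in $U_{\coU\ka}$ and $\ka^*v_\ka^{(n)}$ is supported in $\ci U_{\coU\ka}$), and that the sum is finite because $\mf{v}^{(n)}$ is; hence $u^{(n)}\in\cD\ciMV$. Continuity of $\ci\psi_p^\lda$ forces $u^{(n)}\to u$ in $\gF_p^{s,\lda}$, whence $u\in\ci\gF_p^{s,\lda}$.

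The easy inclusion of Part~(ii) is immediate from the continuity of $\ga_j$ given by Theorem~\ref{thm-T.tr}. For the reverse inclusion, fix $u\in\gF_p^{s,\lda}$ with $\vec{\ga}_ku=0$ and set $\mf{v}=(v_\ka):=\ci\vp_p^\lda u\in\ell_p(\mf{\gF}_p^s)$. The key claim is that $v_\ka\in\ci\gF_{p,\ka}^s$ for every $\ka$. This is trivial for $\ka\in\gK\ssm\gK_{\pl M}$, and by~(b) it reduces for $\ka\in\gK_{\pl M}$ to $\ga_\ka(\pl_1^jv_\ka)=0$ for $0\le j\le k$. Read~\eqref{T.gk} as a linear system for $\ga_\ka(\pl_1^j(\ka_*u)_{(j)}^{(i)})$ indexed by $j$: it is lower triangular, the diagonal entry $(\sqrt{g_{11}\sn\pl U_{\coU\ka}})^j$ is strictly positive, and the sub-diagonal coefficients are polynomials in the derivatives of the Christoffel symbols, uniformly bounded by~\eqref{U.Chl} and Lemma~\ref{lem-U.g}. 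Induction on $j$ shows that $\ga_j u=0$ for $0\le j\le k$ forces $\ga_\ka(\pl_1^j\ka_*u)=0$ on $\pl U_{\coU\ka}$ for $0\le j\le k$. Since $v_\ka=\rho_\ka^{\lda-m/p'}\sqrt{\ka_*g}\,(\ka_*\pi_\ka)(\ka_*u)$ with the prefactor smooth on $Q_\ka^m$, Leibniz' rule then yields $\ga_\ka(\pl_1^jv_\ka)=0$ for $0\le j\le k$, proving the claim. Thus $\mf{v}\in\ell_p(\cimfgF_p^s)$; approximating it in that space by a finitely-supported $\mf{v}^{(n)}\in\mf{\cD}\ciBXE$ and setting $u^{(n)}:=\ci\psi_p^\lda\mf{v}^{(n)}\in\cD\ciMV$, continuity of $\ci\psi_p^\lda$ together with $u=\ci\psi_p^\lda\mf{v}$ yields $u^{(n)}\to u$ in $\gF_p^{s,\lda}$, so $u\in\ci\gF_p^{s,\lda}$.

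The main obstacle is the triangular inversion of~\eqref{T.gk}: one has to carefully track the bounded-but-variable coefficients coming from $\sqrt{g_{11}}$, its reciprocal, and the derivatives of the Christoffel symbols, and then propagate the vanishing through multiplication by the positive smooth factor $\rho_\ka^{\lda-m/p'}\sqrt{\ka_*g}\,\ka_*\pi_\ka$ via Leibniz. Once this passage from the invariant trace $\vec\ga_ku=0$ to the coordinate-wise vanishing $\ga_\ka(\pl_1^jv_\ka)=0$ is established, everything else is a routine application of the retraction-coretraction machinery of Theorem~\ref{thm-V.ret} together with the classical facts (a) and (b) on the model spaces.
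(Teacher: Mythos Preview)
Your proof is correct and follows essentially the same route as the paper's: both parts are reduced to the corresponding classical facts on~$\BX_\ka$ via the retraction-coretraction pair $(\ci\psi_p^\lda,\ci\vp_p^\lda)$ of Theorem~\ref{thm-V.ret}, with the triangular structure of~\eqref{T.gk} used to pass from $\vec\ga_ku=0$ to the vanishing of the Euclidean normal derivatives. Your write-up is somewhat more explicit than the paper's---you spell out the Leibniz step for the smooth prefactor $\rho_\ka^{\lda-m/p'}\sqrt{\ka_*g}\,\ka_*\pi_\ka$ and the final approximation argument---but there is no substantive difference in strategy.
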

\begin{proof}
(i)~follows from Theorem~\ref{thm-V.ret} and the corresponding properties
of these spaces on~$\BX_\ka$.

\par
(ii) Let the assumptions of~(ii) be satisfied. If
\hb{u\in\ci\gF_p^{s,\lda}}, then it is obvious by Corollary~\ref{cor-T.tr}
that
\hb{\vec\ga_ku=0}.

\par
Conversely, suppose
\hb{u\in\gF_p^{s,\lda}} and
\hb{\vec\ga_ku=0}. Then we infer from \eqref{T.gk} that
\hb{(\ga_\ka\circ\pl_1^i)\ka_*(\pi_\ka u)=0} for
\hb{\ka\in\gK} and
\hb{0\leq i\leq k}. Hence
\hb{\ka_*(\pi_\ka u)\in\ci\gF_{p,\ka}^s} for
\hb{\ka\in\gK_{\pl M}} (cf.~Theorem~2.9.4 in~\cite{Tri78a}). Consequently,
\hb{\ci\vp_p^\lda u\in\ell_p(\cimfgF_p^s)} and, by Theorem~\ref{thm-V.ret},
\ \hb{u=\ci\psi_p^\lda(\ci\vp_p^\lda u)\in\ci\gF_p^{s,\lda}}. This proves
assertion~(ii).
\end{proof}
\begin{theorem}\label{thm-V.S}
Suppose
\hb{k\in\BN} and
\hb{k+1/p<s<k+1+1/p}. Put
$$
\pl\gF_p^{s,\lda}(\thV)
:=\prod_{i=0}^kB_p^{s-i-1/p,\lda+i+1/p}(\thV).
$$
Let $\vec\ga_k^c$ be a coretraction for~$\vec\ga_k$. Then\/
\hb{\gF_p^{s,\lda}(V)
   =\ci\gF_p^{s,\lda}(V)\oplus\vec\ga_k^c\,\pl\gF_p^{s,\lda}(\thV)}.
\end{theorem}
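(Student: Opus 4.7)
The plan is to exhibit the decomposition explicitly and then check that the two summands intersect trivially. Since $\vec\ga_k$ is a retraction from $\gF_p^{s,\lda}(V)$ onto $\pl\gF_p^{s,\lda}(\thV)$ by Corollary~\ref{cor-T.tr} (the product of the individual traces lands exactly in the space $\pl\gF_p^{s,\lda}(\thV)$ defined here), the coretraction $\vec\ga_k^c$ satisfies $\vec\ga_k\circ\vec\ga_k^c=\id$ on $\pl\gF_p^{s,\lda}(\thV)$. All the hard work has been done in Theorems~\ref{thm-T.tr} and~\ref{thm-V.tr}; what remains is a standard projection argument.

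First, given $u\in\gF_p^{s,\lda}(V)$, set $w:=\vec\ga_k^c\vec\ga_ku\in\vec\ga_k^c\,\pl\gF_p^{s,\lda}(\thV)$ and $v:=u-w$. Applying $\vec\ga_k$ and using that it is a retraction gives $\vec\ga_kv=\vec\ga_ku-\vec\ga_k\vec\ga_k^c\vec\ga_ku=\vec\ga_ku-\vec\ga_ku=0$. Since $k+1/p<s<k+1+1/p$, Theorem~\ref{thm-V.tr}(ii) then yields $v\in\ci\gF_p^{s,\lda}(V)$. Hence $u=v+w$ with $v\in\ci\gF_p^{s,\lda}(V)$ and $w\in\vec\ga_k^c\,\pl\gF_p^{s,\lda}(\thV)$, proving that the sum equals all of $\gF_p^{s,\lda}(V)$.

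For the directness of the sum, suppose $u\in\ci\gF_p^{s,\lda}(V)\cap\vec\ga_k^c\,\pl\gF_p^{s,\lda}(\thV)$. Write $u=\vec\ga_k^c\eta$ for some $\eta\in\pl\gF_p^{s,\lda}(\thV)$. Because $u\in\ci\gF_p^{s,\lda}(V)$, Theorem~\ref{thm-V.tr}(ii) gives $\vec\ga_ku=0$. On the other hand, the coretraction identity $\vec\ga_k\circ\vec\ga_k^c=\id$ gives $\vec\ga_ku=\vec\ga_k\vec\ga_k^c\eta=\eta$. Thus $\eta=0$ and consequently $u=\vec\ga_k^c\eta=0$, which completes the proof.

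I do not anticipate any serious obstacle here: the argument is purely formal once one has the trace retraction theorem (Theorem~\ref{thm-T.tr}), its iterated version (Corollary~\ref{cor-T.tr}), and the characterization of $\ci\gF_p^{s,\lda}$ by vanishing traces (Theorem~\ref{thm-V.tr}(ii)). The only points requiring care are the restriction $k+1/p<s<k+1+1/p$, which is exactly what makes Theorem~\ref{thm-V.tr}(ii) applicable, and the observation that the vector-valued trace operator $\vec\ga_k=(\ga_0,\ldots,\ga_k)$ indeed maps into and onto the product space $\pl\gF_p^{s,\lda}(\thV)$ appearing in the statement, which is precisely the content of Corollary~\ref{cor-T.tr}.
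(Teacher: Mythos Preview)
Your proof is correct and follows essentially the same approach as the paper: the paper invokes the abstract fact that if $r\circ r^c=\id$ then $r^c\circ r$ is a continuous projection giving $X=\ker(r)\oplus r^cY$, and then identifies $\ker(\vec\ga_k)$ with $\ci\gF_p^{s,\lda}(V)$ via Theorem~\ref{thm-V.tr}(ii) and the retraction property via Corollary~\ref{cor-T.tr}. You have simply written out this projection argument by hand.
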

\begin{proof}
Let $X$ and~$Y$ be Banach spaces,
\hb{r\in\cL\XY} and
\hb{r^c\in\cL\YX} with
\hb{r\circ r^c=\id}. Then
\hb{r^c\circ r} is a projection in~$\cL(X)$ and
$$
X=\ker(r^c\circ r)\oplus\im(r^c\circ r)=\ker(r)\oplus r^cY,
$$
where $r^cY$~is the image space of~$Y$ in~$X$, so that
\hb{r^c\sco Y\ra r^cY} is an isometric isomorphism (cf.~Lemma~4.1.5
in~\cite{Ama09a} or Lemma~2.3.1 in~\cite{Ama95a}). Hence the assertion
follows from Corollary~\ref{cor-T.tr} and Theorem~\ref{thm-V.tr}.
\end{proof}
\section{Spaces of Negative Order}\label{sec-N}
For
\hb{u\in\cD(M,V')} and
\hb{v\in\cD\MV} we put
$$
\dl u,v\dr_M:=\int_M\dl u,v\dr\,dV_{\coV g}.
$$
This bilinear form extends uniquely to a separating continuous bilinear
form
$$
\pw_M\sco L_{p'}^{-\lda}(V')\times L_p^\lda(V)\ra\BK
$$
by which we identify the dual Banach space
of~$L_p^\lda(V)$ with~$L_{p'}^{-\lda}(V')$, that is,
\beq\label{N.LL}
L_p^\lda(V)'=L_{p'}^{-\lda}(V')
\text{ by means of the duality pairing  }\pw_M.
\eeq

\par
It follows from Theorem~\ref{thm-V.tr}(i) that
\beq\label{N.DFL}
\cD\ciMV\sdh\ci\gF_p^{s,\lda}(V)\sdh L_p^\lda(V)
\eeq
for
\hb{s\geq0}, with
\hb{s>0} if
\hb{\gF=B}. Theorem~\ref{thm-B.ref} implies that $\ci\gF_p^{s,\lda}(V)$ is
reflexive, being a closed linear subspace of a reflexive space. Thus we
put, in accordance with \eqref{N.LL},
\beq\label{N.HBD}
\gF_p^{-s,\lda}(V):=\bigl(\ci\gF_{p'}^{s,-\lda}(V')\bigr)'
\qa s>0.
\eeq
It is a consequence of \eqref{N.LL}, \eqref{N.DFL}, and
Theorem~\ref{thm-B.ret} that
\beq\label{N.FHF}
\gF_p^{s,\lda}(V)\sdh L_p^\lda(V)\sdh\gF_p^{-s,\lda}(V)\sdh\cD\ciMV
\qa s>0,
\eeq
with respect to the duality pairing~%
\hb{\pw_M}, that is,
$$
\dl u,v\dr_{\gF_p^{-s,\lda}(V)}=\dl u,v\dr_M
\qa s>0
\qb u\in\ci\gF_{p'}^{s,-\lda}(V')
\qb v\in L_p(V).
$$
Finally, we define
\beq\label{N.B0}
B_p^{0,\lda}(V):=\bigl(B_p^{-1,\lda}(V),B_p^{1,\lda}(V)\bigr)_{1/2,p}.
\eeq
\begin{theorem}\label{thm-N.ret}
Suppose
\hb{s\in\BR} with
\hb{s\notin-\BN^\times+1/p} if\/
\hb{\pl M\neq\es}. Then $\psi_p^\lda$~is a retraction from
$\ell_p(\mf{\gF}_p^s)$ onto~$\gF_p^{s,\lda}(V)$, and $\vp_p^\lda$~is a
coretraction.
\end{theorem}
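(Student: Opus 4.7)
The plan is to handle the three cases $s>0$, $s<0$, and $s=0$ in sequence.

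For $s>0$ the conclusion is exactly Theorem~\ref{thm-B.ret}, so nothing further is needed.

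For $s<0$ with $s\notin-\BN^\times+1/p$, I set $t:=-s>0$; the exclusion translates into $t\notin\BN+1/p'$, since $-\BN-1/p'=-\BN^\times+1/p$. Applying Theorem~\ref{thm-V.ret} with $(p,\lda,V,s)$ replaced by $(p',-\lda,V',t)$ yields that $\ci\psi_{p'}^{-\lda}$ is a retraction from $\ell_{p'}(\cimfgF_{p'}^t)$ onto $\ci\gF_{p'}^{t,-\lda}(V')$ with coretraction $\ci\vp_{p'}^{-\lda}$. I then dualize this retraction--coretraction pair using three identifications: (i)~$\bigl(\ci\gF_{p'}^{t,-\lda}(V')\bigr)'=\gF_p^{s,\lda}(V)$ by definition~\eqref{N.HBD}; (ii)~the standard chart-wise duality $\bigl(\ci\gF_{p',\ka}^t\bigr)'=\gF_{p,\ka}^{-t}$ on $\BX_\ka$, valid for $t\notin\BN+1/p'$ (classical on $\BR^m$; see Sections 4.6 and~4.8 of~\cite{Ama09a} and~\cite{Tri78a} for the half-space); and (iii)~$\bigl(\ell_{p'}(\mf{E})\bigr)'=\ell_p(\mf{E}')$ for sequences with reflexive components. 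These combine to produce a retraction $(\ci\vp_{p'}^{-\lda})'$ from $\ell_p(\mf{\gF}_p^s)$ onto $\gF_p^{s,\lda}(V)$ together with the coretraction $(\ci\psi_{p'}^{-\lda})'$.

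It remains to identify these adjoints with $\psi_p^\lda$ and $\vp_p^\lda$. The weight exponents match by design: under the substitution $p\to p'$, $\lda\to-\lda$, one computes $\ci\vp_{p',\ka}^{-\lda}=\rho_\ka^{-\lda-m/p}\ci\vp_\ka$, sharing the scalar factor of $\psi_{p,\ka}^\lda=\rho_\ka^{-\lda-m/p}\psi_\ka$, and $\ci\psi_{p',\ka}^{-\lda}=\rho_\ka^{\lda+m/p}\ci\psi_\ka$ shares the scalar factor of $\vp_{p,\ka}^\lda=\rho_\ka^{\lda+m/p}\vp_\ka$. The unweighted adjoint identities $(\ci\vp)'=\psi$ and $(\ci\psi)'=\vp$ on $\cD(M,V)$ paired against $\mf{\cD}(\ci\BX,E')$ are essentially steps~(7) and~(8) of the proof of Theorem~\ref{thm-L.ret}; repeating that computation with the new weights inserted, and observing that the Jacobian $\sqrt{\ka_*g}$ hidden in $\ci\vp_\ka$ converts $dV_{g_m}$ back to $dV_g$, gives the weighted identifications on dense subspaces, whence by continuity on the full dual spaces.

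Finally, for $s=0$ with $\gF=H$, Corollary~\ref{cor-B.ret}(i) yields $H_p^{0,\lda}=L_p^\lda=W_{\coW p}^{0,\lda}$, so Theorem~\ref{thm-W.ret} at $k=0$ gives the claim. For $s=0$ with $\gF=B$, I interpolate the retractions at $s=\pm1$ (both established above, with $-1\notin-\BN^\times+1/p$ since $1<p<\iy$), invoking Theorem~1.18.1 of~\cite{Tri78a} to commute $(\cdot,\cdot)_{1/2,p}$ with $\ell_p$, together with the chart-wise reiteration $(B_{p,\ka}^{-1},B_{p,\ka}^1)_{1/2,p}\doteq B_{p,\ka}^0$. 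I expect the main obstacle to be the bookkeeping in the adjoint identification step above: verifying on a dense subspace that the weighted scalings and volume densities align so that $(\ci\vp_{p'}^{-\lda})'=\psi_p^\lda$ and $(\ci\psi_{p'}^{-\lda})'=\vp_p^\lda$ exactly, after which continuous extension closes the argument.
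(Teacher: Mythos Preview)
Your proof is correct and follows essentially the same approach as the paper: the case $s>0$ via Theorem~\ref{thm-B.ret}, the case $s<0$ by dualizing the retraction of Theorem~\ref{thm-V.ret} applied to $(p',-\lda,V')$ and identifying the adjoints $(\ci\vp_{p'}^{-\lda})'=\psi_p^\lda$, $(\ci\psi_{p'}^{-\lda})'=\vp_p^\lda$ through the weight match and the computations of Theorem~\ref{thm-L.ret}, and the case $s=0$ by interpolation via~\eqref{N.B0}. Your separate treatment of $H_p^{0,\lda}$ via Theorem~\ref{thm-W.ret} is a harmless explicitation of what the paper tacitly absorbs into its step~(1).
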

\begin{proof}
(1) If
\hb{s\geq0} with
\hb{s>0} if
\hb{\gF=B}, then this is a restatement of Theorem~\ref{thm-B.ret}.

\par
(2) Suppose
\hb{s<0}, with
\hb{s\notin-\BN+1/p} if
\hb{\pl M\neq\es}. Then Theorem~\ref{thm-V.ret} guarantees that
$\ci\psi_{p'}^{-\lda}$~is a retraction from
$\ell_{p'}(\cimfgF_{p'}^{-s})$ onto~$\gF_{p'}^{-s,-\lda}(V')$ and
$\ci\vp_{p'}^{-\lda}$~is a coretraction. Since
\hb{(\ci\gF_{p',\ka}^{-s})'=\gF_{p,\ka}^s} with respect to the duality
pairing
\hb{\pw_\ka:=\pw_{\BX_\ka}}, it follows
$$
\bigl(\ell_{p'}(\cimfgF_{p'}^{-s})\bigr)'=\ell_p(\mf{\gF}_p^s)
$$
with respect to~%
\hb{\mfpw}. Using
$$
\ci\vp_{p',\ka}^{-\lda}=\rho_\ka^{-\lda-m/p}\ci\vp_\ka,
$$
the proof of Theorem~\ref{thm-L.ret}, and Theorem~\ref{thm-B.ret} we thus
obtain
$$
\psi_p^\lda=(\ci\vp_{p'}^{-\lda})'
\in\cL\bigl(\ell_p(\mf{\gF}_p^s),\gF_p^{s,\lda}(V)\bigr)
$$
and
$$
\vp_p^\lda=(\ci\psi_{p'}^{-\lda})'
\in\cL\bigl(\gF_p^{s,\lda}(V),\ell_p(\mf{\gF}_p^s)\bigr)
$$
with
\hb{\psi_p^\lda\circ\vp_p^\lda=\id}. This proves the assertion if
\hb{s<0}.

\par
(3) If
\hb{s=0}, then the claim for~$B_p^{0,\lda}(V)$ follows by interpolation
from \eqref{N.B0} and steps (1) and~(2).
\end{proof}
\begin{corollary}\label{cor-N.ret}
Suppose
\hb{s\in\BR} and
\hb{s\notin-\BN^\times+1/p}\, if\/
\hb{\pl M\neq\es}. Then
\hb{H_2^{s,\lda}(V)\doteq B_2^{s,\lda}(V)}.
\end{corollary}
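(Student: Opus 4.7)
The plan is to reduce the assertion to the classical Hilbert-case identity $H_2^s \doteq B_2^s$ on the model spaces $\BR^m$ and $\BH^m$, and then transport it to $M$ via the retraction-coretraction pair of Theorem~\ref{thm-N.ret}. First I would establish the local model identity: for $\BX_\ka\in\{\BR^m,\BH^m\}$ and $s$ in the admissible range one has $H_{2,\ka}^s\doteq B_{2,\ka}^s$. For $s>0$ on~$\BR^m$ this is standard (\cite{Tri78a}, Section 2.3.5, using that $H_2^s=F_{2,2}^s=B_{2,2}^s$ so the Slobodeckii/Besov notation $B_2^s$ from Section~\ref{sec-B} coincides with the $H_2^s$-space); the $\BH^m$-case follows by restriction arguments, cf.~Section~4 of~\cite{Ama09a}. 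For $s<0$ the duality defining $\gF_2^{-s,\lda}(V)$ in~\eqref{N.HBD} combined with the fact that $\ci H_2^{s,-\lda}=\ci B_2^{s,-\lda}$ (obtained by taking closures of $\cD(\ci\BX_\ka,E)$ inside equivalent parent spaces) transports the positive-$s$ identity. The case $s=0$ is the mildly delicate one: by definition~\eqref{N.B0}, $B_2^{0,\lda}$ is an interpolation space, and on~$\BX_\ka$ one uses $(B_{2,\ka}^{-1},B_{2,\ka}^1)_{1/2,2}\doteq L_{2,\ka}$, which holds by reiteration and by the fact that for Hilbert couples the real interpolation functor $\pr_{1/2,2}$ produces the same space as the complex one $\pe_{1/2}$, which equals~$L_{2,\ka}$.

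Second, because there are only two local models, the norm equivalence constants are independent of~$\ka$, so
$$
\ell_2(\mf{H}_2^s)\doteq\ell_2(\mf{B}_2^s).
$$
By Theorem~\ref{thm-N.ret} applied with $p=2$ and with $\gF=H$ and $\gF=B$ separately, the pair $(\psi_2^\lda,\vp_2^\lda)$ simultaneously realizes $H_2^{s,\lda}(V)$ and $B_2^{s,\lda}(V)$ as retracts of the two ($\doteq$-equal) spaces $\ell_2(\mf{H}_2^s)$ and $\ell_2(\mf{B}_2^s)$. The standard retract-norm identity $\|u\|_{\gF_2^{s,\lda}(V)}\sim\|\vp_2^\lda u\|_{\ell_2(\mf{\gF}_2^s)}$ (cf.~the proof of Theorem~\ref{thm-B.loc}) then yields
$$
\|u\|_{H_2^{s,\lda}(V)}\sim\|\vp_2^\lda u\|_{\ell_2(\mf{H}_2^s)}
\sim\|\vp_2^\lda u\|_{\ell_2(\mf{B}_2^s)}\sim\|u\|_{B_2^{s,\lda}(V)},
$$
and identifies the two spaces.

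The main obstacle is not conceptual but one of careful bookkeeping across the three regimes. For $s>0$ and $s<0$ the argument is uniform and short; the only genuinely new piece of work is the $s=0$ case, where $B_2^{0,\lda}(V)$ is accessed through the interpolation definition~\eqref{N.B0} rather than through a direct retraction statement. There one must invoke the interchangeability of $\ell_p$ with real and complex interpolation (Theorem~1.18.1 of~\cite{Tri78a}) to commute the $\pr_{1/2,2}$-functor past $\ell_2(\mf{\cdot})$, and then apply the Hilbert-interpolation identity at the model level. The excluded set $-\BN^\times+1/p=\{-1/2,-3/2,\ldots\}$ for $\pl M\neq\es$ appears purely as the exceptional set of Theorem~\ref{thm-N.ret} (traces on $\BH^m$); outside it, everything goes through cleanly.
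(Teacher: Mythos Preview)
Your proof is correct and follows exactly the route the paper intends: the corollary is placed immediately after Theorem~\ref{thm-N.ret} with no explicit proof because both $H_2^{s,\lda}(V)$ and $B_2^{s,\lda}(V)$ are retracts of $\ell_2(\mf{H}_2^s)\doteq\ell_2(\mf{B}_2^s)$ via the \emph{same} pair $(\psi_2^\lda,\vp_2^\lda)$, using the classical identity $H_{2,\ka}^s\doteq B_{2,\ka}^s$ on the model spaces. Note only that your separate treatment of $s=0$ is unnecessary, since Theorem~\ref{thm-N.ret} already covers $s=0$ for $\gF=B$ (see step~(3) of its proof), so the retract argument applies uniformly.
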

It is convenient to denote by~$\ci\gF_p^{s,\lda}(V)$ \emph{for each}
\hb{s\in\BR} the closure of~$\cD\ciMV$ in~$\gF_p^{s,\lda}(V)$. Then
$$
\ci\gF_p^{s,\lda}(V)=\gF_p^{s,\lda}(V)
\qa s<1/p.
$$
In fact, this follows from Theorem~\ref{thm-V.tr}(i) and \eqref{N.FHF}.
\begin{theorem}\label{thm-N.ref}
The Banach spaces $\ci\gF_p^{s,\lda}(V)$ and $\gF_p^{s,\lda}(V)$ are
reflexive for
\hb{s\in\BR}. Moreover,
$$
\bigl(\ci\gF_p^{s,\lda}(V)\bigr)'\doteq\ci\gF_{p'}^{-s,-\lda}(V')
\qa s\in\BR.
$$
\end{theorem}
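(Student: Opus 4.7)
The plan is to bootstrap both assertions from the retraction-coretraction theorem (Theorem~\ref{thm-N.ret}) combined with the definitional duality \eqref{N.HBD} and the convention just preceding the theorem that $\ci\gF_p^{s,\lda}(V) = \gF_p^{s,\lda}(V)$ for $s < 1/p$. A separate real-interpolation argument is needed for the endpoint $s = 0$ in the Besov setting.

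For reflexivity I would argue as follows. On each local model space $\BX_\ka \in \{\BR^m, \BH^m\}$, the spaces $\gF_{p,\ka}^s$ are reflexive for every $s \in \BR$ and $1 < p < \iy$; this is standard on $\BR^m$ and follows on $\BH^m$ from the extension-restriction framework of Section~4.4 in~\cite{Ama09a}. Consequently $\ell_p(\mf{\gF}_p^s)$ is reflexive. Theorem~\ref{thm-N.ret} gives $\psi_p^\lda \circ \vp_p^\lda = \id$, so $\vp_p^\lda \circ \psi_p^\lda$ is a continuous projection of $\ell_p(\mf{\gF}_p^s)$ onto $\vp_p^\lda\bigl(\gF_p^{s,\lda}(V)\bigr)$, which is therefore a complemented closed subspace topologically isomorphic to $\gF_p^{s,\lda}(V)$ via $\vp_p^\lda$. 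Reflexivity passes to complemented subspaces (cf.\ Lemma~I.2.3.1 in~\cite{Ama95a}), yielding reflexivity of $\gF_p^{s,\lda}(V)$, and then of its closed subspace $\ci\gF_p^{s,\lda}(V)$.

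For the duality $(\ci\gF_p^{s,\lda}(V))' \doteq \ci\gF_{p'}^{-s,-\lda}(V')$ I would split into four cases. \emph{(i) $s > 0$:} By definition \eqref{N.HBD}, $\gF_{p'}^{-s,-\lda}(V') = \bigl(\ci\gF_p^{s,\lda}(V)\bigr)'$; since $-s < 0 < 1/p'$, the convention yields $\ci\gF_{p'}^{-s,-\lda}(V') = \gF_{p'}^{-s,-\lda}(V')$. \emph{(ii) $s < 0$:} The convention gives $\ci\gF_p^{s,\lda}(V) = \gF_p^{s,\lda}(V)$, and \eqref{N.HBD} applied with indices swapped gives $\gF_p^{s,\lda}(V) = \bigl(\ci\gF_{p'}^{-s,-\lda}(V')\bigr)'$; reflexivity of $\ci\gF_{p'}^{-s,-\lda}(V')$ (just proven) then identifies its bidual with itself, yielding the claim. \emph{(iii) $s = 0$, $\gF = H$:} $\ci H_p^{0,\lda} = L_p^\lda$, and the assertion reduces to \eqref{N.LL}.

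The main obstacle is case \emph{(iv) $s = 0$, $\gF = B$}, where $\ci B_p^{0,\lda}(V) = B_p^{0,\lda}(V) = (B_p^{-1,\lda}, B_p^{1,\lda})_{1/2,p}$ by \eqref{N.B0}. The plan is to replace this couple by one whose endpoints fall inside cases~(i)–(ii). Fix $\varepsilon \in \bigl(0, \min(1/p, 1/p')\bigr)$. Via an interpolation-of-retractions argument, combining Theorem~\ref{thm-N.ret}, Theorem~1.18.1 of~\cite{Tri78a} for $\ell_p$-sums, and the scalar identity $\bigl(B_p^{-\varepsilon}(\BX_\ka, E), B_p^{\varepsilon}(\BX_\ka, E)\bigr)_{1/2,p} \doteq B_p^0(\BX_\ka, E)$ on each local model, I would first establish $B_p^{0,\lda}(V) \doteq \bigl(B_p^{-\varepsilon,\lda}(V), B_p^{\varepsilon,\lda}(V)\bigr)_{1/2,p}$. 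Density of $\cD\MV$ in each endpoint (from \eqref{N.FHF}) makes the couple regular, so the real-interpolation duality theorem (Theorem~1.11.2 in~\cite{Tri78a}) yields $\bigl(B_p^{0,\lda}(V)\bigr)' \doteq \bigl((B_p^{-\varepsilon,\lda})', (B_p^{\varepsilon,\lda})'\bigr)_{1/2,p'}$. Cases~(i) and~(ii), together with $\varepsilon < 1/p$ and $\varepsilon < 1/p'$, identify the two endpoint duals with $B_{p'}^{\varepsilon,-\lda}(V')$ and $B_{p'}^{-\varepsilon,-\lda}(V')$, respectively; a second application of the same interpolation-of-retractions argument then produces $B_{p'}^{0,-\lda}(V') = \ci B_{p'}^{0,-\lda}(V')$, completing the proof. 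The essential technical point is the auxiliary interpolation identity straddling smoothness zero, which is not directly covered by Corollary~\ref{cor-B.ret}(ii).
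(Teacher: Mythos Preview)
Your argument is correct and follows essentially the same strategy as the paper's (very terse) proof, which simply points to Theorem~\ref{thm-B.ref}, the stability of reflexivity under closed subspaces and duality, and the duality theorem for $\pr_{1/2,p}$ applied to~\eqref{N.B0}. Two small remarks are worth making.

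First, your reflexivity argument invokes Theorem~\ref{thm-N.ret}, which carries the restriction $s\notin-\BN^\times+1/p$ when $\pl M\neq\es$; you should say a word about those excluded values. This is harmless: for such $s$ one has $s<0$, and then $\gF_p^{s,\lda}(V)$ is by definition~\eqref{N.HBD} the dual of the reflexive space $\ci\gF_{p'}^{-s,-\lda}(V')$, hence reflexive. The paper's route via Theorem~\ref{thm-B.ref} plus duality avoids this detour altogether.

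Second, for the $s=0$ Besov duality your choice of endpoints $\pm\varepsilon$ with $\varepsilon<\min(1/p,1/p')$ is in fact cleaner than working directly with the endpoints $\pm1$ of~\eqref{N.B0} as the paper suggests: with $\pl M\neq\es$ the dual $\bigl(B_p^{1,\lda}(V)\bigr)'$ is $\check B_{p'}^{-1,-\lda}(V')$, which by Theorem~\ref{thm-N.dM} contains boundary-supported pieces, so identifying the interpolated dual would require an extra argument. Your $\pm\varepsilon$ trick keeps both endpoints in the range where $\ci\gF=\gF$ and sidesteps this entirely. The auxiliary identity $B_p^{0,\lda}\doteq(B_p^{-\varepsilon,\lda},B_p^{\varepsilon,\lda})_{1/2,p}$ that you need is exactly the special case of Theorem~\ref{thm-I.I}(i) whose proof relies only on Theorem~\ref{thm-N.ret}, so no circularity arises.
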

\begin{proof}
This follows from Theorem~\ref{thm-B.ref}, the fact that closed linear
subspaces and reflexive Banach spaces are reflexive, and the duality
properties of the real interpolation functor~%
\hb{\pr_{1/2,p}} (see~\eqref{N.B0}).
\end{proof}
Suppose
\hb{\pl M\neq\es}. Since $\gF_p^{s,\lda}(V)$ is reflexive and densely
embedded in~$L_p(V)$ for
\hb{s>0}, we can define for
\hb{s>0}
$$
\check\gF_p^{-s,\lda}(V):=\bigl(\gF_{p'}^{s,-\lda}(V')\bigr)'
$$
with respect to the duality pairing~%
\hb{\pw_M}. By Theorem~\ref{thm-V.tr}(i)
$$
\check\gF_p^{s,\lda}(V):=\gF_p^{s,\lda}(V)
\qa -1+1/p<s<0.
$$
However, if
\hb{s<-1+1/p}, then $\check\gF_p^{s,\lda}(V)$ is no longer a space of
distribution sections on~$\ci M$, but contains distribution sections
supported on~$\pl M$.
This is made precise by the next theorem in which we use the notations of
Theorem~\ref{thm-V.S}.
\begin{theorem}\label{thm-N.dM}
Suppose
\hb{\pl M\neq\es} and
\hb{-k-2+1/p<s<-k-1+1/p} with
\hb{k\in\BN}. Put
$$
\pl\gF_p^{s,\lda}(\thV)
:=\prod_{i=0}^kB_p^{s+i+1-1/p,\lda-i-1+1/p}(\thV).
$$
Then
$$
\check\gF_p^{s,\lda}(V)
=\gF_p^{s,\lda}(V)\oplus(\vec\ga_k)'\,\pl\gF_p^{s,\lda}(\thV),
$$
where $\vec\ga_k$~maps $\gF_{p'}^{-s,-\lda}(V)$ onto
$\prod_{i=0}^kB_{p'}^{-s-i-1/p',-\lda+i+1/p'}(\thV)$.
\end{theorem}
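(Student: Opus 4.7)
The plan is to dualize the direct sum decomposition of Theorem \ref{thm-V.S}, applied to $\gF_{p'}^{-s,-\lda}(V')$ with exponent $-s$. Using $1/p'=1-1/p$, the hypothesis $-k-2+1/p<s<-k-1+1/p$ is equivalent to $k+1/p'<-s<k+1+1/p'$, which is precisely the range in Theorem \ref{thm-V.S} (applied with $(p,\lda,s)$ replaced by $(p',-\lda,-s)$). That theorem, together with Corollary \ref{cor-T.tr}, therefore yields the topological direct sum
$$
\gF_{p'}^{-s,-\lda}(V')=\ci\gF_{p'}^{-s,-\lda}(V')\oplus\vec\ga_k^c\,\pl\gF_{p'}^{-s,-\lda}(\thV'),
$$
where $\pl\gF_{p'}^{-s,-\lda}(\thV')=\prod_{i=0}^kB_{p'}^{-s-i-1/p',-\lda+i+1/p'}(\thV')$, $\vec\ga_k$ is the retraction onto this product, and $\vec\ga_k^c$ is its universal coretraction.

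Dualizing this topological direct sum presents $\check\gF_p^{s,\lda}(V)=\bigl(\gF_{p'}^{-s,-\lda}(V')\bigr)'$ as the direct sum of the annihilators of the two summands. Via restriction, the annihilator of $\vec\ga_k^c\,\pl\gF_{p'}^{-s,-\lda}(\thV')$ is canonically isometric to $\bigl(\ci\gF_{p'}^{-s,-\lda}(V')\bigr)'$, which equals $\gF_p^{s,\lda}(V)$ by \eqref{N.HBD}. The annihilator of $\ci\gF_{p'}^{-s,-\lda}(V')$ is canonically isomorphic to the dual of the second summand, i.e.\ to $\pl\gF_{p'}^{-s,-\lda}(\thV')'$. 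Since $\pl(\pl M)=\es$ (Example \ref{exa-S.ex}(e)), Theorem \ref{thm-N.ref} applied on $\pl M$ has no exceptional indices and I would invoke it factor by factor to obtain
$$
B_{p'}^{-s-i-1/p',-\lda+i+1/p'}(\thV')'\doteq B_p^{s+i+1/p',\lda-i-1/p'}(\thV)=B_p^{s+i+1-1/p,\lda-i-1+1/p}(\thV),
$$
and hence $\pl\gF_{p'}^{-s,-\lda}(\thV')'\doteq\pl\gF_p^{s,\lda}(\thV)$.

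The final step is to identify the canonical embedding of this second summand into $\check\gF_p^{s,\lda}(V)$ with the transpose $(\vec\ga_k)'$. For $w\in\pl\gF_p^{s,\lda}(\thV)$, the functional $(\vec\ga_k)'w$ acts on $v\in\gF_{p'}^{-s,-\lda}(V')$ by $\dl w,\vec\ga_kv\dr_{\pl M}$; it vanishes on $\ci\gF_{p'}^{-s,-\lda}(V')\is\ker\vec\ga_k$ by Theorem \ref{thm-V.tr}(ii), and its composition with $\vec\ga_k^c$ recovers $\dl w,\cdot\dr_{\pl M}$ because $\vec\ga_k\circ\vec\ga_k^c=\id$. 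Thus $(\vec\ga_k)'$ is a coretraction, hence a topological embedding onto the annihilator of $\ci\gF_{p'}^{-s,-\lda}(V')$, and the dualized decomposition reads
$$
\check\gF_p^{s,\lda}(V)=\gF_p^{s,\lda}(V)\oplus(\vec\ga_k)'\,\pl\gF_p^{s,\lda}(\thV).
$$

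The main technical delicacy is the boundary duality: one must verify that the Bessel potential and Besov scales on the singular manifold $\pl M$ satisfy Theorem \ref{thm-N.ref} for \emph{all} real indices, which is exactly why the absence of boundary on $\pl M$ is essential, the exceptional set $-\BN^\times+1/p$ being vacuous in that case.
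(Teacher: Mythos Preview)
Your proof is correct and follows exactly the route the paper takes: dualize the decomposition of Theorem~\ref{thm-V.S} applied to $\gF_{p'}^{-s,-\lda}(V')$, using \eqref{N.HBD} and the fact that $\pl(\pl M)=\es$ to identify the resulting pieces. The paper's own proof is a single sentence pointing to this duality argument, so your write-up simply spells out the details the paper leaves to the reader.
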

\begin{proof}
Since
\hb{\pl(\pl M)=\es} the statement follows from \eqref{N.HBD} and
Theorem~\ref{thm-V.S} by duality (cf.~Section~2 of~\cite{Ama02a}).
\end{proof}
\section{Interpolation}\label{sec-I}
Now we can improve on the interpolation results already noted in
Corollaries \ref{cor-B.ret} and~\ref{cor-V.ret}.
\begin{theorem}\label{thm-I.I}
Suppose
\hb{-\iy<s_0<s_1<\iy},
\ \hb{0<\ta<1}, and
\hb{\lda_0,\lda_1\in\BR}.
\begin{itemize}
\item[(i)]
The following interpolation relations,
$$
\bigl[H_p^{s_0,\lda_0}(V),H_p^{s_1,\lda_1}(V)\bigr]_\ta
\doteq H_p^{s_\ta,\lda_\ta}(V)
\qb
\bigl(B_p^{s_0,\lda_0}(V),B_p^{s_1,\lda_1}(V)\bigr)_{\ta ,p}
\doteq B_p^{s_\ta,\lda_\ta}(V),
$$
are valid, provided
\hb{s_0,s_1,s_\ta\notin-\BN^\times+1/p}\, if\/
\hb{\pl M\neq\es}.
\item[(ii)]
Suppose
\hb{\pl M\neq\es} and
\hb{s_0,s_1,s_\ta\in\BR^+\ssm(N+1/p)}. Then
$$
\bigl[\ci H_p^{s_0,\lda_0}(V),\ci H_p^{s_1,\lda_1}(V)\bigr]_\ta
\doteq\ci H_p^{s_\ta,\lda_\ta}(V)
\qb
\bigl(\ci B_p^{s_0,\lda_0}(V),\ci B_p^{s_1,\lda_1}(V)\bigr)_{\ta ,p}
\doteq\ci B_p^{s_\ta,\lda_\ta}(V).
$$
\item[(iii)]
If either\/
\hb{\pl M=\es} or
\hb{s_0,s_1,s_\ta\notin-\BN^\times+1/p}, then
\hb{\bigl(H_p^{s_0,\lda_0}(V),H_p^{s_1,\lda_1}(V)\bigr)_{\ta,p}
   \doteq B_p^{s_\ta,\lda_\ta}(V)}.
\item[(iv)]
Suppose\/
\hb{\pl M\neq\es} and
\hb{s_0,s_1,s_\ta\in\BR^+\ssm(\BN+1/p)}. Then
\hb{\bigl(\ci H_p^{s_0,\lda_0}(V),\ci H_p^{s_1,\lda_1}(V)\bigr)_{\ta,p}
   \doteq\ci B_p^{s_\ta,\lda_\ta}(V)}.
\end{itemize}
\end{theorem}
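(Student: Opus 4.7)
The plan is to invoke the standard retraction principle for interpolation with a \emph{common} retraction-coretraction pair at both endpoints, and then reduce everything to the classical interpolation identities on~$\BR^m$ and~$\BH^m$. Although the maps $\vp_p^\lda$ and $\psi_p^\lda$ of Theorem~\ref{thm-N.ret} depend on~$\lda$ through the factor $\rho_\ka^{\pm(\lda+m/p)}$, this dependence can be absorbed into the sequence space: introducing the weighted direct sum
$$
\ell_p^\mu(\mf{\gF}_p^s)
:=\Bigl\{\,\mf{v}=(v_\ka)\ ;
  \ \Bigl(\sum_\ka\rho_\ka^{p\mu}\|v_\ka\|_{\gF_{p,\ka}^s}^p\Bigr)^{1/p}
  <\iy\,\Bigr\},
$$
the unweighted pair $(\psi,\vp)$ of Theorem~\ref{thm-L.ret} is, for each admissible~$(s,\lda)$, a retraction-coretraction from $\ell_p^{\lda+m/p}(\mf{\gF}_p^s)$ onto~$\gF_p^{s,\lda}(V)$ (since $\vp_p^\lda$ factorises as $\vp$ composed with the multiplication $(v_\ka)\mt(\rho_\ka^{\lda+m/p}v_\ka)$, and analogously for~$\psi_p^\lda$). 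Hence the same pair serves \emph{simultaneously} at both endpoints $(s_0,\lda_0)$ and~$(s_1,\lda_1)$.

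With a common retraction-coretraction pair in hand, the standard retraction principle for interpolation (Lemma~I.2.3.1 of~\cite{Ama95a}, cf.~Theorem~1.2.4 in~\cite{Tri78a}) reduces parts~(i) and~(iii) to computing $[\ell_p^{\lda_0+m/p}(\mf{\gF}_p^{s_0}),\ell_p^{\lda_1+m/p}(\mf{\gF}_p^{s_1})]_\ta$ and its real-interpolation analogue. Each $\ell_p^\mu(X)$ is a vector-valued $L_p$-space over a weighted counting measure, so the interpolation-with-change-of-weight theorem (a change-of-measure variant of Theorem~1.18.1 in~\cite{Tri78a}, together with its real-interpolation counterpart) commutes interpolation with the $\ell_p^\mu$-construction and collapses the weight to $\rho_\ka^{\lda_\ta+m/p}$. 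Feeding in the classical identities
$$
[H_{p,\ka}^{s_0},H_{p,\ka}^{s_1}]_\ta\doteq H_{p,\ka}^{s_\ta},\quad
(B_{p,\ka}^{s_0},B_{p,\ka}^{s_1})_{\ta,p}\doteq B_{p,\ka}^{s_\ta},\quad
(H_{p,\ka}^{s_0},H_{p,\ka}^{s_1})_{\ta,p}\doteq B_{p,\ka}^{s_\ta}
$$
on~$\BR^m$ and on~$\BH^m$ (see~\cite{Tri78a} and Section~4.4 of~\cite{Ama09a}) then yields parts~(i) and~(iii); the exclusions $s_i,s_\ta\in-\BN^\times+1/p$ are just the range in which Theorem~\ref{thm-N.ret} fails. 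Parts~(ii) and~(iv) follow from the same argument with Theorem~\ref{thm-N.ret} replaced by Theorem~\ref{thm-V.ret}, $\vp_\ka,\psi_\ka$ by their circled analogues $\ci\vp_\ka,\ci\psi_\ka$, and $\gF_{p,\ka}^s$ by~$\ci\gF_{p,\ka}^s$; the condition $s_i,s_\ta\in\BR^+\ssm(\BN+1/p)$ is exactly what makes Theorem~\ref{thm-V.ret} applicable and, at the local level, what guarantees the analogous identity $[\ci H_{p,\ka}^{s_0},\ci H_{p,\ka}^{s_1}]_\ta\doteq\ci H_{p,\ka}^{s_\ta}$ (cf.~Theorem~2.10.3 and Section~4.3.2 of~\cite{Tri78a}).

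The main obstacle I expect is purely technical: the interpolation identity $[\ell_p^{\mu_0}(X_0),\ell_p^{\mu_1}(X_1)]_\ta\doteq\ell_p^{\mu_\ta}([X_0,X_1]_\ta)$ (and its real analogue) for weighted vector-valued sequence spaces, which however follows by a standard change of measure. A~minor subtlety arises at $s_\ta=0$ in~(iii), where $B_p^{0,\lda}(V)$ is \emph{defined} via~\eqref{N.B0} as an interpolation between $B_p^{\pm1,\lda}(V)$; the reiteration theorem for~$\pr_{\ta,p}$ then reconciles the two constructions.
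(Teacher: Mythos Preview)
Your argument is correct and is essentially the paper's own proof in different packaging. The paper fixes $\psi_p^{\lda_0}$ as the common retraction and absorbs the discrepancy $\mu:=\lda_1-\lda_0$ into shifted local spaces $\rho_\ka^{-\mu}H_{p,\ka}^{s_1}$, quoting formula~(7) in Section~3.4.1 of~\cite{Tri78a} for $[H_{p,\ka}^{s_0},\rho_\ka^{-\mu}H_{p,\ka}^{s_1}]_\ta\doteq\rho_\ka^{-\ta\mu}H_{p,\ka}^{s_\ta}$, whereas you absorb \emph{all} the weight into the $\ell_p^\mu$-notation and use the unweighted $(\psi,\vp)$; the isometry $\ell_p^\mu(\mf{X})\cong\ell_p(\rho_\ka^{-\mu}\mf{X})$ makes the two formulations identical. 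In particular, your ``change-of-weight'' identity $[\ell_p^{\mu_0}(\mf{X}_0),\ell_p^{\mu_1}(\mf{X}_1)]_\ta\doteq\ell_p^{\mu_\ta}([\mf{X}_0,\mf{X}_1]_\ta)$ decomposes, under this isometry, into Theorem~1.18.1 of~\cite{Tri78a} plus precisely the scalar-weight formula the paper invokes, so no new ingredient is needed beyond what the paper uses.
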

\begin{proof}
Fix \eqref{L.sd}

\par
(1) Set
\hb{\mu:=\lda_1-\lda_0}. Denote by
\hb{\rho_\ka^{-\mu}H_{p,\ka}^{s_1}} the image space of the self-map
\hb{u\mt\rho_\ka^{-\mu}u} of~$H_{p,\ka}^{s_1}$ so that this map is an
isometric isomorphism from~$H_{p,\ka}^{s_1}$
onto~$\rho_\ka^{-\mu}H_{p,\ka}^{s_1}$. Then Theorem~\ref{thm-N.ret}
implies that the diagram
\beq\label{I.rH}
\bal
 \begin{picture}(135,58)(-59,-5)        
\put(55,30){\vector(-2,-1){40}}
\put(-45,30){\vector(2,-1){40}}
\put(-30,40){\vector(1,0){60}}
\put(5,0){\makebox(0,0){\small$H_p^{s_1,\lda_1}$}}
\put(-50,40){\makebox(0,0){\small$H_{p,\ka}^{s_1}$}}
\put(60,40){\makebox(0,0){\small$\rho_\ka^{-\mu}H_{p,\ka}^{s_1}$}}
\put(0,45){\makebox(0,0)[b]{\small$u\mt\rho_\ka^{-\mu}u$}}
\put(0,35){\makebox(0,0)[t]{\small$\cong$}}
\put(-30,15){\makebox(0,0)[r]{\small$\psi_{p,\ka}^{\lda_1}$}}
\put(37.5,15){\makebox(0,0)[l]{\small$\psi_{p,\ka}^{\lda_0}$}}
\end{picture}
\eal
\eeq
is commuting. Interpolation theory guarantees (cf.~formula~(7) in
Section~3.4.1 of~\cite{Tri78a})
\beq\label{I.mu}
[H_{p,\ka}^{s_0},\rho_\ka^{-\mu}H_{p,\ka}^{s_1}]_\ta
\doteq\rho_\ka^{-\ta\mu}H_{p,\ka}^{s_\ta},
\eeq
uniformly with respect to
\hb{\ka\in\gK}. From Theorem~\ref{thm-N.ret} we infer that
$\psi_p^{\lda_0}$~is a retraction from~$\ell_p(\mf{H}_p^{s_0})$
onto~$H_p^{s_0,\lda_0}$ and, due to~\eqref{I.rH},
from~$\ell_p(\mf{\rho}^{-\mu}\mf{H}_p^{s_1})$
onto~$H_p^{s_1,\lda_1}$, where
\hb{\mf{\rho}^{-\mu}\mf{H}_p^s:=\prod_\ka\rho_\ka^\mu H_{p,\ka}^s}.
Thus, by \eqref{I.mu} and interpolation, $\psi_p^{\lda_0}$~is a retraction
from~$\ell_p(\mf{\rho}^{-\ta\mu}\mf{H}_p^{s_\ta})$
onto
\hb{[H_p^{s_0,\lda_0},H_p^{s_1,\lda_1}]_\ta}. By
replacing~$\mu$ in \eqref{I.rH} by~$\ta\mu$ we see that
$\psi_p^{\lda_0}$~is a retraction
from~$\ell_p(\mf{\rho}^{-\ta\mu}\mf{H}_p^{s_\ta})$
onto~$H_p^{s_\ta,\lda_\ta}$. This implies the claim for
\hb{\gF=H}. The proof for
\hb{\gF=B} is analogous.

\par
(2) The assertions of~(ii) follow by invoking in step~(1)
Theorem~\ref{thm-V.ret} instead of Theorem~\ref{thm-N.ret}. The remaining
statements are obtained by similar arguments from the corresponding results
on~$\BX_\ka$.
\end{proof}
\section{Embedding Theorems}\label{sec-E}
Weighted Bessel potential and Besov spaces on singular manifolds enjoy
embedding properties similar to the ones known for the standard non-weighted
spaces on~$\BR^m$.
\begin{theorem}\label{thm-E.p}
Suppose
\hb{s_0<s<s_1} and
\hb{\mu<\lda}.
\begin{itemize}
\item[(i)]
If\/
\hb{\pl M\neq\es} and
\hb{s_0,s,s_1\in\BR^+\ssm(\BN+1/p)}, then
\beq\label{E.HB0} 
\ci H_p^{s_1,\lda}(V)\sdh\ci B_p^{s,\lda}(V)\sdh\ci H_p^{s_0,\lda}(V).
\eeq 
If, moreover, 
\hb{\rho\leq1}, then 
\hb{\ci\gF_p^{s,\mu}(V)\sdh\ci\gF_p^{s,\lda}(V)}, whereas 
\hb{\ci\gF_p^{s,\lda}(V)\sdh\ci\gF_p^{s,\mu}(V)} if 
\hb{\rho\geq1}. 
\item[(ii)]
If either\/
\hb{\pl M=\es} or
\hb{s_0,s,s_1\notin-\BN^\times+1/p}, then
\beq\label{E.HB} 
H_p^{s_1,\lda}(V)\sdh B_p^{s,\lda}(V)\sdh H_p^{s_0,\lda}(V).
\eeq 
Furthermore, 
\hb{\gF_p^{s,\mu}(V)\sdh\gF_p^{s,\lda}(V)} if 
\hb{\rho\leq1}, whereas 
\hb{\rho\geq1} implies 
\hb{\gF_p^{s,\lda}(V)\sdh\gF_p^{s,\mu}(V)}. 
\end{itemize}
\end{theorem}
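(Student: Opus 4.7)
The plan is to deduce both parts from the corresponding well-known embeddings on~$\BR^m$ and~$\BH^m$ via the retraction-coretraction systems established in Theorems~\ref{thm-V.ret} and~\ref{thm-N.ret}. Throughout, I would fix a singularity datum $(\rho,\gK)$ and a subordinate localization system as in~\eqref{L.sd}. Under the stated hypotheses on $s_0,s,s_1$, the maps $\vp_p^\lda$ and $\psi_p^\lda$ form a coretraction-retraction pair between $\gF_p^{s,\lda}(V)$ (respectively $\ci\gF_p^{s,\lda}(V)$) and the sequence space $\ell_p(\mf{\gF}_p^s)$ (respectively $\ell_p(\cimfgF_p^s)$), for both parts of the theorem.

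I would first handle the Bessel-Besov chain at fixed weight. The classical embeddings $H_{p,\ka}^{s_1}\hr B_{p,\ka}^s\hr H_{p,\ka}^{s_0}$ hold uniformly in $\ka\in\gK$, so they lift to $\ell_p(\mf{H}_p^{s_1})\hr\ell_p(\mf{B}_p^s)\hr\ell_p(\mf{H}_p^{s_0})$. Given $u\in H_p^{s_1,\lda}(V)$, Theorem~\ref{thm-N.ret} provides $\vp_p^\lda u\in\ell_p(\mf{H}_p^{s_1})\hr\ell_p(\mf{B}_p^s)$, and then $u=\psi_p^\lda\vp_p^\lda u$ together with $\psi_p^\lda\in\cL\bigl(\ell_p(\mf{B}_p^s),B_p^{s,\lda}(V)\bigr)$ yields $u\in B_p^{s,\lda}(V)$ with continuous embedding; the second inclusion in~\eqref{E.HB} is analogous. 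Statement~\eqref{E.HB0} of part~(i) is obtained verbatim using Theorem~\ref{thm-V.ret} in place of Theorem~\ref{thm-N.ret}.

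For the weight-monotonicity I would exploit the identity
\[
\vp_p^\lda u = \bigl(\rho_\ka^{\lda-\mu}\vp_{p,\ka}^\mu u\bigr)_\ka,
\]
which follows directly from $\vp_{p,\ka}^\lda=\rho_\ka^{\lda+m/p}\vp_\ka$. This reduces the claim to showing that the diagonal multiplier $\mf{v}\mapsto(\rho_\ka^{\lda-\mu}v_\ka)_\ka$ is bounded on $\ell_p(\mf{\gF}_p^s)$ with a uniform constant. The hypothesis $\rho\leq1$, combined with~\eqref{S.sd}(vi), yields $\rho_\ka\leq c$ uniformly in $\ka\in\gK$, hence $\rho_\ka^{\lda-\mu}\leq c^{\lda-\mu}$ since $\lda-\mu>0$; boundedness of the multiplier follows. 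Chaining with $\psi_p^\lda\circ\vp_p^\lda=\id$ gives $\|u\|_{\gF_p^{s,\lda}}\leq c\,\|\vp_p^\mu u\|_{\ell_p(\mf{\gF}_p^s)}\leq c\,\|u\|_{\gF_p^{s,\mu}}$, the continuous embedding. The case $\rho\geq1$ is entirely symmetric: then $\rho_\ka\geq1/c$, so $\rho_\ka^{-(\lda-\mu)}\leq c^{\lda-\mu}$, and the analogous argument gives the reverse inclusion. For part~(i) the same reasoning applies with Theorem~\ref{thm-V.ret} and the ci-retractions.

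Density of each embedding is inherited from the density of $\cD\MV$ (respectively $\cD\ciMV$) in both endpoints: for $s\geq0$ this is built into the definitions of the spaces, and for $s<0$ in~\eqref{E.HB} it follows from the chain~\eqref{N.FHF}. The main obstacle I foresee is the exclusion $s\notin-\BN^\times+1/p$ when $\pl M\neq\es$ in part~(ii); the retraction-coretraction system of Theorem~\ref{thm-N.ret} genuinely fails at those values, and Theorem~\ref{thm-N.dM} shows that the ``naive'' dual space $\check\gF_p^{s,\lda}$ strictly exceeds $\gF_p^{s,\lda}$ there, so this approach cannot be extended to those $s$ without fundamentally different arguments.
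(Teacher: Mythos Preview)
Your argument is correct, but it takes a different route from the paper in both halves. For the chain \eqref{E.HB0}, \eqref{E.HB} the paper does not localize: it invokes Theorem~\ref{thm-I.I} to identify $B_p^{s,\lda}$ (respectively $\ci B_p^{s,\lda}$) as the real interpolation space $(H_p^{s_0,\lda},H_p^{s_1,\lda})_{\ta,p}$ with $s_\ta=s$, and then uses only the general fact that an interpolation space is intermediate between the endpoints. Your approach instead transports the local embeddings $H_{p,\ka}^{s_1}\hr B_{p,\ka}^s\hr H_{p,\ka}^{s_0}$ through the retraction system. For the weight monotonicity the paper argues at the Sobolev level~--- the inequality $\|u\|_{k,p;\lda}\leq\|u\|_{k,p;\mu}$ is immediate from the norm~\eqref{W.norm} when $\rho\leq1$~--- then passes to negative orders by duality via \eqref{N.HBD} and closes by interpolation; you instead observe that the diagonal multiplier $(\rho_\ka^{\lda-\mu})_\ka$ is uniformly bounded and use the retraction system once more. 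Your method is more uniform (one mechanism throughout) and avoids the separate duality step; the paper's method for the weight part is more elementary in that it never leaves the intrinsic norm and does not require the retraction theorem at all for integer~$s$.
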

\begin{proof} 
Assertions \eqref{E.HB0} and \eqref{E.HB} follow from 
Theorem~\ref{thm-I.I}(ii) and~(i), respectively, and the general 
interrelations of the real and complex interpolation functors. 

\par 
If 
\hb{\rho\leq1}, then it is obvious that
\beq\label{E.W}
W_{\coW p}^{k,\mu}(V)\sdh W_{\coW p}^{k,\lda}(V)
\qb \ci W_{\coW p}^{k,\mu}(V)\sdh\ci W_{\coW p}^{k,\lda}(V)
\qa k\in\BN.
\eeq
Thus, by duality,
\beq\label{E.W1}
H_p^{k,\mu}(V)\sdh H_p^{k,\lda}(V)
\qa k\in-\BN^\times.
\eeq
From these embeddings we obtain, once more by interpolation, the second 
part of assertion~(i) and assertion~(ii), respectively, provided 
\hb{\rho\leq1}. If 
\hb{\rho\geq1}, then the embeddings in \eqref{E.W} and \eqref{E.W1} are 
reversed. Thus the remaining statements are also clear. 
\end{proof}
The next theorem concerns embedding theorems of Sobolev type.
\begin{theorem}\label{thm-E.Sob}
\begin{itemize}
\item[(i)]
Suppose
\hb{s_0<s_1} and
\hb{p_0,p_1\in(1,\iy)} satisfy
\hb{s_1-m/p_1=s_0-m/p_0}. Then
$$
\gF_{p_1}^{s_1,\lda}(V)\sdh\gF_{p_0}^{s_0,\lda+s_1-s_0}(V).
$$
\item[(ii)]
Assume
\hb{s\geq t+m/p} with
\hb{t\geq0} and
\hb{s>t+m/p} if\/
\hb{t\in\BN}. Then
$$
\gF_p^{s,\lda}(V)\sdh C_0^{t,\lda+m/p}(V).
$$
\end{itemize}
\end{theorem}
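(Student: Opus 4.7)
\begin{proofb}[Proof plan]
The strategy is to exploit the universal retraction--coretraction system of Theorem~\ref{thm-N.ret} (together with Theorem~\ref{thm-B.ret}) to reduce both statements to the classical Sobolev embedding theorems on $\BR^m$ and $\BH^m$, combined with the easy monotonicity $\ell_q \hookrightarrow \ell_r$ for $q \leq r$. The arithmetic of the weights turns out to collapse exactly, which is the decisive observation.

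For part~(i), fix \eqref{L.sd}. The scaling assumption $s_1 - m/p_1 = s_0 - m/p_0$ rewrites as $s_1 - s_0 = m/p_1 - m/p_0$, and since $s_0 < s_1$ we have $p_1 \leq p_0$. Therefore
\beq\label{E.id}
\vp_{p_1,\ka}^{\lda} u
 = \rho_\ka^{\lda+m/p_1}\vp_\ka u
 = \rho_\ka^{(\lda+s_1-s_0)+m/p_0}\vp_\ka u
 = \vp_{p_0,\ka}^{\lda+s_1-s_0} u
\eeq
uniformly in $\ka \in \gK$. The classical Sobolev embedding, applied uniformly on each $\BX_\ka \in \{\BR^m, \BH^m\}$ (with standard extensions to the half-space case), yields $\gF_{p_1,\ka}^{s_1} \hookrightarrow \gF_{p_0,\ka}^{s_0}$ with norm bounded by a constant independent of $\ka$. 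Combining with $\ell_{p_1}(\mf{E}) \hookrightarrow \ell_{p_0}(\mf{E})$ from~\eqref{W.ll} and invoking \eqref{E.id} gives
$$
\vp_{p_1}^{\lda}
 \in \cL\bigl(\gF_{p_1}^{s_1,\lda}(V),\ell_{p_0}(\mf{\gF}_{p_0}^{s_0})\bigr),
\qquad
 \vp_{p_1}^{\lda} = \vp_{p_0}^{\lda+s_1-s_0}.
$$
Applying $\psi_{p_0}^{\lda+s_1-s_0}$ from Theorem~\ref{thm-N.ret}, which is a retraction onto $\gF_{p_0}^{s_0,\lda+s_1-s_0}(V)$, and using $\psi_{p_0}^{\lda+s_1-s_0}\circ\vp_{p_0}^{\lda+s_1-s_0} = \id$, establishes the continuous embedding. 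Density of $\cD\MV$ (from \eqref{N.FHF} and Corollary~\ref{cor-B.ret}) promotes it to a dense embedding.

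For part~(ii), the approach is identical but with the target localization being the H\"older scale of Theorem~\ref{thm-H.eq}. The classical embedding $\gF_{p,\ka}^{s} \hookrightarrow BC_\ka^{t}$ holds uniformly in $\ka$ under $s \geq t + m/p$ (strict when $t \in \BN$, because $B_p^{s}$ rather than $H_p^{s}$ is required to embed into $BC^t$ at the borderline). Since
$$
\rho_\ka^{\lda+m/p}\|\ka_*(\pi_\ka u)\|_{BC_\ka^t}
\leq c\,\rho_\ka^{\lda+m/p}\|\ka_*(\pi_\ka u)\|_{\gF_{p,\ka}^s},
$$
taking $\sup_\ka$ on the left and using $\ell_\iy \leq \ell_p$-type control on the right (the supremum of a nonnegative sequence is dominated by its $\ell_p$-norm) yields $\vd u\vd_{t,\iy;\lda+m/p} \leq c\,\vd u\vd_{\gF_p^{s,\lda}}$, so $\gF_p^{s,\lda}(V) \hookrightarrow BC^{t,\lda+m/p}(V)$. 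Since $\cD\MV$ is dense in $\gF_p^{s,\lda}(V)$, its image lies in the closure of $\cD\MV$ in $BC^{t,\lda+m/p}(V)$, which is $C_0^{t,\lda+m/p}(V)$ by definition; the density of this embedding then follows, because the target itself is the $BC^{t,\lda+m/p}$-closure of $\cD\MV$.

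The only genuine obstacle is the weight-tracking identity~\eqref{E.id}; once this is in hand, both assertions follow mechanically from the retraction--coretraction machinery plus scalar Sobolev embedding. Values of $s_0$, $s_1$ falling into the exceptional set $-\BN^\times+1/p$ when $\pl M \neq \es$ (where the retraction theorem is silent) are handled a posteriori by reiteration from Theorem~\ref{thm-I.I}, choosing nearby admissible exponents and interpolating, using that both sides of the desired embedding form interpolation scales with matching endpoints.
\end{proofb}
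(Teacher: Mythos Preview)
Your proposal is correct and follows essentially the same route as the paper: the weight identity you label~\eqref{E.id} (equivalently $\psi_{p_1}^\lda=\psi_{p_0}^{\lda+s_1-s_0}$ and $\psi_p^\lda=\psi_\iy^{\lda+m/p}$) is precisely the pivot the paper uses, after which both parts reduce to the local Sobolev embeddings on~$\BX_\ka$ together with $\ell_{p_1}\hr\ell_{p_0}$, resp.\ $\ell_p\hr\ell_\iy$, exactly as you describe. Your treatment of~(ii) via $BC^{t,\lda+m/p}$ followed by density is a harmless variant of the paper's direct passage through $c_0(\mf{C}_0^t)$, and your closing remark on the exceptional set $-\BN^\times+1/p$ (handled by interpolation) is in fact more careful than the paper, which invokes Theorem~\ref{thm-N.ret} without comment on this restriction.
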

\begin{proof}
(1) Let the assumptions of~(i) be satisfied. Since
\hb{s_1>s_0} implies
\hb{p_1<p_0}, it follows from the known embeddings
\hb{\gF_{p_1,\ka}^{s_1}\hr\gF_{p_0,\ka}^{s_0}} and from \eqref{W.ll}   that
\hb{\ell_{p_1}(\mf{\gF}_{p_1}^{s_1})\hr\ell_{p_0}(\mf{\gF}_{p_0}^{s_0})}.
Moreover,
\hb{m/p_1=m/p_0+s_1-s_0} implies
\hb{\psi_{p_1}^\lda=\psi_{p_0}^{\lda+s_1-s_0}}. From this and
Theorem~\ref{thm-N.ret} we infer that the diagram
$$
 \begin{picture}(154,66)(-78,-5)        
 \put(6,50){\makebox(0,0)[b]{\small{$\vp_{p_1}^\lda$}}}
 \put(6,5){\makebox(0,0)[b]{\small{$\psi_{p_0}^{\lda+s_1-s_0}$}}}
 \put(-54,45){\makebox(0,0)[c]{\small{$\gF_{p_1}^{s_1,\lda}$}}}
 \put(-54,0){\makebox(0,0)[c]{\small{$\gF_p^{s_0,\lda+s_1-s_0}$}}}
 \put(60,45){\makebox(0,0)[c]{\small{$\ell_{p_1}(\mf{\gF}_{p_1}^{s_1})$}}}
 \put(60,0){\makebox(0,0)[c]{\small{$\ell_{p_0}(\mf{\gF}_{p_0}^{s_0})$}}}
 \put(-39,45){\vector(1,0){75}}
 \put(36,0){\vector(-1,0){60}}
 \put(-54,33){\vector(0,-1){23}}
 \put(-52,33){\oval(4,4)[t]}
 \put(60,33){\vector(0,-1){23}}
 \put(62,33){\oval(4,4)[t]}
 \end{picture}
 $$
is commuting. Thus the assertions of~(i) follow.

\par
(2) Let $s$ and~$t$ satisfy the hypotheses of~(ii). Then the known
embeddings
\hb{\gF_{p,\ka}^s\hr C_{0,\ka}^t} guarantee
$$
c_c(\mf{\gF}_p^s)\is c_c(\mf{C}_0^t)\hr c_0(\mf{C}_0^t).
$$
Thus
\hb{\ell_p(\mf{\gF}_p^s)\hr c_0(\mf{C}_0^t)} since $c_0(\mf{C}_0^t)$~is
closed in~$\ell_\iy(\mf{C}_0^t)$. Hence, using
\hb{\psi_p^\lda=\psi_\iy^{\lda+m/p}}, it follows from
Theorem~\ref{thm-N.ret} that the diagram
$$
 \begin{picture}(144,66)(-71,-5)        
 \put(6,50){\makebox(0,0)[b]{\small{$\vp_p^\lda$}}}
 \put(6,5){\makebox(0,0)[b]{\small{$\psi_\iy^{\lda+m/p}$}}}
 \put(-51,45){\makebox(0,0)[c]{\small{$\gF_p^{s,\lda}$}}}
 \put(-51,0){\makebox(0,0)[c]{\small{$C_0^{t,\lda+m/p}$}}}
 \put(60,45){\makebox(0,0)[c]{\small{$\ell_p(\mf{\gF}_p^s)$}}}
 \put(60,0){\makebox(0,0)[c]{\small{$c_0(\mf{C}_0^t)$}}}
 \put(-38,45){\vector(1,0){75}}
 \put(36,0){\vector(-1,0){60}}
 \put(-51,33){\vector(0,-1){23}}
 \put(-49,33){\oval(4,4)[t]}
 \put(60,33){\vector(0,-1){23}}
 \put(62,33){\oval(4,4)[t]}
 \end{picture}
 $$
is commuting. Thus claim~(ii) is implied by the density of $\cD\MV$ in each 
of the spaces.
\end{proof}
\section{Differential Forms and Exterior Derivatives}\label{sec-F}
Throughout this section
$$
\bt\quad
M\text{ is oriented}.
$$
For
\hb{0\leq k\leq m} we consider the vector subbundle
$$
\bwt^k:=\bigl(\bwt^kT^*M,\prsn_{g^*}\bigr)
$$
of
\hb{V_k^0=T_k^0M}, the \hbox{$k$-fold} exterior product of
\hb{V_1^0=T^*M}, where
\hb{\bwt^0=T_0^0M=M\times\BK}. The sections of~$\bwt^k$ are the
\hbox{$k$-forms} on~$M$, that is, the differential forms of order~$k$. We
write~$\Om^k(M)$ for the $C^\iy(M)$-module of smooth \hbox{$k$-forms}, and
we set
\hb{\Om^k(M):=\{0\}} for
\hb{k\notin\{0,1,\ldots,m\}}.

\par
We also consider the subbundle
$$
\bwt^{'k}:=\bigl(\bwt^kTM,\prsn_g\bigr)
$$
of
\hb{V_0^k=T_0^kM}. Then
\hb{\bwt^{'k}=(\bwt^k)'} with respect to the duality pairing~%
\hb{\pw} obtained by restriction from the \hbox{$V_k^0$-pairing}. It
follows from \eqref{U.G} and the (vector bundle) conjugate linearity
of~$g^\sh$ that
$$
G^k\sco\bwt^k\ra\bwt^{'k}
\qb \al\mt G_0^k\,\ol{\al}
$$
is a vector bundle isomorphism whose inverse is
$$
G_k\sco\bwt^{'k}\ra\bwt^k
\qb v\mt G_k^0\,\ol{v}.
$$
Let $\om$ be the Riemannian volume form of~$M$. The definition of the Hodge
adjoint
\hb{\Ho\ba\in\Om^{m-k}(M)} implies
\beq\label{F.abw}
(\al\sn\ba)_{g^*}\om=\al\wedge\Ho\ol{\ba}
\qa \al,\ba\in\Om^k(M),
\eeq
(cf.~Section~XX.8 of~\cite{Die69b} or Section~XI.2 in~\cite{AmE08a}). By
\eqref{U.gG}
$$
\dl v,\al\dr=\dl\al,v\dr=(\al\sn G_kv)_{g^*}
\qa \al\in\bwt^k
\qb v\in\bwt^{'k}.
$$
Consequently,
\beq\label{F.dua}
\dl\al,v\dr=\int_M\dl\al,v\dr\,dV_{\coV g}
=\int_M(\al\sn G_kv)_{g^*}\,\om
=\int_M\al\wedge\Ho G_kv
\eeq
for
\hb{\al\in\Om^k(M)} and \hb{v\in\cD(M,\bwt^{'k})}.
\begin{theorem}\label{thm-F.S}
All results obtained in the preceding sections for Bessel potential and
Besov spaces of $(\sa,\tau)$-tensor fields remain valid for the
corresponding spaces of \hbox{$k$-forms}, if\/ $(V_\tau^\sa,V_\sa^\tau)$ is
replaced by~$(\bwt^k,\bwt^{'k})$.
\end{theorem}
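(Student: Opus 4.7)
The plan is to realise $\bwt^k$ as a complemented vector sub-bundle of $V_k^0=T_k^0M$ by means of the natural inclusion $\ia\sco\bwt^k\hr V_k^0$ and the fibre-wise antisymmetrisation
\[
\mathcal{A}\sco V_k^0\ra\bwt^k,\qquad
\mathcal{A}(\al_1\otimes\cdots\otimes\al_k):=\frac{1}{k!}\sum_{\sa\in S_k}\mathrm{sgn}(\sa)\,\al_{\sa(1)}\otimes\cdots\otimes\al_{\sa(k)},
\]
extended linearly. Both $\ia$ and $\mathcal{A}$ are smooth vector-bundle morphisms, $\mathcal{A}\circ\ia=\id_{\bwt^k}$, and they are fibre-wise bounded: $|\ia\al|_g\sim|\al|_{g^*}$ for $\al\in\bwt^k$ and $|\mathcal{A}a|_{g^*}\leq c(k)\,|a|_g$ for $a\in V_k^0$, the constants depending only on $k$. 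The analogous pair $(\ia',\mathcal{A}')$ relates $\bwt^{'k}$ to $V_0^k$ and is dual to $(\mathcal{A},\ia)$ with respect to $\pw$.

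First I would verify that $\ia$ and $\mathcal{A}$ commute with the Levi-Civita connection. Since $\mathcal{A}$ is a constant-coefficient linear combination of permutations of tensor factors, \eqref{U.LT}(ii) and the Leibniz rule give $\na(\mathcal{A}a)=\mathcal{A}(\na a)$ for $a\in\cT_k^0M$, where $\mathcal{A}$ on the right acts on the original $k$ covariant slots; the same holds for $\ia$. Iterating, for every $r\in\BN$ and every $k$-form $\al$ one obtains $\na^r(\ia\al)=\ia_{r}(\na^r\al)$ in $\cT_{k+r}^0M$, so
\[
|\na^r\al|_g\sim|\na^r(\ia\al)|_g,
\]
with constants depending only on $k$ and $r$. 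Together with the corresponding identities for $(\ia',\mathcal{A}')$, this shows that $\ia$ and $\mathcal{A}$ induce bounded linear operators between the spaces $W^{k,\lda}_p$, $BC^{k,\lda}$, $C_0^{k,\lda}$ on $\bwt^k$ and on $V_k^0$; by interpolation and duality the same holds for $\gF_p^{s,\lda}$, $\ci\gF_p^{s,\lda}$ and their negative-order analogues at every admissible $s\in\BR$.

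Second, since $\mathcal{A}\circ\ia=\id$, the composition $\ia\circ\mathcal{A}$ is a bounded projection on each tensor-field space, so $\gF_p^{s,\lda}(\bwt^k)$ is isomorphic to a complemented closed subspace of $\gF_p^{s,\lda}(V_k^0)$, and $\bigl(\mathcal{A}\circ\psi_p^\lda,\vp_p^\lda\circ\ia\bigr)$ is a retraction-coretraction pair for $\gF_p^{s,\lda}(\bwt^k)$ relative to $\ell_p(\mf{\gF}_p^s)$. The retraction-coretraction systems of Theorems \ref{thm-B.ret}, \ref{thm-H.eq}, \ref{thm-V.ret} and \ref{thm-N.ret} thus restrict to the subspace $\iota\mathcal{A}\,\gF_p^{s,\lda}(V_k^0)$, and the interpolation, embedding, pointwise multiplier, and duality statements of Sections \ref{sec-I} and \ref{sec-E} transfer mechanically, because all operations involved (the local retraction-coretraction, $\na$, multiplication, and the definition of negative-order spaces via duality) commute with $\ia$ and $\mathcal{A}$.

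The only step that deserves individual attention is the trace theorem. For $\al\in\gF_p^{s,\lda}(\bwt^k)$ the object $\ga_j\al$ is a section over $\pl M$ of the \emph{pullback} $(\bwt^k)_{\pl M}$, not of the exterior bundle of the induced Riemannian manifold $\pl M$. The commutation $\ga_j\circ\ia=\ia_{\pl M}\circ\ga_j$ and $\ga_j\circ\mathcal{A}=\mathcal{A}_{\pl M}\circ\ga_j$ is immediate from the algebraic nature of $\ia$ and $\mathcal{A}$, so Theorem \ref{thm-T.tr} restricts to a retraction
\[
\ga_j\sco\gF_p^{s,\lda}(\bwt^k)\ra B_p^{s-j-1/p,\lda+j+1/p}\bigl((\bwt^k)_{\pl M}\bigr),
\]
and similarly for Corollary \ref{cor-T.tr} and Theorems \ref{thm-V.tr}, \ref{thm-V.S}, \ref{thm-N.dM}. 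The principal difficulty lies not in any single new estimate but in confirming this book-keeping uniformly across the entire scale.
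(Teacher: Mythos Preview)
Your argument is correct. The paper's own proof consists of the single word ``Obvious.'' The author evidently regards the statement as immediate: every construction in Sections~\ref{sec-D}--\ref{sec-E} uses only that $V_\tau^\sa$ is a metric vector bundle equipped with the metric connection~$\na$ satisfying the uniform estimates of Lemma~\ref{lem-U.g}, and $\bwt^k$ inherits all of this by restriction from~$V_k^0$.

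Your route is slightly different in emphasis: rather than re-running each proof for the subbundle, you realise $\gF_p^{s,\lda}(\bwt^k)$ explicitly as a complemented subspace of $\gF_p^{s,\lda}(V_k^0)$ via the inclusion--antisymmetrisation pair $(\ia,\mathcal{A})$, check that both commute with~$\na$ and with the trace operators, and then pull every result back through this retraction. This is a clean and honest way to make ``Obvious'' precise; it buys you the reassurance that no hidden dependence on the full tensor-bundle structure creeps in, at the cost of the book-keeping you mention. The paper's tacit approach would instead replace the local model $E_k^0$ by its antisymmetric part and observe that nothing changes. Both are valid; yours is more explicit.
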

\begin{proof}
Obvious.
\end{proof}
Justified by this we refer in the following simply to the theorems and
formulas of the preceding sections and it is understood that we mean the
corresponding results for the spaces of differential forms.

\par
The exterior derivative
\hb{d\sco\Om^k(M)\ra\Om^{k+1}(M)} is characterized by
$$
\bal
d\al(X_0,X_1,\ldots,X_k)
&=\sum_{0\leq i\leq k}(-1)^i
 \,\na_{\cona X_i}\bigl(\al(X_0,\ldots,\wh{X}_i,\ldots,X_k)\bigr)\\
&\qquad
 {}+\sum_{0\leq i<j\leq k}(-1)^{i+j}
 \,\al\bigl([X_i,X_j],X_0,
 \ldots,\wh{X}_i,\ldots,\wh{X}_j,\ldots,X_k)\bigr)
\eal
$$
for
\hb{\al\in\Om^k(M)} and
\hb{X_0,\ldots,X_k\in\cT_0^1M}, where $[X_i,X_j]$ is the Lie bracket and
\hb{\wh{\ph{\al}}} the usual omission symbol. Since $\na$~is torsion free,
that is,
\hb{\na_{\cona X}Y-\na_YX=[X,Y]}, it follows
\beq\label{F.d}
d\al(X_0,\ldots,X_k)
=\sum_{0\leq i\leq k}
(-1)^i(\na_{\cona X_i}\al)(X_0,\ldots,\wh{X}_i,\ldots,X_k).
\eeq
The coderivative
\hb{\da\sco\Om^k(M)\ra\Om^{k-1}(M)} is defined by
\beq\label{F.cod}
\da\al:=(-1)^{m(k+1)+1}\,\Ho d\,\Ho\al
\qa \al\in\Om^k(M).
\eeq
Recall
\beq\label{F.ss}
\Ho\Ho\al=(-1)^{k(m-k)}\,\al
\qa \al\in\Om^k(M).
\eeq
Suppose
\hb{\al\in\Om^{k-1}(M)} and
\hb{\ba\in\Om^k(M)}. Then
\hb{\al\wedge\Ho\ba\in\Om^{m-1}(M)} and
\hb{d\,\Ho\ba\in\Om^{m-k+1}(M)}. Note that \eqref{F.cod} and
\eqref{F.ss} imply
\hb{\Ho\da\ba=(-1)^k\,d\,\Ho\ba\in\Om^k(M)}. From this we obtain
$$
d(\al\wedge\Ho\ba)=d\al\wedge\Ho\ba+(-1)^{k-1}\,\al\wedge d\,\Ho\ba
=d\al\wedge\Ho\ba-\al\wedge\Ho\da\ba.
$$
Hence, setting
\hb{\ba=G_kv} with
\hb{v\in\cD(M,\bwt^{'k})},
$$
d(\al\wedge\Ho G_kv)
=d\al\wedge\Ho G_kv-\al\wedge\Ho G_{k-1}G^{k-1}\da G_kv.
$$
Thus Stoke's theorem implies, as is well-known, Green's formula
which, due to \eqref{F.dua}, takes the form
$$
\dl d\al,v\dr_M-\dl\al,G^{k-1}\da G_kv\dr_M
=\int_{\pl M}\thia\,^*(\al\wedge\Ho G_kv)
$$
for
\hb{\al\in\Om^{k-1}(M)} and
\hb{v\in\cD(M,\bwt^{'k})}. In particular,
\beq\label{F.duad}
\dl d\al,v\dr_M=\dl\al,G^{k-1}\da G_kv\dr_M
\eeq
if either $\al$ or~$v$ is compactly supported in~$\ci M$; thus, in
particular, if
\hb{\pl M=\es}. Similarly, using
\hb{\al\wedge\Ho\ba=\ba\wedge\Ho\al} and setting
\hb{\al=G_{k-1}w}, we find
\beq\label{F.duac}
\dl\da\ba,w\dr_M
=\dl\ba,G^kdG_{k-1}w\dr_M
\qa \ba\in\Om^k(M)
\qb w\in\cD(M,\bwt^{'k-1}),
\eeq
if either $\ba$ or~$w$ has compact support in~$\ci M$.

\par
Now we can establish the fundamental mapping properties of the exterior
differential and codifferential operators.
\begin{theorem}\label{thm-F.d}
Suppose
\hb{s\in\BR}.
\begin{itemize}
\item[(i)]
Assume either
\hb{\pl M=\es} or
\hb{s\geq0} with
\hb{s>0} if\/
\hb{\gF=B}. Then
\beq\label{F.ds}
d\in\cL\bigl(\gF_p^{s+1,\lda}(\bwt^k),\gF_p^{s,\lda}(\bwt^{k+1})\bigr)
\eeq
and
\beq\label{F.cds}
\da\in\cL\bigl(\gF_p^{s+1,\lda}(\bwt^k),\gF_p^{s,\lda+2}(\bwt^{k-1})\bigr).
\eeq
\item[(ii)]
Assume
\hb{\pl M\neq\es} and
\hb{s>-2+1/p} with
\hb{s\neq-1+1/p}. Then
\beq\label{F.d0s}
d\in\cL\bigl(\ci\gF_p^{s+1,\lda}(\bwt^k),\gF_p^{s,\lda}(\bwt^{k+1})\bigr)
\eeq
and
\beq\label{F.cd0s}
\da\in\cL\bigl(\ci\gF_p^{s+1,\lda}(\bwt^k),
\gF_p^{s,\lda+2}(\bwt^{k-1})\bigr).
\eeq
\end{itemize}
\end{theorem}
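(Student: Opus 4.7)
The plan is to reduce both assertions to integer-order Sobolev estimates on weighted spaces and then propagate to real $s$ by interpolation and to negative $s$ by duality through the Green identities \eqref{F.duad}--\eqref{F.duac}. For $s = k \in \BN$, formula \eqref{F.d} exhibits $d\alpha$ as an antisymmetrization of $\nabla\alpha$, so $|d\alpha|_g \leq c_k|\nabla\alpha|_g$ pointwise; the shift from $(0,k)$- to $(0,k+1)$-tensors is absorbed by the factor $\rho^{\lambda+\tau-\sigma+i}$ of the norm \eqref{W.norm}, and Theorem~\ref{thm-B.Na} delivers \eqref{F.ds}. For $\delta$, the local expression $(\delta\alpha)_{j_1\ldots j_{k-1}} = -g^{ij}\nabla_i\alpha_{jj_1\ldots j_{k-1}}$ together with $\nabla g^* = 0$ gives, by iterated differentiation, $|\nabla^i\delta\alpha|_g \leq c|g^*|_g|\nabla^{i+1}\alpha|_g$; Lemma~\ref{lem-U.g}(v) combined with $\ka_*g^* \sim \rho_\ka^{-2}g_m$ yields $|g^*|_g \sim 1$ uniformly, so the identity $\rho^{(\lambda+2)+(k-1)+i} = \rho^{\lambda+k+(i+1)}$ matches the $+2$ weight shift in \eqref{F.cds} and produces the estimate at every $s \in \BN$. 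Theorem~\ref{thm-I.I} then interpolates both results to real $s \geq 0$ (with $s > 0$ if $\gF = B$).

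To handle part (i) in the remaining regime $\partial M = \emptyset$, $s < 0$, I observe that \eqref{F.duad}--\eqref{F.duac} then hold without restriction and realise $d$ and $G^k\delta G_{k+1}$ (respectively $\delta$ and $G^{k-1}dG_k$) as formal transposes with respect to $\pw_M$. Because $G_k$ and $G^k$ are fibrewise $g$-isometries commuting with $\nabla$, the convention in \eqref{W.norm} makes them isomorphisms with weight shifts $\mp 2k$ between the corresponding spaces on $\bwt^k$ and $\bwt^{'k}$. Transposing the first stage through Theorem~\ref{thm-N.ref}, tracking the shifts, and then interpolating via Theorem~\ref{thm-I.I}(i) (which imposes no exclusion when $\partial M = \emptyset$) yields \eqref{F.ds} and \eqref{F.cds} at every real $s < 0$.

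For part (ii), the sub-range $s \geq 0$ is immediate from (i) via $\ci\gF_p^{s+1,\lambda}(\bwt^k) \hookrightarrow \gF_p^{s+1,\lambda}(\bwt^k)$. For $-2+1/p < s < 0$, $s \neq -1+1/p$, I apply Green \eqref{F.duad} to $\alpha \in \cD(\ci M,\bwt^k)$ (dense in $\ci\gF_p^{s+1,\lambda}$), whose compact support in $\ci M$ suppresses the boundary integral, giving $\dl d\alpha,v\dr_M = \dl\alpha,G^k\delta G_{k+1}v\dr_M$ for $v \in \cD(M,\bwt^{'k+1})$. By Theorem~\ref{thm-N.ref} the right-hand side defines an element of $\gF_p^{s,\lambda}(\bwt^{k+1})$ provided $G^k\delta G_{k+1}$ maps $\ci\gF_{p'}^{-s,-\lambda}(\bwt^{'k+1})$ continuously into $\ci\gF_{p'}^{-s-1,-\lambda}(\bwt^{'k})$. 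For $s \in (-2+1/p,-1]$ this reduces, after the $G$-weight shifts, to the mapping property of $\delta$ on $\ci\gF$-spaces at the non-negative regularity $-s-1 \geq 0$, which is furnished by restriction of (i); the endpoint $-2+1/p$ corresponds to $-s = 1+1/p'$, the first exclusion threshold of Theorem~\ref{thm-V.ret} on the dual side. For $s \in (-1,0) \setminus \{-1+1/p\}$, the claim follows by real interpolation (Theorem~\ref{thm-I.I}(i)) between the endpoints $s = -1$ (just established) and $s = 0$, the forbidden interpolation value being $-1+1/p \in -\BN^\times+1/p$. The estimate \eqref{F.cd0s} follows by the symmetric argument using \eqref{F.duac}.

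The principal technical obstacle is in part (ii): one must arrange the duality so that the mapping property of $\delta$ on $\ci\gF$-spaces invoked as the transpose of $d$ is itself already available at the regularity in question, a constraint which forces the interpolation bootstrap in the sub-range $(-1,0)$ and dictates both the excluded value $s = -1+1/p$ and the lower endpoint $-2+1/p$ in the statement.
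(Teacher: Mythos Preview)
Your proposal is correct and follows the same overall architecture as the paper: integer-order estimates, interpolation to real $s\geq0$, duality via the Green identities \eqref{F.duad}--\eqref{F.duac} to reach $s<0$ when $\pl M=\es$, and a two-stage (duality then interpolation) bootstrap for part~(ii), with the exclusion $s=-1+1/p$ and the lower endpoint $-2+1/p$ arising exactly as you diagnose.

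The one genuine difference is your treatment of~$\da$ at non-negative regularity. You use the intrinsic contraction formula $(\da\al)_{j_1\ldots j_{k-1}}=-g^{ij}\na_i\al_{jj_1\ldots j_{k-1}}$ together with $\na g^*=0$ and the pointwise bound $|g^*|_g=\sqrt{m}$, which gives the Sobolev estimate for~$\da$ directly. The paper instead passes through the Hodge star: it first proves $\Ho\in\Lis\bigl(\gF_p^{s,\lda}(\bwt^k),\gF_p^{s,\lda+2k-m}(\bwt^{m-k})\bigr)$ by showing $\na_{\cona X}\circ\Ho=\Ho\circ\na_{\cona X}$ from~\eqref{F.abw} and~\eqref{U.ip}, and then reads off~\eqref{F.cds} from $\da=\pm\Ho d\,\Ho$ combined with the already-established result for~$d$. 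Your route is more elementary and avoids developing the Hodge-star mapping properties; the paper's route has the side benefit of establishing those properties for~$\Ho$ on all weighted spaces, which is of independent interest. The remaining ingredients---the $G^k$-isomorphisms with weight shift~$\pm2k$, the identification of $d$ and $\da$ with transposes of $G^{k-1}\da G_k$ and $G^kdG_{k-1}$, and the use of Theorem~\ref{thm-V.tr}(i) to collapse $\ci\gF_{p'}^{-s-1}=\gF_{p'}^{-s-1}$ on the dual side when $-s-1<1/p'$---are handled identically.
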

\begin{proof}
(1) Suppose
\hb{s\geq0} with
\hb{s>0} if
\hb{\gF=B}. Then \eqref{F.ds} is a consequence of \eqref{F.d} and
Theorem~\ref{thm-B.Na}.

\par
(2) For
\hb{\al\in\Om^k(M)} it follows from \eqref{F.abw} and \eqref{F.ss} that
$$
|\Ho\al|_{g^*}^2\om
=\Ho\al\wedge\Ho\Ho\ol{\al}=(-1)^{k(m-k)}\,\Ho\al\wedge\ol{\al}
=\ol{\al}\wedge\Ho\al=|\al|_{g^*}^2\om.
$$
Hence
\hb{\rho^{\lda+2k-m+m-k}\,|\Ho\al|_{g^*}=\rho^{\lda+k}\,|\al|_{g^*}}.
This implies
\beq\label{F.Lp}
\Ho\in\Lis\bigl(L_p^\lda(\bwt^k),L_p^{\lda+2k-m}(\bwt^{m-k})\bigr).
\eeq
From \eqref{U.LT}(ii) we infer for
\hb{X\in\cT M}
\beq\label{F.Nab}
\na_{\cona X}(\al\wedge\Ho\ba)
=\na_{\cona X}\al\wedge\Ho\ba+\al\wedge\na_{\cona X}(\Ho\ba)
\qa \al,\ba\in\Om^k(M).
\eeq
Since
\hb{\na_{\cona X}\om=0} we obtain from \eqref{U.ip}
$$
\na_{\cona X}\bigl((\al\sn\ol{\ba})_{g^*}\om\bigr)
=(\na_{\cona X}\al\sn\ol{\ba})_{g^*}\om
+(\al\sn\na_{\cona X}\ol{\ba})_{g^*}\om.
$$
Using this, \eqref{F.Nab}, and \eqref{F.abw} we deduce
\hb{\al\wedge\na_{\cona X}(\Ho\ba)=\al\wedge\Ho\na_{\cona X}\ba} for
\hb{\al\in\Om^k(M)}. Consequently,
$$
\na_{\cona X}(\Ho\ba)=\Ho(\na_{\cona X}\ba)
\qa \ba\in\Om^k(M)
\qb X\in\cT M.
$$
By this and \eqref{F.Lp} we get
$$
\Ho\in\Lis\bigl(W_{\coW p}^{j,\lda}(\bwt^k),
W_{\coW p}^{j,\lda+2k-m}(\bwt^{m-k})\bigr)
\qa j\in\BN.
$$
Hence, by interpolation,
$$
\Ho\in\Lis\bigl(\gF_p^{s,\lda}(\bwt^k),
\gF_p^{s,\lda+2k-m}(\bwt^{m-k})\bigr)
\qa s\in\BR^+,
$$
provided
\hb{s>0} if
\hb{\gF=B}. Now \eqref{F.cds} follows from \eqref{F.cod} and step~(1),
provided
\hb{s\geq0} with
\hb{s>0} if
\hb{\gF=B}.

\par
(3) Definition~\eqref{U.G} implies
$$
|G^k\al|_g^2=\dl G^k\al,G_kG^k\al\dr=\dl\al,G^k\al\dr=|\al|_{g^*}^2
\qa \al\in\Om^k(M).
$$
Thus, since $\na$~commutes with~$g^\sh$, hence with~$G^k$,
$$
\rho^{\lda+2k+i-k}\,|\na^iG^k\al|_g
=\rho^{\lda+i+k}\,|G^k\na^i\al|_g
=\rho^{\lda+i+k}\,|\na^i\al|_{g^*}
$$
for
\hb{i\in\BN}. From this we deduce
$$
G^k\in\Lis\bigl(W_{\coW p}^{j,\lda}(\bwt^k),
W_{\coW p}^{j,\lda+2k}(\bwt^{'k})\bigr)
\qa (G^k)^{-1}=G_k,
$$
for
\hb{j\in\BN}. Thus, by interpolation,
\beq\label{F.Gk}
G^k\in\Lis\bigl(\gF_p^{s,\lda}(\bwt^k),
\gF_p^{s,\lda+2k}(\bwt^{'k})\bigr)
\qb (G^k)^{-1}=G_k,
\eeq
for
\hb{s\geq0} with
\hb{s>0} if
\hb{\gF=B}.

\par
The part of \eqref{F.cds} which has already been shown and \eqref{F.Gk}
imply
\beq\label{F.A}
A:=G^{k-1}\da G_k
\in\cL\bigl(\gF_{p'}^{s+1,-\lda}(\bwt^{'k}),
\gF_{p'}^{s,-\lda}(\bwt^{'k-1})\bigr).
\eeq

\par
(4) Suppose
\hb{\pl M=\es}. Then \eqref{F.A} and Theorem~\ref{thm-N.ref} imply
$$
A'\in\cL\bigl(\gF_p^{-s,\lda}(\bwt^{k-1}),
\gF_p^{-s-1,\lda}(\bwt^k)\bigr)
$$
for
\hb{s\in\BR^+} with
\hb{s>0} if
\hb{\gF=B}. From this and \eqref{F.duad} we infer, by density, that $A'$~is
the unique continuous extension of~$d$. This proves \eqref{F.ds} for all
\hb{s\in\BR} with the exception
\hb{s=0} if
\hb{\gF=B}. But now we close this gap by interpolation.

\par
(5) Suppose
\hb{\pl M=\es} and
\hb{s>0}. Then \eqref{F.ds} and \eqref{F.Gk} imply
$$
C:=G^kdG_{k-1}
\in\cL\bigl(\gF_{p'}^{s+1,-\lda-2}(\bwt^{'k-1}),
\gF_{p'}^{s,-\lda}(\bwt^{'k})\bigr).
$$
Hence
$$
C'\in\cL\bigl(\gF_p^{-s,\lda}(\bwt^k),
\gF_p^{-s-1,\lda+2}(\bwt^{k-1})\bigr).
$$
Since \eqref{F.duac} shows that $C'$~is the unique continuous extension
of~$\da$ over~$\gF_p^{-s,\lda}(\bwt^k)$ we get assertion~\eqref{F.cds} for
\hb{s<0}. The case
\hb{\gF=B} and
\hb{s=0} is once more covered by interpolation. Assertion~(i) is thus
proved.

\par
(6) Suppose
\hb{\pl M\neq\es}. If
\hb{s\geq0}, then \eqref{F.d0s} and \eqref{F.cd0s} are obvious by~(i).
Clearly, $G^k$~maps $\cD(\ci M,\bwt^k)$ into $\cD(\ci M,\bwt^{'k})$. Hence
\eqref{F.Gk} implies
\beq\label{F.Gk0}
G^k\in\Lis\bigl(\ci\gF_p^{s,\lda}(\bwt^k),
\ci\gF_p^{s,\lda+2k}(\bwt^{'k})\bigr)
\qb (G^k)^{-1}=G_k,
\eeq
for
\hb{s\geq0} with
\hb{s>0} if
\hb{\gF=B}.

\par
Suppose
\hb{-1+1/p<s<0}, that is,
\hb{0<-s<1-1/p=1/p'}. Then, by Theorem~\ref{thm-V.tr}(i),
$$
\gF_{p'}^{-s,-\lda}(\bwt^{'k-1})=\ci\gF_{p'}^{-s,-\lda}(\bwt^{'k-1}).
$$
From this, \eqref{F.Gk0}, and the observation of the beginning of this step
we infer
$$
A\in\cL\bigl(\ci\gF_{p'}^{-s+1,-\lda}(\bwt^{'k}),
\ci\gF_{p'}^{-s,-\lda}(\bwt^{'k-1})\bigr).
$$
Hence, by \eqref{N.HBD},
$$
A'\in\cL\bigl(\gF_p^{s,\lda}(\bwt^{k-1}),
\gF_p^{s-1,\lda}(\bwt^k)\bigr).
$$
Thus \eqref{F.duad} implies
$$
d\in\cL\bigl(\gF_p^{s,\lda}(\bwt^{k-1}),
\gF_p^{s-1,\lda}(\bwt^k)\bigr).
$$
This proves claim~\eqref{F.d0s} if
\hb{-2+1/p<s<-1+1/p}. Now we obtain assertion~\eqref{F.d0s} for
\hb{-1+1/p<s<0} by interpolation, thanks to Theorem~\ref{thm-I.I}. The
proof of statement~\eqref{F.cd0s} is similar.
\end{proof}
As an immediate consequence of this theorem we see that the Hodge Laplacian
\hb{\Da_{\Hodge}:=d\da+\da d} satisfies
$$
\Da_{\Hodge}
\in\cL\bigl(\gF_p^{s+2,\lda}(\bwt^k),
\gF_p^{s,\lda+2}(\bwt^k)\bigr)
$$
if either
\hb{s\in\BR} and
\hb{\pl M=\es}, or
\hb{s\geq0} with
\hb{s>0} if
\hb{\gF=B}. If
\hb{\pl M\neq\es}, then
$$
\Da_{\Hodge}
\in\cL\bigl(\ci\gF_p^{s+2,\lda}(\bwt^k),
\gF_p^{s,\lda+2}(\bwt^k)\bigr),
$$
provided
\hb{s>-2+1/p} with
\hb{s\neq-1+1/p}. Note that
\hb{\Da_{\Hodge}=-\Da_M} if
\hb{k=0}, where
\hb{\Da_M=\divgrad} is the Laplace-Beltrami operator of~$M$.

\par
Finally, we apply these results to derive the mapping properties of the
basic differential operators of vector analysis. For this we recall that
the gradient and the divergence operator can be represented (taking the
complex case into account) by
\beq\label{F.gr}
\grad=G^1\circ d\sco\cD(M)\ra\cD(M,T_0^1M)
\eeq
and
\beq\label{F.di}
\tdiv=-\da\circ G_1\sco\cD(M,T_0^1M)\ra\cD(M),
\eeq
respectively.
\begin{theorem}\label{thm-F.V}
Suppose
\hb{s\in\BR}.
\begin{itemize}
\item[(i)]
Assume either 
\hb{\pl M=\es} or
\hb{s\geq0} with
\hb{s>0} if\/
\hb{\gF=B}. Then
$$
\grad
\in\cL\bigl(\gF_p^{s+1,\lda}(M),
\gF_p^{s,\lda+2}(T_0^1M)\bigr)
\qb \tdiv
\in\cL\bigl(\gF_p^{s+1,\lda}(T_0^1M),
\gF_p^{s,\lda}(M)\bigr).
$$
\item[(ii)]
If\/
\hb{\pl M\neq\es} and
\hb{s>-2+1/p} with
\hb{s\neq-1+1/p}, then
$$
\grad
\in\cL\bigl(\ci\gF_p^{s+1,\lda}(M),
\gF_p^{s,\lda+2}(T_0^1M)\bigr)
\qb \tdiv
\in\cL\bigl(\ci\gF_p^{s+1,\lda}(T_0^1M),
\gF_p^{s,\lda}(M)\bigr).
$$
\end{itemize}
\end{theorem}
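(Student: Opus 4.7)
The plan is to read off both statements from Theorem~\ref{thm-F.d} via the factorizations $\grad=G^1\circ d$ and $\tdiv=-\da\circ G_1$ of \eqref{F.gr}, \eqref{F.di}, after upgrading the musical isomorphism \eqref{F.Gk} to all of the smoothness range we need. Concretely, I would reduce everything to two ingredients: the mapping properties of $d$ and $\da$ already established in Theorem~\ref{thm-F.d}, and the fact that
$$
G^1\in\Lis\bigl(\gF_p^{s,\lda}(\bwt^1),\gF_p^{s,\lda+2}(\bwt^{'1})\bigr),\qquad (G^1)^{-1}=G_1,
$$
for every $s$ relevant to the statement (and the analogous statement for $\ci\gF$-spaces).

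First I would extend \eqref{F.Gk} and \eqref{F.Gk0} to negative $s$. The estimates $|G^1\al|_g=|\al|_{g^*}$ and $\na\circ G^1=G^1\circ\na$ used in the proof of \eqref{F.Gk} give the full integer-order isomorphism $G^1\in\Lis\bigl(W_{\coW p}^{k,\lda}(\bwt^1),W_{\coW p}^{k,\lda+2}(\bwt^{'1})\bigr)$ for all $k\in\BN$, and interpolation (Theorem~\ref{thm-I.I}) covers all $s\geq0$ (with $s>0$ if $\gF=B$). For $s<0$ I would use duality: by Theorem~\ref{thm-N.ref} and \eqref{N.HBD}, $\gF_p^{s,\lda}(\bwt^1)$ is the conjugate dual of $\ci\gF_{p'}^{-s,-\lda-2}(\bwt^{'1})$ under the pairing $\pw_M$ (with a compensating weight shift), and $G^1$, $G_1$ are transposes of each other up to complex conjugation, so the already-proven case transports the isomorphism to the negative-order space. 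In the range $s<1/p$ relevant to part (ii) below, the identity $\ci\gF_p^{s,\lda}=\gF_p^{s,\lda}$ noted after \eqref{N.HBD} means no additional work is needed for the dotted spaces.

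Next, assembling (i) is immediate. For the gradient, take $f\in\gF_p^{s+1,\lda}(M)=\gF_p^{s+1,\lda}(\bwt^0)$; Theorem~\ref{thm-F.d}(i) gives $df\in\gF_p^{s,\lda}(\bwt^1)$, and the extended \eqref{F.Gk} yields $G^1df\in\gF_p^{s,\lda+2}(\bwt^{'1})=\gF_p^{s,\lda+2}(T_0^1M)$. For the divergence, take $v\in\gF_p^{s+1,\lda}(T_0^1M)=\gF_p^{s+1,\lda}(\bwt^{'1})$; the inverse map gives $G_1v\in\gF_p^{s+1,\lda-2}(\bwt^1)$, and \eqref{F.cds} with weight $\mu=\lda-2$ delivers $\da G_1v\in\gF_p^{s,\lda}(\bwt^0)=\gF_p^{s,\lda}(M)$. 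Part (ii) follows by the same composition using Theorem~\ref{thm-F.d}(ii) and the $\ci\gF$ version of the isomorphism; the range $s>-2+1/p$, $s\neq-1+1/p$ is exactly the one inherited from \eqref{F.d0s} and \eqref{F.cd0s}, and for the subrange $s+1<1/p$ of the divergence statement the input dotted space coincides with the undotted one.

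The main technical obstacle is the extension of $G^1$ and $G_1$ to negative-order weighted spaces: although pointwise these operators are simply index raising/lowering by the metric, the argument must pass through the conjugate-dual identifications of Theorems~\ref{thm-N.ref} and~\ref{thm-V.ret}, and one has to keep careful track of the weight shift by $\pm2$, the conjugate linearity (which I would handle by splitting $G^1$ as $g^\sh\circ({\cdot})^-$), and the exclusion of the Dirichlet-trace-critical exponents when $\pl M\neq\es$. Once this extension is in place, the theorem is a one-line composition.
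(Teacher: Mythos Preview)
Your proposal is correct and follows essentially the same route as the paper: extend the musical isomorphism~$G^1$ (and its inverse~$G_1$) to the full smoothness range by duality, then read off the result from Theorem~\ref{thm-F.d} via the factorizations \eqref{F.gr} and \eqref{F.di}. The paper's proof is terser about the duality step: instead of tracking conjugate linearity by splitting $G^1=g^\sh\circ(\,\ol{\cdot}\,)$, it observes directly from \eqref{U.ginv} the symmetry $\dl\al,G^1\ba\dr=\dl G^1\al,\ba\dr$ for $\al,\ba\in\cD(M,T_1^0M)$, which immediately identifies $(G^1)'$ with~$G^1$ under~$\pw_M$ and transports \eqref{F.Gk}, \eqref{F.Gk0} to negative orders in one line.
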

\begin{proof}
It follows from \eqref{U.ginv} that
$$
\dl\al,G^1\ba\dr=\dl G^1\al,\ba\dr
\qa \al,\ba\in\cD(M,T_1^0M).
$$
From this and \eqref{F.Gk} we obtain by duality arguments similar to the
ones used in the preceding proof that
$$
G^1
\in\Lis\bigl(\gF_p^{s,\lda}(T_1^0M),
\gF_p^{s,\lda+2}(T_0^1M)\bigr)
\qb (G^1)^{-1}=G_1,
$$
for all
\hb{s\in\BR} if
\hb{\pl M=\es}. Similarly, \eqref{F.Gk0} implies
$$
G^1
\in\Lis\bigl(\ci\gF_p^{s,\lda}(T_1^0M),
\ci\gF_p^{s,\lda+2}(T_0^1M)\bigr)
\qb s\in\BR.
$$
Now the assertion follows from \eqref{F.gr}, \eqref{F.di}, and
Theorem~\ref{thm-F.d}.
\end{proof}

\begin{acknowledgement}
The author is grateful to a referee for calling his attention to some 
early Russian references and to N.~Nistor for pointing out 
papers \hbox{\cite{AIN06a}--\cite{AN07a}}.   
\end{acknowledgement}

\def\cprime{$'$} \def\polhk#1{\setbox0=\hbox{#1}{\ooalign{\hidewidth
  \lower1.5ex\hbox{`}\hidewidth\crcr\unhbox0}}}

\end{document}